\documentclass[UTF8]{ctexart}

\usepackage[a4paper, margin = 2.33cm, centering, includefoot]{geometry}

\usepackage[svgnames, dvipsnames]{xcolor} 

\usepackage[unicode=true, bookmarksnumbered=true, colorlinks=true]{hyperref}
\colorlet{linkblue}{blue!45!black}

\hypersetup{
    citecolor=magenta, 
    linkcolor=linkblue, 
    urlcolor=gray
}

\usepackage{xurl} 

\usepackage{latexsym,mathtools,mathalfa,amssymb,amsmath,amsbsy,amsopn,amstext,amsthm,amsxtra,bm, bbm,calc,ifpdf,cancel,extarrows,manfnt}
\usepackage{enumerate,graphicx,wrapfig,subfig,diagbox,listings,multicol,multirow,makeidx,fontspec,marvosym}
\usepackage{booktabs,tabularx,array,float}

\newtheorem{Lem}{Lemma}[section]
\newtheorem{Thm}{Theorem}[section]
\newtheorem{Prop}{Proposition}[section]
\newtheorem{Ass}{Assumption}[section]
\newtheorem{Rmk}{Remark}[section]
\newtheorem{Eg}{Example}[section]
\newtheorem{IEg}{Illustrative Example}[section]
\newtheorem{corollary}{Corollary}[section] 
\newtheorem{condition}{Condition}[section]
\newtheorem{definition}{Definition}[section] 

\usepackage{fancyhdr}
\renewcommand{\sectionmark}[1]{}

\usepackage{caption}
\usepackage[section]{placeins}

\usepackage{lmodern}
\usepackage[T1]{fontenc}
\usepackage{anyfontsize}
\usepackage{tikz}
\usetikzlibrary{decorations.pathreplacing, calligraphy, patterns, arrows.meta, positioning}

\ctexset{
 section = {
    format = \Large\bfseries\raggedright,
    numberformat = {},
    nameformat = {}  
  }, 
  abstractname = {Abstract},
  contentsname = {Contents},
  part = {
    name   = {Part~},
    number = \Roman{part}
  },
  bibname = {References},
  figurename = {Figure},  
  tablename  = {Table}   
}

\usepackage[authoryear,round, sort&compress]{natbib} 

\newcommand{\V}{\mathsf{V}}  
\newcommand{\U}{\mathsf{U}}  
\newcommand{\Sf}{\mathsf{S}} 

\newcommand{\1}{\mathbf{1}}				
\renewcommand{\P}{\mathbb{P}}				
\newcommand{\E}{\mathbb{E}}					
\newcommand{\Bro}{\mathbb{B}}               
\newcommand{\Var}{\mathbb{V}}		
\newcommand{\Lp}{\mathcal{L}}

\newcommand{\R}{\mathbb{R}}					
\renewcommand{\d}{\mathrm{d}}				

\DeclareMathOperator*{\argmin}{argmin}

\newcommand{\abs}[1]{\left\lvert#1\right\rvert}		
\newcommand{\norm}[1]{\left\lVert#1\right\rVert}

\newcommand{\oas}[1]{o_{\mathrm{a.s.}}(#1)}                   
\newcommand{\op}[1]{o_{\P}(#1)}                   
\newcommand{\T}[2]{\mathcal{T}_{#1}[#2]}  

\newcommand{\eps}{\varepsilon}              
\newcommand{\block}{\mathcal{B}}            

\usepackage[en-US]{datetime2} 

\title{\textbf{Time-Uniform Dependent Central Limit Theory}\footnote{The authors are listed in alphabetical order. We acknowledge the use of ChatGPT and Claude for assistance in draft editing, coding, proofreading and finding useful literature. We thank Alessandro Casini, Nabarun Deb, Florian Gunsilius, Christian Hansen, Shiwei Huang, Shubhanshu Shekhar, Weibiao Wu, Kaspar Wüthrich, and seminar participants at Michigan Econometrics Workshop for helpful comments and feedback. We also thank Yuchen Hu and Robert de Jong for insightful discussions.  All errors are our own.}}
\author{Mingxuan Ge\thanks{University of Michigan, Ross School of Business: \href{mkgg@umich.edu}{mkgg@umich.edu}} \quad Yanlong Liu\thanks{University of Chicago, Booth School of Business: \href{yliu420@chicagobooth.edu}{yliu420@chicagobooth.edu}}}
\date{\small This draft version: \today}

\begin{document}
\maketitle

\begin{abstract}
We establish a near-optimal almost sure invariance principle for nonstationary time series under the functional dependence measure (FDM) framework. This result yields a universal time-uniform dependent central limit theorem, which serves as the basis for anytime-valid inference. In particular, we instantiate the theorem through asymptotic confidence sequences (AsympCSs), time-uniform counterparts of asymptotic confidence intervals obtained via central limit theorems, extending those originally proposed in \cite{waudby2024time} to time series with serial dependence. We further refine the original definition because, as we show, it can admit AsympCSs that fail to achieve asymptotic time-uniform coverage even for independent and identically distributed settings. We address this issue by adding a simple eventual-inclusion condition, which in turn permits more flexible variance estimation. In time-series settings, this relaxation can substantially ease restrictions on bandwidth and kernel selection. For feasible implementation of AsympCSs, we establish strong consistency of HAC variance estimators under the FDM framework, accommodating a wider range of commonly used bandwidth choices than existing results under mixing conditions. As illustrations, we construct AsympCSs for algorithms with Markovian structure and time-series-based causal estimands.

\vspace{1em}
\noindent 
\textbf{Keywords:} anytime-valid inference, asymptotic confidence sequences, functional dependence measure, nonlinear time series, nonstationary almost sure invariance principle, strong consistency of HAC estimators
\end{abstract}

\section{Introduction} \label{Sec: Introduction}
\noindent Many modern time-series applications involve data collected continuously and often at high frequency, creating a natural need to conduct statistical inference as data arrive rather than only at a prespecified endpoint or at a limited number of interim analyses. For example, outcomes in time-series experiments (e.g., \cite{bojinov2019time}; \cite{hu2022switchback}; \cite{bojinov2023design}) are often observed at 30-minute or hourly intervals, making continuous monitoring particularly valuable for limiting exposure to poorly performing treatments and terminating experiments as soon as sufficient evidence has accumulated (\cite{ham2022design}). Similar needs arise in sequential change-point detection (e.g., \cite{chu1996monitoring}; \cite{leisch2000monitoring}; \cite{berkes2004sequential}; \cite{gosmann2021new}; \cite{horvath2025sequential}), where the aim is to promptly detect changes in the parameters of a time series to issue an alarm and mitigate the consequences of abnormal behavior. Iterative algorithms such as Markov chain Monte Carlo (MCMC) and stochastic gradient descent (SGD) also naturally give rise to time series (e.g., \cite{dieuleveut2020bridging}; \cite{li2024sgd}; \cite{merad2025convergence}; \cite{wei2025online}), where it is desirable to determine from the realized trajectory whether further iteration is statistically warranted, rather than committing to an ad-hoc iteration horizon. Such practical interest is already reflected in recent work establishing time-uniform guarantees for algorithmic procedures (e.g., \cite{xie2024asymptotic}; \cite{aolaritei2025stopping}; \cite{ritzwoller2026reproducible}; \cite{aolaritei2026beyond}).

These applications commonly involve temporal heterogeneity and time-varying effects, which can give rise to departures from stationarity. Accommodating potential nonstationarity is particularly important in our anytime-valid sequential settings, where one cannot generally rely on existing pretesting procedures to restrict attention to stationary processes (e.g., \cite{horvath2020sequential}; \cite{schmidt2021asymptotic}). Moreover, units in these settings are often generated through recursive decision-making and feedback dynamics, resulting in dependence patterns that can be highly nonlinear and complex. These practical considerations thus raise an important question of how to conduct valid inference under continuous monitoring for nonlinear time series with serial dependence and nonstationarity, while maintaining uniform Type-I error control over time. Such an inferential guarantee is precisely the objective of anytime-valid inference (AVI) (\cite{ramdas2023game}). While much of the AVI literature focuses on finite-sample guarantees, inference for time series relies on inherently asymptotic quantities such as the long-run variance. We therefore take an asymptotic approach in this paper. Our primary goal is to develop a time-uniform dependent central limit theorem (TUDCLT), aiming to bring the broad applicability of the classical fixed-time central limit theorem (CLT) to the anytime-valid setting. We next describe how we develop this approach and summarize our main contributions.

\vspace{1em}
\noindent \textbf{Our Approach and Contributions.} Our major technical contribution is to establish an almost sure invariance principle (ASIP) for nonstationary time series under the functional dependence measure (FDM) framework (\cite{wu2005nonlinear}), with an approximation rate that matches the optimal KMT rate (\cite{komlos1975approximation}; \cite{komlos1976approximation}) up to an additional logarithmic factor. The FDM framework provides a tractable approach to characterize nonlinear time series  where other dependence conditions such as mixing (\cite{doukhan1994mixing}) can be difficult to verify or fail to hold. An almost-sure notion is needed because anytime validity with an open-end guarantee is an eventual pathwise property in the asymptotic regime. We refer readers to Remark 1 of \cite{waudby2024time} for formal arguments and to Remarks 2.2 and 2.6(i) of \cite{gosmann2021new} for further discussion. Our ASIP result then reduces the otherwise intractable problem of AVI for time series to an analysis of the approximating Gaussian process, providing the foundation for our TUDCLT.  

Following \cite{waudby2024time}, we formulate the resulting TUDCLT in terms of asymptotic confidence sequences (AsympCSs), the time-uniform counterparts of CLT-based asymptotic confidence intervals (AsympCIs). The crux of the approach is to transfer nonasymptotic Gaussian boundaries obtained from martingale inequalities (\cite{ramdas2020admissible}) to the original partial-sum process via the ASIP, which we establish for nonstationary time series. This construction thus relaxes the finite-sample distributional requirements of nonasymptotic confidence sequences (e.g., \cite{darling1967confidence}; \cite{howard2021time}), and we adopt this formulation because it often offers greater flexibility than its dual counterpart, anytime-valid sequential testing, especially when the inferential objective is not clearly specified a priori or may evolve over time. Our formulation further refines the original definition of AsympCSs. As we show in Example \ref{IEg 2}, there exist AsympCSs that satisfy the original definition but fail to achieve asymptotic time-uniform coverage even in independent and identically distributed settings, and the result continues to hold under a broad class of data-generating processes. We address this issue by adding a simple eventual-inclusion condition. Our refinement is particularly useful in time-series settings, as it allows greater flexibility in kernel and bandwidth selection for long-run variance estimation than the approach taken in Section 2.5 of \cite{waudby2024time} to address a related issue in a specific setting.

For feasible implementation of AsympCSs, we establish strong consistency of heteroskedasticity and autocorrelation consistent (HAC) estimators under the FDM framework. To the best of our knowledge, there appears to be no strongly consistent estimator of the long-run variance for nonstationary time series under the FDM framework. Our result permits substantially larger bandwidth choices than those available under mixing conditions (\cite{de2000strong}), with a polynomial-factor improvement up to logarithmic terms. This thus recovers many standard bandwidth choices such as the conventional $b_t \asymp t^{1/3}$ rate associated with the Bartlett kernel under finite fourth moments. As illustrations, we construct AsympCSs for algorithms with Markovian structure and time-series-based causal estimands. We next survey related strands of the literature. 

\vspace{1em}
\noindent \textbf{Literature.} The paper most closely related to ours is \cite{waudby2024time}, which develops a time-uniform CLT for independent and identically distributed (i.i.d.) data and martingale difference sequences (MDSs). However, its framework does not accommodate more general serial dependence and excludes standard autoregressive processes. A subsequent paper briefly discusses stationary time series in Remark 4.2 (\cite{Gnettner2025}), but leaves nonstationarity unaddressed. This is partly due to the lack of a relevant ASIP, which we establish in this paper. Establishing such an ASIP is itself a nontrivial technical challenge. To the best of our knowledge, beyond MDSs, the only existing relevant nonstationary ASIP is \cite{hafouta2023almost} and other ASIP results are confined to stationary processes (e.g., \cite{dehling1982almost}; \cite{berkes2014kmt}).  However, the main theorem in \cite{hafouta2023almost}, established for uniformly bounded sequences with exponentially decaying $\alpha$-mixing coefficients and an additional $\phi$-mixing condition, only provides a Gaussian approximation whose variance approximately matches that of the original partial sums, and does not facilitate the construction of AsympCSs for nonstationary time series. We establish the approximation whose variance exactly matches that of the original partial sums under substantially weaker conditions in the FDM framework, with a comparison of the assumptions provided in Appendix \ref{Appendix: Assumptions in Existing Nonstationary ASIP Results}. In particular, our assumptions are comparable in spirit to those assumed in CLTs but we get the additional anytime validity.

More broadly, our work contributes to a growing literature that uses ASIPs to establish asymptotic time-uniform guarantees under weak distributional assumptions for anytime-valid inference. Following \cite{waudby2024time}, such results have been developed in cross-sectional i.i.d.\ settings (e.g., \cite{dalal2024anytime}; \cite{oprescu2025efficient}; \cite{imbens2026demonstration}), martingale difference sequences (e.g., \cite{bibaut2022near}; \cite{cho2025exploration}), and design-based finite-population frameworks (e.g., \cite{liang2023experimental}; \cite{molitor2025anytime}; \cite{wang2025design}; \cite{ham2022design}). These results, however, do not cover super-population time-series settings. Relatedly, \cite{shi2023dynamic} consider super-population time-series experiments but only construct a group sequential test with a fixed number of interim analyses. Specifically, they build on the $\alpha$-spending strategy (\cite{lan1983discrete}) and rely on computationally demanding bootstrap procedures. Our approach instead yields more analytically tractable time-uniform boundaries once the long-run variance is estimated, avoiding the recursive calibration of critical boundaries to control the overall Type-I error across interim analyses.  

\vspace{1em} 
\noindent \textbf{Paper organization.} The rest of the paper is organized as follows. In Section \ref{Sec: Almost Sure Invariance Principles for (Nonstationary) Time Series}, we introduce the time-series set-up under the functional dependence measure (FDM) framework and establish a near-optimal nonstationary ASIP result. We then instantiate our TUDCLT in terms of AsympCSs in Section \ref{Sec: Main Results}. Specifically, we revisit the existing definition of AsympCS and propose a refinement in Section \ref{Sec: Asymp CS}, followed by presentation of related main results in Sections \ref{Sec: Robbins Gaussian Mixture Asymp CSs for (Nonstationary) Time Series}-\ref{Sec: Iterated Logarithm Asymp CSs for (Nonstationary) Time Series}. For practical implementation of our AsympCSs, we establish the strong consistency of HAC estimators in Section \ref{Sec: Strongly Consistent Estimation Strategies}. In Section \ref{Sec: Illustrations: Applications to Time Series Data}, we demonstrate the use of AsympCSs in common time-series-oriented applications. We conduct simulation studies in Section \ref{Sec: Simulation Studies} and conclude in Section \ref{Sec: Conclusion}. Additional technical details, proofs, and discussions are provided in the supplementary material (Appendix \ref{Appendix: Proofs of Nonstationary ASIP} - Appendix \ref{Appendix: Additional Discussions}).

\vspace{1em}
\noindent \textbf{Notation.}  For a random variable $X$ and $p\ge1$, let $\norm{X}_p=(\E|X|^p)^{1/p}$ denote the $\Lp_p$-norm, and abbreviate $\norm{\cdot}=\norm{\cdot}_2$. We use $\E[X]$ and $\Var[X]$ to denote the expectation and variance of $X$, respectively.

\section{Time-Series Set-Up and Almost Sure Invariance Principle} \label{Sec: Almost Sure Invariance Principles for (Nonstationary) Time Series}
\noindent In this section, we introduce the time-series framework and establish a near-optimal ASIP for nonstationary time series, which serves as the main technical tool for deriving our AsympCSs as instantiations of the TUDCLT.

Following a common convention in time-series analysis (\cite{hamilton2020time}), we decompose the process $(Y_i)_{i\in\mathbb{Z}}$ as
\begin{equation} \label{eq:additive_decomposition}
Y_i = \mu_i + X_i,    
\end{equation}
where $\mu_i := \E[Y_i]$ is a deterministic mean sequence that may vary over time, and $(X_i)_{i\in\mathbb{Z}}$ is a centered stochastic process subject to dependence assumptions that we will specify in Assumptions \ref{Assump:dep_decay}-\ref{Assump:UI}.

Many theoretical developments in the time-series literature proceed under a mean-zero assumption (e.g., \cite{xiao2012covariance}; \cite{zhang2017gaussian}; \cite{bonnerjee2024gaussian}). We explicitly retain the deterministic mean sequence $(\mu_i)_{i\in\mathbb{Z}}$ in Equation \eqref{eq:additive_decomposition}, as its running averages are our inferential targets. We also refrain from imposing additional structure on the mean to keep our framework agnostic about its specific form. To prevent first-stage mean estimation errors from contaminating variance estimation, a phenomenon known as low-frequency contamination (\cite{casini2026theory}), we impose a mild Ces\`aro mean-stability condition similar to those in the relevant literature (\cite{johari2025estimation}; \cite{mathis2026confidence}). The formal statement is deferred to Condition \ref{Cond: Ces\`aro Mean-Stability Condition} in Section \ref{Sec: Strongly Consistent Estimation Strategies}. In short, we rule out persistent mean nonstationarity, such as deterministic trends or structural breaks occupying a non-vanishing fraction of the sample. The nonstationarity of our primary interest mainly arises from temporal heterogeneity in the stochastic component, as is often the case in the motivating applications discussed in Section \ref{Sec: Introduction}. Following the discussion therein, we adopt the functional dependence measure (FDM) framework of \cite{wu2005nonlinear} to model the stochastic component $(X_i)_{i\in\mathbb{Z}}$. For readers less familiar with FDM, we provide a brief overview and discussion in Appendix \ref{Appendix: Functional Dependence Measure Framework}. 

In this framework, we characterize $(X_i)_{i\in\mathbb{Z}}$ as the output of a physical system:
\begin{equation}   \label{eq:g_function}    
X_i = \varphi_i(\eps_i, \eps_{i-1}, \dots),
\end{equation}
where $(\eps_i)_{i\in\mathbb{Z}}$ are i.i.d.\ inputs, 
and $\varphi_i:\R^{\infty}\to \R$ are measurable functions.

We critically allow the physical mechanism $\varphi_i$ to vary over time to incorporate nonstationarity. Based on this representation, given a time lag $h\geq0$, we quantify the dependence of the system on the input $\eps_{i-h}$ by replacing it with an i.i.d.\ copy $\eps'_{i-h}$. Let $X_{i,\{i-h\}} = \varphi_i(\eps_i, \dots, \eps_{i-h+1},\eps_{i-h}',\eps_{i-h-1},\dots)$ denote the coupled version of $X_i$. For $p\geq1$, we define the uniform functional dependence measure as
\begin{equation}      
    \delta_p(h):= \sup_{i\in\mathbb{Z}} \norm{X_i - X_{i,\{i-h\}}}_p,
     \label{eq:f_measure} 
\end{equation}
where the term $\norm{X_i - X_{i,\{i-h\}}}_p$ captures the sensitivity of the output $X_i$ to the innovation $\eps_{i-h}$, and the supremum over all $i \in \mathbb{Z}$ is taken to enable a uniform measure of dependence over nonstationarity.

We then formulate the dependency condition using
\begin{equation}    
    \Theta_{i,p} = \sum_{h=i}^{\infty} \delta_p(h),\ i\geq0,
    \label{eq:dep_measure}   
\end{equation}
in which this quantity measures the cumulative dependence of the future trajectory on an input shock. To enable the construction of our TUDCLT, similar to the counterpart in the classic fixed-time CLT regime, we assume a short-range dependence condition throughout the rest of the paper: $\Theta_{0,p}<\infty$. If this fails, the process $(X_i)$ can be long-range dependent, and the partial sum process $\mathsf{S}_t := \sum_{i=1}^t X_i$ will no longer converge to a Brownian motion limit even in the stationary case (\cite{taqqu1975weak}).

We now introduce the primitive assumptions underlying the ASIP result. First, we impose a mild polynomial decay condition on the dependence structure of the process.
\begin{Ass}[Polynomial-Rate Functional Dependence Decay] \label{Assump:dep_decay}
    Consider Equation~\eqref{eq:g_function}. Suppose that $\Theta_{0,p}<\infty$ for $p \in (2,3]$. Assume there exist $A>1$ and a positive constant $C>0$ such that
    \begin{align}
        \sup_{i\geq0} (i+1)^A \Theta_{i,p} \leq C.
        \label{Eq:dep_decay}
    \end{align}
\end{Ass}
\noindent Assumption \ref{Assump:dep_decay} holds for a broad range of processes. We provide a collection of stationary and nonstationary examples from the literature that satisfy or violate this assumption in Appendix~\ref{Appendix: Examples Satisfying and Violating the Assumptions}. In particular, Assumption \ref{Assump:dep_decay} accommodates processes with arbitrarily slow polynomial decay above the summability threshold $A=1$. We formulate this assumption under minimal conditions, requiring only finite moments of order slightly greater than two, i.e., $p > 2$. Higher-order moment conditions will improve the approximation error rate, whereas the rate achieved under Assumption \ref{Assump:dep_decay} is already sufficient for constructing AsympCSs. We thus defer a full discussion for general $p$ to Appendix \ref{Appendix: The Complete Theorem}. 

In addition to the dependency decay, due to the potentially nonstationary nature of $(X_i)$, we assume the following stochastic domination condition to ensure a uniform control over the tail behavior. 

\begin{Ass}[Stochastic Dominance] \label{Assump:UI}
    For the same $p$ as in Assumption \ref{Assump:dep_decay}, there exist a non-negative random variable $Q \in\Lp_p$ and a constant $C > 0$, such that for all $x > 0$:
\begin{equation}
    \sup_{i\in\mathbb{Z}}\mathbb{P}\left[|X_i| > x\right] \leq C  \mathbb{P}\left[Q > x\right].
\end{equation}
\end{Ass}
\noindent Assumption \ref{Assump:UI} mainly rules out unit-root processes and we defer specific examples to Appendix~\ref{Appendix: Examples Satisfying and Violating the Assumptions}. Taken together, these two assumptions accommodate many familiar stationary and nonstationary time-series models already covered by classical fixed-time CLTs. This thus suggests that the practical scope of our TUDCLT is comparable in spirit to that of classical fixed-time CLTs but we obtain the additional anytime validity.

With these preliminary conditions in place, we are now ready to present our nonstationary ASIP result. For readability, with a slight abuse of notation, we index sequences of random variables starting from 1 rather than over $\mathbb{Z}$ for the rest of the paper, unless otherwise noted.  

\begin{Thm}[Nonstationary ASIP] \label{Thm: nonstationary ASIP}
For the process $(Y_i)_{i\geq1}$ satisfying Equation \eqref{eq:additive_decomposition}, suppose Assumptions \ref{Assump:dep_decay} and \ref{Assump:UI} hold. Let $\V_t := \Var\left[\sum_{i=1}^{t} Y_i\right]$ be the cumulative variance process. Then, on a possibly richer probability space, there exists a standard Brownian motion $\Bro(\cdot)$ such that
\begin{equation} \label{eq: KMT rate}
\sum_{i=1}^{t} (Y_i - \mu_i)  = \Bro(\V_t) + \oas{t^{1/p}\sqrt{\log t}}.    
\end{equation}
\end{Thm}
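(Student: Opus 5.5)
Since $Y_i-\mu_i=X_i$, the claim is an ASIP for the partial sums $\Sf_t=\sum_{i=1}^t X_i$ with the exact variance clock $\V_t=\Var[\Sf_t]$. The plan follows the stationary route of \cite{berkes2014kmt}, in three stages: $m$-dependent approximation, blocking with a martingale/conditional reduction, and strong approximation of independent nonidentically distributed block sums via Sakhanenko's inequality. Along the way I will keep track of the nonstationarity-induced variance mismatch and correct it at the end.

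\textbf{Step 1 ($m$-dependent truncation).} Set $X_i^{(m)}=\E[X_i\mid \eps_i,\dots,\eps_{i-m}]$. Burkholder's inequality together with the projection decomposition $X_i-X_i^{(m)}=\sum_{h>m}\mathcal{P}_{i-h}X_i$ gives
\[
\norm{\max_{s\le t}|\Sf_s-\Sf_s^{(m)}|}_p\lesssim \sqrt t\,\Theta_{m,p}\lesssim \sqrt t\, m^{-A}.
\]
Because $\delta_p$ is uniform in $i$, this bound needs no stationarity. I will use dyadic blocks $[2^k,2^{k+1})$ with a slowly growing $m_k\asymp 2^{k\beta}$. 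Borel--Cantelli then makes the truncation error $\oas{t^{1/p}}$ once $\beta$ is chosen so that $2^{k/2}m_k^{-A}\ll 2^{k/p}k^{-1}$; this is possible since $A>1$ and $p\le3$. For the truncated process, the variance discrepancy $|\Var[\Sf_t]-\Var[\Sf_t^{(m)}]|\lesssim t\,\Theta_{m,p}$ is also small.

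\textbf{Step 2 (blocking and independence).} Split $[1,t]$ into big blocks of length $l\gg m$ separated by gaps of length $m$. The big-block sums $U_j$ are independent and mean zero but not identically distributed. The gap sums and the conditional corrections are handled with the same maximal inequality plus Borel--Cantelli. Assumption~\ref{Assump:UI} yields $\sup_j\E|U_j|^p\lesssim l^{p/2}$ uniformly, and, via truncation at level $t^{1/p}$ with Fuk--Nagaev/Rosenthal bounds, $\sum_j \E|U_j|^p\1\{|U_j|>\cdot\}$ is controlled as for i.i.d.\ data. Sakhanenko's strong approximation for independent nonidentical summands then gives a Brownian motion $\Bro_1$ with
\[
\sum_{j\le J}U_j=\Bro_1\Bigl(\sum_{j\le J}\Var U_j\Bigr)+\oas{t^{1/p}\sqrt{\log t}}.
\]
The $\sqrt{\log t}$ loss comes from the $\log$-many dyadic scales in the Borel--Cantelli union and from the blocking with growing $m_k$, which is where the extra factor relative to KMT enters.

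\textbf{Step 3 (exact variance matching; main obstacle).} The clock obtained above, $\widetilde\V_t=\sum_j\Var U_j$, differs from $\V_t$ through the cross-block covariances, the gap contributions and the truncation. I also need to interpolate within blocks. With nonstationarity there is no $\sigma^2t$ to compare against, and I expect this to be the main obstacle. The plan has two parts. First, bound $|\V_t-\widetilde\V_t|\lesssim t\,\Theta_{m,p}+(t/l)\,m\,\Theta_{0,2}^2$ deterministically. Second, apply the modulus of continuity of Brownian motion,
\[
\sup_{|u-v|\le\Delta,\ u\le Ct}|\Bro(u)-\Bro(v)|=\oas{\sqrt{\Delta\log t}},
\]
using $\V_t\lesssim t\,\Theta_{0,2}^2$. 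This requires choosing $l,m$ so that $\Delta_t=o(t^{2/p})$, and the tension between this requirement and the Step 1 constraint is what forces the careful choice of $\beta$ and of the block length exponent. Within-block fluctuations of both $\Sf_s$ and $\Bro(\V_s)$ are bounded by maximal inequalities over blocks of size $l=o(t^{2/p}/\log t)$. Finally, by the Skorokhod embedding/Berkes--Philipp lemma the Brownian motions built blockwise can be glued on a richer probability space, which yields \eqref{eq: KMT rate}.
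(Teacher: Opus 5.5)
\textbf{The gap is in Step 2.} You use Bernstein big-block/small-block blocking: independent big blocks of length $l$ separated by discarded gaps of length $m$. These gaps carry variance of order $(t/l)m$, and you list this term yourself in Step 3. In nondegenerate cases (e.g.\ a linear process with coefficients $a_j\asymp j^{-A-1}$) the discarded gap sums really are of size $\sqrt{tm/l}$. So you need $l/m\gg t^{1-2/p}$. Two other requirements pull against this:
\begin{itemize}
\item Variance matching, $t\,\Theta_{m,p}=o(t^{2/p})$, forces $m\gg t^{(1-2/p)/A}$.
\item Sakhanenko's bound forces $l$ to be small. With $l\asymp t^{\gamma}$, summability of $\sum_j\E|U_j|^{\alpha}/t^{\alpha/p}$ over scales requires $\gamma<\frac{2}{p}\cdot\frac{\alpha-p}{\alpha-2}<\frac2p$. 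You need some $\alpha>p$ here, because with $\alpha=p$ the sum is $\asymp\sum_k l_k^{p/2-1}=\infty$. And $\alpha$ is capped by the dependence: the Rosenthal-type bound for truncated $m$-dependent block sums needs $\sum_h h^{1/2-1/\alpha}\delta_p(h)^{p/\alpha}<\infty$, which Assumption~\ref{Assump:dep_decay} only guarantees for $\alpha<2(1+p+Ap)/3$.
\end{itemize}
Take $p=3$. You then need $\gamma>\frac13+\beta>\frac13(1+\frac1A)$ together with $\gamma<\frac23\cdot\frac{\alpha-3}{\alpha-2}$ for some $\alpha<(8+6A)/3$. These are compatible only if $3A^2-5A-1>0$, i.e.\ $A>(5+\sqrt{37})/6\approx1.85$. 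Even with unlimited moments ($\alpha\to\infty$), the requirement $\frac13(1+\frac1A)<\frac23$ is exactly $A>1$, and its slack vanishes as $A\downarrow1$. So the ``tension'' you postpone cannot be resolved on the whole range $A>1$, $p\in(2,3]$; it already fails for $p$ near $3$ and $A$ near $1$.

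\textbf{The missing idea} is the paper's conditional blocking, which never discards a positive fraction of each block.
\begin{itemize}
\item Within each triadic group it uses contiguous outer blocks of length $3m_k$ and drops only the first $3m_k$ terms of the group. The total discarded variance is therefore $\sum_k m_k=O(t^{\beta})$ rather than $tm/l$.
\item It conditions on the shared boundary innovations $\bm{\eta}_{k,3j}$, which makes the blocks conditionally independent, and applies Sakhanenko conditionally.
\item It then splits the conditional variance as $\mathbb{V}_{k,2j+1}(\bm{a}_{k,3j})+\mathbb{V}_{k,2j+2}(\bm{a}_{k,3j+3})$ to build genuinely independent increments $\mathcal{A}_{k,j}$, and applies Sakhanenko a second time.
\end{itemize}
Block length and dependence range are then of the same order $m_k=3^{\beta k}$. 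The constraints reduce to $\frac{p-2}{pA}<\beta<\frac{2(\alpha-p)}{p(\alpha-2)}$, which is feasible for all $A>1$ when $p\le3$.

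Two smaller points. First, $\sup_j\E|U_j|^p\lesssim l^{p/2}$ comes from Burkholder, not from Assumption~\ref{Assump:UI}. That assumption's role is to make the truncation at level $3^{k/p}$ cost only $\oas{t^{1/p}}$, and to give $\sum_k3^{k(1-\alpha/p)}\sup_i\norm{\T{3^{k/p}}{X_i}}_\alpha^\alpha\lesssim\E Q^p$. Second, in the paper the blocking/Sakhanenko stage loses no logarithm; it yields $\oas{t^{1/p}}$. The $\sqrt{\log t}$ enters only in the final replacement of the truncated clock $\E[(\Sf_t^{\oplus})^2]$ by $\V_t$, via the Gaussian-increment lemma.
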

\noindent The complete proof is provided in Appendix \ref{Appendix: Proofs of Nonstationary ASIP}, along with an outline in Appendix \ref{sec:proof_ASIP} to highlight the main ideas since techniques are quite involved.

We briefly highlight some technical contributions. The approximation error rate in Equation \eqref{eq: KMT rate} is within only a logarithmic factor of the optimal KMT rate, i.e., $o_{\mathrm{a.s.}}(t^{1/p})$ (see, e.g., \cite{komlos1975approximation} \& \cite{komlos1976approximation} for the classical KMT results and \cite{berkes2014kmt} for an extension to stationary time series). Under an additional non-degeneracy condition on $\V_t$, this logarithmic factor can be removed, yielding the optimal KMT rate (see, e.g., the discussion of Condition 2.4 and Theorem 2.5 in \cite{bonnerjee2024gaussian}). We pursue this sharp construction because a finer control of the approximation errors typically entails a more careful analysis of the dependence structure, often leading to weaker dependence conditions, as is the case shown in Assumption \ref{Assump:dep_decay}. To this end, we develop a new multiscale argument that partitions intrinsic time into geometrically growing blocks and aggregates the resulting blockwise bounds. Specifically, we draw on the triadic grouping idea of \cite{berkes2014kmt}. Across the resulting blocks, we substantially reorganize and then strengthen the reduction steps of \cite{bonnerjee2024gaussian}. This allows us to obtain the near-optimal KMT-type approximation error rate under mild assumptions. Lastly, we provide a brief discussion of related results and further applications which may be of independent interest in Appendix \ref{Appendix: Non-Degenerate Condition on the Cumulative Variance Process}.

\section{Asymptotic Confidence Sequences} \label{Sec: Main Results}
\noindent Having developed the ASIP in Section \ref{Sec: Almost Sure Invariance Principles for (Nonstationary) Time Series}, we now use it to reduce the time-uniform inference problem to a finite-sample Gaussian one. Specifically, we instantiate our TUDCLT in terms of AsympCSs. In Section \ref{Sec: Asymp CS}, we first propose a refined notion of AsympCSs originally introduced in \cite{waudby2024time}.  Sections~\ref{Sec: Robbins Gaussian Mixture Asymp CSs for (Nonstationary) Time Series} and~\ref{Sec: Iterated Logarithm Asymp CSs for (Nonstationary) Time Series} then present Robbins' Gaussian mixture and iterated-logarithm AsympCSs for nonstationary time series, respectively. Finally, Section~\ref{Sec: Strongly Consistent Estimation Strategies} establishes strong consistency of the HAC estimator for long-run variance under FDM, enabling practical implementation.

\subsection{Refining Asymptotic Confidence Sequences} \label{Sec: Asymp CS}
\noindent We begin by reviewing the definition of a nonasymptotic confidence sequence (CS) before introducing its asymptotic counterpart, an asymptotic confidence sequence (AsympCS). Dating back to \cite{darling1967confidence}, a sequence of sets $\{C_t^\star\}_{t\ge1}$ is a
nonasymptotic $(1-\alpha)$ confidence sequence (CS) for the estimand $\theta_0$ if $\P \!\left[
\forall t\ge1:\theta_0\in C_t^\star
\right]\ge 1-\alpha.$ 

We next propose a refinement of the notion of AsympCSs originally introduced in \cite{waudby2024time}. Informally, an AsympCS is a sequence that asymptotically behaves like the underlying nonasymptotic CS, thereby remaining valid after a ``sufficiently large'' burn-in period. 

\begin{definition}[Asymptotic Confidence Sequences] \label{Def: Asym CS}
A sequence of random intervals $\left\{\widetilde C_t\right\}_{t \ge 1} :=
\left\{\left[\widehat\theta_t-\widetilde L_t,\,
\widehat\theta_t+\widetilde U_t\right]\right\}_{t \ge 1}$ centered at estimators $\left\{\widehat \theta_t\right\}_{t \ge 1}$ with non-zero bounds, i.e., $\widetilde L_t, \widetilde U_t > 0$, is a \emph{two-sided} $(1-\alpha)$-\emph{asymptotic confidence sequence (AsympCS)} for the estimand $\theta_0$ if there exists a (potentially unknown) \emph{two-sided nonasymptotic $(1-\alpha)$-CS} $\left\{C_t^\star\right\}_{t \ge 1} :=
\left\{\left[\widehat\theta_t-L_t^\star,\,
\widehat\theta_t+U_t^\star\right]\right\}_{t \ge 1}$ for
$\theta_0$ such that the following two conditions hold:
\begin{enumerate}
    \item \textbf{Almost-sure relative approximations:} $\widetilde{L}_t / L_t^\star \to 1 \ \text{and} \
\widetilde{U}_t / U_t^\star \to 1
\ \text{almost surely}.$
\item \textbf{Eventual inclusion:} $C_t^\star \subseteq \widetilde C_t$ eventually almost surely.
\end{enumerate}
\end{definition}
\noindent Definition \ref{Def: Asym CS} applies to arbitrary data-generating processes (DGPs), although this paper focuses exclusively on time series. Analogous definitions apply to one-sided lower or upper $(1-\alpha)$-AsympCSs on $ [\hat{\theta}_t-\widetilde{L}_t,\infty)
\ \text{or} \
(-\infty,\hat{\theta}_t+\widetilde{U}_t],$ respectively, so we omit them here.

As discussed in Section \ref{Sec: Introduction}, almost sure guarantees are essential here. For AsympCIs derived from CLTs, an analogue of Condition 1 holds in probability. Here, the almost-sure notion is used in place of the in probability notion to obtain time-uniformity in AsympCSs. Beyond Condition 1, we additionally impose Condition 2, which is not required in the original definition of \cite{waudby2024time}. In words, Condition 2 means that the boundary endpoints of AsympCSs need to be eventually no smaller than those of the underlying CSs, i.e., $\widetilde L_t \geq L_t^{\star}$ and $\widetilde U_t \geq U_t^{\star}$ eventually almost surely (see e.g., p. 23 of \cite{kallenberg1997foundations} for ``eventually/ultimately'' and p. 11 of \cite{shields1996ergodic} for ``eventually almost surely''). This strengthening is important in practice, as Condition~1 alone need not lead to a desired coverage guarantee. As the following Example \ref{IEg 2} demonstrates, iterated-logarithm-rate boundaries can exhibit severe undercoverage, with coverage probabilities approaching zero.

\begin{IEg}[Failure of Coverage under the Original AsympCS Defn]
\label{IEg 2}
Let $X_i=\theta_0+\varepsilon_i$, where
$\varepsilon_i\overset{\mathrm{iid}}{\sim}\mathcal N(0,1)$, and define
$\widehat\theta_t:=\bar X_t=t^{-1}\sum_{i=1}^t X_i$.
For $a>0$, let
\[
u_t(a):=
\sqrt{t\left(2\log\log t+a\log\log\log t\right)},
\qquad t\ge16.
\]
Fix $\alpha\in(0,1)$ and an arbitrary initialization point $T_0\ge16$.
Then, for sufficiently large constants $K_{\alpha/2}>0$ and
$R_{\alpha/2}>0$, define
$g_t^\star=R_{\alpha/2}$ for $t<T_0$ and
$g_t^\star=u_t(4)+K_{\alpha/2}$ for $t\ge T_0$.
The sequence $\{C_t^\star\}_{t\ge1}
:=
\left\{
\left[
\bar X_t-{g_t^\star}/{t},
\,
\bar X_t+ {g_t^\star}/{t}
\right]
\right\}_{t\ge1}$ is then a nonasymptotic $(1-\alpha)$-CS for $\theta_0$. Now define $\widetilde C_t
:=
\left[
\bar X_t-{\widetilde g_t}/{t},
\,
\bar X_t+{\widetilde g_t}/{t}
\right],
\ t\ge1,$ where $\widetilde g_t=g_t^\star/2$ for $t<T_0$ and $\widetilde g_t=u_t(2)$ for $t\ge T_0$. 
Then ${\widetilde g_t}/{g_t^\star}\to1,$ so Condition~1 of Definition \ref{Def: Asym CS} holds. However, it follows that 
\[
\lim_{m\to\infty}
\P \left[
\forall t\ge m:\theta_0\in\widetilde C_t
\right]
=0.
\]
\end{IEg}
\noindent The two-part construction ensures that the boundary is well-defined at all time points, with $u_t(a)$ used only for $t\ge 16$, where all iterated logarithmic terms are positive. We provide the proof in Appendix \ref{Appendix: Proofs of Illustrative Examples}. It exploits the fact that the time-uniform crossing probability is sensitive to the second-order term of the law of the iterated logarithm (LIL) boundary, resulting in fundamentally different crossing behavior when changing from $u_t(4)$ to $u_t(2)$, a violation of Condition 2. This is known as the Kolmogorov--Erd\H{o}s--Petrowski--Feller LIL (EFKP-LIL) behavior (Chapter 5.2 of \cite{revesz2013random}). Moreover, the phenomenon illustrated by this example is neither specific to exact normality for i.i.d.\ sequences nor a time-series phenomenon, and continues to hold under a broad class of DGPs (\cite{aistleitner2011upper}; \cite{sasai2019erdHos}). 

Example \ref{IEg 2} thus motivates us to incorporate the eventual-inclusion condition in the definition of AsympCSs, stated as Condition~2 in Definition \ref{Def: Asym CS}, which we show in Appendix \ref{Appendix: Proof of coverage by eventual inclusion} implies the asymptotic time-uniform coverage property. We next formally present this coverage guarantee. 
 
\begin{definition}[Asymptotic Time-Uniform Coverage]
\label{Defn：Asymptotic time-uniform coverage}
Let $\{\widetilde C_t\}_{t\ge1}$ be a sequence of sets and 
let $\alpha\in(0,1)$ be the nominal miscoverage level.
We say that $\{\widetilde C_t\}_{t\ge1}$ has
\emph{asymptotic time-uniform $(1-\alpha)$-coverage} for $\theta_0$ if
\[
\liminf_{m\to\infty}
\mathbb{P}\!\left[
\forall t\ge m:\theta_0\in\widetilde C_t
\right]
\ge 1-\alpha.
\]
The coverage is said to be \emph{sharp} if the above inequality holds
with equality.
\end{definition}
\noindent $m$ here represents a practically meaningful burn-in point to have asymptotic approximations to ``kick in''. As argued in \cite{waudby2024time}, we do not refer to sequences merely satisfying Definition \ref{Defn：Asymptotic time-uniform coverage} as AsympCSs. In practice, the events $\{\forall t\ge m:\theta_0\in\widetilde C_t\}$ may not always arise from a triangular-array formulation with different starting times $m$ by having the sequence of sets to be $\widetilde C_t(m)$, as in delayed-start CSs (Equation 20 of \cite{robbins1970statistical}). These events may therefore be non-decreasing in $m$, in which case this criterion may reduce to trivial eventual coverage with the coverage probability approaching one if one starts to peek late enough. That said, the notion alone does not ensure reasonable finite-sample calibration at the nominal level, as the following example illustrates.
 
\begin{IEg}[Coverage and Finite-Sample Calibration] \label{IEg 1}
Let $X_1,X_2,\ldots$ be i.i.d.\ random variables with $X_t\in[0,1]$ and a finite mean $\theta_0:= \mu$. Define $\widehat{\theta}_t :=\bar X_t = t^{-1} \sum_{i=1}^{t} X_i.$ For any fixed $\alpha' \in (0,1)$, $\{C_t^\star\}_{t \ge 1}
= \left \{\left[
\bar X_t
\pm
\sqrt{
\log\!\left(\pi^2 t^2 /(3\alpha')\right)/2t
}
\right] \right\}_{t \ge 1}$ is an exact nonasymptotic $(1-\alpha')$-CS for $\theta_0$. However, asymptotically, it holds that 
\[
\lim_{m \to \infty}
\mathbb{P}
\!\left[
\forall t \ge m,\,
\theta_0 \in C_t^\star
\right]
= 1.
\]
\end{IEg}
\noindent The proof is provided in Appendix \ref{Appendix: Proofs of Illustrative Examples}. The key here is that $\alpha'$ is not required to be identical to the asymptotic significance level $\alpha$ stated in Definition \ref{Defn：Asymptotic time-uniform coverage}. In fact, choosing $\alpha'=0.5$ already suffices for this confidence sequence to satisfy the tail-validity criterion at arbitrarily stringent levels, even including $\alpha = 0$, while its underlying finite-sample time-uniform coverage remains only 50\%. Condition 1 in Definition \ref{Def: Asym CS} addresses this issue by requiring the underlying CS and the AsympCS to use the same $\alpha$ so that the approximation exhibits reasonable finite-sample behavior.

In light of these discussions, we conclude the section with a schematic figure summarizing the relationships among the original AsympCS, our refined AsympCS, and sequences merely satisfying asymptotic time-uniform coverage in Definition \ref{Defn：Asymptotic time-uniform coverage}.
\begin{figure}[H]
     \centering
     \includegraphics[width=0.4\textwidth]{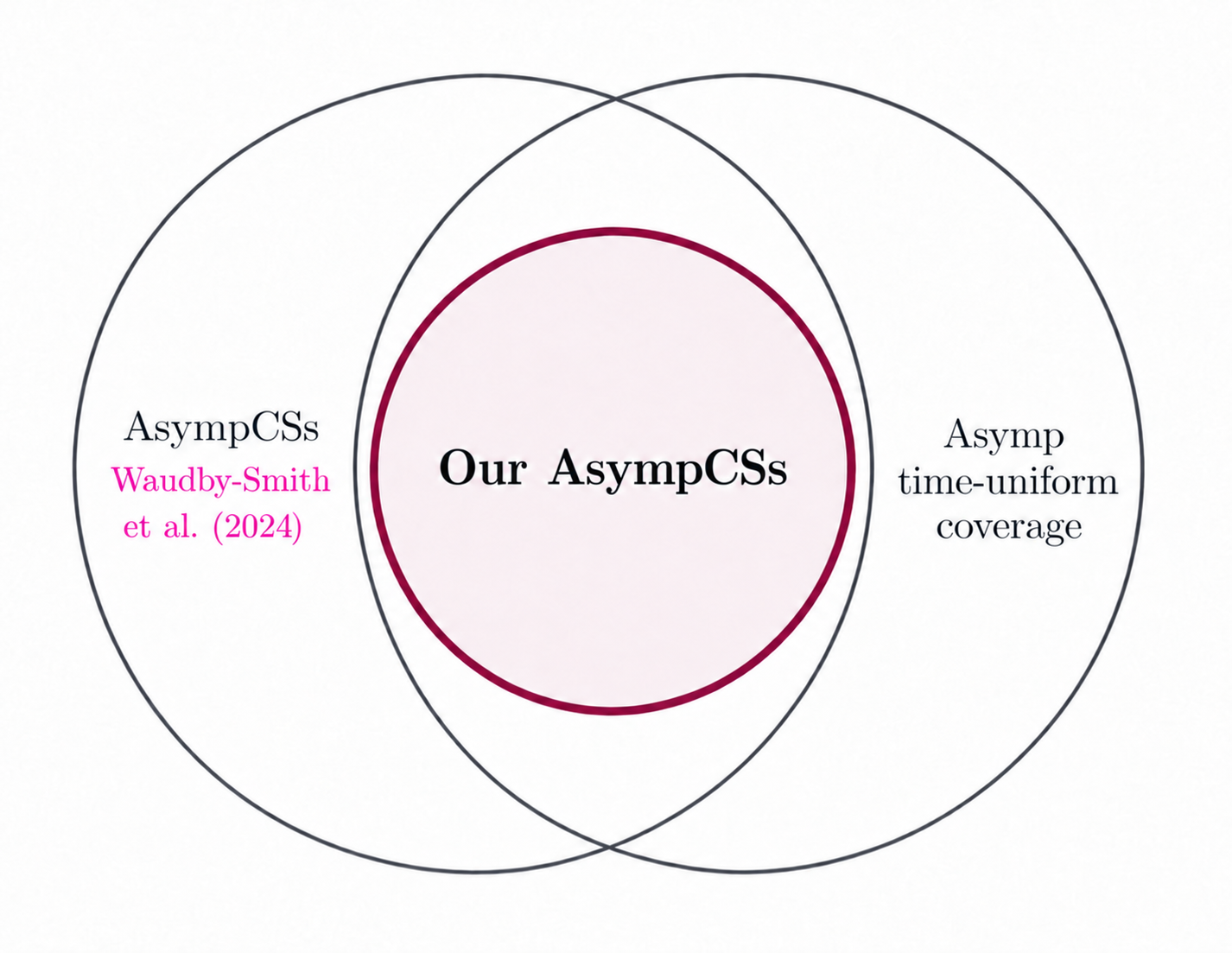}
     \caption{A schematic illustration of the relationships among three notions.}
     \label{fig:relationship}
 \end{figure}
\noindent Following Examples \ref{IEg 2}-\ref{IEg 1}, our Definition \ref{Def: Asym CS} lies at the intersection of the other two notions in Figure ~\ref{fig:relationship}. In this sense, we think that our refinement may more closely capture the original purpose of \cite{waudby2024time}. We thus hope that our refined notion will help applied researchers exercise appropriate care when applying AsympCSs in complex settings.

\subsection{Robbins' Gaussian Mixture AsympCSs for Nonstationary Time Series}  \label{Sec: Robbins Gaussian Mixture Asymp CSs for (Nonstationary) Time Series} 
\noindent In this section, we derive Robbins' Gaussian mixture asymptotic confidence sequences (AsympCSs), one of the most widely used constructions in the literature, for potentially nonstationary time series.  

For the mixture boundary to be nontrivial and meaningful, we assume the cumulative variance process $\V_t$ to be non-degenerate, which is also imposed in \cite{waudby2024time}'s Lindeberg- and Lyapunov-type AsympCSs, as well as in its related subsequent work. For simplicity\footnote{\label{fn:Vt order} For instance, one may relax the order condition to $\V_t/t^{2/p} \rightarrow \infty$ as $t \rightarrow \infty$ where the condition becomes weaker with higher order moments $p$. This unnecessarily obfuscates the proofs, so we impose a cleaner condition to give more interpretable results.}, we further require it to grow linearly in \(t\), i.e., it is bounded above and below by constant multiples of \(t\), where the upper bound requirement\footnote{$\V_t = O(t)$ is also an established and well-known result in alternative dependence frameworks, such as ones with (decaying) mixing conditions (e.g., Corollary 1.1 in Section 1.4 of \cite{rio2017asymptotic}).} is already implicitly satisfied given our dependency decay condition in Assumption \ref{Assump:dep_decay}. This is not very restrictive as it still covers a rather broad class of processes with regular accumulation of variance and it mainly rules out degenerate or explosive cases, in which many of the counterexamples arise from long-range dependent processes that we have already ruled out in Assumption \ref{Assump:dep_decay}. Also, a necessary but not sufficient condition for a non-zero long-run variance (LRV) to exist is $\V_t \asymp t$. Under these conditions, we have 

\begin{Thm}[Robbins' Gaussian Mixture AsympCSs] \label{Thm: Robbins Gaussian Mixture Asymp CS}
For the process $(Y_i)_{i\geq1}$, under the same notation and assumptions as in Theorem \ref{Thm: nonstationary ASIP}, suppose further that $\V_t \rightarrow \infty$ as $t \rightarrow \infty$ with $\V_t \asymp t$. Define the running average of the unconditional means as
$\widetilde{\mu}_t := t^{-1}\sum_{i=1}^t \mu_i$, with its strongly consistent sample analogue $\widehat{\mu}_t := t^{-1}\sum_{i=1}^t Y_i$. Assume that there exists a strongly consistent cumulative variance estimator $(\widehat{\V}_t)$ such that $\widehat{\V}_t/\V_t = 1 + \oas{1/\log t}.$ Then, for any prespecified constant $\rho >0$,
\[
\widetilde{C}^{\mathcal{G}}_t = \left(
\widehat{\mu}_t \pm
\sqrt{
\frac{\widehat{\V}_t \rho^2 + 1}{t^2\rho^2}
\log\!\left(\frac{\widehat{\V}_t \rho^2 + 1}{\alpha^2}\right)
}
\right)
\]
forms a $(1-\alpha)$-AsympCS for the unconditional running average estimand $\widetilde{\mu}_t$.
\end{Thm}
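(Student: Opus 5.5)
We construct a nonasymptotic CS $\{C_t^\star\}$ with the same center $\widehat\mu_t$, and then verify Conditions 1 and 2 of Definition \ref{Def: Asym CS}. The difficulty is that $\widehat\mu_t-\widetilde\mu_t = t^{-1}\Sf_t$ is not Gaussian, so a CS built directly from Robbins' mixture on the data does not exist. Following \cite{waudby2024time}, we take $C_t^\star$ to be a (possibly unknown) nonasymptotic CS that is valid on the enlarged probability space of Theorem \ref{Thm: nonstationary ASIP}. Write $\Sf_t = \Bro(\V_t) + R_t$ with $R_t = \oas{t^{1/p}\sqrt{\log t}}$. Since $t\mapsto \V_t$ is deterministic, $M(v)=\Bro(v)$ is a continuous martingale in intrinsic time $v$. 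Robbins' normal mixture supermartingale $\exp(\lambda M(v)-\lambda^2 v/2)$, mixed over $\lambda\sim\mathcal N(0,\rho^2)$, together with Ville's inequality, gives
\[
\P\left[\exists v\ge 0: |\Bro(v)| \ge \sqrt{\tfrac{v\rho^2+1}{\rho^2}\log\tfrac{v\rho^2+1}{\alpha^2}}\right]\le \alpha .
\]
Set $b(v):=\sqrt{(v\rho^2+1)\rho^{-2}\log((v\rho^2+1)/\alpha^2)}$. Note that this holds uniformly over all $v$, and therefore along the nondecreasing grid $v=\V_t$; if $\V_t$ is not monotone, we restrict the bound to the grid directly, since the uniform event already covers every point.

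Next, we choose the radius of $C_t^\star$ so as to absorb the remainder: $L_t^\star=U_t^\star := t^{-1}\bigl(b(\V_t)+|R_t|\bigr)$. On the event $\{|\Bro(\V_t)|<b(\V_t)\ \forall t\}$ we then have $|\Sf_t|\le b(\V_t)+|R_t|$, so $\widetilde\mu_t\in C_t^\star$ for all $t$ with probability at least $1-\alpha$. This makes $\{C_t^\star\}$ a genuine nonasymptotic two-sided $(1-\alpha)$-CS, with a data- and coupling-dependent radius, which is permitted by the definition. Condition 1 then reduces to two facts. First, $|R_t|/b(\V_t)\to0$ a.s.; this holds because $b(\V_t)\asymp\sqrt{t\log t}$ when $\V_t\asymp t$, while $R_t=o(t^{1/p}\sqrt{\log t})$ with $p>2$. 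Second, $b(\widehat\V_t)/b(\V_t)\to1$ a.s., which follows from $\widehat\V_t/\V_t\to1$ by continuity of $v\mapsto (v\rho^2+1)\log(\cdot)$ at the ratio level.

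The main obstacle is Condition 2, eventual inclusion $b(\widehat\V_t)\ge b(\V_t)+|R_t|$. This is exactly where the EFKP-type sensitivity of Example \ref{IEg 2} enters: a ratio approximation alone is not enough, and we must compare the \emph{differences} at the scale of $|R_t|$. As stated, the inequality is not guaranteed, since $\widehat\V_t$ may underestimate $\V_t$. To handle this, we exploit the freedom in choosing $C_t^\star$. We shrink its radius to $L_t^\star:=t^{-1}\min\{b(\widehat\V_t),\, b(\V_t)+|R_t|\}$ on the event where the two are close, and otherwise use $t^{-1}(b(\V_t)+|R_t|)$. Taking the minimum alone would break the CS guarantee, however. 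The fix is the expansion
\[
b(\widehat\V_t)-b(\V_t)\approx \tfrac12\sqrt{\log \V_t/\V_t}\,(\widehat\V_t-\V_t),
\]
which has magnitude $o(\sqrt{t\log t}/\log t)=o(\sqrt{t/\log t})$ under the rate $\widehat\V_t/\V_t=1+\oas{1/\log t}$. We then build the mixture CS at a slightly inflated level, using $b_\eta(v)$ with $\alpha$ replaced by $\alpha e^{-\eta}$ or an extra additive slack $c\sqrt{t/\log t}$. We expect the key point to be showing that the slack term is simultaneously (i) large enough to dominate $|R_t|+|b(\widehat\V_t)-b(\V_t)|$ eventually a.s., which is where the $1/\log t$ rate and $p>2$ are used, and (ii) $o(b(\V_t))$, so that Condition 1 still holds. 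To keep coverage at level $1-\alpha$, we define $C_t^\star$ with radius $t^{-1}(b(\V_t)-s_t)$, where $s_t$ is the slack. Crossing this slightly smaller deterministic Gaussian boundary still has probability at most $\alpha$ asymptotically, but not exactly, and making that exact is the delicate step. Our first attempt will be a direct Ville argument for a mixture boundary with its Gaussian prior variance adjusted so that the boundary is pointwise at least $b(\V_t)-s_t$. Failing that, we will accept the CS $b(\V_t)+|R_t|$ and prove inclusion by verifying $b(\widehat\V_t)-b(\V_t)\ge |R_t|$ eventually, using a one-sided consistency argument if $\widehat\V_t$ is shown to dominate $\V_t$.
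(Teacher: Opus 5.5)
Your construction of $C_t^\star$ and your check of Condition 1 are sound. Applying Ville's inequality to the continuous-time mixture martingale over all $v\ge 0$ is a legitimate simplification: it removes the need for the paper's running-maximum device $\U_t$ and for the bounds on $\U_t-\V_t$ and $\Bro(\U_t)-\Bro(\V_t)$.

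The gap is Condition 2. You correctly see that the oracle radius must eventually sit below $b(\widehat\V_t)$ by a margin of order $\sqrt{t/\log t}$. But you never exhibit an exactly valid $(1-\alpha)$-CS with that property, and each route you list fails:
\begin{itemize}
\item Subtracting a slack $s_t$ loses exact validity, as you note yourself.
\item Replacing $\alpha$ by $\alpha e^{-\eta}$, or adding $c\sqrt{t/\log t}$, \emph{enlarges} the oracle radius, which is the wrong direction. The opposite move $\alpha e^{\eta}$ yields only a $(1-\alpha e^{\eta})$-CS, which Definition~\ref{Def: Asym CS} does not allow.
\item The fallback needs $\widehat\V_t\ge\V_t$, which the two-sided hypothesis $\widehat\V_t/\V_t=1+\oas{1/\log t}$ does not give. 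Even domination would not yield $b(\widehat\V_t)-b(\V_t)\ge|R_t|$ without a quantitative lower bound on the overshoot.
\end{itemize}

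The missing idea, which is the paper's Part II, is to vary the mixing parameter rather than $\alpha$. Write $b_\rho$ for your $b$ with the mixing parameter made explicit. Ville's inequality makes the mixture boundary exactly valid for \emph{every} $\rho_0>0$, so take $C_t^\star$ with radius $t^{-1}\bigl(b_{\rho_0}(\V_t)+|R_t|\bigr)$ for some fixed $\rho_0<\rho$; the paper uses $\rho_0=\rho/\log k$ with $k>e$.

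Condition 1 is unaffected, because $b_{\rho_0}(v)/b_\rho(v)\to1$. However, $\rho$ enters at second order: $b_\rho(v)^2=v\log v+v\log(\rho^2/\alpha^2)+O(\log v)$. Hence
$b_\rho(v)-b_{\rho_0}(v)=\sqrt{v/\log v}\,\bigl(\log(\rho/\rho_0)+o(1)\bigr)$, which is a margin of exactly the order you need. Combine this with your own estimates:
\begin{itemize}
\item $b_\rho(\widehat\V_t)-b_\rho(\V_t)=\oas{\sqrt{t/\log t}}$, which is where the $1/\log t$ rate is used;
\item $|R_t|=\oas{\sqrt{t/\log t}}$, which holds since $p>2$.
\end{itemize}
Together these give $b_\rho(\widehat\V_t)\ge b_{\rho_0}(\V_t)+|R_t|$ eventually almost surely, which is eventual inclusion.

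Only eventual domination is needed. The pointwise comparison in your ``first attempt'' cannot hold, because near $v=0$ the smaller-$\rho$ boundary is the larger one: $b_{\rho_0}(0)=\rho_0^{-1}\sqrt{2\log(1/\alpha)}>b_\rho(0)$.
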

\noindent The mixture boundary offers flexibility through the choice of tuning parameter $\rho$. The optimal choice of $\rho$ is provided in Appendix \ref{Appendix: optimal rho}. The proof of Theorem \ref{Thm: Robbins Gaussian Mixture Asymp CS} is provided in Appendix \ref{Appendix: Proofs of Gaussian Mixture Asymp CS}. Our proof differs from standard tricks in that for nonstationary time series, Ville's inequality cannot be applied directly to $\Bro(\V_t)$ because $\V_t$ in Theorem~\ref{Thm: nonstationary ASIP} need not be non-decreasing, in which case the associated time-changed Brownian motion is generally not adapted to the canonical Brownian filtration in a manner compatible with the required martingale structure. Therefore, we proceed with upper bounding $\V_t$ by its running maximum, a deterministic non-decreasing sequence, so that we could apply Ville's inequality on the upper bound while preserving the asymptotic order of the boundary. Moreover, under the logarithmic rate assumption on variance estimation, which is weaker than the polynomial rate condition imposed by \cite{waudby2024time}, Condition 2 of Definition \ref{Def: Asym CS} holds. This relaxation can substantially ease restrictions on bandwidth and kernel selection. In particular, as will be shown in Section \ref{Sec: Strongly Consistent Estimation Strategies}, our condition imposes little additional restriction on these choices beyond what is needed for strong consistency alone.

Since assuming the existence of LRV is both appealing and common in practice, we provide an asymptotically equivalent $(1-\alpha)$-AsympCS. 

\begin{corollary}[Equivalent Robbins' Gaussian Mixture AsympCSs] \label{Corollary: Equivalent Asymp CS Boundary}
For the process $(Y_i)_{i\geq1}$, under the same notation and assumptions as in Theorem \ref{Thm: Robbins Gaussian Mixture Asymp CS}, further assume the LRV exists, i.e., $\sigma^2 := \lim_{t \rightarrow \infty} \sigma^2_t >0$, where $\sigma^2_t :=\V_t/t$. Suppose that $\widehat{\sigma}^2_t/\sigma_t^2 = 1 + \oas{1/\log t}$. Then, for any prespecified constant $\rho >0$,
\[
\widetilde{C}^{\mathcal{G}}_t = \left(
\widehat{\mu}_t \pm \widehat{\sigma}_t
\sqrt{
\frac{t  \rho^2 + 1}{t^2\rho^2}
\log\!\left(\frac{t\rho^2 + 1}{\alpha^2}\right)
}
\right)
\]
also forms a $(1-\alpha)$-AsympCS for the unconditional running average estimand $\widetilde{\mu}_t$. 
\end{corollary}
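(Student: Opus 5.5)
The plan is to reduce Corollary \ref{Corollary: Equivalent Asymp CS Boundary} to Theorem \ref{Thm: Robbins Gaussian Mixture Asymp CS}. Both intervals are symmetric around $\widehat{\mu}_t$, and Theorem \ref{Thm: Robbins Gaussian Mixture Asymp CS} supplies a nonasymptotic $(1-\alpha)$-CS $\{C_t^\star\}$ with half-width $L_t^\star$. I would reuse that same $C_t^\star$: it is built from the running maximum $\V_t^{\max}:=\max_{s\le t}\V_s$, plus a vanishing or negligible inflation absorbing the ASIP error $\oas{t^{1/p}\sqrt{\log t}}$. So it suffices to verify the two conditions of Definition \ref{Def: Asym CS} for the new half-width
\[
\widetilde L_t' := \widehat{\sigma}_t\sqrt{\tfrac{t\rho^2+1}{t^2\rho^2}\log\!\big(\tfrac{t\rho^2+1}{\alpha^2}\big)}
\]
against that same $L_t^\star$. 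Equivalently, I compare $\widetilde L_t'$ with the half-width $\widetilde L_t$ of $\widetilde C^{\mathcal G}_t$ from the theorem.

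First, for the ratio condition, write $\widehat{\V}'_t := t\widehat{\sigma}^2_t$, so that $\widehat{\V}'_t/\V_t = \widehat\sigma_t^2/\sigma_t^2 = 1+\oas{1/\log t}$. Hence $\widehat{\V}'_t$ satisfies exactly the hypothesis of Theorem \ref{Thm: Robbins Gaussian Mixture Asymp CS}. Applying the theorem with $\widehat{\V}'_t$ already gives an AsympCS with half-width
\[
\sqrt{\tfrac{t\widehat\sigma_t^2\rho^2+1}{t^2\rho^2}\log\big(\tfrac{t\widehat\sigma_t^2\rho^2+1}{\alpha^2}\big)}.
\]
It therefore remains to compare this expression with $\widetilde L_t'$, which equals it with $\widehat\sigma_t$ factored out of the prefactor and removed from inside the log. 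The prefactor $(t\widehat\sigma^2_t\rho^2+1)$ versus $\widehat\sigma^2_t(t\rho^2+1)$ differs by $|1-\widehat\sigma_t^2|$, which is $O(1)$ against a term of order $t$; this gives relative error $O(1/t)$. Inside the log, $\log(t\widehat\sigma_t^2\rho^2+1)-\log(t\rho^2+1)\to \log\sigma^2$ a.s. because $\sigma^2>0$. The log terms themselves are of order $\log t$, so the ratio of the squared half-widths equals $1+ \log\sigma^2/\log t + o(1/\log t)$. In particular the ratio tends to $1$ a.s., which gives Condition 1. Rescaling by $\sigma_t$ rather than $\sigma$ is harmless, since $\sigma_t\to\sigma$.

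The main obstacle is Condition 2, eventual inclusion. Here the discrepancy is of relative order exactly $1/\log t$ and carries the sign of $-\log\sigma^2$, so when $\sigma^2>1$ the new boundary is smaller than the theorem's by a $\Theta(1/\log t)$ relative amount. The plan is to inspect the slack in the proof of Theorem \ref{Thm: Robbins Gaussian Mixture Asymp CS}. That proof needs $\widehat\V_t/\V_t=1+\oas{1/\log t}$ precisely because its $C_t^\star$ is chosen with a margin of strictly larger order than $1/\log t$ in relative terms, or else because the true boundary can be chosen freely among CSs. In the latter case I would redefine $C_t^\star$ with the mixture boundary evaluated at $\sigma^2 t$ in the prefactor and at $t$ inside the log, that is, the mixture boundary for the rescaled Gaussian process $\Bro(\V_t^{\max})/\sigma$ with variance proxy $t$. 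This is still a valid nonasymptotic $(1-\alpha)$-CS by Ville's inequality applied to the rescaled martingale, provided $\V^{\max}_t\le \sigma^2 t(1+o(1/\log t))$ can be absorbed in the same way as in the theorem. Against this choice, $\widetilde L'_t/L_t^\star = \widehat\sigma_t/\sigma_t\cdot(1+o(1/\log t))$. Eventual inclusion then follows exactly as in the theorem's argument: one needs that the $o(1/\log t)$ errors are dominated by the built-in positive margin of $C_t^\star$. I expect this bookkeeping, including matching $\sigma_t$ versus $\sigma$ at rate $o(1/\log t)$, to be the delicate step. If it fails, the fallback is to assume the additional rate $\sigma_t^2/\sigma^2=1+o(1/\log t)$.
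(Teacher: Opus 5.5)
Your verification of Condition 1 is fine and is the paper's first step. The gap is in Condition 2, and neither of your two routes closes it under the stated hypotheses.

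Your first route rests on a false premise. In Part II of the theorem's proof the slack is $\frac{\sqrt{\U_t}}{t\sqrt{\log \U_t}}\log(\rho/\rho_0)$, where $\rho_0$ is the oracle CS's tuning parameter. That is exactly relative order $1/\log t$, not strictly larger.

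Your second route puts $\sigma^2 t$ in place of $\U_t$ in the first-order term of $C_t^\star$, and this creates two problems:
\begin{itemize}
\item Ville's inequality, via $\Bro(\U_t)=\sigma W(\U_t/\sigma^2)$, certifies that boundary only when $\U_t\le\sigma^2 t$.
\item More seriously, eventual inclusion then compares $\widehat\sigma_t\sqrt{t\log t}$ with $\sigma\sqrt{t\log t}$ at first order. If $\sigma_t\uparrow\sigma$ more slowly than $1/\log t$, inclusion fails. An example is $\sigma_t=\sigma(1-(\log t)^{-1/2})$, which nothing in the hypotheses excludes. No slack in $\rho$ can repair this, since slack only buys relative order $1/\log t$.
\end{itemize}
So your argument closes only under the fallback rate $\sigma_t^2/\sigma^2=1+o(1/\log t)$, which the corollary neither assumes nor needs.

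The missing idea is that $\rho$ enters the mixture boundary only at second order. So $\sigma$ belongs in the oracle's tuning constant, not in its variance term. With $g_\rho(s):=\sqrt{(s\rho^2+1)\rho^{-2}\log((s\rho^2+1)/\alpha^2)}$ there is an exact identity $\widehat\sigma_t\, g_\rho(t)=g_{\rho/\widehat\sigma_t}(t\widehat\sigma_t^2)$. Hence the new interval is the theorem's interval with $\widehat\V_t:=t\widehat\sigma_t^2$ and a random tuning parameter $\rho/\widehat\sigma_t\to\rho/\sigma$.

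Keep the theorem's $\U_t$-based oracle $C_t^\star$, with half-width $g_{\rho_0}(\U_t)/t+|\eps_t|$, but choose a fixed $\rho_0<\rho/\sigma$. The paper takes $\rho_0=\rho/(\sigma\log k)$ with $k>e$. The Part II expansion uses only $t\widehat\sigma_t^2/\U_t=1+\oas{1/\log t}$, and it gives
\[
\widehat\sigma_t\frac{g_\rho(t)}{t}-L_t^\star=\frac{\sqrt{\U_t}}{t\sqrt{\log\U_t}}\left(\log\frac{\rho}{\widehat\sigma_t\rho_0}+\oas{1}\right).
\]
The bracket tends to $\log(\rho/(\sigma\rho_0))>0$ using only $\widehat\sigma_t\to\sigma$, with no rate.

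In your own terms, you found a deficit of $\frac{\sqrt{\U_t}}{t\sqrt{\log\U_t}}(\log\sigma+o(1))$ relative to the theorem's boundary. It is absorbed by choosing the oracle's slack so that $\log(\rho/\rho_0)>\log\sigma$. This is legitimate because the oracle CS only has to exist, so its constant may depend on the unknown $\sigma$.
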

\noindent The proof is provided in Appendix \ref{Appendix: Proof of Corollary  Equivalent Asymp CS Boundary}. As in the i.i.d.\ setting of \cite{waudby2024time}, this formulation directly parallels the fixed-time CLT. Under regularity conditions, the classical time-series AsympCI takes the form $\left(
\widehat{\mu}_t \pm t^{-1/2}\widehat{\sigma}_t \cdot 
\Phi^{-1}(1-\alpha/2)
\right)$, while its time-uniform counterpart (AsympCS) in Corollary \ref{Corollary: Equivalent Asymp CS Boundary} incurs an additional logarithmic factor in $t$, reflecting the well-known $\log(t)$ penalty in sample sizes required to achieve time-uniformity. This formulation also facilitates application of the delta method for AsympCSs stated in Proposition 3.5 of \cite{waudby2024time}, as factoring out the square root of the estimated variance term enables the transformation of variance to be applied directly, separately from the boundary.

\subsection{Iterated Logarithm AsympCSs for Nonstationary Time Series} \label{Sec: Iterated Logarithm Asymp CSs for (Nonstationary) Time Series}
\noindent Unlike classical fixed-time CLT boundaries, which are typically obtained from normal quantiles, in this anytime-valid regime, virtually any boundary for Gaussian observations can be used to derive an AsympCS as long as the appropriate ASIP can be applied under the given assumptions. Among them, we present the law-of-the-iterated-logarithm (LIL) boundary for completeness because it attains the asymptotically optimal width $O\left(\sqrt{\log\log t/t}\right)$, whereas the current asymptotic width in Robbins' Gaussian mixture boundary in Corollary \ref{Corollary: Equivalent Asymp CS Boundary} is $O\!\left(\sqrt{\log t/t}\right)$.

To the best of our knowledge, there is no established LIL result in the time-series literature for nonstationary processes. In particular, the implied LIL rate for the strong law of large numbers (SLLN) under the $t^{-1}$ normalization does not appear to have been established yet. As a direct consequence of the nonstationary ASIP in Theorem \ref{Thm: nonstationary ASIP}, we state related results as a corollary below.

\begin{corollary}[SLLN with an LIL rate]  \label{Corollary: SLLN with an LIL rate}For the process $(Y_i)_{i\geq1}$, under the same notation and assumptions as in Corollary \ref{Corollary: Equivalent Asymp CS Boundary}, let $\mathsf{S}_t:=\sum_{i=1}^t (Y_i-\mu_i).$
Then 
\[
\limsup_{t\to\infty}
\frac{\mathsf{S}_t /t}{\sqrt{2\V_t\log\log \V_t} /t }
=1,
\qquad
\liminf_{t\to\infty}
\frac{\mathsf{S}_t /t}{\sqrt{2\V_t\log\log \V_t} /t}
=-1
\qquad \mathrm{a.s.}
\]
Equivalently,
\[
\limsup_{t\to\infty}
\frac{\mathsf{S}_t /t}{ \sigma \sqrt{2t\log\log t} /t}
=1,
\qquad
\liminf_{t\to\infty}
\frac{\mathsf{S}_t /t}{\sigma \sqrt{2t\log\log t} /t}
=-1
\qquad \mathrm{a.s.}
\]
In particular,
\[
\limsup_{t\to\infty}
\frac{|\mathsf{S}_t| /t}{ \sqrt{2\V_t\log\log \V_t} /t}
=1
\qquad \mathrm{a.s.}
\]
\end{corollary}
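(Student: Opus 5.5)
The plan is to transfer the classical Khinchin LIL for standard Brownian motion to $\mathsf{S}_t$ through the nonstationary ASIP in Theorem \ref{Thm: nonstationary ASIP}. That theorem gives $\mathsf{S}_t = \Bro(\V_t) + \oas{t^{1/p}\sqrt{\log t}}$. Since $\V_t \asymp t$ and $p>2$, the remainder satisfies
\[
t^{1/p}\sqrt{\log t}\,/\sqrt{\V_t\log\log\V_t} \to 0 .
\]
So it suffices to show that
\[
\limsup_{t\to\infty} \frac{\Bro(\V_t)}{\sqrt{2\V_t\log\log\V_t}} = 1 \quad \text{a.s.},
\]
together with the symmetric $\liminf$ statement, which follows by replacing $\Bro$ with $-\Bro$. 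The factor $1/t$ appears in both numerator and denominator and cancels, so it plays no role.

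For the upper bound, I will use the continuous-time LIL, $\limsup_{s\to\infty}\Bro(s)/\sqrt{2s\log\log s}=1$ a.s. Since $\V_t\to\infty$, the values $\V_t$ form a (possibly non-monotone) sequence tending to infinity. Evaluating along this sequence therefore gives $\limsup_t \Bro(\V_t)/\sqrt{2\V_t\log\log\V_t}\le 1$ immediately. Monotonicity of $\V_t$ is not needed here, because the continuous-time $\limsup$ dominates the $\limsup$ along any sequence diverging to infinity.

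The lower bound is the main obstacle, since the sequence $\{\V_t\}$ is discrete and might skip the times at which $\Bro$ nearly attains its LIL envelope. The increments, however, are small: by Assumptions \ref{Assump:dep_decay}--\ref{Assump:UI}, $|\V_{t+1}-\V_t| = \Var[X_{t+1}] + 2\,\mathrm{Cov}(X_{t+1},\mathsf{S}_t) = O(1)$, since the covariance sums are bounded via $\Theta_{0,2}<\infty$. So the grid $\{\V_t\}$ has $O(1)$ mesh and, because $\V_t\to\infty$ continuously in this sense, it eventually meets every interval of length $C$. Given any large $s$, choose $t$ with $|\V_t - s|\le C$. Lévy's modulus of continuity, or the standard estimate $\sup_{|u|\le C}|\Bro(s+u)-\Bro(s)| = O_{\mathrm{a.s.}}(\sqrt{\log s})$, shows that $\Bro(\V_t)$ differs from $\Bro(s)$ by $o(\sqrt{s\log\log s})$. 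The same holds for the normalizers. Taking $s$ along a sequence at which $\Bro(s)/\sqrt{2s\log\log s}\to1$ then gives the lower bound $\ge 1$.

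For the equivalent form, I will use $\V_t/t=\sigma_t^2\to\sigma^2>0$. This gives $\V_t\log\log\V_t/(\sigma^2 t\log\log t)\to1$, so the two normalizers are asymptotically equivalent and the $\limsup$ and $\liminf$ values carry over. The final claim about $|\mathsf{S}_t|$ follows from the $\limsup$ and $\liminf$ statements together: their combination gives an upper bound of $1$, and the $\limsup$ statement alone shows the value $1$ is attained.
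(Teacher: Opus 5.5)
Your proposal is correct and follows the route the paper has in mind. The paper states the corollary as a direct consequence of Theorem \ref{Thm: nonstationary ASIP} combined with the Brownian LIL, and remarks only that $\V_t \asymp t$ ensures the Brownian motion is observed densely enough in variance time. You make that density step explicit through the $O(1)$ bound on $\V_{t+1}-\V_t$ (the same covariance summation used in Proposition \ref{Prop: Order of Running Max Difference}) together with a Brownian increment bound. That step works as long as the increment bound is taken in its uniform form, $\limsup_{s\to\infty}\sup_{|u|\le C}|\Bro(s+u)-\Bro(s)|/\sqrt{\log s}<\infty$ a.s. This is a Cs\"org\H{o}--R\'ev\'esz-type estimate, not L\'evy's modulus of continuity, and uniformity is needed because the times at which $\Bro$ nearly attains its envelope are random.
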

\noindent We remark that $\V_t \to \infty$ alone in Theorem \ref{Thm: nonstationary ASIP} does not guarantee that we observe the Brownian motion densely enough in variance time. If the sequence $\V_t$ grows too rapidly, we may completely miss the Brownian LIL peaks. Therefore, $\V_t \asymp t $ in Theorem \ref{Thm: Robbins Gaussian Mixture Asymp CS} is a convenient sufficient condition for characterizing the LIL-rate behavior of the nonstationary process. 

\begin{Thm}[Iterated Logarithm AsympCSs] \label{Thm: Iterated Logarithm Asymp CSs}
Under the same conditions as Corollary \ref{Corollary: Equivalent Asymp CS Boundary},
\[
\widetilde{C}^{\mathcal{L}} _t 
:= \left(
\widehat{\mu}_t \pm \widehat{\sigma}_t \cdot 1.7 \sqrt{
\frac{\log \log (2t) + 0.72 \log(10.4/\alpha)}{t}
}
\right)
\]
forms a $(1 - \alpha)$-AsympCS for $\widetilde{\mu}_t$.     
\end{Thm}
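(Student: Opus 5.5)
We follow the same route as for Theorem \ref{Thm: Robbins Gaussian Mixture Asymp CS} and Corollary \ref{Corollary: Equivalent Asymp CS Boundary}. We first build a nonasymptotic CS $C_t^\star$ from a stitched LIL boundary for Brownian motion in variance time, and then verify both conditions of Definition \ref{Def: Asym CS}. The constants $1.7$, $0.72$, $10.4$ come from the polynomial stitching boundary of \cite{howard2021time}. For a standard Brownian motion $\Bro$ and intrinsic time $v\ge1$,
\[
\P\!\left[\exists v\ge 1: |\Bro(v)| \ge 1.7\sqrt{v\left(\log\log(2v)+0.72\log(10.4/\alpha)\right)}\right]\le\alpha.
\]
This follows by applying the one-sided stitched bound at level $\alpha/2$ to $\pm\Bro$, with the constants absorbing the union bound. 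Write $\ell_\alpha(v):=1.7\sqrt{v(\log\log(2v)+0.72\log(10.4/\alpha))}$.

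As in the mixture proof, $\V_t$ need not be monotone. We therefore set $\V_t^{\max}:=\max_{s\le t}\V_s$, which is deterministic and non-decreasing, and satisfies $\V_t\le\V_t^{\max}\asymp t$. Because $\ell_\alpha$ is increasing in $v$, the event $\{\forall t: |\Bro(\V_t)|<\ell_\alpha(\V_t^{\max}\vee1)\}$ contains $\{\forall v\ge1:|\Bro(v)|<\ell_\alpha(v)\}$, up to handling $\V_t<1$ at finitely many early $t$. On those early times we enlarge the boundary to a large constant, as in Example \ref{IEg 2}. This gives a CS for the Gaussian proxy. To turn it into a genuine $(1-\alpha)$-CS for $\widetilde\mu_t$ in terms of $\mathsf S_t=t(\widehat\mu_t-\widetilde\mu_t)$, we take $C_t^\star:=\widehat\mu_t\pm\big(\ell_\alpha(\V_t^{\max})+|\mathsf S_t-\Bro(\V_t)|\big)/t$. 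The event $\{\forall t:\widetilde\mu_t\in C_t^\star\}$ then contains the Brownian event, so it has probability at least $1-\alpha$. The CS is allowed to be ``potentially unknown'', so using $\Bro$ from Theorem \ref{Thm: nonstationary ASIP} on the enlarged space is legitimate. The ASIP error is $\oas{t^{1/p}\sqrt{\log t}}=\oas{\sqrt t}$ since $p>2$. Hence $L_t^\star=U_t^\star=\ell_\alpha(\V_t^{\max})/t\,(1+\oas{1/\sqrt{\log\log t}})$.

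\textbf{Condition 1.} Put $\widetilde L_t=\widehat\sigma_t\,\ell_\alpha(t)/t$. Under the LRV assumption, $\V_t/t\to\sigma^2$, and therefore $\V_t^{\max}/t\to\sigma^2$ as well. Combined with $\widehat\sigma_t/\sigma_t\to1$ a.s., this gives $\widehat\sigma_t\ell_\alpha(t)/\ell_\alpha(\V_t^{\max})\to1$. The reason is that $\ell_\alpha(cv)/(\sqrt c\,\ell_\alpha(v))\to1$ for fixed $c$, because $\log\log(2cv)-\log\log(2v)\to0$.

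\textbf{Condition 2} is the main obstacle. We need $\widehat\sigma_t\ell_\alpha(t)\ge\ell_\alpha(\V_t^{\max})+|\mathsf S_t-\Bro(\V_t)|$ eventually a.s. The rates available are $\widehat\sigma_t/\sigma_t=1+\oas{1/\log t}$, and $\ell_\alpha(\V_t^{\max})$ versus $\sigma_t\ell_\alpha(t)$ differ through $\V_t^{\max}/\V_t$ and the $\log\log$ term. The ASIP error is polynomially smaller than $\sqrt t$, so it is harmless. The $\widehat\sigma_t$ error contributes a relative $o(1/\log t)$. This must be dominated by a slack of relative size at least of order $1/\log\log t$, which exceeds $1/\log t$.

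To create that slack, we plan to define $C_t^\star$ with $\ell_\alpha$ at a slightly inflated level, or with a vanishing multiplicative deflation factor such as $(1-1/\log\log t)$ built into the nonasymptotic boundary. We would first try to obtain the deflation from Example \ref{IEg 2}'s mechanism. There, the second-order term $a\log\log\log t$ leaves room, since the stitched boundary strictly exceeds the EFKP critical boundary. Concretely, $\ell_\alpha(v)/\sqrt{2v\log\log v}\to1.7/\sqrt2>1$, so a boundary $(1-\eta_t)\ell_\alpha(\V_t^{\max})$ with $\eta_t\to0$ slowly still crosses with probability at most $\alpha$ for suitable constants. This step requires re-running the stitching inequality with slightly adjusted parameters.

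The remaining discrepancy is $\V_t^{\max}$ versus $\sigma_t^2t$. We would handle it by noting $\V_t^{\max}/\V_t\to1$ under the LRV assumption, and absorbing it into the same slack $\eta_t$, choosing $\eta_t$ to dominate both $|\V_t^{\max}/\V_t-1|$ and $1/\log t$. Since $\eta_t$ may depend on the deterministic sequence $\V_t$, this is admissible. With this construction, both conditions hold and the claim follows.
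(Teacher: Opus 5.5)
Your architecture is the paper's. You apply the stitched boundary of \cite{howard2021time} to $\pm\Bro$ at level $\alpha/2$ and pass to the running maximum of $\V_t$. You take an unknown $C_t^\star$ that absorbs $|\mathsf S_t-\Bro(\V_t)|$, get Condition 1 from ratio limits, and get Condition 2 from a slack argument.

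The gap is in the one step that carries Condition 2: the claim that $(1-\eta_t)\ell_\alpha(\V_t^{\max})$ is still crossed with probability at most $\alpha$. Your justification is that $\ell_\alpha(v)/\sqrt{2v\log\log v}\to1.7/\sqrt2>1$, i.e.\ the boundary sits in the EFKP upper class. That only shows the deflated boundary is crossed finitely often almost surely, so $\P[\exists v\ge v_0:|\Bro(v)|\ge(1-\eta_v)\ell_\alpha(v)]\to0$ as $v_0\to\infty$. This is an eventual, asymptotic-coverage statement, not the nonasymptotic level-$\alpha$ guarantee required of $C_t^\star$. The total crossing probability over all $v\ge1$ is driven by early times, where $\eta_t$ is not small. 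That is exactly why Example \ref{IEg 2} had to calibrate $R_{\alpha/2}$ and $K_{\alpha/2}$ separately. Knowing only that $\ell_\alpha$ itself is valid at level $\alpha$, nothing rules out a deflation pushing the crossing probability above $\alpha$. ``Re-running the stitching with adjusted parameters'' names the right tool, but the step is left undone.

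The missing fact is the one the paper's omitted proof appeals to. The constants $1.7$, $0.72$, $10.4$ are round-ups of the exact constants in Theorem 1 of \cite{howard2021time} with $\eta=2$, $s=1.4$, $m=1$:
$k_1\sqrt s=\frac{2^{1/4}+2^{-1/4}}{\sqrt2}\sqrt{1.4}\approx1.6985$, $1/s\approx0.714$, and $2\zeta(s)/(\log2)^{s}\approx10.38$.
Let $\mathcal S_\alpha(v):=k_1\sqrt{s}\,\sqrt{v\left(\log\log(2v)+s^{-1}\log\left(2\zeta(s)/(\alpha(\log2)^{s})\right)\right)}$ be the exact two-sided boundary. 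Then $\P[\exists v\ge0:|\Bro(v)|\ge\mathcal S_\alpha(v\vee1)]\le\alpha$. Because that theorem is stated at $V_t\vee m$, your large-constant patch for $\V_t<1$ is unnecessary; as proposed it would also spend extra budget.

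Moreover, $\ell_\alpha(v)\ge\bigl(1.7/(k_1\sqrt s)\bigr)\mathcal S_\alpha(v)$ for all $v\ge1$ and $\alpha\in(0,1)$. So any deterministic $\eta_t\le1-k_1\sqrt s/1.7\approx8.8\times10^{-4}$ gives $(1-\eta_t)\ell_\alpha(\V_t^{\max}\vee1)\ge\mathcal S_\alpha(\V_t\vee1)$. Your deflated $C_t^\star$ is then a genuine $(1-\alpha)$-CS, with no new stitching computation.

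Take, e.g., $\eta_t:=\min\{8\times10^{-4},\,1/\log\log(t\vee16)\}$, and the rest of your bookkeeping closes. Condition 1 holds because $\eta_t\to0$. Condition 2 holds because $\eta_t\log t\to\infty$, so the slack dominates each of the following:
$|\widehat\sigma_t/\sigma_t-1|=\oas{1/\log t}$;
$\V_t^{\max}/\V_t-1=O(1/t)$;
the $O\bigl(1/(\log t\log\log t)\bigr)$ gap between $\log\log(2\V_t)$ and $\log\log(2t)$;
the polynomially small ASIP term.

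Do keep the vanishing $\eta_t$, because the rounding slack is a fixed factor. Taking $C_t^\star$ to be $\mathcal S_\alpha$ itself would give $\widetilde L_t/L_t^\star\to1.7/(k_1\sqrt s)\neq1$ and violate Condition 1. Your interpolation between the two boundaries is what lets both conditions hold at once.
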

\noindent We omit the proof as it proceeds in a similar fashion to that of Theorem \ref{Thm: Robbins Gaussian Mixture Asymp CS}. By construction from Theorem 1 of \cite{howard2021time}, the implemented boundary in Theorem \ref{Thm: Iterated Logarithm Asymp CSs} is already a slight asymptotic inflation of the exact stitched boundary, providing sufficient slack for the Condition 2 of Definition \ref{Def: Asym CS}.

As discussed in Section 3.5 of \cite{howard2021time} and Section 2.2.4 of \cite{waudby2024time}, mixture boundaries such as the one in Theorem \ref{Thm: Robbins Gaussian Mixture Asymp CS} may be preferable in practice, because constructing an anytime-valid boundary is essentially an $\alpha$-budget allocation problem in the sense that any bound that is tighter ``later on'' asymptotically must be looser  ``early on'' at practical sample sizes to account for the use of a smaller portion of the $\alpha$ budget for potential miscoverage. That said, although we present the iterated logarithm AsympCSs, we recommend Theorem \ref{Thm: Robbins Gaussian Mixture Asymp CS} or Corollary \ref{Corollary: Equivalent Asymp CS Boundary} over Theorem \ref{Thm: Iterated Logarithm Asymp CSs} in practice.

\subsection{Practical Implementation of AsympCSs for Nonstationary Time Series} \label{Sec: Strongly Consistent Estimation Strategies}
\noindent Thus far, we have assumed the existence of strongly consistent estimators. We now provide explicit constructions for the target parameters in Section \ref{Sec: Robbins Gaussian Mixture Asymp CSs for (Nonstationary) Time Series} and Section \ref{Sec: Iterated Logarithm Asymp CSs for (Nonstationary) Time Series}.
 
For the process $(Y_i)_{i\geq1}$, under the same notation and process assumptions as in Theorem \ref{Thm: Robbins Gaussian Mixture Asymp CS}, the sample analogue already provides a strongly consistent estimator of the unconditional running mean, i.e., 
\begin{equation} \label{Eq: unconditioning running mean estimator}
\widehat{\mu}_t := \frac{1}{t}\sum_{i=1}^{t} Y_i =\widetilde{\mu}_t + O_{\mathrm{a.s.}}\left(\sqrt{\frac{\log\log t}{t}}\right),   
\end{equation}
where the stated rate follows directly from the SLLN result in Corollary \ref{Corollary: SLLN with an LIL rate}. 

However, unlike in i.i.d.\ or martingale settings, variance estimation for time series involves long-run variance (LRV) and often admits various choices. One of the most commonly adopted estimators is the heteroskedasticity and autocorrelation consistent (HAC) estimator. 
While strong consistency of HAC estimators has been established under mixing conditions (\cite{de2000strong}), to the best of our knowledge, no such result is available under the functional dependence framework\footnote{For readers familiar with this literature, Chapter 8 of the Handbook of Statistics (\cite{wu2012covariance}) provides related results on HAC estimation, but does not establish the strong consistency. We also note a few minor typographical issues in their stated results, e.g., the unnormalized HAC estimator in Equation (38) of Theorem 6 should not scale only with bandwidth.}. In fact, there appears to be no strongly consistent estimator of LRV for nonstationary time series. Moreover, the HAC estimator studied in \cite{de2000strong} is constructed directly on a centered nonstationary process. In contrast, our setting involves a running mean structure, which induces additional technical difficulties. Therefore, under suitable regularity conditions, we first prove strong consistency of the oracle HAC estimator, namely on a centered nonstationary process under the functional dependence framework, and then we prove strong consistency of the plug-in mean HAC estimator on the uncentered process. We also verify the specific rate needed to obtain the asymptotic time-uniform coverage.

\begin{condition}[Conditions for $\oas{1}$-consistent Oracle HAC Estimation]\label{Cond: primitive-bandwidth-rate}
Let the kernel $K$ be an even, bounded function supported on $[-1,1]$, satisfying $K(0)=1, K(u)=0$ whenever $|u| >1$, and continuous at zero, i.e., $\lim_{u\to 0}K(u)=K(0)$. Furthermore, let the bandwidth $b_t$ be a deterministic non-decreasing bandwidth satisfying $b_t\to\infty$ and $b_t=O\left(t^{\,1-2/(p \wedge 4)}(\log t)^{-\lambda}\right)$, for some $\lambda>2/p$.
\end{condition}
\noindent Condition \ref{Cond: primitive-bandwidth-rate} is used to construct the strong consistency result for the oracle HAC estimator so that it helps us compare it with the counterpart used under mixing conditions. Our kernel condition is broadly in line with those commonly adopted in the literature (\cite{de2000strong}; \cite{wu2012covariance}), and permits many popular choices such as the Bartlett, Bohman, Epanechnikov, Parzen, quartic, truncated, and Tukey--Hanning kernels. The bandwidth conditions require $b_t\to\infty$, so that increasingly many autocovariance lags are incorporated, while the upper restrictions prevent $b_t$ from growing too rapidly, as sample autocovariances at larger lags are based on fewer observations.  The additional $\log$ term of $b_t$ is to get rid of the boundary case when ensuring the summability condition to apply the Borel-Cantelli lemma. Our bandwidth conditions are substantially weaker, by a polynomial factor up to logarithmic terms, than their counterparts under mixing conditions in \cite{de2000strong}. 

\begin{Rmk}[A subset relation for admissible bandwidth classes] \label{Rmk: A subset relation for admissible bandwidth classes}
Let
\[
\mathcal{B}^{\mathrm{FDM}}_1(\lambda)
=
\left\{
b_t:
b_t \to \infty,\;
b_t = O\!\left(t^{1-2/(p \wedge 4)}(\log t)^{-\lambda}\right)
\right\},
\qquad \lambda>\frac{2}{p},
\]
and
\[
\mathcal{B}^{\mathrm{Mixing}}_2(\gamma)
=
\left\{
b_t:
b_t \to \infty,\;
b_t = O\!\left(t^{1/2-1/(p \wedge 4)}
(\log t)^{-1/(p \wedge 4)-\gamma}\right)
\right\},
\qquad \gamma >0.
\]
Then, for any fixed finite $\lambda>2/p$ and any fixed finite $\gamma >0$, $\mathcal{B}^{\mathrm{Mixing}}_2(\gamma) \subset \mathcal{B}^{\mathrm{FDM}}_1(\lambda).$ 
\end{Rmk}
\noindent For comparison, $\mathcal{B}^{\mathrm{Mixing}}_2(\gamma)$ is adapted from Assumption 1 of \cite{de2000strong} by setting $r:=p\wedge 4$, which yields an upper envelope of the admissible bandwidth classes under the original condition. The proof is omitted because the result follows by just comparing the leading polynomial orders, in which the inclusion is strict as one could easily find a bandwidth $b_t \in \mathcal{B}^{\mathrm{FDM}}_1(\lambda)$ but $b_t \notin \mathcal{B}^{\mathrm{Mixing}}_2(\gamma)$. Technically, higher-order $p$ may also improve the polynomial rate of the admissible bandwidth and the truncation of the effective moment order at four may be relaxed. However, Condition \ref{Cond: primitive-bandwidth-rate} already accommodates most practical choices, so we do not pursue this direction further. In particular, it recovers standard bandwidth choices such as the conventional $b_t \asymp t^{1/3}$ rate associated with the Bartlett kernel under finite fourth moments, which were previously unavailable under \cite{de2000strong}. We provide further details and some comparisons in related contexts for interested readers in Appendix \ref{Appendix: Comparison of Admissible Bandwidth Classes}. 
 
We now state conditions under which the HAC estimator is strongly consistent at the specified rate used to establish the asymptotic time-uniform coverage in Theorem \ref{Thm: Robbins Gaussian Mixture Asymp CS}.
\begin{condition}[Conditions for $\oas{1/\log t}$-consistent Oracle HAC Estimation] \label{Cond: o(1/logt) oracle HAC bandwidth}
In addition to Condition \ref{Cond: primitive-bandwidth-rate}, further suppose that there exists some $\kappa \in (0,A)$ with $A$ defined in Assumption \ref{Assump:dep_decay} such that the kernel satisfies $|1-K(u)| = O(|u|^\kappa)
\ \text{as }u\to 0,$ 
and the bandwidth satisfies ${b_t^\kappa}/\log t\to\infty$ along with {$b_t = O\left(t^{\,1-2/(p \wedge 4)} (\log t)^{-\lambda} \right)$, for some $\lambda> 1 + 2/p$.}
\end{condition}
\noindent Condition \ref{Cond: o(1/logt) oracle HAC bandwidth} is slightly more restrictive than Condition \ref{Cond: primitive-bandwidth-rate}, but we do not view these additional assumptions as practically restrictive. The local smoothness condition admits all commonly used HAC kernels discussed above, and the additional bandwidth restrictions are essentially innocuous, as practitioners typically adopt polynomial-rate bandwidth sequences. However, this flexibility in kernel and bandwidth selection is generally unavailable under the stronger polynomial-rate convergence requirement imposed by \cite{waudby2024time}.

With Condition \ref{Cond: primitive-bandwidth-rate} and Condition \ref{Cond: o(1/logt) oracle HAC bandwidth}, we are now ready to present the strong consistency result for the oracle HAC estimator constructed from the centered  nonstationary process. Formally, we have 
\begin{Prop}[Strong Consistency of Oracle HAC Estimator] \label{Prop: Strong Consistency of Oracle HAC Estimator}
Let  $(X_i)_{i \ge 1}$ satisfy Equation \eqref{eq:g_function} with $\E[X_i] = 0$. Under the same notation and process assumptions as in Corollary \ref{Corollary: Equivalent Asymp CS Boundary}, suppose that the kernel $K$ and the bandwidth $b_t$ satisfy Condition \ref{Cond: primitive-bandwidth-rate}, and define the oracle HAC estimator as 
\[
\widetilde{\sigma}_t^2
:=
\frac{1}{t}
\sum_{i=1}^t
\sum_{j=1}^t
K\!\left(\frac{i-j}{b_t}\right)
X_i X_j. 
\]
Then it follows that $\widetilde{\sigma}_t^2 \xrightarrow{\mathrm{a.s.}} \sigma^2$. If, in addition, 
Condition \ref{Cond: o(1/logt) oracle HAC bandwidth} holds, then $\widetilde{\sigma}^2_t/\sigma_t^2 = 1 + \oas{1/\log t}$ so that the rate condition assumed in Corollary \ref{Corollary: Equivalent Asymp CS Boundary} is satisfied.
\end{Prop}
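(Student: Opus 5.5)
The plan is the usual bias--variance decomposition $\widetilde\sigma_t^2-\sigma_t^2=(\widetilde\sigma_t^2-\E\widetilde\sigma_t^2)+(\E\widetilde\sigma_t^2-\sigma_t^2)$. The bias term is deterministic. The fluctuation term is handled by moment bounds along a geometric subsequence, Borel--Cantelli, and an interpolation step between grid points. Since $\sigma_t^2\to\sigma^2>0$, proving $\widetilde\sigma_t^2-\sigma_t^2=\oas{1}$, or $\oas{1/\log t}$, yields both claims.

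\textbf{Bias.} Set $\gamma(i,j):=\E[X_iX_j]$. The standard coupling bound under Assumption \ref{Assump:dep_decay} gives $\sup_i|\gamma(i,i+h)|\le \sum_{l\ge0}\delta_2(l)\delta_2(l+h)\lesssim \Theta_{0,2}\,\Theta_{h,2}$. Because $t\sigma_t^2=\V_t=\sum_{i,j}\gamma(i,j)$, we can write
\[
\E\widetilde\sigma_t^2-\sigma_t^2=\frac1t\sum_{|h|<t}\bigl(K(h/b_t)-1\bigr)\sum_{i}\gamma(i,i+|h|),
\]
so $|\E\widetilde\sigma_t^2-\sigma_t^2|\lesssim \sum_{h}|1-K(h/b_t)|\,\Theta_{h,2}$. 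For the first claim, continuity of $K$ at zero, boundedness of $K$, and $\sum_h\Theta_{h,2}<\infty$ (recall $A>1$) give $o(1)$ by dominated convergence. For the rate claim, split the sum at $h\le b_t$. The part $h\le b_t$ contributes $b_t^{-\kappa}\sum_{h\le b_t}h^\kappa (h+1)^{-A}$. The part $h>b_t$ contributes $\lesssim \sum_{h>b_t}h^{-A}\lesssim b_t^{1-A}$. Using $\kappa<A$, this yields a bound of order $b_t^{-\kappa}$ up to harmless terms; if the summability is borderline, replace $\kappa$ by any $\kappa'\in(\kappa,A)$ slightly adjusted, or use $\kappa<A-1$ where needed. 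The condition $b_t^\kappa/\log t\to\infty$ then makes the bias $o(1/\log t)$.

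\textbf{Fluctuation.} Write $\widetilde\sigma_t^2-\E\widetilde\sigma_t^2=t^{-1}\sum_{|h|\le b_t}K(h/b_t)\sum_i (X_iX_{i+h}-\gamma(i,i+h))$. Let $q:=p\wedge4$. The key estimate is
\[
\Bigl\|\sum_{|h|\le b}K(h/b)\sum_{i\le t}\bigl(X_iX_{i+h}-\E X_iX_{i+h}\bigr)\Bigr\|_{q/2}\lesssim t^{2/q}\,b .
\]
To prove it, apply Burkholder's inequality to the projection decomposition $\mathcal P_k=\E[\cdot\mid\mathcal F_k]-\E[\cdot\mid\mathcal F_{k-1}]$ of the quadratic form, with $q/2\in(1,2]$. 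The FDM of the product $X_iX_{i+h}$ in $\Lp_{q/2}$ is bounded by $\|X\|_q\,\delta_q(\cdot)$ via H\"older, and Assumption \ref{Assump:UI} supplies the uniform moment bounds. This is the nonstationary analogue of Wu's quadratic-form bounds (\cite{wu2012covariance}); only uniform-in-$i$ quantities enter, so nonstationarity is harmless.

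Dividing the estimate by $t$, Markov's inequality at the points $t_k=2^k$ gives
\[
\P\bigl[|\widetilde\sigma^2_{t_k}-\E\widetilde\sigma^2_{t_k}|>\epsilon/\log t_k\bigr]\lesssim \bigl(b_{t_k}t_k^{2/q-1}\log t_k\bigr)^{q/2}.
\]
Under $b_t=O(t^{1-2/q}(\log t)^{-\lambda})$ the right side is $k^{-(\lambda-1)q/2}$. This is summable precisely when $\lambda>1+2/q$, which matches Condition \ref{Cond: o(1/logt) oracle HAC bandwidth} when $p\le4$; for the $o(1)$ claim we drop the $\log t_k$ factor and need $\lambda>2/q$, matching Condition \ref{Cond: primitive-bandwidth-rate}. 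Borel--Cantelli then handles the subsequence.

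\textbf{Main obstacle.} The hard step is passing from the geometric grid to all $t$, since both the bandwidth and the normalisation change. For $t_k\le t<t_{k+1}$ we control $\max_{t_k\le t<t_{k+1}}$ of the unnormalised quadratic form. Because $b_t$ is non-decreasing, the kernel weights move monotonically in $t$; we can use a maximal inequality, either a Doob-type bound for the martingale part of the projection decomposition or a dyadic chaining bound (Móricz-type, costing at most $\log$ factors absorbed by the slack in $\lambda$). As a fallback, use a finer grid $t_k=\lfloor k^{\beta}\rfloor$, which replaces the geometric summability by a polynomial one at the cost of requiring a slightly larger $\lambda$. I expect this chaining/maximal step, together with making the Burkholder bound fully uniform under nonstationarity, to be the main technical work.
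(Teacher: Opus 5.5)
Your skeleton is the paper's: the bias/fluctuation split, the projection decomposition of $\xi_{i,h}:=X_iX_{i+h}-\E[X_iX_{i+h}]$ with Burkholder/von Bahr--Esseen, Markov along a dyadic grid, and Borel--Cantelli. However, two steps do not go through as written.

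\emph{Passing from the grid to all $t$.} You flag this as the main obstacle but leave it open, and your remedies are unneeded or fail. The weights $K(h/b_t)$ need not be monotone in $t$ for a general bounded even $K$. A chaining bound that loses a factor $\log t$ would not be absorbed, because $\lambda>2/p$ and $\lambda>1+2/p$ are exactly the summability thresholds. The fallback grid $t_k=\lfloor k^\beta\rfloor$ cannot work at all: along it your own Markov bound is of order $(\log k)^{-(\lambda-1)q/2}$, which is not summable for any $\lambda$. Since the moment bounds decay only logarithmically in $t$, only a geometric grid is usable.

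The missing observation is that your key estimate already gives up all cancellation across lags; that is where the factor $b$ comes from. So nothing is lost by dominating every $t$ in a block at once. For $2^\ell\le t<2^{\ell+1}$, using only $\abs{K}\le C$, $1/t\le 2^{-\ell}$ and $\lfloor b_t\rfloor\le b_\ell^\star:=\max_{2^\ell\le s<2^{\ell+1}}\lfloor b_s\rfloor$, you get
\[
\abs{\widetilde\sigma_t^2-\E[\widetilde\sigma_t^2]}\le C\,2^{-\ell}\sum_{h=0}^{b_\ell^\star}\max_{1\le m\le 2^{\ell+1}}\abs{\sum_{i=1}^m\xi_{i,h}}.
\]
Here $\xi_{i,h}$ does not depend on $t$. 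Doob's inequality, applied to each martingale $m\mapsto\sum_{i\le m}\mathcal{P}_{i+h-r}[\xi_{i,h}]$, gives the maximal sums the same order $2^{2\ell/q}$ in $\Lp_{q/2}$ as your fixed-$t$ estimate. Markov and Borel--Cantelli over $\ell$ then finish the argument.

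Relatedly, capping the moment order at $(p\wedge4)/2$ loses when $p>4$. You would need $\lambda>3/2$ (resp.\ $\lambda>1/2$) instead of $\lambda>1+2/p$ (resp.\ $\lambda>2/p$). Working in $\Lp_{p/2}$, where Burkholder still gives the $N^{1/2}$ rate, matches both Conditions for every $p>2$. This is moot for the main-text range $p\in(2,3]$.

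\emph{The bias rate.} Bounding $\gamma_2(h):=\sup_i\abs{\E[X_iX_{i+h}]}$ by $\Theta_{0,2}\Theta_{h,2}\lesssim h^{-A}$ throws away a factor of $h$. It gives $\sum_{h>b}\gamma_2(h)\lesssim b^{1-A}$, and, if $\kappa>A-1$, also $b^{-\kappa}\sum_{h\le b}h^{\kappa-A}\asymp b^{1-A}$. So your argument only yields $O(b_t^{-\min\{\kappa,A-1\}})$, up to logarithms.

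Take $\kappa\in(A-1,A)$ and $b_t=(\log t)^c$ with $1/\kappa<c\le 1/(A-1)$. Then Condition \ref{Cond: o(1/logt) oracle HAC bandwidth} holds, but your bound $b_t^{1-A}$ is not $o(1/\log t)$. Your patches do not help. Enlarging $\kappa$ demands more kernel smoothness than assumed, and shrinking it below $A-1$ can destroy $b_t^\kappa/\log t\to\infty$.

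The fix is to sum the bilinear envelope over lags before bounding it:
\[
\sum_{h>b}\sum_{r\ge0}\delta_2(r)\delta_2(r+h)=\sum_{r\ge0}\delta_2(r)\Theta_{r+b+1,2}\le\Theta_{0,2}\Theta_{b+1,2}=O(b^{-A}).
\]
Summation by parts then gives $\sum_h h^\kappa\gamma_2(h)<\infty$ for every $\kappa<A$. This yields bias $O(b_t^{-\kappa})=o(1/\log t)$ under exactly the stated condition, as in the paper.
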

\noindent The proof is provided in Appendix \ref{Proof of Prop: A.S. Oracle HAC Estimator}.

To establish the strong consistency of the plug-in residual-based HAC estimator, we impose a Ces\`aro mean-stability condition as briefly discussed in the beginning of Section \ref{Sec: Almost Sure Invariance Principles for (Nonstationary) Time Series}. The following condition ensures that the unconditional running mean estimator $\widehat{\mu}_t$ defined in Equation \eqref{Eq: unconditioning running mean estimator} can be directly employed without affecting the asymptotic behavior of the HAC estimator.  

\begin{condition}[Ces\`aro Mean-Stability Condition] \label{Cond: Ces\`aro Mean-Stability Condition}
Under Equation \eqref{eq:additive_decomposition}, for a deterministic sequence $c_t \to \infty$, suppose that $t^{-1} \sum_{i=1}^{t}(\mu_i-\widetilde{\mu}_t)^2  = o(c_t^{-1}),$ 
where $\widetilde{\mu}_t$ is defined in Theorem \ref{Thm: Robbins Gaussian Mixture Asymp CS}. 
\end{condition}
\noindent A similar Ces\`aro mean-stability condition appears in related settings, for example in Assumption 3 of \cite{johari2025estimation} and Equation 13 of \cite{mathis2026confidence}. In words, Condition \ref{Cond: Ces\`aro Mean-Stability Condition} allows the individual means $\mu_i$ to vary with $i$, but requires their average squared fluctuation from the time-varying mean to be asymptotically negligible relative to $c_t^{-1}$, a factor of the bandwidth scale specified in Theorem \ref{Thm: Strong Consistency of Plug-in HAC Estimator} below. It mainly rules out persistent mean nonstationarity, such as deterministic trends or structural breaks occupying a non-vanishing fraction of the sample. 

By leveraging Proposition \ref{Prop: Strong Consistency of Oracle HAC Estimator}, we present the strong consistency result for the ultimate plug-in residual-based HAC estimator as below. 
\begin{Thm}[Strong Consistency of Plug-in HAC Estimator]  \label{Thm: Strong Consistency of Plug-in HAC Estimator}
For the process $(Y_i)_{i\geq1}$ satisfying Equation \eqref{eq:additive_decomposition}, under the same notation and process assumptions as in Corollary \ref{Corollary: Equivalent Asymp CS Boundary}, suppose that the conditions of Proposition \ref{Prop: Strong Consistency of Oracle HAC Estimator} hold. Further suppose Condition \ref{Cond: Ces\`aro Mean-Stability Condition} holds with $c_t = b_t$, and assume that the kernel $K$ is positive semi-definite (PSD). Define $$\widehat{\sigma}_t^2
:=
t^{-1}
\sum_{i=1}^t
\sum_{j=1}^t
K\!\left(\frac{i-j}{b_t}\right)
\widehat X_{i,t} \widehat X_{j,t}, \quad \widehat{X}_{i,t} : = Y_i - \widehat{\mu}_t,$$ where $\widehat{\mu}_t$ is defined in Equation \eqref{Eq: unconditioning running mean estimator}. Then $\widehat{\sigma}_t^2 - \widetilde \sigma_t^2 \xrightarrow{\mathrm{a.s.}} 0,$ and by Proposition \ref{Prop: Strong Consistency of Oracle HAC Estimator}, it follows that 
$\widehat{\sigma}_t^2 \xrightarrow{\mathrm{a.s.}}  \sigma^2.$ In addition, if Condition \ref{Cond: Ces\`aro Mean-Stability Condition} holds with $c_t = b_t (\log t)^2$, the estimator satisfies the rate condition assumed in Corollary \ref{Corollary: Equivalent Asymp CS Boundary}.
\end{Thm}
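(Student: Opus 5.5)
We begin by writing $\widehat X_{i,t} = X_i + d_{i,t}$ with the deterministic-plus-random shift
\[
d_{i,t} := (\mu_i - \widetilde\mu_t) - (\widehat\mu_t - \widetilde\mu_t) =: a_{i,t} - e_t ,
\]
where $e_t := \widehat\mu_t-\widetilde\mu_t = \mathsf{S}_t/t$. Writing $\mathbf{K}_t := (K((i-j)/b_t))_{i,j\le t}$ and vectors $X, a, \1$, we expand the quadratic form
\[
t\,\widehat\sigma_t^2 = X^\top \mathbf{K}_t X + 2X^\top\mathbf{K}_t a - 2e_t X^\top \mathbf{K}_t \1 + a^\top \mathbf{K}_t a - 2 e_t a^\top\mathbf{K}_t\1 + e_t^2 \1^\top \mathbf{K}_t \1 .
\]
The first term is $t\widetilde\sigma_t^2$, so it suffices to show that the remaining terms divided by $t$ are $\oas{1}$. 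Under the second hypothesis they should instead be $\oas{1/\log t}$, which gives the rate claim once combined with Proposition \ref{Prop: Strong Consistency of Oracle HAC Estimator} and $\sigma_t^2\to\sigma^2>0$.

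Rather than bounding the cross terms individually, we use positive semi-definiteness, which is exactly why the PSD kernel assumption appears. Since $\mathbf{K}_t$ is PSD, Cauchy--Schwarz in the $\mathbf{K}_t$-inner product gives $|X^\top \mathbf{K}_t d| \le (X^\top\mathbf{K}_t X)^{1/2}(d^\top \mathbf{K}_t d)^{1/2}$. Hence
\[
|\widehat\sigma_t^2-\widetilde\sigma_t^2| \le 2\widetilde\sigma_t\,(t^{-1}d^\top\mathbf{K}_t d)^{1/2} + t^{-1}d^\top \mathbf{K}_t d .
\]
Because $\widetilde\sigma_t^2\to\sigma^2$ a.s., everything reduces to showing $D_t := t^{-1}d^\top\mathbf{K}_t d\to0$ a.s. For the rate claim we need the stronger $D_t = \oas{(\log t)^{-2}}$, since it enters through a square root.

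To bound $D_t$, we note that $K$ is bounded and supported on $[-1,1]$. Each row of $\mathbf{K}_t$ therefore has at most $2b_t+1$ nonzero entries of size at most $\|K\|_\infty$. By the Schur test, the operator norm satisfies $\|\mathbf{K}_t\|_{op}\le \|K\|_\infty(2b_t+1)$. Using $(x+y)^\top \mathbf{K}(x+y)\le 2x^\top\mathbf{K}x+2y^\top\mathbf{K}y$ for PSD $\mathbf{K}$, we get
\[
D_t \le \frac{2C b_t}{t}\Big(\sum_{i=1}^t (\mu_i-\widetilde\mu_t)^2 + t e_t^2\Big) = 2Cb_t\Big(t^{-1}\sum_{i=1}^t(\mu_i-\widetilde\mu_t)^2\Big) + 2Cb_t e_t^2 .
\]
For the first piece, Condition \ref{Cond: Ces\`aro Mean-Stability Condition} with $c_t=b_t$ gives $o(1)$ directly. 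With $c_t=b_t(\log t)^2$ it gives $o((\log t)^{-2})$.

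For the second piece, Corollary \ref{Corollary: SLLN with an LIL rate} gives $e_t^2 = O_{\mathrm{a.s.}}(\log\log t/t)$, so $b_te_t^2 = O_{\mathrm{a.s.}}(b_t\log\log t/t)$. Condition \ref{Cond: primitive-bandwidth-rate} gives $b_t = O(t^{1-2/(p\wedge4)}(\log t)^{-\lambda})$, hence $b_t/t = O(t^{-2/(p\wedge 4)})$. This decays polynomially, so the piece is $\oas{(\log t)^{-2}}$ and in particular $\oas{1}$.

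The main obstacle, mild here, is recognizing that the naive term-by-term treatment of the cross term $X^\top\mathbf{K}_t a$ would require a maximal inequality for weighted sums of $X_i$ with deterministic weights. The PSD Cauchy--Schwarz step sidesteps this entirely. One should also check that the operator-norm bound uses only boundedness and compact support of $K$, both guaranteed by Condition \ref{Cond: primitive-bandwidth-rate}. Finally, $\widetilde\sigma_t^2/\sigma_t^2 = 1+\oas{1/\log t}$ from Proposition \ref{Prop: Strong Consistency of Oracle HAC Estimator} combines with $|\widehat\sigma_t^2-\widetilde\sigma_t^2| = \oas{1/\log t}$ and $\sigma_t^2$ bounded away from zero to yield $\widehat\sigma_t^2/\sigma_t^2 = 1+\oas{1/\log t}$.
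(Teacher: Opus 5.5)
Your proposal is correct and takes essentially the same route as the paper. The shared steps are the decomposition $\widehat{X}_{i,t}=X_i+(\mu_i-\widetilde{\mu}_t)-\overline{X}_t$, the PSD Cauchy--Schwarz bound $|\widehat{\sigma}_t^2-\widetilde{\sigma}_t^2|\le 2\widetilde{\sigma}_t\sqrt{D_t}+D_t$, the operator-norm bound $\|\mathbf{K}_t\|_{\mathrm{op}}\lesssim b_t$, and the split of $D_t$ into the Ces\`aro term and $b_t\overline{X}_t^2$. The only cosmetic difference is that you control $\overline{X}_t^2$ via the LIL-rate SLLN (Corollary \ref{Corollary: SLLN with an LIL rate}), whereas the paper uses the bound $O_{\mathrm{a.s.}}(\log t/t)$ from the ASIP plus a Brownian maximal inequality; either works because $b_t/t$ decays polynomially.
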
 
\noindent The proof is provided in Appendix \ref{Proof of Thm: A.S. Plug-in HAC Estimator}. The additional PSD requirement on the kernel is only for technical convenience and could be easily relaxed. In particular, it still accommodates many standard kernel choices such as the Bartlett and Parzen kernels, whose relatively smooth weighting properties tend to improve finite-sample stability.

\section{Illustrations: Applications to Time-Series Data} \label{Sec: Illustrations: Applications to Time Series Data}
\noindent This section illustrates the use of the time-uniform dependent CLT (AsympCSs) developed in Section~\ref{Sec: Main Results} for two applications. Section~\ref{Sec: Sampling and Optimization Algorithms} applies the result to Markovian processes and Section~\ref{Sec: Illustration: Time-series Based Causal Estimands} considers time-series-based causal estimands.

\subsection{Illustration: Algorithms with Markovian Structure} \label{Sec: Sampling and Optimization Algorithms}
\noindent Under suitable conditions, sampling and optimization algorithms such as MCMC and online SGD admit contractive Markovian recursions satisfying Assumptions~\ref{Assump:dep_decay} and ~\ref{Assump:UI}. We use MCMC as the leading example and construct AsympCSs for ergodic averages in Proposition~\ref{Prop: Asymp CS for MCMC averages}. Constant-step-size SGD also falls within the same Markovian framework (\cite{dieuleveut2020bridging}; \cite{li2024sgd}; \cite{merad2025convergence}; \cite{wei2025online}), which we defer to Appendix~\ref{Appendix: Stochastic Gradient Descent} given the close parallel.

In MCMC, a central objective is to estimate the ergodic average for a target distribution $\pi$ and terminate sampling once the estimate appears sufficiently precise. The classical fixed-width stopping rules for MCMC have been developed in the fixed-time literature (\cite{jones2006fixed}; \cite{flegal2015relative}), and we further establish their time-uniform guarantees in the following Proposition \ref{Prop: Asymp CS for MCMC averages}. 

For generality, we consider a multivariate time-homogeneous Markov chain on $\mathbb{R}^d$ with invariant distribution $\pi$, denoted as $(\bm{X}_i)_{i \geq 0}$. Our focus, however, remains on consstructing univariate AsympCSs for a scalar. Specifically, let $f:\R^d \to \mathbb R$ be a Lipschitz function. The estimand of interest is $\mu := \E_\pi \left[f(\bm{X})\right]$, which may be interpreted as a posterior mean in the Bayesian context. 

\begin{Prop}[AsympCS for MCMC averages] \label{Prop: Asymp CS for MCMC averages}
  Suppose the chain evolves as $\bm{X}_i = G(\bm{X}_{i-1}, \bm{\varepsilon}_i)$, where
$(\bm{\varepsilon}_i)_{i \in \mathbb{Z}}$ are i.i.d.\ innovations and $G$ is a 
measurable transition map. Assume that for some $p > 2 $, we have
\begin{align} 
    L_p :=\sup_{\bm{x} \neq \bm{y}}
  \frac{\|G(\bm{x},\bm{\varepsilon}) - G(\bm{y},\bm{\varepsilon})\|_p}{\norm{\bm{x} - \bm{y}}} < 1 \label{eq:contraction_map},
\end{align}
and for some $\bm{x}_0$, 
\begin{align}
\norm{G(\bm{x}_0, \bm{\varepsilon})}_p < \infty \label{eq: initial moment condition}. 
\end{align}
Let $f:\R^d \to \mathbb R$ be Lipschitz, and define the estimand of interest and the corresponding estimator as $\mu := \E_\pi \left[f(\bm{X})\right]$ and $
\widehat\mu_t=t^{-1}\sum_{i=1}^t f(\bm{X}_i)$ respectively. Then, under the same LRV conditions as in Corollary \ref{Corollary: Equivalent Asymp CS Boundary}, for any prespecified constant $\rho >0$, 
\[
\widetilde{C}^{\mathcal{G}}_t = \left(
\widehat{\mu}_t \pm \widehat{\sigma}_t
\sqrt{
\frac{t  \rho^2 + 1}{t^2\rho^2}
\log\!\left(\frac{t\rho^2 + 1}{\alpha^2}\right)
}
\right)
\]
forms a $(1-\alpha)$-AsympCS for the estimand of interest $\mu$, where $\widehat{\sigma}_t$ is the square root of the HAC estimator on the process of $Y_i := f(\bm{X}_i)$ provided in Theorem \ref{Thm: Strong Consistency of Plug-in HAC Estimator}. 
\end{Prop}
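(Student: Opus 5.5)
The plan is to reduce Proposition \ref{Prop: Asymp CS for MCMC averages} to Corollary \ref{Corollary: Equivalent Asymp CS Boundary} together with Theorem \ref{Thm: Strong Consistency of Plug-in HAC Estimator}. This amounts to checking that $Y_i := f(\bm{X}_i)$ fits the framework of Equations \eqref{eq:additive_decomposition}--\eqref{eq:g_function} and satisfies Assumptions \ref{Assump:dep_decay} and \ref{Assump:UI}. It also requires that the running-mean target $\widetilde\mu_t$ can be replaced by the constant $\mu$.

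\textbf{Step 1: physical representation and geometric decay.} Iterating the recursion backward, the contraction \eqref{eq:contraction_map} and the moment condition \eqref{eq: initial moment condition} give a unique stationary solution $\bm{X}_i = \Phi(\bm{\varepsilon}_i,\bm{\varepsilon}_{i-1},\dots)$ whose law is $\pi$. This is the standard iterated random function argument: $\|\bm{X}_i\|_p \le \|G(\bm{x}_0,\bm\varepsilon)\|_p/(1-L_p) + \|\bm{x}_0\|$.

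I would work with this stationary version. If the chain is started elsewhere, the coupling bound $\|\bm{X}_i - \bm{X}_i^{\mathrm{stat}}\|_p \le L_p^i\,\|\bm{X}_0-\bm{X}_0^{\mathrm{stat}}\|_p$ makes the difference summable almost surely. This follows from Borel--Cantelli with Markov's inequality, and the resulting $O_{\mathrm{a.s.}}(1)$ perturbation of partial sums is negligible against all boundaries.

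For the coupled version, replacing $\bm\varepsilon_{i-h}$ changes $\bm{X}_{i-h}$ by at most $2\|\bm{X}\|_p$-type quantities in $\Lp_p$. Each subsequent step contracts the difference by $L_p$, using independence of the future innovations from the past. Hence $\|\bm{X}_i-\bm{X}_{i,\{i-h\}}\|_p \le C L_p^{h}$.

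Since $f$ is Lipschitz with constant $\mathrm{Lip}(f)$, we get $\delta_p(h)\le \mathrm{Lip}(f)\,C L_p^h$. Therefore $\Theta_{i,p}=O(L_p^i)$, which gives Assumption \ref{Assump:dep_decay} for any $A>1$. If $p>3$, I would use $\Lp_3$ in place of $\Lp_p$, since the framework requires $p\in(2,3]$.

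\textbf{Step 2: tails and means.} Stationarity makes $Y_i$ identically distributed, and $|f(\bm{X}_i)|\le |f(\bm 0)|+\mathrm{Lip}(f)\|\bm{X}_i\|\in\Lp_p$. Taking $Q=|X_1|$ with $X_i=Y_i-\mu$ gives Assumption \ref{Assump:UI}.

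The mean is $\mu_i=\E_\pi f=\mu$ for all $i$, so $\widetilde\mu_t=\mu$. Condition \ref{Cond: Ces\`aro Mean-Stability Condition} then holds trivially for every $c_t$, including $c_t=b_t(\log t)^2$.

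The LRV conditions are assumed in the statement ($\sigma^2>0$, with $\V_t\asymp t$ following from stationarity and summable autocovariances). Hence Theorem \ref{Thm: Strong Consistency of Plug-in HAC Estimator} applies, under Conditions \ref{Cond: primitive-bandwidth-rate}--\ref{Cond: o(1/logt) oracle HAC bandwidth} on $K$ and $b_t$. This yields $\widehat\sigma_t^2/\sigma_t^2=1+\oas{1/\log t}$. Corollary \ref{Corollary: Equivalent Asymp CS Boundary} then delivers the AsympCS for $\widetilde\mu_t=\mu$.

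\textbf{Main obstacle.} I expect the delicate part to be Step 1's coupling bound. The naive one-step contraction concerns $\|G(\bm x,\bm\varepsilon)-G(\bm y,\bm\varepsilon)\|_p$ for \emph{deterministic} $\bm x,\bm y$. To apply it to random inputs, I would condition on $\mathcal F_{i-1}$, using that $\bm\varepsilon_i$ is independent of $(\bm{X}_{i-1},\bm{X}_{i-1,\{\cdot\}})$. This gives $\E[\|G(\bm{X},\bm\varepsilon)-G(\bm{X}',\bm\varepsilon)\|^p\mid \bm{X},\bm{X}']\le L_p^p\|\bm{X}-\bm{X}'\|^p$, after which the bound is iterated.

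The existence of the stationary solution uses the same estimate, via a Cauchy-sequence argument in $\Lp_p$.
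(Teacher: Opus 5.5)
Your proposal is correct and follows essentially the same route as the paper. Both use a stationary causal solution from iterated-random-map theory and obtain geometric decay $\delta_p(h)\lesssim L_p^h$ by conditioning on the past, iterating the contraction, and passing through the Lipschitz $f$. Both then get domination from the common stationary marginal, and use a geometric-moment-contraction coupling to show the initialization error adds only an almost surely summable perturbation.

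Two minor remarks. First, your moment bound should read $\|\bm{X}_i\|_p \le (\|G(\bm{x}_0,\bm{\varepsilon})\|_p + L_p\|\bm{x}_0\|)/(1-L_p)$; as stated it can fail when $L_p>1/2$, although only finiteness matters here. Second, like the paper, you do not spell out that this summable perturbation also moves the HAC estimator computed on the observed chain by only $O_{\mathrm{a.s.}}(t^{-1/2}) = \oas{1/\log t}$. This follows by bounding the quadratic-form difference via Cauchy--Schwarz with $\mathbf{W}_t\succeq\bm{0}$ and bounded $K$, and it is what keeps the rate condition of Corollary~\ref{Corollary: Equivalent Asymp CS Boundary} intact.
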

\noindent The proof is provided in Appendix \ref{Proof of Prop: Asymp CS for MCMC averages}, in which the key idea is that the centered process $Y_i - \mu$ admits the representation as in Equation \eqref{eq:g_function}, and Assumptions \ref{Assump:dep_decay}-\ref{Assump:UI} could be shown to be satisfied via the iterated random map theory.


Proposition~\ref{Prop: Asymp CS for MCMC averages} is stated for a general time-homogeneous Markov chain with invariant distribution $\pi$, but could be easily extended to time-inhomogeneous chains, allowing for nonstationarity induced by initialization provided that the associated initialization error is asymptotically negligible. For specific MCMC algorithms, the required dependency condition can often be established from regularity conditions on the target distribution. We illustrate this with the unadjusted Langevin algorithm (ULA), a discrete approximation of the Langevin diffusion process (\cite{durmus2017nonasymptotic, dalalyan2017theoretical}). 
\begin{Eg}[Unadjusted Langevin Algorithm (ULA)] \label{Example: ULA}
\begin{align*}
    \bm{X}_{i+1} = \bm{X}_i - \eta\nabla U(\bm{X}_i) + \sqrt{2\eta}\,\bm{\xi}_i,
  \quad \bm{\xi}_i \overset{\mathrm{i.i.d.}}{\sim} \mathcal{N}(0, \bm{I}_d),
\end{align*}
where $U = -\log\pi$ and $\eta > 0$; further suppose the target distribution $\pi$ is $m$-strongly log-concave and $L$-smooth, so that $m\bm{I} \preceq \nabla^2 U \preceq L\bm{I}$. 
\end{Eg} 
\noindent For this Example \ref{Example: ULA}, since
$G(\bm{x},\bm{\varepsilon}) - G(\bm{y},\bm{\varepsilon}) = (\bm{x} - \bm{y}) - \eta(\nabla U(\bm{x}) - \nabla U(\bm{y}))$ is
independent of~$\bm{\varepsilon}$, the contraction is deterministic. For
$\eta \in (0,\, 2m/L^2)$, standard convexity-smoothness inequalities give
\begin{align*}
   |G(\bm{x},\bm{\varepsilon}) - G(\bm{y},\bm{\varepsilon})| \leq\kappa\norm{\bm{x} - \bm{y}}, \quad
\kappa :=\sqrt{1 - 2\eta m + \eta^2 L^2} < 1. 
\end{align*}
Consequently, $\delta_p(h) \leq C\kappa^h$ and
$\Theta_{i,p} \leq C'\kappa^i$, so
$\sup_{i \geq 0}(i+1)^A\Theta_{i,p} < \infty$ for every $A > 1$, and Assumption \ref{Assump:dep_decay} is satisfied.  ULA is an approximate sampler, but similar geometric stability can also hold for exact samplers such as the Metropolis-Langevin algorithm under comparable conditions.

\subsection{Illustration: Time-series-Based Causal Estimands} \label{Sec: Illustration: Time-series Based Causal Estimands}
\noindent We study time-series experiments under a super-population framework, where one aggregate unit is sequentially randomized into treatment or control at each time. These designs are particularly relevant when only a small number of experimental units are available or when cross-sectional interference is too complex to model explicitly, such as in online marketplaces.

Let $Y_i \in \mathbb{R}$ and $D_i \in \{0,1\}$ denote the observed outcome and treatment assignment, respectively, for the aggregate unit in period $i$. Let $Y_i(1)$ and $Y_i(0)$ denote the corresponding potential outcomes under treatment and control, respectively. For simplicity of exposition, we assume the stable unit treatment value assumption holds so that the observed outcome satisfies $Y_i = D_i Y_i(1) + (1-D_i)Y_i(0)$ as usual (\cite{rubin1980randomization}). We thus focus on the 
canonical average treatment effect (ATE): 
\begin{equation} \label{Eq: ATE in RCT}
\E\!\left[Y_i(1)-Y_i(0)\right] = \tau,   
\end{equation}
where the underlying expectation is with respect to the stochastic evolution of the aggregate unit. 

In the presence of carryover effects, one may instead consider the contemporaneous ATE with potential outcomes $Y_i(D_{1:i-1},1)$ and $Y_i(D_{1:i-1},0)$ evaluated on the observed treatment path $D_{1:i-1}$ (\cite{ham2022design}), or the lag-$k$ ATEs that compare treatment histories over the most recent $k+1$ periods (\cite{bojinov2019time}; \cite{lin2025unifying}). Alternatively, one may study ATEs with the always-treated and always-control regimes, represented by $Y_i(1,\ldots,1)$ and $Y_i(0,\ldots,0)$, under Markovian dynamics (\cite{glynn2020adaptive}; \cite{farias2022markovian}; \cite{hu2022switchback}; \cite{johari2025estimation}). Our results can be extended to both settings, with the latter following the approach in Section \ref{Sec: Sampling and Optimization Algorithms}. We also acknowledge that the stability of the ATE in Equation \eqref{Eq: ATE in RCT} may be restrictive in settings with dynamic behavioral responses such as novelty effects or changing environments. Nevertheless, it does not require stationarity of the observed outcomes, and time-varying ATEs can be accommodated with additional modeling, in which case the resulting low-frequency contamination issue can be addressed under Condition \ref{Cond: Ces\`aro Mean-Stability Condition}. In particular, such a stable effect is not rare in experiments (\cite{kohavi2012trustworthy}; \cite{sadeghi2022novelty}), and industry evidence from Microsoft Research found that treatment effects still appeared to be considerably more stable than the underlying outcome levels even during the COVID-19 pandemic (\cite{townsend2020ab}). Moreover, in time-series experiments with aggregate outcomes, interventions inducing drastic changes to the platform, particularly those that generate pronounced novelty or learning effects at the aggregate level, may be of limited practical relevance.

To make progress with the super-population analysis, we impose conditions on the temporal dynamics of the potential outcomes, allowing for both nonstationarity and serial dependence. 

\begin{Ass}[Potential-Outcome Mean Decomposition] \label{Ass: RCT's nonstationary potential outcomes}
For each $i \geq 1$ and $d \in \{0,1\}$, suppose
\[
    Y_i(d) = m_i + \tau d + X_i(d),
    \qquad
    \E[X_i(d)] = 0,
\]
where $\{m_i\}_{i\geq 1}$ is a deterministic sequence and $X_i(d)$ denotes the centered stochastic component.
\end{Ass}
\noindent Assumption \ref{Ass: RCT's nonstationary potential outcomes} is meant to be a potential-outcome analogue of the standard reparametrization of time series in Equation \eqref{eq:additive_decomposition}, with the additional decomposition $\mu_i(d) := m_i + \tau d$. Here, $m_i$ absorbs temporal mean variation unrelated to treatment, including potentially nonstationary common shocks, while $\tau$ represents the treatment-induced effects. $X_i(d)$ may exhibit serial dependence and nonstationarity, while allowing for both treatment-specific stochastic variation and heterogeneity in treatment responses. Indeed, $Y_i(1)-Y_i(0) = \tau + X_i(1)-X_i(0),$ so that the constant $\tau$ does not imply homogeneous realized treatment effects.

We then specify the treatment assignment mechanism.
\begin{Ass}[Treatment Assignment] \label{Ass: RCT's Treatment assignment}
Let $D_i \in \{0,1\}$ denote the treatment assignment. Assume sequential randomization, i.e., $D_i \perp \left(Y_i(1),Y_i(0)\right) \mid
\mathcal{F}_{i-1},$ where $\mathcal{F}_{i-1}$ represents the canonical filtration of the observed history prior to the treatment assignment in the $i$-th period. Also, assume a strong overlap condition that $\P[D_i=1\mid\mathcal{F}_{i-1}]=p_i,\ \underline p \leq p_i \leq 1-\underline p$, 
for some $\underline p>0$.
\end{Ass}
\noindent This condition accommodates general sequential randomization schemes, allowing the treatment probability to vary over time and adapt to the observed history. Given the treatment assignment, we consider the inverse-propensity score weighting (IPW) estimator:
\begin{align} \label{Eq: RCT's IPW}
\widehat{\tau}_{\mathrm{IPW},t}
:=
\frac{1}{t}\sum_{i=1}^t \phi_i
=
\frac{1}{t}\sum_{i=1}^t
\left(
\frac{D_iY_i}{p_i}
-
\frac{(1-D_i)Y_i}{1-p_i}
\right),    
\end{align}
with $\phi_i$ as the observed IPW pseudo-outcome, i.e., the noncentered IPW estimating score, and $p_i$ as the treatment assignment probability at time $i$ defined in Assumption \ref{Ass: RCT's Treatment assignment}.

We now construct the corresponding AsympCS for the ATE estimand as follows.
\begin{Prop}[AsympCS for the ATE in Time-series-Based Experiments] \label{Prop: Asymp CS for RCT IPW}
Suppose Assumptions \ref{Ass: RCT's nonstationary potential outcomes}-\ref{Ass: RCT's Treatment assignment} hold. Suppose further that the noncentered IPW estimating score $\phi_i$ in Equation \eqref{Eq: RCT's IPW} satisfies the assumptions required by Corollary \ref{Corollary: Equivalent Asymp CS Boundary}. Then, for any prespecified constant $\rho >0$, 
\[
\widetilde{C}^{\mathcal{G}}_t = \left(
\widehat{\tau}_{\mathrm{IPW},t} \pm \widehat{\sigma}^{\mathrm{HAC}}_{\mathrm{IPW},t}
\sqrt{
\frac{t  \rho^2 + 1}{t^2\rho^2}
\log\!\left(\frac{t\rho^2 + 1}{\alpha^2}\right)
}
\right)
\]
forms a $(1-\alpha)$-AsympCS for $\tau$, where $\widehat{\sigma}^{\mathrm{HAC}}_{\mathrm{IPW},t}$ is the square root of the HAC estimator, provided in Theorem \ref{Thm: Strong Consistency of Plug-in HAC Estimator},  applied to the IPW estimating score process $\{\phi_i\}_{i\geq 1}$. 
\end{Prop}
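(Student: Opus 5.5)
The plan is to reduce the statement to Corollary \ref{Corollary: Equivalent Asymp CS Boundary}, applied to the process $(\phi_i)_{i\ge1}$ with $\phi_i$ playing the role of $Y_i$ in Equation \eqref{eq:additive_decomposition}. The only thing to check beyond the stated hypothesis is that the running unconditional mean of $\phi_i$ is exactly $\tau$ for every $t$. Then the estimand $\widetilde\mu_t$ in the corollary coincides with $\tau$, and the estimator $\widehat\mu_t=t^{-1}\sum_{i\le t}\phi_i$ coincides with $\widehat{\tau}_{\mathrm{IPW},t}$.

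\textbf{Step 1: the mean of each score is $\tau$.} By the SUTVA representation, $D_iY_i=D_iY_i(1)$ and $(1-D_i)Y_i=(1-D_i)Y_i(0)$. Condition on $\mathcal F_{i-1}$ and use the sequential randomization of Assumption \ref{Ass: RCT's Treatment assignment}. This gives
\[
\E\!\left[\frac{D_iY_i(1)}{p_i}\,\middle|\,\mathcal F_{i-1}\right]=\frac{\E[D_i\mid\mathcal F_{i-1}]}{p_i}\,\E[Y_i(1)\mid\mathcal F_{i-1}]=\E[Y_i(1)\mid\mathcal F_{i-1}],
\]
and similarly for the control term, using $1-p_i$. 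Here $p_i$ is $\mathcal F_{i-1}$-measurable, and strong overlap guarantees $p_i\in[\underline p,1-\underline p]$, so the division is legitimate. Taking expectations via the tower property yields $\E[\phi_i]=\E[Y_i(1)-Y_i(0)]$. Assumption \ref{Ass: RCT's nonstationary potential outcomes} gives $\E[Y_i(d)]=m_i+\tau d$, so this equals $\tau$. The nuisance sequence $m_i$ cancels exactly, even though it may be nonstationary. Hence $\mu_i:=\E[\phi_i]=\tau$ for all $i$, and $\widetilde\mu_t=\tau$ for every $t$.

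\textbf{Step 2: invoke the corollary.} Write $\phi_i=\tau+(\phi_i-\tau)$. This is a decomposition of the form \eqref{eq:additive_decomposition} with a constant mean and a centered stochastic component. By hypothesis, $(\phi_i)$ satisfies the assumptions of Corollary \ref{Corollary: Equivalent Asymp CS Boundary}: the FDM representation, Assumptions \ref{Assump:dep_decay}--\ref{Assump:UI}, $\V_t\asymp t$, and an existing long-run variance $\sigma^2>0$.

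It remains to supply the variance estimator with the required rate $\widehat\sigma_t^2/\sigma_t^2=1+\oas{1/\log t}$. We take the plug-in HAC estimator of Theorem \ref{Thm: Strong Consistency of Plug-in HAC Estimator} applied to $\phi_i$. Because the mean sequence is constant, the Ces\`aro mean-stability Condition \ref{Cond: Ces\`aro Mean-Stability Condition} holds trivially: $t^{-1}\sum_{i\le t}(\mu_i-\widetilde\mu_t)^2=0$, which is $o(c_t^{-1})$ for any $c_t$, in particular $c_t=b_t(\log t)^2$. So, under the kernel and bandwidth Conditions \ref{Cond: primitive-bandwidth-rate}--\ref{Cond: o(1/logt) oracle HAC bandwidth} (understood as part of ``the assumptions required''), that theorem delivers the rate. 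The corollary then yields that $\widetilde C^{\mathcal G}_t$ is a $(1-\alpha)$-AsympCS for $\widetilde\mu_t=\tau$.

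\textbf{Main obstacle.} The potential difficulty lies in Step 1. The propensity $p_i$ may be history-dependent, so $\phi_i$ need not be literally of the form $\tau+\varphi_i(\eps_i,\eps_{i-1},\dots)$ with a deterministic mean. It is therefore essential that $\E[\phi_i]$ be an unconditional constant, which the tower argument above secures. Verifying that $\phi_i-\tau$ admits an FDM representation is delegated to the hypothesis of the proposition. Anything beyond that, for example that adaptivity of $p_i$ could break the dependence decay, is assumed rather than proved.
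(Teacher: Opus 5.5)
Your proposal is correct and follows the paper's (omitted) argument, which simply invokes Corollary \ref{Corollary: Equivalent Asymp CS Boundary} for the score process $(\phi_i)$. Your Step 1 uses the tower property and sequential randomization to show $\E[\phi_i]=\tau$, so that $\widetilde\mu_t\equiv\tau$. Your Step 2 notes that the Ces\`aro mean-stability condition holds trivially for a constant mean. Both are exactly the implicit details the paper leaves unstated when it says the result follows from the corollary.
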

\noindent The proof of Proposition \ref{Prop: Asymp CS for RCT IPW} is omitted as it follows from Corollary \ref{Corollary: Equivalent Asymp CS Boundary}. We remark that even in this simplified setting, the IPW estimating score process remains nonstationary because $D_iY_i = D_iY_i(1)$ by consistency and the potential outcome $Y_i(1)$ exhibits potential nonstationarity as in Assumption \ref{Ass: RCT's nonstationary potential outcomes}. That said, it generally cannot be reduced to a stationary process, nor does it satisfy a martingale difference structure, underscoring the importance and broad applicability of our nonstationary framework. We leave other complex models, such as semiparametric causal models for time series (\cite{angrist2011causal}; \cite{angrist2018semiparametric}), for future work.

\section{Simulation Studies} \label{Sec: Simulation Studies}
\noindent In this section, we evaluate the finite-sample performance of the AsympCSs through simulations. Since, to the best of our knowledge, no nonasymptotic CSs are available for comparison with AsympCSs under our general time-series assumptions, we use the CLT-based AsympCIs as a baseline. We then conduct simulations using the equivalent AsympCS boundary established in Corollary~\ref{Corollary: Equivalent Asymp CS Boundary} for a more direct comparison. Simulations using the boundary in Theorem~\ref{Thm: Robbins Gaussian Mixture Asymp CS} yield similar results and are therefore not reported separately.

Figure \ref{fig:sim_kernels} considers four data-generating processes (DGPs). The first two DGPs are adapted from \cite{bonnerjee2024gaussian}. Specifically, the first is a linear process $Y_t = 0.25Y_{t-1} + \sqrt{2/3}U_t$, where $U_t$ are independent $t_6$ innovations, with $t_6$ denoting the Student's $t$-distribution with six degrees of freedom. The second is a nonlinear process with periodic dependence, $Y_t=\sin(Z_t)$, where $Z_t = a_t Z_{t-1}+\eta_t$, $a_t = 0.2 + 0.1\sin(2\pi t/100)$, and $\eta_t$ are independent standard normal innovations. To examine robustness to strong dependence, the third and fourth processes that we consider are stationary Gaussian AR(1) processes with coefficients $0.9$ and $-0.9$, respectively. All four processes have mean zero. For each of the DGPs, we report the average width of the intervals and the cumulative miscoverage rate, defined as the fraction of simulated paths among Monte Carlo replicates on which the target is excluded at least once after a burn-in period $m$ to allow the asymptotic approximation as in Definition \ref{Defn：Asymptotic time-uniform coverage}. We optimize the AsympCS boundaries by tuning $\rho$ as described in Appendix \ref{Appendix: optimal rho}. 

\begin{figure}[H]
\centering  
\includegraphics[width=0.55\textwidth]{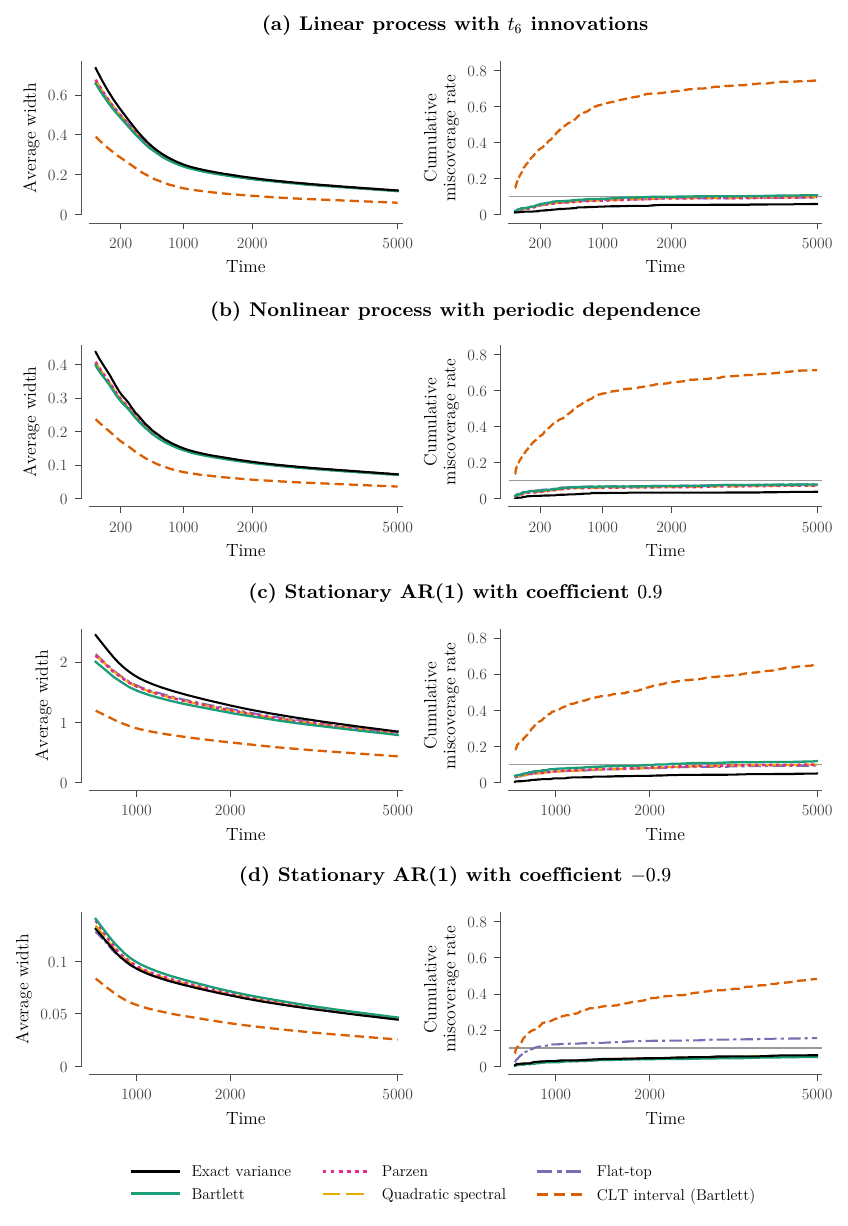}
\caption{Left: average width. Right: cumulative miscoverage rate of nominal $90\%$ AsympCSs beginning at time 100 for (a) and (b) and time 500 for (c) and (d), where the horizontal line marks $\alpha=0.1$. The simulation horizon is 5000, with 1000 replicates for each design.} 
\label{fig:sim_kernels}
\end{figure}
\noindent  Exact variance $\sigma_t^2 = \mathsf{V}_t/t$ and the CLT-interval under the Bartlett kernel serve as baselines in Figure \ref{fig:sim_kernels}. The former removes the estimation error introduced by plug-in HAC estimators, isolating the Gaussian approximation, while the latter illustrates the well-known phenomenon that fixed-time boundaries are invalid under continuous monitoring, leading to inflated miscoverage rates. In particular, the results under the exact variance suggest that the Gaussian approximation in Theorem \ref{Thm: nonstationary ASIP} works well for constructing AsympCSs. For AsympCSs with plug-in HAC estimators, we consider four commonly used kernels: Bartlett, Parzen, quadratic spectral (Equation 2.7 of \cite{andrews1991heteroskedasticity}) and flat-top (\cite{politis1995bias}), with conventional bandwidths $b_t=\lfloor t^{1/3}\rfloor$ for Bartlett and flat-top and $b_t=\lfloor t^{1/5}\rfloor$ for Parzen and quadratic spectral. We do not observe any substantial differences in average width or cumulative miscoverage rate. This suggests that guidance on kernel and bandwidth selection from the fixed-time literature (e.g., \cite{andrews1991heteroskedasticity}; \cite{newey1994automatic}; \cite{politis1995bias}) and more recent developments (\cite{vats2022lugsail}; \cite{chan2024asymptotically}) may remain applicable in the anytime-valid sequential setting. A theoretical justification is left to future work.  

Next, since practitioners may be interested in early monitoring, we examine cumulative miscoverage for smaller burn-in times $m$ in Figure \ref{fig:sim_first_look}. We focus on the first two nonstationary DGPs with moderate dependence, as stronger dependence slows information accumulation and may limit the practical value of early monitoring.
 
\begin{figure}[H]
\centering
\includegraphics[width=0.55\textwidth]{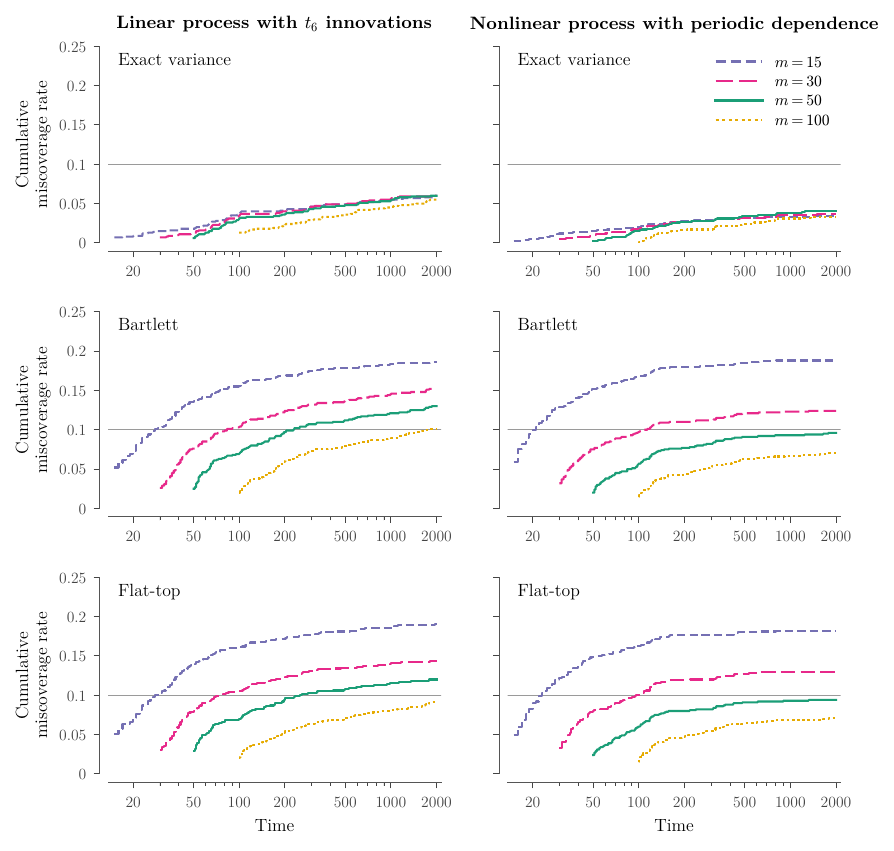}
\caption{Left: cumulative miscoverage rate of nominal $90\%$ AsympCSs beginning at time $m\in\{15,30,50,100\}$ in the linear process. Right: the counterpart in the  nonlinear process with periodic dependence. The horizontal line marks $\alpha=0.1$. The simulation horizon is 2000, with 1000 replicates for each design.}
\label{fig:sim_first_look}
\end{figure}
\noindent As Figure \ref{fig:sim_kernels} shows no substantial differences across kernels, we consider only the Bartlett and flat-top kernels in Figure \ref{fig:sim_first_look}. Consistent with findings of existing simulation studies under the super-population framework in the AsympCS literature (e.g., \cite{waudby2024time}; \cite{dalal2024anytime}), the asymptotic approximation starts to ``kick in'' after a modest burn-in period, here around 50 observations, slightly longer than that in the i.i.d.\ settings owing to dependence.

\section{Discussion} \label{Sec: Conclusion}
\noindent In this paper, we establish a time-uniform dependent central limit theorem under the functional dependence framework, extending asymptotic confidence sequences beyond cross-sectional and martingale settings to accommodate potentially nonstationary time series. We illustrate our framework through applications in algorithms with Markovian structure and time-series-based causal estimands.

Several directions for future research remain. First, while we focus on the univariate case, our proof techniques, together with \cite{wu2011gaussian}, suggest a natural route toward establishing a multivariate nonstationary ASIP. Combined with the multivariate nonasymptotic confidence sequence construction developed in \cite{chugg2023time}, this could further enable vector-valued asymptotic confidence sequences for nonstationary time series. Second, given the broad class of distributions for time series, one may leverage the recent distribution-uniform anytime-valid inference framework of \cite{waudby2023distribution} to construct distribution-uniform extensions of our results. Third, extending our framework to other distributional functionals, such as quantiles which are central to tail-risk applications in financial time-series data, constitutes a promising direction for future research. Such an extension may be achieved by combining the tail functional dependence framework introduced by \cite{zhang2021high} with further advances in quantile or empirical process ASIPs in time series to construct AsympCSs. None of these extensions is trivial, yet each is both technically interesting and of substantial practical relevance. Taken together, we hope that this work opens a fruitful direction for future research on anytime-valid inference for time-series data beyond martingale settings.

\bibliography{ref}

\appendix

\clearpage

\section*{Appendix}
\addcontentsline{toc}{section}{Appendix}

\noindent The appendices are organized as follows. Appendix~\ref{Appendix: Proofs of Nonstationary ASIP}
proves the nonstationary ASIP and collects the technical inequalities and auxiliary lemmas that are needed for the proof. Throughout this Appendix, for ease of exposition, we use the terms ``ASIP'' and ``strong Gaussian approximation'' interchangeably. Some other literature uses the second term more broadly for uniform Gaussian couplings in probability (e.g., \cite{chernozhukov2013intersection}; \cite{belloni2015some}; \cite{li2020uniform}), but we use it to refer exclusively to approximations in the almost-sure sense. Appendix~\ref{Appendix: Proof of TUCLT} provides supporting proofs of results in Section \ref{Sec: Asymp CS} and Section \ref{Sec: Robbins Gaussian Mixture Asymp CSs for (Nonstationary) Time Series}. Appendix \ref{Appendix: Sampling and Optimization Algorithms} provides further discussion of SGD, followed by the proof of the AsympCS result for MCMC ergodic averages. Finally, Appendix~\ref{Appendix: Additional Discussions} collects additional discussion.

\section{Proofs of Nonstationary ASIP - Theorem \ref{Thm: nonstationary ASIP}}  \label{Appendix: Proofs of Nonstationary ASIP}

\noindent The proof of Theorem~\ref{Thm: nonstationary ASIP} builds on \cite{bonnerjee2024gaussian}. It is lengthy and largely self-contained. To facilitate readers' verification and to better follow conventions in the time-series literature, we use notation that differs slightly from that in the main text. Unless otherwise specified, the notation introduced in this appendix is local and does not carry over to other sections. In the main text, Assumption~\ref{Assump:dep_decay} of Theorem~\ref{Thm: nonstationary ASIP} is stated only for $p\in(2,3]$. In this appendix, we present the complete Gaussian approximation theorem for general $p>2$ and restate the assumptions at this level of generality.

This part is organized as follows. Section~\ref{sec_asip: set up} introduces the notation and presents the complete theorem. Section~\ref{sec:proof_ASIP} gives the main proof.
Section~\ref{sec_asip: FDM lemmas} proves auxiliary lemmas stated and used in Section~\ref{sec:proof_ASIP}, including technical inequalities needed under the functional dependence
framework.

\subsection{Notations and the Complete Theorem} \label{sec_asip: set up}
\noindent \textbf{Notations.} For a random variable $X$ and $p\ge1$, let $\norm{X}_p=(\E|X|^p)^{1/p}$ denote the $\Lp_p$-norm, and write $\norm{\cdot}=\norm{\cdot}_2$. $C$ denotes a generic positive constant whose value may change from line to line. For positive quantities $a$ and $b$, write $a\lesssim b$ if $a\le Cb$ for some constant $C>0$. For positive sequences $a_n$ and $b_n$, write $a_n=o(b_n)$ if $a_n/b_n\to0$. For random variables $(X_n)_{n\ge1}$ and positive numbers $(y_n)_{n\ge1}$, write $X_n=\oas{y_n}$ if $X_n/y_n\to0$ almost surely, and $X_n=O_{\mathrm{a.s.}}(y_n)$ if $|X_n|/y_n$ is almost surely bounded. We write $X\overset{\mathcal D}{=}Y$ for equality in distribution, use $\1\{\cdot\}$ for the indicator function, denote by $|\mathcal D|$ the cardinality of a set $\mathcal D$, and let $\lfloor x\rfloor$ be the largest integer not exceeding $x$. Unless otherwise specified, we assume throughout that $p>2$ and $\E[X_i]=0$ for all $i\in\mathbb Z$.

\vspace{12pt}
\label{Appendix: The Complete Theorem}
\noindent \textbf{Assumptions \& the Complete Theorem.} For simplicity, we work exclusively with the centered process $(X_i)_{i\in\mathbb{Z}}$. Since the mean component is deterministic, the ASIP result remains valid for the original process $(Y_i)_{i\in\mathbb{Z}}$. We now state the complete assumptions for our result under general $p >2$. 

\begin{Ass} \label{asm:dep_decay}
Consider Equation \eqref{eq:g_function}. Suppose that $\Theta_{0,p}<\infty$ for some $p>2$. Assume there exist $A>1$ and a positive constant $C>0$ such that
    \begin{align}
        \sup_{i\geq0} (i+1)^A \Theta_{i,p} \leq C,
        \label{eq:dep_decay}
    \end{align}
    where $A$ further satisfies
    \begin{align}\label{eq:decay_rate}
    A > \frac{p^2 - p - 2 + (p-2)\sqrt{p^2+10p+1}}{4p}.
    \end{align}
\end{Ass}

\noindent When $p \in (2,3]$, Equation (\ref{eq:decay_rate}) implies $A > (2 + \sqrt{10})/6$, so the additional constraint is always satisfied given $ A > 1$. In other words, Assumption \ref{Assump:dep_decay} is a special case of Assumption \ref{asm:dep_decay}. As stated in the main text, a uniform tail behavior control is also needed due to the nonstationarity behavior.

\begin{Ass} \label{asm:UI}
    For the same $p$ as in Assumption \ref{asm:dep_decay}, there exists a non-negative random variable $Q\in\Lp_p$ and a constant $C > 0$, such that for all $x > 0$:
\begin{equation}
    \sup_{i\in\mathbb{Z}}\mathbb{P}[|X_i| > x] \leq C  \mathbb{P}[Q > x].
\end{equation}
\end{Ass}

\noindent With these preliminary conditions in place, we have the following complete ASIP result. 
\begin{Thm} \label{thm:main}
    For the process $(X_i)_{i\geq1}$, assume Assumptions \ref{asm:dep_decay} and \ref{asm:UI} hold. Then, on a possibly richer probability space, there exists a standard Brownian motion $\Bro(\cdot)$ such that
    \begin{align}
        \mathsf{S}_n - \Bro(\E[\mathsf{S}_n^2])  = \oas{n^{1/p}\sqrt{\log n}},
    \end{align}
where $\mathsf{S}_n := \sum_{i=1}^n X_i$ denotes the partial sum process.
\end{Thm}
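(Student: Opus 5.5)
The plan is to follow the blueprint of \cite{bonnerjee2024gaussian} and \cite{berkes2014kmt}: reduce $\mathsf S_n$, in several approximation steps, to a sum of independent block variables. A Sakhanenko/KMT-type coupling for independent, non-identically distributed variables can then be applied to these blocks, and the errors are aggregated over geometrically growing dyadic ranges.

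\textbf{Step 1 (truncation and $m$-dependence).} First truncate $X_i$ at level $i^{1/p}$ (up to logs). Assumption \ref{asm:UI} makes $\sum_i \P[|X_i|>i^{1/p}]\lesssim \E Q^p<\infty$, so the truncation error is handled by Borel--Cantelli. Next replace $X_i$ by the $m$-dependent projection $\tilde X_i=\E[X_i\mid \eps_i,\dots,\eps_{i-m}]$ on each dyadic range $[2^k,2^{k+1})$, with $m=m_k$ a polynomial in $2^k$. Burkholder-type inequalities under FDM give
\[
\norm{\max_{j\le 2^k}|\textstyle\sum_{i}(X_i-\tilde X_i)|}_p\lesssim 2^{k/2}\Theta_{m_k,p}\lesssim 2^{k/2}m_k^{-A}.
\]
A Markov/Borel--Cantelli step over $k$ then shows this error is $o_{\mathrm{a.s.}}(n^{1/p})$ once $m_k\ge 2^{k(1/2-1/p)/A}$ times logs.

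\textbf{Step 2 (blocking).} Split each dyadic range into big blocks of length $\ell_k\gg m_k$ separated by small blocks of length $m_k$, in the spirit of the triadic grouping of \cite{berkes2014kmt}. The big-block sums are then independent, and the small-block contributions are controlled by maximal inequalities. The block sums have non-identical variances. To handle this, apply Sakhanenko's coupling to the independent big-block sums within each range. It yields a Brownian motion $\Bro_k$ with error controlled by $\sum(\text{block }p\text{-th moments})^{1/p}$, which is of order $2^{k/p}$ up to logs by Rosenthal's inequality for $m$-dependent sums. Concatenating the ranges via independent increments gives one Brownian motion $\Bro$ evaluated at the cumulative sum of block variances, $\tilde{\V}_n$.

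\textbf{Step 3 (variance matching; the main obstacle).} It remains to replace $\Bro(\tilde{\V}_n)$ by $\Bro(\E\mathsf S_n^2)$. Two issues arise. The variance of the sum of big blocks differs from $\E\mathsf S_n^2$ by cross-covariances and the small blocks. Also, $\E \mathsf S_n^2$ need not be monotone or nondegenerate, so no normalization trick is available. I would bound $|\tilde{\V}_n-\E\mathsf S_n^2|\lesssim n\,\ell_k^{-1}m_k+\sum\text{(tail covariances)}$, using $|\mathrm{Cov}(X_i,X_j)|\le \sum_h\delta_2(h)\delta_2(h+|i-j|)$. Brownian modulus of continuity then gives
\[
\sup_{|s-u|\le \Delta}|\Bro(s)-\Bro(u)|=O_{\mathrm{a.s.}}\!\left(\sqrt{\Delta\log n}\right),
\]
and choosing $\Delta\lesssim n^{2/p}$ yields the $n^{1/p}\sqrt{\log n}$ rate. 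This is where the extra $\sqrt{\log n}$ comes from, since no nondegeneracy is assumed. The delicate part is to choose $\ell_k,m_k$ so that three requirements hold simultaneously: Step 1's error, the coupling error, and the variance mismatch $\Delta$ must all be $\lesssim n^{2/p}$. Balancing these exponents should produce exactly the constraint \eqref{eq:decay_rate} on $A$. I would first try $m_k=2^{k\beta}$ and $\ell_k=2^{k\gamma}$, solve the resulting linear inequalities in $(\beta,\gamma)$, and check that their feasibility reduces to the quadratic condition in \eqref{eq:decay_rate}. Within-range maxima, over $n\in[2^k,2^{k+1})$, are handled by maximal Brownian and Ottaviani-type inequalities, and summing over $k$ with Borel--Cantelli completes the proof of Theorem \ref{thm:main}. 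Theorem \ref{Thm: nonstationary ASIP} then follows because $\mu_i$ is deterministic.
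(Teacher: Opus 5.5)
Your truncation, $m_k$-dependent projection, and the $\sqrt{\Delta\log n}$ Brownian-increment argument for the final variance replacement all match the paper. The blocking in Step 2, however, is the wrong device: the exponent balancing you postpone to the end cannot close, and in particular it does not reduce to \eqref{eq:decay_rate}.

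Write $m_k=2^{k\beta}$ and $\ell_k=2^{k\gamma}$. Sakhanenko's moment bound, applied to big-block sums of scale $\sqrt{\ell_k}$, needs $\sum_k (2^k/\ell_k)\,\ell_k^{\alpha/2}\,2^{-k\alpha/p}<\infty$ for some $\alpha>p$, i.e.\ $\gamma<(\alpha/p-1)/(\alpha/2-1)<2/p$. (With $\alpha=p$, as in your sketch, the coupling error is of order $2^{k/p}\ell_k^{1/2-1/p}$, not $2^{k/p}$, so higher moments of the truncated block sums are needed.)

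On the other hand, the roughly $2^k/\ell_k$ discarded separating blocks are mutually independent, each with variance of order $m_k$. Dropping them therefore costs a path error of order $\sqrt{2^k m_k/\ell_k}$, plus the variance mismatch $2^k m_k/\ell_k$ you identified yourself. Both must be $o(2^{2k/p})$, which forces $\gamma>\beta+1-2/p$. The $m$-dependence variance requirement $2^k\Theta_{m_k,p}=o(2^{2k/p})$ gives $\beta>(1-2/p)/A$. Together you would need $(1-2/p)(1+1/A)<(\alpha/p-1)/(\alpha/2-1)<2/p$. This is impossible for every $p\ge 4$, whatever $A$ is. For $p<4$ it is strictly stronger than \eqref{eq:decay_rate}: at $p=3$, with the paper's Rosenthal restriction $\alpha<2(1+p+Ap)/3$, it requires roughly $A>1.85$ instead of $A>1$.

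The missing idea is the gap-free triadic scheme of \cite{berkes2014kmt}, which the paper uses.
\begin{itemize}
\item Inside $\mathcal{D}_k=(3^{k-1},3^k]$, consecutive outer blocks of length $3m_k$ are placed with no separators, so neighbouring blocks share only the boundary innovations $\bm{\eta}_{k,3j}$.
\item Conditionally on $\bm{\eta}^b$, the blocks are independent and Sakhanenko applies.
\item The randomness of $\bm{\eta}^b$ is then reinstated. Each conditional variance $\Var_k^{(j)}(\bm{a}_{k,3j},\bm{a}_{k,3j+3})$ is split, by conditioning on the middle innovations $\bm{\eta}_{k,3j+1}$, into two pieces that each depend on a single boundary vector. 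These are regrouped into independent increments $\mathcal{A}_{k,j}=\sqrt{\Var^{\circ}_{k,j+1}(\bm{\eta}_{k,3j+3})}\,Z_{k,j}+\mathbb{M}^{(j)}_{k,2}(\bm{\eta}_{k,3j+3})+\mathbb{M}^{(j+1)}_{k,0}(\bm{\eta}_{k,3j+3})$, and Sakhanenko is applied a second time.
\end{itemize}
Because nothing inside a group is discarded, Lemma~\ref{lem:variance_A} gives a variance discrepancy of $O(m_k)$ per group. The total is therefore $\lesssim\sum_{k\le h_n}m_k$, rather than $n\,m_k/\ell_k$. This lets the block length be of the same order as the dependence lag. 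The only remaining constraints are $\beta>(1-2/p)/A$, $\beta<(\alpha/p-1)/(\alpha/2-1)$ and $\alpha<2(1+p+Ap)/3$, and these together yield \eqref{eq:decay_rate}.
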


\subsection{Proofs of Theorem \ref{thm:main} (the Complete Theorem)} \label{sec:proof_ASIP}

\noindent The proof of Theorem \ref{thm:main} adopts the construction framework established in \cite{bonnerjee2024gaussian}, where a convergence rate of $\op{n^{1/p}\sqrt{\log n}}$ was obtained. To strengthen this result from convergence in probability to almost sure convergence, we require significantly sharper estimates and a more delicate analysis of the tail probabilities. We begin by outlining the main steps of the proof, followed by the detailed verification of each step.

\vspace{6pt}
\noindent \textbf{Outline of the proof of part (a).} To strengthen convergence in probability to almost sure convergence, we partition the time horizon into geometrically increasing groups, denoted by $\mathcal{D}_k = (3^{k-1}, 3^k]$. Let $h_n$ be the integer satisfying $3^{h_n-1} < n \leq 3^{h_n}$, so that the sample of size $n$ is decomposed into $h_n$ groups. Within each group $\mathcal{D}_k$ ($1 \le k \le h_n$), we perform the following three reduction steps:

\begin{itemize}
    \item \textbf{Truncation}: In Proposition \ref{prop:truncation_error}, we truncate the process at the level $3^{k/p}$. This step allows us to exploit the stochastic domination condition to control the tail probabilities.
    
    \item \textbf{$m$-dependent approximation}: In the second step, we use an $m_k$-dependent approximation within the group $\mathcal{D}_k$, where $m_k$ increases with $k$. This limits the arbitrary non-stationary dependency structure to a lag of $m_k$. We provide an optimal choice of $m_k$ to ensure that the error rate of $\oas{n^{1/p}}$ is achieved.
    
    \item \textbf{Blocking}: We employ a blocking strategy similar to \cite{berkes2014kmt}: a two-step construction with an inner layer of blocks of size $m_k$ being combined into an outer layer of blocks of size $3m_k$. This structure enables us to perform the necessary mathematical manipulations to obtain an explicit form of the variance in terms of $m_k$-dependent processes.
\end{itemize}

\noindent This three-step reduction transforms the complex original process into a sequence of block sums suitable for the strong approximation result of \cite{sakhanenko2006estimates}, while ensuring that the cumulative approximation error at each stage remains within the required $\oas{n^{1/p}}$ bound. 

\vspace{3pt}
In each of the reduction steps outlined above, our strategy relies on constructing a more tractable surrogate to approximate the original group sum $\sum_{i\in\mathcal{D}_k} X_i$. To verify that the cumulative approximation errors remain within the desired bound, we formulate the following general lemma based on the Borel-Cantelli lemma.

\begin{Lem} \label{lem:approx_principle}
Let $\{\Delta_k\}_{k\geq 1}$ be a sequence of random variables representing the approximation errors within the group $\mathcal{D}_k$. If for any $\delta > 0$, the tail probabilities satisfy the summability condition:
\begin{align*} 
\sum_{k=1}^\infty \mathbb{P}\left[ \abs{\Delta_k} > \delta 3^{k/p} \right] < \infty,
\end{align*}
then the cumulative error satisfies the almost sure bound:
\begin{align*}
\max_{1\le m \le h_n} \abs{\sum_{k=1}^m \Delta_k} = \oas{n^{1/p}}.
\end{align*}
\end{Lem}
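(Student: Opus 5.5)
The plan is a Borel--Cantelli argument followed by a deterministic Kronecker/Cesàro-type step. Fix $\delta>0$. By the summability hypothesis and the first Borel--Cantelli lemma, almost surely there is a random index $K_\delta(\omega)<\infty$ such that $\abs{\Delta_k}\le \delta 3^{k/p}$ for all $k> K_\delta$. The remainder of the argument is pathwise on this full-measure event. At the end we intersect over a countable sequence $\delta=1/j$, $j\in\mathbb{N}$, to obtain a single full-measure event on which the bound holds for every rational $\delta>0$.

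On this event, for any $1\le m\le h_n$, we split the partial sum at $K_\delta$:
\[
\abs{\sum_{k=1}^m \Delta_k}\le \sum_{k=1}^{K_\delta}\abs{\Delta_k} + \delta\sum_{k=K_\delta+1}^{h_n} 3^{k/p}.
\]
The first term is a finite random constant $M_\delta(\omega)$, because each $\Delta_k$ is finite a.s. and there are only finitely many of them. This bound does not depend on $m$, so it also controls the maximum over $1\le m\le h_n$. For the second term we use the geometric sum
\[
\sum_{k\le h_n}3^{k/p}\le \frac{3^{h_n/p}}{1-3^{-1/p}}.
\]
Since $3^{h_n-1}<n$, we have $3^{h_n/p}\le 3^{1/p}n^{1/p}$. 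Hence the second term is at most $c_p\,\delta\, n^{1/p}$, with $c_p:=3^{1/p}/(1-3^{-1/p})$ depending only on $p$.

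Dividing by $n^{1/p}$ gives
\[
\limsup_{n\to\infty} n^{-1/p}\max_{1\le m\le h_n}\abs{\sum_{k=1}^m\Delta_k}\le \limsup_{n\to\infty} M_\delta\, n^{-1/p} + c_p\delta = c_p\delta
\]
almost surely. Letting $\delta\downarrow0$ along the countable sequence yields the claimed $\oas{n^{1/p}}$ bound.

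There is no genuine obstacle here. The only points needing care are that the exceptional null set must not depend on $\delta$, which the countable intersection handles, and that the geometric growth of the thresholds $3^{k/p}$ makes the tail sum comparable to its last term $3^{h_n/p}\asymp n^{1/p}$. This comparability is exactly why the geometric grouping $\mathcal{D}_k=(3^{k-1},3^k]$ is used.
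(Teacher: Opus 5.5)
Your proposal is correct and matches the paper's approach: the paper omits the proof as a direct consequence of the Borel--Cantelli lemma, which is exactly the argument you give. Your split at the random index $K_\delta$, the geometric-sum bound $\sum_{k\le h_n}3^{k/p}\le 3^{1/p}n^{1/p}/(1-3^{-1/p})$, and the countable intersection over $\delta=1/j$ supply precisely the details the paper leaves out.
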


\noindent The proof is a direct consequence of the Borel-Cantelli lemma and is therefore omitted. We will frequently invoke the Borel-Cantelli lemma to establish similar results throughout the paper.

\vspace{6pt}
\noindent \textbf{Outline of the proof of part (b).} This constitutes the most intricate component of our proof. While the strong Gaussian approximation for non-stationary but independent processes is well-established in \cite{sakhanenko2006estimates}, applying it to our setting requires decoupling the dependence structure. Thus, the main objective here is to construct independent increments that allow us to leverage Sakhanenko's result from \cite{sakhanenko2006estimates}. Building upon the regularized block structure established above, we achieve this by implementing a delicate two-step Gaussian approximation strategy within each group $\mathcal{D}_k$: \begin{itemize}
    \item \textbf{Conditional Gaussian approximation}: By conditioning on the shared $\eps$'s between outer blocks, we ensure block-wise independence, allowing for the direct application of Theorem 1 from \cite{sakhanenko2006estimates} to establish a conditional strong approximation. Plugging back the randomness of the shared $\eps$ yields an intermediate unconditional object, which behaves as a Brownian motion \text{only conditional on} the boundary terms.

    \item \textbf{Unconditional Gaussian approximation}: We transform this intermediate object into a surrogate process composed of independent increments. This surrogate serves as a valid proxy for the original partial sum $\mathsf{S}_n$, enabling a second application of Theorem 1 in \cite{sakhanenko2006estimates} to derive an unconditional Gaussian approximation.
\end{itemize}

This unconditional approximation hinges on the explicit variance structure of the $m_k$-dependent process derived earlier. We construct independent increments by performing a strategic algebraic reformulation of this variance and exploiting the scaling properties of Brownian motion.

\vspace{6pt}
\noindent \textbf{Outline of the proof of part (c).} In the first two steps, we use a sequence of approximations to obtain a tractable auxiliary process and its strong Gaussian approximation. However, the resulting Gaussian approximation has a variance that differs from the target $(\mathbb E[\mathsf S_i^2])_{i\ge1}$. Another issue is that making all preceding approximation errors negligible requires a careful choice of $(m_k)_{k\geq1}$ to achieve the optimal approximation. We address these two issues and therefore complete the proof in two steps: \begin{itemize}
    \item \textbf{Regularization of variance}: We correct the approximating Gaussian process step by step to account for the effect of the preceding approximations, so that the final Gaussian approximation has the target variance process $(\mathbb{E}[\mathsf{S}_i^2])_{i\ge1}$.
    
    \item \textbf{Decay rate calibration}: We first collect the set of conditions on the block sequence $(m_k)_{k\ge1}$ required for the optimal approximation to hold. These constraints on $m_k$ can be equivalently re-expressed as restrictions on the dependence-decay exponent $A$. We then characterize the strongest admissible dependency for which all conditions are simultaneously satisfied.
\end{itemize}

This part is standard but somewhat involved. Note that the optimal rate $n^{1/p}$ is already achieved once the Gaussian proxy is matched to the truncated variance process $(\E[(\mathsf S_i^{\oplus})^2])_{i\ge1}$. The subsequent step that replaces $(\E[(\mathsf S_i^{\oplus})^2])_{i\ge1}$ by the target $(\E[\mathsf S_i^2])_{i\ge1}$ typically incurs an additional $\sqrt{\log n}$ factor; in general this term is non-negligible without strong nondegeneracy conditions, and we will quantify it later in the proof.

\subsubsection{(a) - Truncation}

\noindent We begin by defining the standard truncation operator $\T{a}{\cdot}$ at level $a > 0$:
\begin{align*}
    \T{a}{w} = \max\{\min\{w, a\}, -a\}, \quad w \in \R.
\end{align*}
Note that $\T{a}{\cdot}$ is a Lipschitz continuous function with a Lipschitz constant of 1.

\vspace{3pt}
To handle the potential heavy tails, we apply a group-dependent truncation. Specifically, for any index $i$, let $k(i)$ denote the unique group index such that $i \in \mathcal{D}_{k}$. We then define the truncated variable $X_i^{\oplus}$ by $X_i^{\oplus} := \T{3^{k/p}}{X_i}.$ In other words, if $i$ falls into the $k$-th group $\mathcal{D}_k$, we truncate $X_i$ at the level $3^{k/p}$. Let $\mathcal{W}_{k,l}$ denote the partial sum of these centered truncated variables starting from the beginning of the $k$-th group:
\begin{align*}
    \mathcal{W}_{k,l} = \sum_{i = 3^{k-1}+1}^{3^{k-1}+l} \left( \T{3^{k/p}}{X_i} - \E[\T{3^{k/p}}{X_i}] \right).
\end{align*}
The truncated partial sum process, denoted by $\mathsf{S}_n^{\oplus}$, is constructed by accumulating these contributions. For any $n$ falling into the $h_n$-th group (i.e., $3^{h_n-1} < n \le 3^{h_n}$), we have the decomposition:
\begin{align*}
    \mathsf{S}_n^{\oplus} = \sum_{i=1}^n \left(X_i^{\oplus} - \E[X_i^{\oplus}]\right)=\sum_{k=1}^{h_n-1} \mathcal{W}_{k, 3^{k} - 3^{k-1}} + \sum_{i=3^{h_n-1}+1}^n \left( \T{3^{h_n/p}}{X_i} - \E[\T{3^{h_n/p}}{X_i}] \right).
\end{align*}
The following proposition establishes that $\mathsf{S}_n^{\oplus}$ serves as an optimal approximation for the original partial sum process $\mathsf{S}_n$.

\begin{Prop} \label{prop:truncation_error}
    Under Assumption \ref{asm:UI}, the approximation error satisfies
    \begin{align*}
        \max_{1\le i \le n} \abs{\mathsf{S}_i - \mathsf{S}_i^{\oplus}} = \oas{n^{1/p}}.
    \end{align*}
\end{Prop}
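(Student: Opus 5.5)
Write $X_i-X_i^{\oplus}=X_i-\T{3^{k/p}}{X_i}$ for $i\in\mathcal D_k$. Since $\E[X_i]=0$, we have $\E[X_i^{\oplus}]=-\E[X_i-X_i^{\oplus}]$. This gives the pathwise bound
\[
\max_{1\le i\le n}\abs{\mathsf S_i-\mathsf S_i^{\oplus}}\le \sum_{i=1}^{n}\abs{X_i-\T{3^{k(i)/p}}{X_i}}+\sum_{i=1}^{n}\abs{\E[X_i-\T{3^{k(i)/p}}{X_i}]}.
\]
The plan is to show that the random sum and the deterministic sum are each $\oas{n^{1/p}}$ (respectively $o(n^{1/p})$). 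The key observation is that $\abs{w-\T{a}{w}}=(\abs{w}-a)^+\le \abs{w}\1\{\abs{w}>a\}$, so both terms involve only exceedances of the truncation level. No dependence structure is needed; only Assumption \ref{asm:UI} is used, as the statement indicates.

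\textbf{Random term.} First I would show that almost surely only finitely many $i$ satisfy $\abs{X_i}>3^{k(i)/p}$. Since $i\le 3^{k(i)}$, we have $3^{k(i)/p}\ge i^{1/p}$. By Assumption \ref{asm:UI},
\[
\sum_i\P\bigl[\abs{X_i}>3^{k(i)/p}\bigr]\le C\sum_i\P[Q^p>i]\le C\,\E[Q^p]<\infty .
\]
Borel--Cantelli then gives that the exceedance set is a.s.\ finite. Hence the random sum is eventually a constant $R(\omega)<\infty$, and $R=o(n^{1/p})$ trivially. This step is clean because no marginal stationarity or independence is required: Borel--Cantelli needs only summability.

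\textbf{Deterministic term.} This is the step I expect to need the most care. For $a>0$, the tail-integral formula gives
\[
\E\bigl[\abs{X_i}\1\{\abs{X_i}>a\}\bigr]=a\P[\abs{X_i}>a]+\int_a^\infty\P[\abs{X_i}>x]\,\d x\le C\,\E\bigl[Q\1\{Q>a\}\bigr],
\]
where the inequality follows by applying the domination to each tail probability. Write $\psi(a):=\E[Q\1\{Q>a\}]$. I would then bound the deterministic sum by $C\sum_{i\le n}\psi(i^{1/p})$, which is allowed since $\psi$ is nonincreasing and $3^{k(i)/p}\ge i^{1/p}$. It remains to show this is $o(n^{1/p})$. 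Kronecker's lemma suffices if $\sum_i i^{-1/p}\psi(i^{1/p})<\infty$. Interchanging sum and expectation, this series is bounded by $\E\bigl[Q\sum_{i<Q^p}i^{-1/p}\bigr]$. The inner sum is $\lesssim Q^{p(1-1/p)}=Q^{p-1}$, so the whole expression is $\lesssim\E[Q^p]<\infty$ because $p>1$. Kronecker's lemma with weights $i^{1/p}\uparrow\infty$ then yields $\sum_{i\le n}\psi(i^{1/p})=o(n^{1/p})$.

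Combining the two terms gives $\max_{i\le n}\abs{\mathsf S_i-\mathsf S_i^{\oplus}}=\oas{n^{1/p}}$. If one prefers to stay within the paper's framework, both conclusions can also be phrased groupwise via Lemma \ref{lem:approx_principle}. In that version, $\Delta_k$ is the group-$k$ error, and the same exceedance and tail-integral bounds, summed over $i\in\mathcal D_k$, give the required summability.
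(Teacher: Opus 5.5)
Your proof is correct and follows the paper's argument: it makes the same split into a Borel--Cantelli bound on the exceedance events $\{\abs{X_i}>3^{k(i)/p}\}$ (summable by Assumption \ref{asm:UI} and $\E[Q^p]<\infty$) and a deterministic bound on $\sum_{i\le n}\abs{\E[X_i^{\oplus}]}$ through $\E[Q\1\{Q>a\}]$. The differences are minor: you index per observation using $3^{k(i)/p}\ge i^{1/p}$ and finish with Kronecker's lemma, whereas the paper works groupwise. It bounds $3^k\E[Q\1\{Q>3^{k/p}\}]\le 3^{k/p}\E[Q^p\1\{Q>3^{k/p}\}]$ and uses that a geometrically weighted sum of a null sequence is $o(3^{h_n/p})$; your tail-integral argument also justifies the domination $\E[\abs{X_i}\1\{\abs{X_i}>a\}]\le C\,\E[Q\1\{Q>a\}]$, which the paper only asserts.
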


\begin{proof}
   The proof consists of two parts. First, we show that $\max_{1\leq r\leq n}| \mathsf{S}_r - \sum_{i=1}^r X_i^{\oplus}| = \oas{n^{1/p}}$. Second, we show that $\sum_{i=1}^n |\E[X_i^{\oplus}]| = o(n^{1/p})$.

\vspace{3pt}
\noindent To prove the first claim, by the Borel-Cantelli lemma, it suffices to verify that $\sum_{i\geq1} \P[X_i \neq X_i^{\oplus}]<\infty$. Noting that $\P[X_i \neq X_i^{\oplus}] = \P[\abs{X_i} > 3^{k(i)/p}]$ and invoking Assumption \ref{asm:UI}, we have:
\begin{align*}
    &\sum_{i=1}^\infty \P[X_i \neq X_i^{\oplus}]  =\sum_{k=1}^\infty \sum_{i\in\mathcal{D}_k} \P[\abs{X_i}>3^{k/p}] \\ 
     \lesssim &\sum_{k=1}^\infty 3^{k} \P[Q > 3^{k/p}] \lesssim \E[Q^p] <\infty.
\end{align*}

\noindent To prove the second claim $\sum_{i=1}^n \E[X_i^{\oplus}] = o(n^{1/p})$, recall that $\abs{\E[\T{a}{X_i}]} \leq \E[\abs{X_i}\1\{\abs{X_i} > a\}]$. By applying Assumption \ref{asm:UI} again, we have $\E[\abs{X_i}\1\{X_i > a\}] \leq C\E[Q\1\{ Q>a\}]$. Thus, we deduce:
\begin{align*}
    \sum_{i=1}^n \abs{\E[X_i^{\oplus}]} & \leq \sum_{k=1}^{h_n-1} \sum_{i\in\mathcal{D}_k} \E[\abs{X_i}\1\{\abs{X_i}>3^{k/p}\}] + \sum_{i=3^{h_n-1}+1}^n \E[\abs{X_i}\1\{\abs{X_i}>3^{h_n/p}\}] \\ 
    & \lesssim \sum_{k=1}^{h_n} 3^k \E[Q \1\{Q > 3^{k/p}\}] \\
    & \leq  \sum_{k=1}^{h_n} 3^{k/p} \E[Q^{p} \1\{Q>3^{k/p}\}] = o(n^{1/p}).
\end{align*}

\noindent The final inequality follows from the bound $\E[\abs{X}\1\{\abs{X}>a\}] \leq a^{1-p} \E[\abs{X}^p\1\{\abs{X}> a\}]$.
\end{proof}

\subsubsection{(a) - \texorpdfstring{$m$}{m}-dependence Approximation}

\noindent For $i\in \mathcal{D}_k$, we construct the $m_k$-dependent approximation $\tilde{X}_{k,i} = \E[X_i^{\oplus}|\eps_i,\dots,\eps_{i-m_k}] - \E[X_i^{\oplus}]$. By definition, this sequence forms an $m_k$-dependent process. Consequently, the associated partial sum process is defined as
\begin{align*}
    \tilde{\mathsf{S}}_n = \sum_{k=1}^{h_n-1} \tilde{\mathcal{W}}_{k,3^k - 3^{k-1}} + \tilde{\mathcal{W}}_{h_n,n - 3^{h_n-1}},\quad \mathrm{where}\ \tilde{\mathcal{W}}_{k,l} = \sum_{i=3^{k-1}+1}^{3^{k-1}+l} \tilde{X}_{k,i}.
\end{align*}

\noindent To establish the bound $\max_{1\leq i \leq n}|\tilde{\mathsf{S}}_i - \mathsf{S}_i^{\oplus}| = \oas{n^{1/p}}$, it is necessary that the dependence lag $m_k$ increases with $k$ to improve approximation precision. The following lemma quantifies the approximation error between the $m_k$-dependent process and the truncated process $\mathsf{S}_n^t$:

\begin{Lem}\label{lem:inequality_1}
    Under the assumptions of Theorem \ref{thm:main}, we have \begin{align}\label{mk_approx_inequality}
        \norm{\max_{1\leq l \leq 2\times 3^{k-1}}|\tilde{\mathcal{W}}_{k,l} - \mathcal{W}_{k,l} |}_p\leq C\left( 3^{k} - 3^{k-1}\right)^{1/2} \Theta_{1+m_k,p}.
    \end{align}
\end{Lem}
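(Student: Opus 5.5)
The plan is to write the difference as a sum of projections and to bound it with a martingale maximal inequality. Fix the group $\mathcal{D}_k$ and set $X_i^{\circ}:=X_i^{\oplus}-\E[X_i^{\oplus}]$ for $i\in\mathcal{D}_k$. Let $\mathcal{P}_j[\cdot]:=\E[\cdot\mid\mathcal{F}_j]-\E[\cdot\mid\mathcal{F}_{j-1}]$ with $\mathcal{F}_j=\sigma(\eps_j,\eps_{j-1},\dots)$. Since $\tilde X_{k,i}=\E[X_i^{\circ}\mid \eps_i,\dots,\eps_{i-m_k}]$, independence of the innovations gives
\[
X_i^{\circ}-\tilde X_{k,i}=\sum_{h=m_k+1}^{\infty}\bigl(\E[X_i^{\circ}\mid \eps_i,\dots,\eps_{i-h}]-\E[X_i^{\circ}\mid \eps_i,\dots,\eps_{i-h+1}]\bigr)=:\sum_{h=m_k+1}^\infty \mathcal{Q}_{i,h}.
\]
This uses martingale convergence in $\Lp_p$ as $h\to\infty$ together with $\E X_i^{\circ}=0$. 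The difference $\tilde{\mathcal{W}}_{k,l}-\mathcal{W}_{k,l}$ is then $-\sum_{h>m_k}\sum_{i=3^{k-1}+1}^{3^{k-1}+l}\mathcal{Q}_{i,h}$. Minkowski's inequality moves the $\Lp_p$ norm of the maximum inside the sum over $h$. It therefore suffices to show, for each fixed $h$,
\[
\Bigl\|\max_{l\le 2\cdot 3^{k-1}}\Bigl|\sum_{i=3^{k-1}+1}^{3^{k-1}+l}\mathcal{Q}_{i,h}\Bigr|\Bigr\|_p\le C(2\cdot 3^{k-1})^{1/2}\delta_p(h).
\]
Summing over $h\ge m_k+1$ then yields $\Theta_{1+m_k,p}$, as claimed.

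For fixed $h$, the summands $\mathcal{Q}_{i,h}$ form a reverse-type martingale difference sequence in $i$ with respect to the filtration $\mathcal{G}_i=\sigma(\eps_{i-h},\eps_{i-h+1},\dots)$. Indeed, $\mathcal{Q}_{i,h}$ is $\sigma(\eps_{i-h},\dots,\eps_i)$-measurable and has zero conditional mean given $\sigma(\eps_{i-h+1},\dots,\eps_i)$. The cleanest way to make this precise is to reindex: $\mathcal{Q}_{i,h}$ is a martingale difference with respect to the decreasing index $j=i-h$, after conditioning on the "future" $\eps$'s. Equivalently, write $\mathcal{Q}_{i,h}=\E[X_i^{\circ}-X_{i,\{i-h\}}^{\circ}\mid \eps_i,\dots,\eps_{i-h}]$, which is the standard coupling identity. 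I will show a martingale structure for $\sum_i \mathcal{Q}_{i,h}$ over $i$ by ordering the terms along the innovation index $i-h$. Doob's maximal inequality and Burkholder's inequality for $p>2$ then give
\[
\Bigl\|\max_l|\cdot|\Bigr\|_p\le C_p\Bigl(\sum_i\|\mathcal{Q}_{i,h}\|_p^2\Bigr)^{1/2}.
\]

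It remains to bound each term: $\|\mathcal{Q}_{i,h}\|_p\le\|X_i^{\oplus}-X_{i,\{i-h\}}^{\oplus}\|_p$ by Jensen's inequality. Because $\T{a}{\cdot}$ is $1$-Lipschitz and the coupled truncated variable is the truncation of the coupled variable at the same level $3^{k/p}$, this is at most $\|X_i-X_{i,\{i-h\}}\|_p\le\delta_p(h)$. The constant is then uniform in $k$ and $i$, which gives the per-$h$ display above.

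The main obstacle is rigorously justifying the martingale ordering for the maximum over $l$. The sum runs forward in $i$, while the martingale structure of $\mathcal{Q}_{i,h}$ is naturally indexed by the innovation $i-h$. For fixed $h$ the map $i\mapsto i-h$ is monotone, so a partial sum in $i$ is a partial sum of martingale differences adapted to $\sigma(\eps_{i-h},\dots)$ read in reverse, that is, a reverse martingale. Doob's inequality still applies to this reverse martingale. If that becomes delicate, the fallback is to use $\mathcal{P}_{j}$-projections directly: write the sum over $i$ as $\sum_j\mathcal{P}_j[\sum_i(X_i^{\circ}-\tilde X_{k,i})]$, apply Burkholder in $j$, and control the maximum via the standard Wu-type maximal inequality. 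That route gives the same $\Theta_{1+m_k,p}$ bound.
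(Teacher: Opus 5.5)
Your proposal is correct and is essentially the argument the paper relies on. The paper's proof only defers to Lemma A1 of Liu and Lin (2009), whose standard proof is exactly your steps: telescope $X_i^{\oplus}-\E[X_i^{\oplus}]-\tilde X_{k,i}$ into the differences $\E[\,\cdot\mid\eps_i,\dots,\eps_{i-h}]-\E[\,\cdot\mid\eps_i,\dots,\eps_{i-h+1}]$ for $h>m_k$, apply Burkholder and Doob to each fixed-$h$ reverse martingale, and bound each term by $\delta_p(h)$ via Jensen and the coupling identity, where truncation is harmless because $\T{a}{\cdot}$ is $1$-Lipschitz.

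The one step to state more carefully is the maximum. The forward partial sums $M_l=\sum_{i=3^{k-1}+1}^{3^{k-1}+l}\mathcal{Q}_{i,h}$ are not themselves a reverse martingale. Instead, $M_l=N_0-N_l$, where $N_l:=\sum_{i=3^{k-1}+l+1}^{3^k}\mathcal{Q}_{i,h}$ is a reverse martingale with respect to $\sigma(\eps_{3^{k-1}+l+1-h},\eps_{3^{k-1}+l+2-h},\dots)$. Hence $\max_l|M_l|\le 2\max_l|N_l|$, Doob applies at the cost of a factor $2$, and your $\mathcal{P}_j$ fallback is unnecessary.
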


\noindent Equipped with this lemma, we can now specify the growth condition on $m_k$ sufficient to ensure $\max_{1\leq i \leq n}|\tilde{\mathsf{S}}_i - \mathsf{S}_i^{\oplus}| = \oas{n^{1/p}}$.

\begin{Prop}
    Under the assumptions of Theorem \ref{thm:main}, if there exists a positive integer sequence $(m_k)_{k=1}^{\infty}$ such that \begin{align}\label{condition_1}
        \sum_{k=1}^{\infty} \frac{3^{kp/2}\Theta^p_{m_k,p}}{3^{k}} <\infty,
    \end{align}

    \noindent then we have $\max_{1\leq i \leq n}|\tilde{\mathsf{S}}_i - \mathsf{S}_i^{\oplus}| = \oas{n^{1/p}}$.
\end{Prop}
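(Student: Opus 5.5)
The plan is to reduce the statement to the Borel--Cantelli principle of Lemma \ref{lem:approx_principle}, applied group by group, with the $\Lp_p$ maximal bound of Lemma \ref{lem:inequality_1} supplying the tail estimates through Markov's inequality.

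Fix $n$ with $3^{h_n-1}<n\le 3^{h_n}$. Both $\tilde{\mathsf{S}}_i$ and $\mathsf{S}_i^{\oplus}$ are accumulated group by group, so for $i$ in group $\mathcal{D}_j$ we can write
\[
\tilde{\mathsf{S}}_i-\mathsf{S}_i^{\oplus}=\sum_{k=1}^{j-1}\left(\tilde{\mathcal{W}}_{k,|\mathcal{D}_k|}-\mathcal{W}_{k,|\mathcal{D}_k|}\right)+\left(\tilde{\mathcal{W}}_{j,l}-\mathcal{W}_{j,l}\right),\qquad l=i-3^{j-1}.
\]
Define
\[
\Delta_k:=\max_{1\le l\le 2\cdot 3^{k-1}}\left|\tilde{\mathcal{W}}_{k,l}-\mathcal{W}_{k,l}\right|,
\]
noting that $|\mathcal{D}_k|=2\cdot 3^{k-1}$. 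Then
\[
\max_{1\le i\le n}\left|\tilde{\mathsf{S}}_i-\mathsf{S}_i^{\oplus}\right|\le \sum_{k=1}^{h_n}\Delta_k .
\]
This is slightly stronger than the form $\max_m|\sum_{k\le m}\Delta_k|$ in Lemma \ref{lem:approx_principle}, but the Borel--Cantelli argument behind that lemma yields it anyway; I would spell this out briefly rather than cite the lemma verbatim.

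For the tails, take any $\delta>0$. Markov's inequality with Lemma \ref{lem:inequality_1} gives
\[
\P\left[\Delta_k>\delta 3^{k/p}\right]\le \frac{\E[\Delta_k^p]}{\delta^p 3^{k}}\le C\delta^{-p}\,\frac{3^{kp/2}\,\Theta_{1+m_k,p}^p}{3^k}.
\]
Since $\Theta_{i,p}$ is non-increasing in $i$, we have $\Theta_{1+m_k,p}\le\Theta_{m_k,p}$, so condition \eqref{condition_1} makes these probabilities summable in $k$. By Borel--Cantelli, almost surely $\Delta_k\le\delta 3^{k/p}$ for all $k\ge k_0(\omega)$. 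Consequently
\[
\sum_{k=1}^{h_n}\Delta_k\le C(\omega)+\delta\sum_{k\le h_n}3^{k/p}\le C(\omega)+C_p\,\delta\, 3^{h_n/p}\le C(\omega)+C_p'\,\delta\, n^{1/p},
\]
using the geometric sum and $3^{h_n}<3n$. Dividing by $n^{1/p}$ and letting $\delta\downarrow0$ along a countable sequence gives the $\oas{n^{1/p}}$ claim.

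The main obstacle is passing from per-group errors to the uniform maximum over $i\le n$. A naive union over all indices would lose a factor; it is avoided because Lemma \ref{lem:inequality_1} already controls the within-group running maximum, and the geometric growth of the thresholds $3^{k/p}$ makes the sum over earlier groups comparable to the last term. The only other point to check is the index shift between $\Theta_{1+m_k,p}$ and $\Theta_{m_k,p}$, which is handled by monotonicity.
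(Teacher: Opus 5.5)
Your proposal is correct and follows essentially the same route as the paper's proof: bound the uniform maximum by the sum of within-group running-maximum errors, control each group's tail via Markov's inequality and Lemma~\ref{lem:inequality_1}, then conclude by Borel--Cantelli with the geometric thresholds $3^{k/p}$. Your explicit step $\Theta_{1+m_k,p}\le\Theta_{m_k,p}$, by monotonicity, fills in a point the paper uses without comment.
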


\begin{proof}
    To establish the convergence $\max_{1\leq i \leq n}|\tilde{\mathsf{S}}_i - \mathsf{S}_i^{\oplus}| = \oas{n^{1/p}}$, we first observe the following decomposition:
    \[
        \max_{1\leq i\leq n} |\tilde{\mathsf{S}}_i - \mathsf{S}_i^{\oplus}| \leq \sum_{k=1}^{h_n-1} \max_{1\leq l \leq 2\times 3^{k-1}} |\tilde{\mathcal{W}}_{k,l} - \mathcal{W}_{k,l}| + \max_{1\leq l \leq n-3^{h_n-1}}|\tilde{\mathcal{W}}_{h_n,l} - \mathcal{W}_{h_n,l}|.
    \]
    Consequently, by the Borel-Cantelli lemma, it suffices to show that for any $\delta>0$,
    \begin{align*}
        \sum_{k=1}^{\infty} \P\left[ \max_{1\leq l\leq 2\times 3^{k-1}} |\tilde{\mathcal{W}}_{k,l} - \mathcal{W}_{k,l}| > \delta 3^{k/p}\right] <\infty.
    \end{align*}
    Invoking Lemma \ref{lem:inequality_1} together with Markov's inequality and Condition \eqref{condition_1}, we obtain
    \begin{align*}
        \sum_{k=1}^{\infty} \P\left[ \max_{1\leq l\leq 2\times 3^{k-1}} |\tilde{\mathcal{W}}_{k,l} - \mathcal{W}_{k,l}| > \delta 3^{k/p}\right] \lesssim \frac{1}{\delta^p} \sum_{k=1}^{\infty} \frac{(3^k - 3^{k-1})^{p/2}\Theta_{1+m_k,p}^p}{3^k}<\infty,
    \end{align*}
    which completes the proof.
\end{proof}

To satisfy condition \eqref{condition_1}, $m_k$ must grow sufficiently fast with $k$. Conversely, the proof in the next step implies that the growth rate of $m_k$ must be restricted to ensure the generation of sufficiently many independent blocks. 

\subsubsection{(a) - Blocking}

\noindent In this step, we build blocks of sums using the $m$-dependent process derived above. Let $q_k = \lfloor 2\times 3^{k-2}/m_k \rfloor - 2$. We choose $K_0\in\mathbb{N}^{\times}$ large enough so that $q_k\geq 2$ for all $k\geq K_0$, and set $N_0 = 3^{K_0}$. For $k\geq K_0$, we define the outer blocks as
\begin{align}\label{def_block}
    \mathcal{B}_{k,j} = \sum_{3^{k-1}+1+3jm_k}^{3^{k-1} + 3(j+1)m_k} \tilde{X}_{k,i},
\end{align}

\noindent where $\mathcal{B}_{k,j}$ is the sum of $3m_k$ consecutive random variables, and $q_k$ is the maximum number of blocks we can keep within group $\mathcal{D}_k$. The corresponding partial sum process is defined as
\begin{align}\label{def_S_block}
    \mathsf{S}_n^{\Diamond} := \sum_{k = K_0}^{h_n-1} \sum_{j=1}^{q_k} \block_{k,j}+ \sum_{j=1}^{\tau_n} \block_{h_n,j},
\end{align}

\noindent where $\tau_n = \lfloor (n-3^{h_n-1})/3m_{h_n}\rfloor - 2$.

\vspace{3pt}
This construction introduces a specific dependence structure. Since we remove the first $3m_k$ terms in each group $\mathcal{D}_k$, blocks from different groups are independent. However, adjacent blocks $\block_{k,j}$ and $\block_{k,j+1}$ within the same group remain dependent because there is no gap between them. Crucially, the resulting process $\mathsf{S}_n^{\Diamond}$ has a tractable variance structure, which facilitates the subsequent Gaussian approximation in later steps. 

\vspace{3pt}
Now, our goal is to show that $\max_{1\leq i \leq n} |\mathsf{S}_n^{\Diamond} - \tilde{\mathsf{S}}_n | = \oas{n^{1/p}}$. The difference between these two processes comes from the blocks removed during construction. Intuitively, a larger block size $m_k$ leads to a larger approximation error. Therefore, we need to limit the growth rate of $m_k$ to keep the error small enough. To control the moments of the maximal partial sums of the removed blocks, we introduce the following key lemma.

\begin{Lem}\label{lem:rosenthal}
    Under the assumptions of Theorem \ref{thm:main}, suppose there exists $\alpha > p$ such that
    \begin{align}\label{condition_2}
        \Xi_{p}(\alpha):= \sum_{h= 0}^{\infty} \abs{h}^{1/2-1/\alpha} \delta_{p}(h)^{p/\alpha} <\infty,\quad \Lambda_p(\alpha):= \sum_{k=1}^{\infty} 3^{k-k\alpha/p} m_k^{\alpha/2-1} < \infty.
    \end{align}
    Then, there exists a constant $C$ depending only on $\alpha$ and $p$ such that
    \begin{align}\label{rosenthal_ineq}
        \sum_{k=K_0}^{\infty} \max_{1\leq j \leq q_k}\frac{3^k\E\left[\max_{3jm_k < l\leq 3(j+1)m_k} |\tilde{\mathcal{S}}_{3jm_k,l}^{(k)}|^{\alpha}\right]}{m_k3^{k\alpha/p}} \leq C\left( \Lambda_p(\alpha) \Theta_{0,2}^{\alpha} + \Xi_p(\alpha)^{\alpha} + \sup_{i\in\mathbb{Z}} \norm{X_i}_p^p\right),
    \end{align}
    where $\tilde{\mathcal{S}}_{s,l}^{(k)} = \sum_{i=s+1}^l \tilde{X}_{k,i}$ with $\tilde{\mathcal{S}}_{l,l}^{(k)}=0$.
\end{Lem}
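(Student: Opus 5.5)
We prove Lemma \ref{lem:rosenthal} by bounding, uniformly in $j$, the $\alpha$-th moment of the maximal partial sum of the $m_k$-dependent process $\tilde X_{k,i}$ over a window of length $3m_k$. Summing over $k$ with the weight $3^k/(m_k 3^{k\alpha/p})$ then yields the three terms on the right of \eqref{rosenthal_ineq}.

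\textbf{Step 1: Rosenthal-type maximal inequality.} The tool is a Rosenthal-type maximal inequality for functionals of i.i.d.\ inputs under the functional dependence measure, in the spirit of Wu's martingale-projection arguments and the version used in \cite{bonnerjee2024gaussian}. Write $\tilde X_{k,i}=\sum_{h=0}^{m_k}\mathcal P_{i-h}\tilde X_{k,i}$, where $\mathcal P_s=\E[\cdot|\mathcal F_s]-\E[\cdot|\mathcal F_{s-1}]$. For each lag $h$, the sequence $(\mathcal P_{i-h}\tilde X_{k,i})_i$ is a martingale difference sequence. Applying Doob's inequality followed by Burkholder/Rosenthal to each lag gives
\[
\Big\|\max_{l\le L}|\tilde{\mathcal S}^{(k)}_{s,s+l}|\Big\|_\alpha \lesssim \sum_{h\ge0}\Big(L^{1/2}\max_i\|\mathcal P_{i-h}\tilde X_{k,i}\|_2+ L^{1/\alpha}\max_i\|\mathcal P_{i-h}\tilde X_{k,i}\|_\alpha\Big).
\]
We then control the projections by coupling. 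Since $\E[\cdot|\eps_i,\dots,\eps_{i-m_k}]$ and truncation are contractions, $\|\mathcal P_{i-h}\tilde X_{k,i}\|_q\le \|X^{\oplus}_i-X^{\oplus}_{i,\{i-h\}}\|_q$. For $q=2$ this quantity is at most $\delta_2(h)$, and summing over $h$ gives $\Theta_{0,2}$.

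\textbf{Step 2: the $\alpha$-moment term.} This is the main obstacle, because $\alpha>p$ and the moment exceeds the available integrability. The first idea is to interpolate using the truncation level $3^{k/p}$:
\[
\E|X^\oplus_i-X^\oplus_{i,\{i-h\}}|^\alpha\le (2\cdot3^{k/p})^{\alpha-p}\,\delta_p(h)^p .
\]
A second bound comes from Assumption \ref{asm:UI}. Since $\E|X_i^\oplus|^\alpha\lesssim \E[\min(Q,3^{k/p})^\alpha]$, this controls the diagonal, non-difference contribution. After multiplying by the normalization $3^k/(m_k3^{k\alpha/p})$ and summing over $k$, it produces $\sum_k 3^{k(1-\alpha/p)}\E[\min(Q,3^{k/p})^\alpha]\lesssim \E Q^p$, which gives the term $\sup_i\|X_i\|_p^p$.

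For the dependence part, I would instead bound the sum $\sum_h L^{1/\alpha}\|\cdot\|_\alpha$ by $L^{1/2}$ times the weighted series. This uses $L^{1/\alpha}\le L^{1/2}h^{-(1/2-1/\alpha)}\cdot h^{1/2-1/\alpha}$ together with $\|\cdot\|_\alpha\lesssim 3^{k(\alpha-p)/(p\alpha)}\delta_p(h)^{p/\alpha}$. The lags with $h\le L$ are then handled by $\Xi_p(\alpha)$. The expected issue is that the residual power of $3^{k/p}$ must cancel against the normalization. Because $L=3m_k$, the factor $3^k/(m_k 3^{k\alpha/p})$ multiplied by $3^{k(\alpha-p)/p}$ leaves $3^{0}/m_k$, which is summable only after combining with the $q_k\approx 3^k/m_k$ structure. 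I would therefore split more carefully, comparing $h$ with $3^{k/p}$ rather than with $L$, until the sum over $k$ collapses to $\Xi_p(\alpha)^\alpha$.

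\textbf{Step 3: the Gaussian-scale term.} The $L^{1/2}\Theta_{0,2}$ part contributes $m_k^{\alpha/2}\Theta_{0,2}^\alpha$. Multiplying by $3^k/(m_k3^{k\alpha/p})$ gives $3^{k-k\alpha/p}m_k^{\alpha/2-1}\Theta_{0,2}^\alpha$, and summing over $k$ gives exactly $\Lambda_p(\alpha)\Theta_{0,2}^\alpha$. Since every bound above is uniform in $i$, and hence in the block index $j$, the maximum over $1\le j\le q_k$ costs nothing, which completes the plan.
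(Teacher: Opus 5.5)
Your overall plan matches the paper's skeleton: a Rosenthal-type maximal bound, with the $\sqrt{m_k}\,\Theta$ part giving $\Lambda_p(\alpha)\Theta_{0,2}^\alpha$ and the diagonal part giving $\E Q^p$ by summing over $k$ inside the expectation. However, Step 2 is not proved, and the remedies you suggest would not work. Step 2 is the dependence part of the $\alpha$-moment term, which you correctly flag as the main obstacle.

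\textbf{Why your Step 2 fails.} Use your uniform bound $\E\abs{X^{\oplus}_i-X^{\oplus}_{i,\{i-h\}}}^\alpha\le(2\cdot 3^{k/p})^{\alpha-p}\delta_p(h)^p$. Then $\bigl(m_k^{1/\alpha}\sum_h h^{1/2-1/\alpha}\tilde\delta_{k,\alpha}(h)\bigr)^\alpha$ is of order $m_k\,3^{k(\alpha-p)/p}\,\Xi_p(\alpha)^\alpha$. After multiplying by $3^k/(m_k3^{k\alpha/p})$, every group contributes a constant multiple of $\Xi_p(\alpha)^\alpha$, so the series over $k$ diverges. Your count ``$3^0/m_k$'' drops the factor $(L^{1/\alpha})^\alpha\asymp m_k$. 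None of your proposed fixes helps:
\begin{itemize}
\item The factor $q_k\asymp 3^k/m_k$ is already the normalization, so there is no further structure to exploit.
\item Trading $L^{1/\alpha}$ for $L^{1/2}$ makes things worse, since it creates a factor $m_k^{\alpha/2-1}$.
\item Splitting $h$ against $3^{k/p}$ misses the source of the loss. The loss is the deterministic bound $\abs{D}^{\alpha-p}\le(2\cdot 3^{k/p})^{\alpha-p}$ on the coupling difference $D=X_i-X_{i,\{i-h\}}$, which is sharp only where $\abs{D}\approx 3^{k/p}$ and cannot be made summable group by group.
\end{itemize}

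\textbf{The missing idea.} It is the one you already use for the diagonal term: sum over $k$ before taking expectations. Pathwise, $\sum_k 3^{k-k\alpha/p}\min\{\abs{D},2\cdot 3^{k/p}\}^\alpha\lesssim\abs{D}^p$, by two geometric series on either side of $3^k\approx\abs{D}^p$. Combined with $\abs{\T{a}{x}-\T{a}{y}}\le\min\{2a,\abs{x-y}\}$, this yields $\sum_k 3^{k-k\alpha/p}\tilde\delta_{k,\alpha}(h)^\alpha\lesssim\delta_p(h)^p$ in the paper.

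To use this, the $k$-sum must act on $\tilde\delta_{k,\alpha}(h)^\alpha$ lag by lag. So the paper first linearizes the $\alpha$-th power of the lag sum by H\"older with weights adapted to $\Xi_p(\alpha)$. Writing $w_h:=h^{1/2-1/\alpha}\delta_p(h)^{p/\alpha}$,
\[
\Bigl(\sum_{h\ge1}h^{1/2-1/\alpha}\tilde\delta_{k,\alpha}(h)\Bigr)^{\alpha}\le\Xi_p(\alpha)^{\alpha-1}\sum_{h\ge1}w_h^{1-\alpha}\,h^{\alpha/2-1}\,\tilde\delta_{k,\alpha}(h)^{\alpha}.
\]
It then exchanges $\sum_k$ and $\sum_h$. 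Since $h^{\alpha/2-1}\delta_p(h)^p=w_h^\alpha$, the total collapses to $\Xi_p(\alpha)^{\alpha-1}\sum_h w_h=\Xi_p(\alpha)^\alpha$.

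\textbf{A secondary problem in Step 1.} Your displayed inequality does not follow from per-lag Doob plus Burkholder/Rosenthal. For each lag, the martingale Rosenthal bound involves the conditional quadratic variation $\norm{\bigl(\sum_i\E[(\mathcal{P}_{i-h}\tilde X_{k,i})^2\mid\mathcal{F}_{i-h-1}]\bigr)^{1/2}}_\alpha$. This is not controlled by $L^{1/2}\max_i\norm{\mathcal{P}_{i-h}\tilde X_{k,i}}_2$. Bounding it honestly gives $\alpha$-norms at the $\sqrt{L}$ scale, which the normalization cannot absorb. The paper instead cites the Rosenthal-type inequality of \cite{LiuXiaoWu2013}, whose $\alpha$-moment part carries exactly the weight $h^{1/2-1/\alpha}$ appearing in $\Xi_p(\alpha)$.
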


Condition \eqref{condition_2} imposes technical constraints on the decay rate of the dependence measure $\delta_p(h)$ and the growth rate of the block size $m_k$. These requirements are introduced specifically to bound the summation in \eqref{rosenthal_ineq} via a Rosenthal-type inequality for non-stationary processes. With the help of Lemma \ref{lem:rosenthal}, we are now positioned to show $\max_{1\leq i \leq n} |\mathsf{S}_n^{\Diamond} - \tilde{\mathsf{S}}_n | = \oas{n^{1/p}}$.

\begin{Prop}
    Under the assumptions of Theorem \ref{thm:main}, if there exists $\alpha > p$ such that Condition~\eqref{condition_2} holds, then $\max_{1\leq i\leq n}|\mathsf{S}_i^{\Diamond} - \tilde{\mathsf{S}}_i| = \oas{n^{1/p}}$.
\end{Prop}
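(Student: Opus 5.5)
The plan is to identify exactly where $\mathsf{S}^{\Diamond}_i$ and $\tilde{\mathsf{S}}_i$ differ, reduce to groupwise Borel--Cantelli in the spirit of Lemma \ref{lem:approx_principle}, and control each group with Lemma \ref{lem:rosenthal}. Within a completed group $\mathcal{D}_k$ with $k\ge K_0$, $\tilde{\mathcal W}_{k,3^k-3^{k-1}}$ minus $\sum_{j=1}^{q_k}\block_{k,j}$ consists of two pieces: the initial stretch of $3m_k$ terms (the block $j=0$), and the terminal leftover of at most about $9m_k$ terms beyond block $q_k$. Both have length $O(m_k)$. For $n$ in the current group $\mathcal{D}_{h_n}$, the difference $\mathsf{S}^{\Diamond}_i-\tilde{\mathsf{S}}_i$ at intermediate times $i$ also includes a partial-block discrepancy. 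This is a partial sum of $\tilde X_{k,\cdot}$ over a window of length $O(m_k)$ that begins at a block boundary $3^{k-1}+3jm_k$. The groups $k<K_0$ contribute a finite, hence $\oas{n^{1/p}}$, error. Therefore
\[
\max_{1\le i\le n}\bigl|\mathsf S_i^{\Diamond}-\tilde{\mathsf S}_i\bigr|\le O_{\mathrm{a.s.}}(1)+\sum_{k=K_0}^{h_n-1}\Delta_k^{\mathrm{end}}+\max_{k\le h_n} M_k,
\]
where $\Delta_k^{\mathrm{end}}$ collects the removed edge sums of group $k$. The quantity $M_k$ is the maximum over $0\le j\le q_k+2$ of $\max_{3jm_k<l\le 3(j+1)m_k}|\tilde{\mathcal S}^{(k)}_{3jm_k,l}|$. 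It dominates both the within-block oscillation and the edge pieces, after splitting the trailing $\le 9m_k$ terms into at most three such windows.

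The key probabilistic estimate is a union bound over blocks combined with Markov's inequality at order $\alpha>p$:
\[
\P\bigl[M_k>\delta 3^{k/p}\bigr]\le \sum_{j}\frac{\E\bigl[\max_{3jm_k<l\le3(j+1)m_k}|\tilde{\mathcal S}^{(k)}_{3jm_k,l}|^{\alpha}\bigr]}{\delta^{\alpha}3^{k\alpha/p}}\lesssim \frac{3^k}{m_k}\,\max_j\frac{\E[\cdots]}{\delta^\alpha 3^{k\alpha/p}},
\]
since the number of windows is $q_k+3\lesssim 3^k/m_k$. Summing over $k$, the right side is exactly the series bounded in \eqref{rosenthal_ineq}. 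By Lemma \ref{lem:rosenthal} under Condition \eqref{condition_2}, this sum is finite. One caveat is that the lemma's range $1\le j\le q_k$ must be extended to the boundary indices $j=0$ and $j\in\{q_k+1,q_k+2\}$. This is harmless, because its proof is a Rosenthal bound uniform in the window position. Borel--Cantelli then gives $M_k\le\delta 3^{k/p}$ eventually almost surely, and the same bound holds for $\Delta_k^{\mathrm{end}}$.

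It remains to aggregate over groups, which is the content of Lemma \ref{lem:approx_principle}. Once $|\Delta_k|\le\delta3^{k/p}$ for all $k\ge k_0(\omega)$, the geometric sum satisfies $\sum_{k\le h_n}\delta 3^{k/p}\lesssim \delta 3^{h_n/p}\lesssim\delta n^{1/p}$. Since $\delta$ is arbitrary, the claim $\oas{n^{1/p}}$ follows, and the maximum over $i\le n$ is absorbed because the bound is monotone in $h_n$.

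The main obstacle I anticipate is the bookkeeping, rather than any new estimate. Specifically, I must check that every discrepancy, including the current incomplete group where $\tau_n$ blocks are kept and the trailing terms at time $i$, is genuinely dominated by $O(1)$ windows of length $3m_k$ anchored at block boundaries. Only then does the Rosenthal-type sum \eqref{rosenthal_ineq} apply verbatim, with the factor $3^k/m_k$ counting the windows.
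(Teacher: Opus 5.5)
Your proposal is correct and follows the paper's argument: a union bound over the $O(3^k/m_k)$ windows of length $3m_k$ anchored at block boundaries in $\mathcal{D}_k$, Markov's inequality at order $\alpha$, summability from Lemma~\ref{lem:rosenthal}, and Borel--Cantelli with geometric aggregation across groups as in Lemma~\ref{lem:approx_principle}. You also make explicit bookkeeping that the paper's terse proof leaves implicit: the edge windows ($j=0$ and the trailing leftover, which is in fact shorter than $6m_k$), the groups $k<K_0$, and the need to extend Lemma~\ref{lem:rosenthal} beyond the range $1\le j\le q_k$.
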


\begin{proof}
    Assume $k\geq K_0$ and $n\geq N_0$. For any $\delta>0$, the maximal deviation between $\mathsf{S}_n^{\Diamond}$ and $\tilde{\mathsf{S}}_n$ within group $\mathcal{D}_k$ satisfies  
    \begin{align}
        &\P\left[ \max_{1\leq l \leq 2\times 3^{k-1}} \abs{\tilde{\mathcal{W}}_{k,l} - \sum_{j=1}^{\lfloor l/(3m_k)\rfloor} \block_{k,j}}  \geq \delta 3^{k/p}\right] \nonumber \\ \leq & \frac{2\times 3^{k-2}}{m_k} \max_{1\leq j \leq q_k}\P\left[ \max_{3jm_k < l\leq 3(j+1)m_k} |\tilde{\mathcal{S}}_{3jm_k,l}^{(k)}| > \delta 3^{k/p} \right] \\ \leq & \max_{1\leq j \leq q_k}\frac{3^k  \E\left[\max_{3jm_k < l\leq 3(j+1)m_k} |\tilde{\mathcal{S}}_{3jm_k,l}^{(k)}|^{\alpha}\right] }{m_k \delta^{\alpha}3^{k\alpha/p}}, \nonumber
    \end{align}
    
    \noindent where the first inequality uses the union bound over the sub-blocks covering the interval $\mathcal{D}_k$. By Lemma \ref{lem:rosenthal}, we know that this tail probability is summable over the group index $k$. Thus, by the Borel-Cantelli lemma, we can conclude that $\max_{1\leq i\leq n}|\mathsf{S}_i^{\Diamond} - \tilde{\mathsf{S}}_i| = \oas{n^{1/p}}$.
\end{proof}

In the proof of the above results, we assume Conditions \eqref{condition_1} and \eqref{condition_2} hold without explicitly establishing that such a positive sequence $(m_k)_k$ exists under the assumptions of Theorem \ref{thm:main}. The existence and optimal choice of such a sequence $(m_k)_{k\ge 1}$ will be discussed at the end of the proof.

\subsubsection{(b) - Conditional Gaussian Approximation}

\noindent As noted previously, while the blocks $\mathcal{B}_{k,j}$ in group $\mathcal{D}_k$ are independent of blocks in other groups, they remain dependent on their adjacent blocks $\mathcal{B}_{k,j-1}$ and $\mathcal{B}_{k,j+1}$. This local dependence is induced by the shared innovations $\varepsilon_i$ along the block boundaries. In this step, we condition on these boundary innovations to render the blocks conditionally independent, allowing us to apply Theorem 1 of \cite{sakhanenko2006estimates} to establish a conditional Gaussian approximation.

\vspace{3pt}
To formalize the conditioning argument, we introduce a generalized notation based on $m_k$-block partitions. First, for any integer $l \ge 1$, we define the index set $\mathcal{I}_{k,l} = \{ 3^{k-1} + (l-1)m_k + i : i = 1,\dots, m_k\}$ and its corresponding innovation vector $\bm{\eta}_{k,l} = (\varepsilon_i)_{i\in\mathcal{I}_{k,l}}$. Under this notation, the innovations constituting the $j$-th outer block $\mathcal{B}_{k,j}$ correspond to the indices $l \in \{3j+1, 3j+2, 3j+3\}$. Specifically, we partition these into two categories: the \text{internal innovations}, represented by $\bm{\eta}_{k,3j+1}$ and $\bm{\eta}_{k,3j+2}$, and the \text{boundary innovations}. We define the collection of boundary innovations as $\bm{\eta}^{b} = (\bm{\eta}_{k,3j})_{k\geq K_0, 1\leq j \leq (q_k+1)}$. These boundary innovations serve as the dependency bridges between adjacent blocks. We also define a corresponding collection of real vectors $\bm{a}^{b} = (\bm{a}_{k,3j})_{k \ge K_0, 1 \le j \le (q_k+1)}$, where each component $\bm{a}_{k,3j} \in \mathbb{R}^{m_k}$ represents a specific realization of the boundary innovations. Furthermore, for any $i \in \mathcal{D}_k$, let $\tilde{\varphi}_i$ be a measurable function such that the process admits the representation $\tilde{X}_{k,i} = \tilde{\varphi}_i(\varepsilon_i, \dots, \varepsilon_{i-m_k})$.

\vspace{3pt}

\begin{figure}[H]
    \centering
    \begin{tikzpicture}[
        scale=0.92, transform shape, 
        block/.style={draw, thick, minimum height=1.0cm, outer sep=0pt}, 
        gap/.style={draw, dashed, thick, color=gray!60, minimum height=1.0cm, fill=gray!2},
        brace/.style={decorate, decoration={calligraphic brace, amplitude=3pt, mirror}, thick},
        topbrace/.style={decorate, decoration={calligraphic brace, amplitude=3pt}, thick}, 
        bigbrace/.style={decorate, decoration={calligraphic brace, amplitude=5pt, mirror}, very thick},
        tick/.style={draw, thick}
    ]

        \def\mwidth{0.8}  
        \def\bheight{1.0} 
        \def\tailgap{1.2} 

        
        \draw[gap] (0,0) rectangle (2*\mwidth, \bheight);
        
        \draw[block, fill=red!10] (2*\mwidth,0) rectangle (3*\mwidth, \bheight);
        \fill[pattern=north east lines, pattern color=red!30] (2*\mwidth,0) rectangle (3*\mwidth, \bheight);
        \node[font=\scriptsize, text=red!80!black] at (2.5*\mwidth, \bheight/2) {$\bm{\eta}_{k,3}$};
        
        
        \begin{scope}[shift={(3*\mwidth, 0)}] 
            \draw[block, fill=white] (0,0) rectangle (1*\mwidth, \bheight);
            \node[font=\tiny] at (0.5*\mwidth, \bheight/2) {$\bm{\eta}_{k,4}$};

            \draw[block, fill=white] (1*\mwidth,0) rectangle (2*\mwidth, \bheight);
            \node[font=\tiny] at (1.5*\mwidth, \bheight/2) {$\bm{\eta}_{k,5}$};
            
            \draw[block, fill=red!10] (2*\mwidth,0) rectangle (3*\mwidth, \bheight);
            \fill[pattern=north east lines, pattern color=red!30] (2*\mwidth,0) rectangle (3*\mwidth, \bheight);
            \node[font=\scriptsize, text=red!80!black] at (2.5*\mwidth, \bheight/2) {$\bm{\eta}_{k,6}$};

            \draw[topbrace] (0, \bheight+0.1) -- (3*\mwidth, \bheight+0.1) 
                node[midway, above=3pt, font=\footnotesize\bfseries] {$\mathcal{B}_{k,1}$};
        \end{scope}

        \begin{scope}[shift={(6*\mwidth, 0)}] 
            \draw[block, fill=white] (0,0) rectangle (1*\mwidth, \bheight);
            \node[font=\tiny] at (0.5*\mwidth, \bheight/2) {$\bm{\eta}_{k,7}$};

            \draw[block, fill=white] (1*\mwidth,0) rectangle (2*\mwidth, \bheight);
            \node[font=\tiny] at (1.5*\mwidth, \bheight/2) {$\bm{\eta}_{k,8}$};
            
            \draw[block, fill=red!10] (2*\mwidth,0) rectangle (3*\mwidth, \bheight);
            \fill[pattern=north east lines, pattern color=red!30] (2*\mwidth,0) rectangle (3*\mwidth, \bheight);
            \node[font=\scriptsize, text=red!80!black] at (2.5*\mwidth, \bheight/2) {$\bm{\eta}_{k,9}$};

            \draw[topbrace] (0, \bheight+0.1) -- (3*\mwidth, \bheight+0.1) 
                node[midway, above=3pt, font=\footnotesize\bfseries] {$\mathcal{B}_{k,2}$};
        \end{scope}

        \node[font=\large] at (10*\mwidth, \bheight/2) {$\cdots$};

        \begin{scope}[shift={(11*\mwidth, 0)}] 
            \draw[block, fill=white] (0,0) rectangle (1*\mwidth, \bheight);

            \draw[block, fill=white] (1*\mwidth,0) rectangle (2*\mwidth, \bheight);
            
            \draw[block, fill=red!10] (2*\mwidth,0) rectangle (3*\mwidth, \bheight);
            \fill[pattern=north east lines, pattern color=red!30] (2*\mwidth,0) rectangle (3*\mwidth, \bheight);
            
            \node[font=\scriptsize, text=red!80!black, inner sep=0pt] at (2.5*\mwidth, \bheight/2) 
                {\scalebox{0.75}{$\bm{\eta}_{k,3(q_k+1)}$}};

            \draw[topbrace] (0, \bheight+0.1) -- (3*\mwidth, \bheight+0.1) 
                node[midway, above=3pt, font=\footnotesize\bfseries] {$\mathcal{B}_{k,q_k}$};
        \end{scope}

        \def\finalstart{14*\mwidth}
        \def\finalpos{14*\mwidth + \tailgap}
        
        \draw[gap] (\finalstart, 0) rectangle (\finalpos, \bheight);

        \draw[->, thick] (-0.2, 0) -- (\finalpos + 0.5, 0) ;
        
        \draw[tick] (0, 0) -- (0, -0.2);
        
        \draw[tick] (\finalpos, 0) -- (\finalpos, -0.2);

        
        \draw[brace] (0, -0.3) -- (3*\mwidth, -0.3) 
            node[midway, below=2pt, font=\tiny, color=gray!80!black] {Initial Gap};

        \draw[brace] (3*\mwidth, -0.3) -- (6*\mwidth, -0.3) 
            node[midway, below=2pt, font=\tiny] {$3m_k$};

        \draw[brace] (6*\mwidth, -0.3) -- (9*\mwidth, -0.3) 
            node[midway, below=2pt, font=\tiny] {$3m_k$};

        \draw[brace] (11*\mwidth, -0.3) -- (14*\mwidth, -0.3) 
            node[midway, below=2pt, font=\tiny] {$3m_k$};
            
        \draw[brace, decoration={amplitude=3pt}] (14*\mwidth, -0.3) -- (\finalpos, -0.3)
            node[midway, below=2pt, font=\tiny, color=gray!80!black] {Remainder};

        
        \node[below=3pt, font=\small] at (0, -0.9) {$3^{k-1}$};
        \node[below=3pt, font=\small] at (\finalpos, -0.9) {$3^k$};

        \draw[bigbrace] (0, -0.9) -- (\finalpos, -0.9) 
            node[midway, below=3pt, font=\small\bfseries] {Group $\mathcal{D}_k = (3^{k-1}, 3^k]$};

    \end{tikzpicture}

    \caption{Illustration of the block decomposition within group $\mathcal{D}_k$. The shaded regions (red) denote the boundary innovations $\bm{\eta}$ that introduce dependence between adjacent blocks $\mathcal{B}_{k,j}$ and $\mathcal{B}_{k,j+1}$, while the white regions represent internal innovations.}
    \label{fig:block_structure}
\end{figure}

To explicitly characterize the dependency structure induced by the boundary innovations $\bm{\eta}^b$, recall that each outer block $\block_{k,j}$ consists of $3m_k$ elements. Due to the $m$-dependence of the process, the boundary innovation $\bm{\eta}_{k,3j}$ simultaneously influences the last (rightmost) $m_k$ elements of the preceding block $\block_{k,j-1}$ and the first (leftmost) $m_k$ elements of the current block $\block_{k,j}$. Crucially, the central $m_k$ segment of $\block_{k,j}$ relies solely on internal innovations, rendering it independent of any other block $\block_{k,s}$ for $s \neq j$. To formalize this, we decompose $\block_{k,j}$ into three consecutive inner sub-blocks via the following random functions:
\begin{align*}
    \mathbf{B}_{k,3j}(\bm{a}_{k,3j}) & = \sum_{i=1+(3j-1)m_k}^{3jm_k} \tilde{\varphi}_i\big(a_{i+3^{k-1}}, \dots, a_{3jm_k+3^{k-1}}, \varepsilon_{3jm_k+1+3^{k-1}}, \dots, \varepsilon_{i+m_k+3^{k-1}} \big); \\
    \mathbf{B}_{k,3j+1} & = \sum_{i=1+3jm_k}^{(3j+1)m_k} \tilde{\varphi}_i\big(\varepsilon_{i+3^{k-1}}, \dots, \varepsilon_{(3j+1)m_k+3^{k-1}}, \varepsilon_{(3j+1)m_k+1+3^{k-1}}, \dots, \varepsilon_{i+m_k+3^{k-1}} \big); \\
    \mathbf{B}_{k,3j+2}(\bm{a}_{k,3j+3}) & = \sum_{i=1+(3j+1)m_k}^{(3j+2)m_k} \tilde{\varphi}_i\big(\varepsilon_{i+3^{k-1}}, \dots, \varepsilon_{(3j+2)m_k+3^{k-1}}, a_{(3j+2)m_k+1+3^{k-1}}, \dots, a_{i+m_k+3^{k-1}} \big).
\end{align*}
Consequently, the outer block $\block_{k,j}$ defined in \eqref{def_block} can be expressed as the sum of these components, explicitly highlighting the functional dependence on the boundaries:
\begin{equation}\label{eq:def_B}
    \block_{k,j} = \mathbf{B}_{k,3j}(\bm{\eta}_{k,3j}) + \mathbf{B}_{k,3j+1} + \mathbf{B}_{k,3j+2}(\bm{\eta}_{k,3j+3}).
\end{equation}

\noindent Note that $\E[\mathbf{B}_{k,3j+1}] = 0$ by definition. To facilitate the subsequent approximation, we define the conditional mean functions for the boundary-dependent components:
\begin{align*}
    \mathbb{M}_{k,0}^{(j)} (\bm{a}_{k,3j})= \E[\mathbf{B}_{k,3j}(\bm{a}_{k,3j})],\quad \mathbb{M}_{k,2}^{(j)}(\bm{a}_{k,3j+3}) = \E[\mathbf{B}_{k,3j+2}(\bm{a}_{k,3j+3})].
\end{align*}
With these means defined, we introduce the centered process:
\begin{align*}
    \mathcal{Y}_{k,j}(\bm{a}_{k,3j},\bm{a}_{k,3j+3}) = [\mathbf{B}_{k,3j}(\bm{a}_{k,3j}) - \mathbb{M}_{k,0}^{(j)} (\bm{a}_{k,3j})] + \mathbf{B}_{k,3j+1} + [\mathbf{B}_{k,3j+2}(\bm{a}_{k,3j+3}) - \mathbb{M}_{k,2}^{(j)}(\bm{a}_{k,3j+3})].
\end{align*}

By construction, $\mathcal{Y}_{k,j}$ represents the block $\block_{k,j}$ conditioned on the boundary configuration $\bm{\eta}^b = \bm{a}^b$ and centered to have zero mean. Crucially, for any fixed $\bm{a}^b$, the sequence $\{\mathcal{Y}_{k,j}(\bm{a}_{k,3j},\bm{a}_{k,3j+3})\}_{j=1}^{q_k}$ consists of mutually independent random variables. Analogous to the definition of $\mathsf{S}_n^{\Diamond}$ in \eqref{def_S_block}, we define the conditional partial sum process
\begin{align*}
    \mathsf{C}_n(\bm{a}^b) = \sum_{k=K_0}^{h_n-1} \sum_{j=1}^{q_k} \mathcal{Y}_{k,j}(\bm{a}_{k,3j},\bm{a}_{k,3j+3}) + \sum_{j=1}^{\tau_n} \mathcal{Y}_{h_n,j}(\bm{a}_{h_n,3j},\bm{a}_{h_n,3j+3}).
\end{align*}

\noindent Furthermore, we define the associated cumulative mean and variance functions as
\begin{align}
    \mathsf{M}_n(\bm{a}^b) & = \sum_{k=K_0}^{h_n-1} \sum_{j=1}^{q_k} \left[\mathbb{M}_{k,0}^{(j)}(\bm{a}_{k,3j}) + \mathbb{M}_{k,2}^{(j)}(\bm{a}_{k,3j+3})\right] + \sum_{j=1}^{\tau_n} \left[\mathbb{M}_{h_n,0}^{(j)}(\bm{a}_{h_n,3j}) + \mathbb{M}_{h_n,2}^{(j)}(\bm{a}_{h_n,3j+3})\right], \nonumber \\
    \mathsf{Q}_n(\bm{a}^b) & = \sum_{k=K_0}^{h_n-1} \sum_{j=1}^{q_k} \Var_k^{(j)}(\bm{a}_{k,3j},\bm{a}_{k,3j+3}) + \sum_{j=1}^{\tau_n} \Var_{h_n}^{(j)}(\bm{a}_{h_n,3j},\bm{a}_{h_n,3j+3}), 
    \label{variance_process_1}
\end{align}
where $\Var_k^{(j)}(\bm{a}_{k,3j},\bm{a}_{k,3j+3}) = \norm{\mathcal{Y}_{k,j}(\bm{a}_{k,3j},\bm{a}_{k,3j+3})}^2$.

\vspace{3pt}
Since the process $\mathsf{C}_n(\bm{a}^b)$ is a partial sum process consisting of independent variables, we are now able to state our first Gaussian approximation result for $\mathsf{C}_n(\bm{a}^b)$. By Theorem 1 in \cite{sakhanenko2006estimates}, there exists a probability space $(\Omega_{\bm{a}^b}, \mathcal{A}_{\bm{a}^b}, \P_{\bm{a}^b})$ on which we can define random variables $(\mathcal{R}_{k,j}(\bm{a}^b))_{k\geq K_0, 1\leq j \leq q_k} \overset{\mathcal{D}}{=} (\mathcal{Y}_{k,j}(\bm{a}_{k,3j}, \bm{a}_{k,3j+3}))_{k\geq K_0, 1\leq j \leq q_k}$, and a standard Brownian motion $\Bro_{\bm{a}^b}$, such that for all $x>0$,
\begin{align}\label{conditional_gauss_approx}
    \P_{\bm{a}^b}\left[ \max_{N_0\leq i \leq 3^h} \abs{\mathsf{H}_i(\bm{a}^b) - \Bro_{\bm{a}^b}(\mathsf{Q}_i(\bm{a}^b))} \geq c\alpha x \right] \leq \mathsf{U}_{h}(\bm{a}^b,x,\alpha),
\end{align}
where $c$ is an absolute constant,
\begin{align*}
    \mathsf{H}_n(\bm{a}^b) &:= \sum_{k=K_0}^{h_n-1} \sum_{j=1}^{q_k} \mathcal{R}_{k,j}(\bm{a}^b) + \sum_{j=1}^{\tau_n} \mathcal{R}_{h_n,j}(\bm{a}^b), \\
    \mathsf{U}_h(\bm{a}^b,x,\alpha) &:= \sum_{k=K_0}^h \sum_{j=1}^{q_k} \E\left[\abs{\mathcal{Y}_{k,j}(\bm{a}_{k,3j},\bm{a}_{k,3j+3})/x}^{\alpha}\right].
\end{align*}

The result established above represents a conditional Gaussian approximation, valid given the realization $\bm{\eta}^b = \bm{a}^b$. However, our ultimate goal is to obtain an unconditional Gaussian approximation, which requires us to incorporate the randomness inherent in $\bm{\eta}^b$ back into the conditional estimate \eqref{conditional_gauss_approx}. To this end, we replace the fixed parameter $\bm{a}^b$ with the random variable $\bm{\eta}^b$, denoting the resulting unconditional partial sum process by $\mathsf{H}_n(\bm{\eta}^b)$ and the corresponding approximating process by $\Bro_{\bm{\eta}^b}(\mathsf{Q}_n(\bm{\eta}^b))$. Crucially, due to the random time change and dependence on $\bm{\eta}^b$, this approximating process is not a standard Brownian motion. We formalize the resulting unconditional approximation in the following proposition.

\begin{Prop}\label{prop:prob_space} 
    Under the assumptions of Lemma \ref{lem:rosenthal}, there exists a probability space $(\Omega_*,\mathcal{A}_*,\P_*)$ on which we can define $\mathsf{H}_n(\bm{\eta}^b)$ and $\Bro_{\bm{\eta}^b}(\mathsf{Q}_n(\bm{\eta}^b))$ such that \begin{align*}\max_{1\leq i \leq n} \abs{\mathsf{H}_i(\bm{\eta}^b) - \Bro_{\bm{\eta}^b}(\mathsf{Q}_{i}(\bm{\eta}^b))} = \oas{n^{1/p}}.\end{align*}
\end{Prop}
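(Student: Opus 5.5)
We build the joint space by gluing the conditional couplings along the law of the boundary innovations, and then use the conditional Sakhanenko bound \eqref{conditional_gauss_approx} with Borel--Cantelli, averaged over $\bm{\eta}^b$.

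\textbf{Construction of the space.} Let $\P^{b}$ be the law of $\bm{\eta}^b$ on the product of the $\R^{m_k}$. We would first check that the Sakhanenko coupling in \eqref{conditional_gauss_approx} can be chosen measurably in $\bm{a}^b$. The cleanest route is a regular-conditional-probability argument on Polish spaces. We fix the joint law $\mu_{\bm{a}^b}$ of $(\mathcal{R}_{k,j}(\bm{a}^b),\Bro_{\bm{a}^b})$ on $\R^{\infty}\times C[0,\infty)$. Its marginals depend measurably on $\bm{a}^b$, because the law of $\mathcal{Y}_{k,j}(\bm{a}_{k,3j},\bm{a}_{k,3j+3})$ is a measurable function of the boundary values. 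We then select a measurable version of the coupling using the measurable-selection or Skorokhod-type construction that underlies Sakhanenko's theorem. Next we set
\[
\Omega_*=\R^{\infty}\times\R^{\infty}\times C[0,\infty), \qquad \P_*(\d\bm{a}^b,\d r,\d w)=\P^{b}(\d\bm{a}^b)\,\mu_{\bm{a}^b}(\d r,\d w).
\]
On $\Omega_*$, $\bm{\eta}^b$ is the first coordinate. The conditional law of the $\mathcal{R}$'s given $\bm{\eta}^b$ equals that of the $\mathcal{Y}_{k,j}$ given the boundaries. Hence $\mathsf{H}_n(\bm{\eta}^b)+\mathsf{M}_n(\bm{\eta}^b)$ has the same joint law as $(\mathsf{S}_n^{\Diamond})_n$, which is what is later needed to transfer the coupling back.

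\textbf{Borel--Cantelli over groups.} For $h\ge K_0$, let $E_h$ be the event
\[
\max_{3^{h-1}<i\le 3^h}\abs{\mathsf{H}_i-\Bro_{\bm{\eta}^b}(\mathsf{Q}_i)}\ge\delta 3^{h/p}.
\]
By \eqref{conditional_gauss_approx} with $x=\delta 3^{h/p}/(c\alpha)$ and Fubini,
\[
\P_*[E_h]\le \E\!\left[\mathsf{U}_h(\bm{\eta}^b,x,\alpha)\right]\lesssim \delta^{-\alpha}3^{-h\alpha/p}\sum_{k=K_0}^{h}\sum_{j=1}^{q_k}\E\abs{\block_{k,j}-\E[\block_{k,j}\mid\bm{\eta}^b]}^{\alpha}.
\]
The conditional centering is handled by conditional Jensen, which gives a bound by $2^{\alpha}\E\abs{\block_{k,j}}^{\alpha}$. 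This estimate is too crude if used inside a single $h$, since it sums all past groups. We therefore apply Sakhanenko's inequality group by group. The independence of blocks across groups and the increment structure let us work with the maxima over $\mathcal{D}_h$ only, and the earlier groups are handled by the telescoping in Lemma \ref{lem:approx_principle}. The resulting summand is then
\[
3^{-h\alpha/p}\,q_h\max_j\E\abs{\block_{h,j}}^{\alpha}\lesssim \frac{3^h\,\E\bigl[\max_{l}\abs{\tilde{\mathcal{S}}^{(h)}_{3jm_h,l}}^{\alpha}\bigr]}{m_h\,3^{h\alpha/p}}.
\]
This is exactly the quantity that Lemma \ref{lem:rosenthal} shows is summable in $h$, under Condition \eqref{condition_2}. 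Borel--Cantelli then gives that only finitely many $E_h$ occur. Taking the maximum over groups up to $h_n$ and letting $\delta\downarrow0$ yields the $\oas{n^{1/p}}$ bound. The indices $i<N_0$ contribute only an a.s.\ finite term.

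\textbf{Main obstacle.} The delicate point is making the per-group application legitimate for a single Brownian motion $\Bro_{\bm{a}^b}$. Sakhanenko's Theorem 1 controls the whole maximum up to $3^h$ by the cumulative $\mathsf{U}_h$, and summing $3^{-h\alpha/p}\mathsf{U}_h$ over $h$ requires the extra estimate $\sum_h 3^{-h\alpha/p}\sum_{k\le h}a_k\lesssim\sum_k 3^{-k\alpha/p}a_k$. This follows from the geometric decay of $3^{-h\alpha/p}$: swapping the order of summation gives a geometric series in $h\ge k$. I would use this to avoid a groupwise reconstruction of the coupling altogether. The measurable dependence of the coupling on $\bm{a}^b$ is the remaining technical point, and I would settle it by citing the standard measurable version of the Sakhanenko or KMT couplings, obtained via quantile transforms and regular conditional distributions.
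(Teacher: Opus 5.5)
Your proposal is correct, and once you commit to the cumulative Sakhanenko bound together with the exchange $\sum_{h}3^{-h\alpha/p}\sum_{k\le h}a_k\lesssim\sum_k 3^{-k\alpha/p}a_k$, it is essentially the paper's argument: Fubini over $\bm{\eta}^b$, Jensen to bound $\E|\mathcal{Y}_{k,j}|^{\alpha}$ by the maximal block moment, swapping the order of summation, and then Lemma~\ref{lem:rosenthal} plus Borel--Cantelli. Your intermediate claim that the cumulative estimate is ``too crude'' is not right, so the group-by-group detour is unnecessary and can be dropped; your kernel-based construction of $\P_*$ is a more explicit, measurability-aware version of the paper's gluing of the conditional spaces.
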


\begin{proof}
    We first construct the probability space on which we can discuss the strong approximation. Define
\begin{align*}
    (\Omega_*,\mathcal{A}_*,\P_*) = (\Omega,\mathcal{A},\P) \times \prod_{\omega\in\Omega} (\Omega_{\bm{\eta}^b(\omega)},\mathcal{A}_{\bm{\eta}^b(\omega)},\P_{\bm{\eta}^b(\omega)}),
\end{align*}
where $(\Omega,\mathcal{A},\P)$ is the probability space on which the random variables $(\eps_i)_{i\in\mathbb{Z}}$ are defined. For a set $A\subset \Omega_*$ with $A\in\mathcal{A}_*$, the probability measure $\P_*$ is defined as
\begin{align*}
    \P_*[A] = \int_{\Omega}\P_{\bm{\eta}^b(\omega)}[A_{\omega}] \P[\d\omega],
\end{align*}
where $A_{\omega}$ is the $\omega$-section of $A$. Here we recall that, for each fixed $\bm{a}^b$, $(\Omega_{\bm{a}^b},\mathcal{A}_{\bm{a}^b},\P_{\bm{a}^b})$ is the probability space carrying $\Bro_{\bm{a}^b}$ and $\mathcal{R}_{k,j}(\bm{a}^b)$ given $\bm{\eta}^b = \bm{a}^b$. On the probability space $(\Omega_*,\mathcal{A}_*,\P_*)$, the random variable $\mathcal{R}_{k,j}(\bm{\eta}^b)$ is defined as
\begin{align*}
    \mathcal{R}_{k,j}(\bm{\eta}^b)(\omega,\theta(\cdot)) = \mathcal{R}_{k,j}(\bm{\eta}^b(\omega))(\theta(\omega)),
\end{align*}
where $(\omega,\theta(\cdot))\in\Omega_*$, and $\theta(\cdot)$ is an element in the product space $\prod_{\tau \in \Omega} \Omega_{\bm{\eta}^b(\tau)}$ such that $\theta(\tau) \in \Omega_{\bm{\eta}^b(\tau)}$. The other random processes $\mathsf{M}_i(\bm{\eta}^b),\ \mathsf{Q}_i(\bm{\eta}^b)$ and $\Bro_{\bm{\eta}^b}(\mathsf{Q}_i(\bm{\eta}^b))$ are defined similarly.

\vspace{3pt}
With the probability space constructed, we proceed to establish the almost sure convergence. By the Borel-Cantelli lemma, it suffices to show $\sum_{h=K_0}^{\infty} \E[\mathsf{U}_h(\bm{\eta}^b,\delta3^{h/p},\alpha)] < \infty,\ \forall \delta>0$. First, by Jensen's inequality, there exists a constant $c_{\alpha}$ such that for $j = 1,\dots,q_k$,
\begin{align}\label{ineq:bound_variance_1}
    \E[\abs{\mathcal{Y}_{k,j}(\bm{\eta}_{k,3j},\bm{\eta}_{k,3j+3})}^{\alpha}] \leq c_{\alpha} \E\left[\abs{\tilde{\mathcal{S}}_{3jm_k,3(j+1)m_k}^{(k)}}^{\alpha}\right] \leq c_{\alpha} \E\left[\max_{3jm_k < l  \leq 3(j+1)m_k} \abs{\tilde{\mathcal{S}}_{3jm_k,l}^{(k)}}^{\alpha} \right].  
\end{align}
Thus, by exchanging the order of summation, we have
\begin{align}
    \sum_{h=K_0}^{\infty} \mathbb{E}[\mathsf{U}_h(\bm{\eta}^b, \delta 3^{h/p}, \alpha)] 
    &\lesssim \sum_{h=K_0}^{\infty} \sum_{k=K_0}^{h} \frac{ \delta^{-\alpha} q_k}{3^{\alpha h/p}} \left( \max_{1\leq j \leq q_k} \E\left[\max_{3jm_k < l  \leq 3(j+1)m_k} \abs{\tilde{\mathcal{S}}_{3jm_k,l}^{(k)}}^{\alpha} \right] \right) \label{summation} \\
    &= \sum_{k=K_0}^{\infty} \left(\sum_{h=k}^{\infty} \frac{ \delta^{-\alpha} q_k}{3^{\alpha h/p}} \right)\cdot\left( \max_{1\leq j \leq q_k} \E\left[\max_{3jm_k < l  \leq 3(j+1)m_k} \abs{\tilde{\mathcal{S}}_{3jm_k,l}^{(k)}}^{\alpha} \right] \right)  \nonumber \\
    &\lesssim \sum_{k=K_0}^{\infty} \frac{3^k \delta^{-\alpha}}{m_k 3^{\alpha k /p}} \left( \max_{1\leq j \leq q_k} \E\left[\max_{3jm_k < l  \leq 3(j+1)m_k} \abs{\tilde{\mathcal{S}}_{3jm_k,l}^{(k)}}^{\alpha} \right] \right) < \infty. \nonumber
\end{align}
By Lemma \ref{lem:rosenthal}, the last series converges. 
\end{proof}

Proceeding from the result in Proposition \ref{prop:prob_space}, our next goal is to establish a standard Gaussian approximation for the composite process $\mathsf{H}_n(\bm{\eta}^b)+ \mathsf{M}_n(\bm{\eta}^b)$, which reconstructs the original target $\mathsf{S}^{\Diamond}$. The motivation for introducing the intermediate object $\Bro_{\bm{\eta}^b}(\mathbf{Q}_n(\bm{\eta}^b))$ is pivotal: by exploiting the independent increment property of Brownian motion, we can translate the decomposition of the variance process $\mathsf{Q}_n(\bm{a}^b)$ into a sum of independent Gaussian variables. This yields a \text{surrogate} for $\Bro_{\bm{\eta}^b}(\mathbf{Q}_n(\bm{\eta}^b))$ composed of independent increments, for which a strong Gaussian approximation is readily derived.

\vspace{3pt}
In the subsequent proofs, we may need to switch the underlying probability space multiple times to establish strong Gaussian approximation results. To simplify the presentation, we adopt the convention of sometimes using the original notation for all processes without explicitly distinguishing the probability spaces. It is implicitly understood that the processes are defined on the space that supports the construction of the independent copies.

\subsubsection{(b) - Unconditional Gaussian Approximation}

\noindent 
We begin by regularizing the variance process $\mathsf{Q}_n(\bm{a}^b)$ defined in \eqref{variance_process_1}. Observe that in each increment $\Var_k^{(j)}(\bm{a}_{k,3j},\bm{a}_{k,3j+3})$, the boundary terms $\bm{a}_{k,3j}$ and $\bm{a}_{k,3j+3}$ are coupled. Our objective is to decouple these dependencies. Let $\mathbf{C}_{k,3j+1}(\bm{\eta}_{k,3j+1}) = \mathbb{E}[\mathbf{B}_{k,3j+1} | \bm{\eta}_{k,3j+1}]$. By conditioning on $\bm{\eta}_{k,3j+1}$, we decompose the conditional variance $\mathbb{V}_k^{(j)}(\bm{a}_{k,3j}, \bm{a}_{k,3j+3})$ as follows:
\begin{align*}
    &\mathbb{V}_k^{(j)}(\bm{a}_{k,3j}, \bm{a}_{k,3j+3}) = \mathbb{E}[\mathcal{Y}_{k,j}(\bm{a}_{k,3j}, \bm{a}_{k,3j+3})^2] \\
     = &\mathbb{E}\bigg[ \Big( \mathcal{Y}_{k,j}(\bm{a}_{k,3j}, \bm{a}_{k,3j+3})  - \mathbb{E}[\mathcal{Y}_{k,j}(\bm{a}_{k,3j}, \bm{a}_{k,3j+3}) | \bm{\eta}_{k,3j+1}] \Big)^2 \bigg] +  \mathbb{E}\bigg[ \Big( \mathbb{E}[\mathcal{Y}_{k,j}(\bm{a}_{k,3j}, \bm{a}_{k,3j+3}) | \bm{\eta}_{k,3j+1}] \Big)^2 \bigg] \\
    = & \mathbb{E}\bigg[ \Big( \mathbf{B}_{k,3j+1} - \mathbf{C}_{k,3j+1}(\bm{\eta}_{k,3j+1}) + \mathbf{B}_{k,3j+2}(\bm{a}_{k,3j+3}) - \mathbb{M}_{k,2}^{(j)}(\bm{a}_{k,3j+3}) \Big)^2 \bigg] \\
    & \quad + \mathbb{E}\bigg[ \Big( \mathbf{B}_{k,3j}(\bm{a}_{k,3j}) - \mathbb{M}_{k,0}^{(j)}(\bm{a}_{k,3j}) + \mathbf{C}_{k,3j+1}(\bm{\eta}_{k,3j+1}) \Big)^2 \bigg] \\
    := & \Var_{k,2j+2}(\bm{a}_{k,3j+3}) + \Var_{k,2j+1}(\bm{a}_{k,3j}). 
\end{align*}

\noindent Let $\Var^{\circ}_{k, j+1}(\bm{a}_{k,3j+3}) = \Var_{k,2j+2}(\bm{a}_{k,3j+3}) + \Var_{k,2j+3}(\bm{a}_{k,3j+3})$. Then, for all $t\in\mathbb{N}^{\times}$, we have the summation identity
\begin{align*}
    \sum_{j=1}^t \Var_{k}^{(j)} (\bm{a}_{k,3j}, \bm{a}_{k,3j+3}) = \Var_{k,3}(\bm{a}_{k,3}) + \sum_{j=1}^{t-1} \Var^{\circ}_{k,j+1}(\bm{a}_{k,3j+3}) + \Var_{k,2t+2}(\bm{a}_{k,3t+3}).
\end{align*}

\noindent Based on this decomposition, we define a surrogate $\mathsf{Q}_n^{\circ}(\bm{a}^b)$ for $\mathsf{Q}_n(\bm{a}^b)$ as follows:
\begin{align*}
    \mathsf{Q}_n^{\circ}(\bm{a}^b) = \sum_{k = K_0}^{h_n-1} \sum_{j=1}^{q_k-1} \Var_{k,j+1}^{\circ}(\bm{a}_{k,3j+3}) + \sum_{j=1}^{\tau_n - 1} \Var^{\circ}_{h_n,j+1}(\bm{a}_{h_n,3j+3}).
\end{align*}
It is readily apparent that in $\mathsf{Q}_n^{\circ}(\bm{a}^b)$, the contributions of $\bm{a}_{k,3j}$ across different $j$ are decoupled, while the process itself preserves the dominant component of $\mathsf{Q}_n(\bm{a}^b)$.

\vspace{3pt}
Guided by the structure of $\mathsf{Q}_n^{\circ}(\bm{a}^b)$, we proceed to decompose the Brownian motion into a sum of independent increments, thereby constructing a tractable surrogate for $\Bro_{\bm{\eta}^b}(\mathsf{Q}_n(\bm{\eta}^b))$. Specifically, invoking the scaling property of Brownian motion, conditional on $\bm{\eta}^b = \bm{a}^b$, there exist i.i.d. standard normal random variables $(Z_{k,j}^{\bm{a}^b})_{k,j}$ such that
\begin{align*}
    \Bro_{\bm{a}^b}(\mathsf{Q}_n(\bm{a}^b)) = \mathsf{R}_n(\bm{a}^b) + \mathsf{E}_n(\bm{a}^b),
\end{align*}
where
\begin{align*}
    \mathsf{R}_n(\bm{a}^b) &= \sum_{k=K_0}^{h_n-1}\sum_{j=1}^{q_k-1} \sqrt{\Var_{k,j+1}^{\circ}(\bm{a}_{k,3j+3})} Z_{k,j}^{\bm{a}^b} + \sum_{j=1}^{\tau_n-1} \sqrt{\Var_{h_n,j+1}^{\circ}(\bm{a}_{h_n,3j+3})} Z_{h_n,j}^{\bm{a}^b},\\
    \mathsf{E}_n(\bm{a}^b) & = \sum_{k = K_0}^{h_n-1} \left( \sqrt{\Var_{k,3}(\bm{a}_{k,3})} Z_{k,0}^{\bm{a}^b} + \sqrt{\Var_{k,2q_k+2}(\bm{a}_{k,3q_k+3})} Z_{k,q_k}^{\bm{a}^b}\right) \\
    &\quad + \left( \sqrt{\Var_{h_n,3}(\bm{a}_{h_n,3})} Z_{h_n,0}^{\bm{a}^b} + \sqrt{\Var_{h_n,2\tau_n+2}(\bm{a}_{h_n,3\tau_n+3})} Z_{h_n,\tau_n}^{\bm{a}^b}\right).
\end{align*}
Here, the term $\mathsf{R}_n(\bm{a}^b)$ corresponds to the surrogate variance process $\mathsf{Q}_n^{\circ}(\bm{a}^b)$. Now, let $(Z_{k,j})_{k,j}$ be a sequence of i.i.d. standard normal random variables that is independent of the innovation sequence $(\eps_j)_{j\in\mathbb{Z}}$, and define
\begin{align*}
    \overline{\mathsf{H}}_n = \sum_{k=K_0}^{h_n-1}\sum_{j=1}^{q_k-1} \sqrt{\Var_{k,j+1}^{\circ}(\bm{\eta}_{k,3j+3})} Z_{k,j} + \sum_{j=1}^{\tau_n-1} \sqrt{\Var_{h_n,j+1}^{\circ}(\bm{\eta}_{h_n,3j+3})} Z_{h_n,j}.
\end{align*}

\noindent Observe that $(\overline{\mathsf{H}}_n | \bm{\eta}^b = \bm{a}^b)_{i\geq1} \overset{\mathcal{D}}{=} (\mathsf{R}_n(\bm{a}^b))_{i\geq1}$. Consequently, we have the distributional equivalence $\overline{\mathsf{H}}_n \overset{\mathcal{D}}{=} \mathsf{R}_n(\bm{\eta}^b)$. Since $\overline{\mathsf{H}}_n$ consists of independent increments, establishing a strong Gaussian approximation for it is straightforward. It remains to verify that the surrogate $\overline{\mathsf{H}}_n$ sufficiently approximates the intermediate object $\Bro_{\bm{\eta}^b}(\mathsf{Q}_n(\bm{\eta}^b))$.

\begin{Prop}\label{prop:main_prop}
    Under the assumptions of Theorem \ref{thm:main}, if there exists $\alpha > p$ such that Condition~\eqref{condition_2} holds, then there exists a probability space on which we can couple the processes $\overline{\mathsf{H}}_n$ and $\Bro_{\bm{\eta}^b}(\mathsf{Q}_n(\bm{\eta}^b))$ such that
    \begin{align*}
        \max_{1\leq i \leq n} \abs{\overline{\mathsf{H}}_n - \Bro_{\bm{\eta}^b}(\mathsf{Q}_n(\bm{\eta}^b))}  = \oas{n^{1/p}}.
    \end{align*}
\end{Prop}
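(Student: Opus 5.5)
By the decomposition above, conditional on $\bm{\eta}^b=\bm{a}^b$ we have $\Bro_{\bm{a}^b}(\mathsf{Q}_n(\bm{a}^b)) = \mathsf{R}_n(\bm{a}^b)+\mathsf{E}_n(\bm{a}^b)$, and $\overline{\mathsf{H}}_n$ has the same conditional law as $\mathsf{R}_n(\bm{a}^b)$. The plan is therefore to build the coupling by identification, and then show that the boundary term $\mathsf{E}_n$ is almost surely negligible.

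\textbf{Step 1: the coupling.} On the product space of Proposition \ref{prop:prob_space}, for each $\omega$ we define the Gaussian variables $Z_{k,j}:=Z^{\bm{\eta}^b(\omega)}_{k,j}$ for $j=1,\dots,q_k-1$. These are the normalized Brownian increments of $\Bro_{\bm{\eta}^b}$ over the consecutive variance intervals of lengths $\Var^{\circ}_{k,j+1}$. We place the boundary increments, of lengths $\Var_{k,3}$ and $\Var_{k,2q_k+2}$, at the group endpoints.

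Since the conditional law of $(Z_{k,j})$ given $\bm{\eta}^b$ is i.i.d.\ $\mathcal N(0,1)$ and does not depend on $\bm{\eta}^b$, the $Z_{k,j}$ are unconditionally i.i.d.\ standard normal and independent of $(\eps_i)$. This gives $\overline{\mathsf H}_n=\mathsf R_n(\bm{\eta}^b)$ exactly, and hence $\overline{\mathsf H}_n-\Bro_{\bm{\eta}^b}(\mathsf Q_n(\bm{\eta}^b))=-\mathsf E_n(\bm{\eta}^b)$ at the block endpoints.

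\textbf{Step 2: within-block fluctuations.} The statement concerns $\max_{i\le n}$ over all indices, not only block endpoints. Between endpoints both processes are constant in $i$ in the block index $j$ (they are indexed through $\tau_n$), so only the values at block endpoints matter. The boundary pieces are then handled in Step 3.

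\textbf{Step 3: bounding $\mathsf E_n$.} We have
\[
\max_{i\le n}|\mathsf E_i|\le \sum_{k\le h_n}\Big(\sqrt{\Var_{k,3}}|Z_{k,0}|+\sqrt{\Var_{k,2q_k+2}}|Z_{k,q_k}|\Big)+\max_{j\le q_{h_n}}\sqrt{\Var_{h_n,2j+2}}\,|Z_{h_n,j}|.
\]
The last term is the delicate one, because the boundary index moves with $n$. Each $\Var_{k,\cdot}(\bm a)$ is a second moment of a sum of at most $2m_k$ terms of the $m_k$-dependent truncated process, with some arguments frozen. By Jensen and the bound \eqref{ineq:bound_variance_1}, $\E[\Var_{k,\cdot}(\bm{\eta})^{\alpha/2}]\lesssim \max_j\E\max_l|\tilde{\mathcal S}^{(k)}_{3jm_k,l}|^\alpha$.

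With a union bound over the $q_k\lesssim 3^k/m_k$ blocks of group $k$ and Markov's inequality at order $\alpha$, we get
\[
\P\Big[\max_{j\le q_k}\sqrt{\Var_{k,2j+2}}\,|Z_{k,j}|>\delta 3^{k/p}\Big]\lesssim \delta^{-\alpha}\frac{3^k}{m_k3^{k\alpha/p}}\max_j\E\max_l|\tilde{\mathcal S}^{(k)}_{3jm_k,l}|^\alpha .
\]
This is summable in $k$ by Lemma \ref{lem:rosenthal}, using $\E|Z|^\alpha<\infty$ and independence of $Z$ from $\bm\eta$. The two fixed endpoint terms per group are dominated by the same bound. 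Lemma \ref{lem:approx_principle} then gives that the sum over $k\le h_n$ is $\oas{n^{1/p}}$, and Borel--Cantelli handles the moving maximum in the last group.

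\textbf{Main obstacle.} The hardest part is Step 1: making the coupling measurable and consistent across the uncountable product space, and verifying the unconditional independence of $(Z_{k,j})$ from $\bm{\eta}^b$. I would handle this via regular conditional distributions and the fact that the conditional law of $(Z_{k,j})$ is constant in $\bm{a}^b$, using a single Brownian motion on the space of Proposition \ref{prop:prob_space}.
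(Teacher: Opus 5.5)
Your proposal is correct and follows essentially the same route as the paper. You identify $\overline{\mathsf H}_n$ with $\mathsf R_n(\bm\eta^b)$ through the Brownian increment decomposition, then control $\mathsf E_n(\bm\eta^b)$ by a union bound over the $q_k$ blocks, Markov's inequality at order $\alpha$, Jensen to pass from the conditional variances to $\E|\mathcal Y_{k,j}|^{\alpha}$, Lemma~\ref{lem:rosenthal}, and Borel--Cantelli.

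One small slip is in your Step~3 display. For $i\le n$, the moving partial-block term $\sqrt{\Var_{h_i,2\tau_i+2}}\,|Z_{h_i,\tau_i}|$ lives in group $h_i$, so the maximum should run over all groups $k\le h_n$, not only $k=h_n$. Since your tail bound is summable over every $k$, Borel--Cantelli handles this at no extra cost.
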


\begin{proof}
    Working on a suitable sample space where the decomposition $\Bro_{\bm{\eta}^b}(\mathsf{Q}_n(\bm{\eta}^b)) = \overline{\mathsf{H}}_n + \mathsf{E}_n(\bm{\eta}^b)$ holds, it suffices to show that $\max_{1\leq i\leq n }\abs{\mathsf{E}_n(\bm{\eta}^b)} = \oas{n^{1/p}}.$ By the Borel-Cantelli lemma, this reduces to verifying that for any $\delta > 0$,
\begin{align}\label{condition_3}
    &\sum_{k=K_0}^\infty \P\left[ \max_{1\leq j \leq q_k} \abs{\sqrt{\Var_{k,2j+1}(\bm{\eta}_{k,3j})}Z_{k,j}^*} > \delta 3^{k/p}\right] < \infty, \\
    \text{and}\quad &\sum_{k=K_0}^\infty \P\left[ \max_{1\leq j \leq q_k} \abs{\sqrt{\Var_{k,2j+2}(\bm{\eta}_{k,3j+3})}Z_{k,j}^*} > \delta 3^{k/p}\right] < \infty, \nonumber
\end{align}
where $(Z_{k,j}^*)_{k,j}$ denotes another sequence of i.i.d. standard normal random variables independent of $(\eps_i)_i$. Without loss of generality, we focus on establishing \eqref{condition_3}, as the proof for the second series follows an identical argument. Observe that $\Var_{k,2j+1}(\bm{a}_{k,3j}) \leq \E[\mathcal{Y}_{k,j}(\bm{a}_{k,3j},\bm{a}_{k,3j+3})^2]$. Consequently, by applying Jensen's inequality, there exists a constant $c_{\alpha}$ such that
\begin{align}\label{ineq:bound_variance_2}
    \E\left[ \abs{\sqrt{\Var_{k,2j+1}(\bm{\eta}_{k,3j})}Z_{k,j}^*}^{\alpha} \right] \lesssim \E\left[ \abs{\sqrt{\Var_{k,2j+1}(\bm{\eta}_{k,3j})}}^{\alpha} \right]\leq c_{\alpha} \E\left[ \abs{\mathcal{Y}_{k,j}(\bm{\eta}_{k,3j},\bm{\eta}_{k,3j+3})}^{\alpha} \right].
\end{align}
Therefore, by applying the union bound followed by Markov's inequality, we obtain
\begin{align*}
    &\sum_{k=K_0}^{\infty}\P\left[ \max_{1\leq j \leq q_k}\abs{\sqrt{\Var_{k,2j+1}(\bm{\eta}_{k,3j})}Z_{k,j}^*} > \delta 3^{k/p}\right] \\
    \leq & \sum_{k=K_0}^{\infty}  q_k \left( \max_{1\leq j \leq q_k} \P\left[ \abs{\sqrt{\Var_{k,2j+1}(\bm{\eta}_{k,3j})}Z_{k,j}^*} > \delta 3^{k/p}\right]\right) \\
    \lesssim  & \sum_{k=K_0}^{\infty} q_k \left( \max_{1\leq j \leq q_k} \frac{\E[\abs{\mathcal{Y}_{k,j}(\bm{\eta}_{k,3j},\bm{\eta}_{k,3j+3})}^{\alpha}]}{\delta^{\alpha} 3^{\alpha k/p}} \right).
\end{align*}
By invoking Lemma \ref{lem:rosenthal} and following an argument similar to that in \eqref{summation}, we conclude that the series above converges.
\end{proof}

Recall that $\mathsf{H}_n(\bm{\eta}^b) + \mathsf{M}_n(\bm{\eta}^b) \overset{\mathcal{D}}{=} \mathsf{S}_n^{\Diamond}$. Combining Proposition \ref{prop:prob_space} and Proposition \ref{prop:main_prop}, we obtain the bound $\max_{1\leq i \leq n}|\mathsf{H}_i(\bm{\eta}^b) - \overline{\mathsf{H}}_i| = \oas{n^{1/p}}$. Consequently, there exists a sample space on which the approximation $\max_{1\leq i\leq n}|\overline{\mathsf{H}}_i+ \mathsf{M}_i(\bm{\eta}^b) - \mathsf{S}_i^{\Diamond}| = \oas{n^{1/p}}$ holds. It therefore suffices to establish a strong Gaussian approximation for the composite process $\overline{\mathsf{H}}_n+ \mathsf{M}_n(\bm{\eta}^b)$. To achieve this, we introduce a modified partial sum process $\mathsf{S}_n^{\natural}$ such that
\begin{align*}
    \mathsf{S}_n^{\natural}:= \sum_{k=K_0}^{h_n-1} \sum_{j=1}^{q_k-1}\mathcal{A}_{k,j} + \sum_{j=1}^{\tau_n-1}\mathcal{A}_{h_n,j},
\end{align*}
where \begin{align}\label{eq:def_A}
    \mathcal{A}_{k,j} = \sqrt{\Var_{k,j+1}^{\circ}(\bm{\eta}_{k,3j+3})} Z_{k,j} + \mathbb{M}_{k,2}^{(j)}(\bm{\eta}_{k,3j+3}) + \mathbb{M}_{k,0}^{(j+1)}(\bm{\eta}_{k,3j+3}).
\end{align}
Observe that the residual error $\mathsf{E}^{\natural}_n := (\overline{\mathsf{H}}_n + \mathsf{M}_n(\bm{\eta}^b)) - \mathsf{S}_n^{\natural}$ consists solely of boundary terms:
\begin{align*}
    \mathsf{E}^{\natural}_n = \sum_{k=K_0}^{h_n-1}\left[\mathbb{M}_{k,0}^{(1)}(\bm{\eta}_{k,3}) + \mathbb{M}_{k,2}^{(q_k)}(\bm{\eta}_{k,3q_k+3})\right] + \left[\mathbb{M}_{h_n,0}^{(1)}(\bm{\eta}_{h_n,3}) + \mathbb{M}_{h_n,2}^{(\tau_n)}(\bm{\eta}_{h_n,3\tau_n+3}) \right].
\end{align*}
It is straightforward to verify that $\E[|\mathbb{M}_{k,0}^{(j)}(\bm{\eta}_{k,3j})|^{\alpha}] \lesssim \E[\max_{3jm_k < l \leq 3(j+1)m_k} |\tilde{\mathcal{S}}^{(k)}_{3jm_k,l}|^{\alpha}]$. Consequently, by employing arguments analogous to the proof of Proposition \ref{prop:main_prop}, we have $\max_{1\leq i \leq n}|\mathsf{E}_i^{\natural}| = \oas{n^{1/p}}$.

\vspace{3pt}
In light of this estimate, the task of approximating the original process reduces to establishing a strong Gaussian approximation for $\mathsf{S}_n^{\natural}$. Crucially, the terms $\mathcal{A}_{k,j}$ constitute a sequence of independent random variables with zero mean. This independence structure allows us to readily apply standard strong Gaussian approximation techniques.

\begin{Prop}
    Under the assumptions of Theorem \ref{thm:main} and Lemma \ref{lem:rosenthal}, we can construct a probability space such that
    \begin{align*}
        \max_{1\leq i\leq n} | \mathsf{S}_i^{\natural} - \Bro(\overline{\mathsf{Q}}_i) |  = \oas{n^{1/p}},
    \end{align*}
    where $\overline{\mathsf{Q}}_n := \sum_{k=K_0}^{h_n-1} \sum_{j=1}^{q_k-1}\norm{\mathcal{A}_{k,j}}^2 + \sum_{j=1}^{\tau_n-1}\norm{\mathcal{A}_{h_n,j}}^2$.
\end{Prop}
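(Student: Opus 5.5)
The plan is to apply Theorem 1 of \cite{sakhanenko2006estimates} directly to the sequence $(\mathcal{A}_{k,j})_{k\ge K_0,\,1\le j\le q_k-1}$, ordered lexicographically. Then use the Borel--Cantelli lemma over the groups $\mathcal{D}_h$, exactly as in the proof of Proposition \ref{prop:prob_space}.

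\textbf{Independence and centering.} First I would confirm that the $\mathcal{A}_{k,j}$ are independent and mean zero. Each $\mathcal{A}_{k,j}$ in \eqref{eq:def_A} is a measurable function of $(\bm{\eta}_{k,3j+3}, Z_{k,j})$ only. The boundary vectors $\bm{\eta}_{k,3j+3}$ are disjoint across $(k,j)$, the $\eps_i$ are i.i.d., and the $Z_{k,j}$ are i.i.d.\ and independent of $(\eps_i)$, so the family is mutually independent. For centering, $\E[\sqrt{\Var^{\circ}_{k,j+1}(\bm{\eta}_{k,3j+3})}Z_{k,j}]=0$ by independence of $Z_{k,j}$. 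Also
\[
\E[\mathbb{M}^{(j)}_{k,2}(\bm{\eta}_{k,3j+3})]=\E[\mathbf{B}_{k,3j+2}(\bm{\eta}_{k,3j+3})],
\]
and similarly for $\mathbb{M}^{(j+1)}_{k,0}$. The sum of these two expectations is the mean of a partial sum of the centered $\tilde X_{k,i}$, hence zero.

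\textbf{Applying Sakhanenko's theorem.} With $\overline{\mathsf{Q}}_n$ the cumulative variance, Sakhanenko's theorem gives a probability space carrying a copy of $(\mathcal{A}_{k,j})$ and a standard Brownian motion $\Bro$. On it, for all $x>0$ and each $h$,
\[
\P\Big[\max_{N_0\le i\le 3^h}|\mathsf{S}^{\natural}_i-\Bro(\overline{\mathsf{Q}}_i)|\ge c\alpha x\Big]\le \sum_{k=K_0}^{h}\sum_{j=1}^{q_k-1}\E|\mathcal{A}_{k,j}/x|^{\alpha}.
\]
Note that $\mathsf{S}^{\natural}_i$ is piecewise constant in $i$, changing only at block endpoints. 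The maximum over $i$ is therefore a maximum over partial sums of the independent sequence, which is the form the theorem addresses. Take $x=\delta 3^{h/p}$ and sum over $h$. Exchanging the order of summation as in \eqref{summation}, it then suffices to show
\[
\sum_{k\ge K_0}\frac{q_k}{3^{\alpha k/p}}\max_j\E|\mathcal{A}_{k,j}|^{\alpha}<\infty .
\]
Borel--Cantelli then yields $\max_{i\le n}|\mathsf{S}^\natural_i-\Bro(\overline{\mathsf{Q}}_i)|=\oas{n^{1/p}}$. For $n$ between $3^{h-1}$ and $3^h$ one has $3^{h/p}\lesssim n^{1/p}$, and $\delta$ is arbitrary.

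\textbf{The moment bound, which I expect to be the main step.} The remaining work is to show
\[
\E|\mathcal{A}_{k,j}|^{\alpha}\lesssim \max_{j}\E\Big[\max_{3jm_k<l\le 3(j+1)m_k}|\tilde{\mathcal{S}}^{(k)}_{3jm_k,l}|^{\alpha}\Big],
\]
after which Lemma \ref{lem:rosenthal} closes the argument (using $q_k\lesssim 3^k/m_k$). I would split $\mathcal{A}_{k,j}$ into its Gaussian part and its mean parts.

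For the Gaussian part, $\E|Z|^\alpha<\infty$ reduces the bound to $\E[(\Var^{\circ}_{k,j+1})^{\alpha/2}]$. Now $\Var^{\circ}_{k,j+1}=\Var_{k,2j+2}+\Var_{k,2j+3}$, and each summand is a conditional second moment. Each is therefore at most a conditional $L^2$ norm of a partial sum of $\tilde X_{k,i}$ over at most $3m_k$ consecutive indices, with the boundary innovation frozen. By conditional Jensen ($\alpha/2>1$) and integrating over $\bm{\eta}_{k,3j+3}$, this is bounded by $\E|\mathcal{Y}|^\alpha$ and $\E|\mathcal{Y}'|^\alpha$ for adjacent blocks. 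These in turn are bounded as in \eqref{ineq:bound_variance_1}–\eqref{ineq:bound_variance_2}.

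For the mean parts, each of $\mathbb{M}^{(j)}_{k,2}(\bm{\eta}_{k,3j+3})$ and $\mathbb{M}^{(j+1)}_{k,0}(\bm{\eta}_{k,3j+3})$ is a conditional expectation of a sub-block sum given $\bm{\eta}_{k,3j+3}$. Jensen gives the same domination, as already noted before the statement.

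The only delicate point is bookkeeping. The terms attached to $\bm{\eta}_{k,3j+3}$ straddle blocks $j$ and $j+1$, so the maximum over $j$ must range over $1\le j\le q_k$. This is harmless because Lemma \ref{lem:rosenthal} already controls that maximum uniformly.
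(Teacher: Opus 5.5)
Your proposal is correct and follows the paper's proof essentially step for step. You apply Sakhanenko's Theorem 1 to the independent, centered $\mathcal{A}_{k,j}$, repeat the Borel--Cantelli and summation-exchange argument of Proposition~\ref{prop:prob_space}, and bound $\E|\mathcal{A}_{k,j}|^{\alpha}$ by separating the Gaussian and conditional-mean parts, using \eqref{ineq:bound_variance_1}, \eqref{ineq:bound_variance_2} and Lemma~\ref{lem:rosenthal} over the adjacent blocks $j$ and $j+1$; your explicit checks of independence, centering, and the piecewise-constant structure of $\mathsf{S}^{\natural}_i$ just fill in details the paper states without proof.
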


\begin{proof}
    By invoking Theorem 1 of Sakhanenko \cite{sakhanenko2006estimates} and utilizing arguments analogous to those in \eqref{conditional_gauss_approx} and Proposition \ref{prop:prob_space}, the proof is nearly complete; it remains only to bound the moment $\E[|\mathcal{A}_{k,j}|^{\alpha}]$. Observe that
    \begin{align*}
        \E[|\mathcal{A}_{k,j}|^{\alpha}] &\lesssim \E\left[ \abs{\sqrt{\Var^{\circ}_{k,j+1}(\bm{\eta}_{k,3j+3})}}^{\alpha}\right] + \E\left[\abs{\mathbb{M}_{k,2}^{(j)}(\bm{\eta}_{k,3j+3})}^{\alpha}\right] + \E\left[\abs{\mathbb{M}_{k,0}^{(j+1)}(\bm{\eta}_{k,3j+3})}^{\alpha}\right]  \\
        &\lesssim \E\left[ \abs{\sqrt{\Var_{k,2j+2}(\bm{\eta}_{k,3j+3})}}^{\alpha}\right] + \E\left[ \abs{\sqrt{\Var_{k,2j+3}(\bm{\eta}_{k,3j+3})}}^{\alpha}\right] \\ &\quad+ \E\left[\abs{\mathbb{M}_{k,2}^{(j)}(\bm{\eta}_{k,3j+3})}^{\alpha}\right] + \E\left[\abs{\mathbb{M}_{k,0}^{(j+1)}(\bm{\eta}_{k,3j+3})}^{\alpha}\right] \\
        & \lesssim \E\left[\max_{3jm_k < l \leq 3(j+1)m_k} \abs{\tilde{\mathcal{S}}^{(k)}_{3jm_k,l}}^{\alpha}\right] + \E\left[\max_{3(j+1)m_k < l \leq 3(j+2)m_k} \abs{\tilde{\mathcal{S}}^{(k)}_{3(j+1)m_k,l}}^{\alpha}\right],
    \end{align*}
    where the variance terms are controlled by \eqref{ineq:bound_variance_1} and \eqref{ineq:bound_variance_2}. Substituting this upper bound into the derivation steps outlined in Proposition \ref{prop:prob_space} concludes the proof.
\end{proof}

Under the assumptions of Theorem \ref{thm:main}, and provided that Conditions \eqref{condition_1} and \eqref{condition_2} hold, by combining the established results, we can construct a sample space such that
\begin{align}
    \max_{1\leq i\leq n}| \mathsf{S}_i - \Bro(\overline{\mathsf{Q}}_i)| = \oas{n^{1/p}}.
\end{align}
This confirms that we have successfully established the strong Gaussian approximation for non-stationary processes with the optimal rate, marking a major milestone in our proof.

\subsubsection{(c) - Regularization of Variance}

\noindent Although we have established a strong Gaussian approximation with an optimal rate, the current result is still essentially existential, since the variance process $\overline{\mathsf{Q}}_i$ has not yet been explicitly identified. Recall that our ultimate goal is to establish
\[
\max_{1\leq i \leq n} | \mathsf{S}_i - \Bro(\E[\mathsf{S}_i^2]) |
= \oas{n^{1/p}\sqrt{\log n}} .
\]
In this section, we regularize the variance by showing that $\overline{\mathsf{Q}}_i$ can be replaced by $\E[\mathsf{S}_i^2]$ up to a negligible error. We begin with the following lemma, which characterizes the variance of $\mathcal{A}_{k,j}$.


\begin{Lem}\label{lem:variance_A}
    The variance of $\mathcal{A}_{k,j}$ defined in \eqref{eq:def_A} is given by
    \begin{align*}
        \norm{\mathcal{A}_{k,j}}^2
        &= \norm{\mathbf{B}_{k,3j+1}}^2 + \norm{\mathbf{B}_{k,3j+2}}^2 + \norm{\mathbf{B}_{k,3j+3}}^2 \\
        &\quad + 2\E[\mathbf{B}_{k,3j+1}\mathbf{B}_{k,3j+2}]
        + 2\E[\mathbf{B}_{k,3j+2}\mathbf{B}_{k,3j+3}]
        + 2\E[\mathbf{B}_{k,3j+3}\mathbf{B}_{k,3j+4}] \\
        &\quad - \norm{\mathbf{C}_{k,3j+1}(\bm{\eta}_{k,3j+1})}^2
        + \norm{\mathbf{C}_{k,3j+4}(\bm{\eta}_{k,3j+4})}^2 .
    \end{align*}
    Here, for notational simplicity, we suppress the dependence on the boundary variables and write
    $\mathbf{B}_{k,3j+1}(\bm{\eta}_{k,3j})$ as $\mathbf{B}_{k,3j+1}$ and
    $\mathbf{B}_{k,3j+2}(\bm{\eta}_{k,3j+3})$ as $\mathbf{B}_{k,3j+2}$.
\end{Lem}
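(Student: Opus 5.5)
The plan is a direct second-moment computation. Write $\eta_l:=\bm{\eta}_{k,l}$ and $\mathbf{B}_l:=\mathbf{B}_{k,l}$, and evaluate every boundary argument at the random innovations. Then $\mathbf{B}_{3j+1}$ depends on $(\eta_{3j+1},\eta_{3j+2})$, $\mathbf{B}_{3j+2}$ on $(\eta_{3j+2},\eta_{3j+3})$, $\mathbf{B}_{3j+3}=\mathbf{B}_{k,3(j+1)}(\eta_{3j+3})$ on $(\eta_{3j+3},\eta_{3j+4})$, and $\mathbf{B}_{3j+4}$ on $(\eta_{3j+4},\eta_{3j+5})$. Each $\mathbf{B}_l$ has mean zero, since $\tilde X_{k,i}$ is centered.

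Define $C_l:=\E[\mathbf{B}_l\mid \eta_l]$ and $D_l:=\E[\mathbf{B}_l\mid\eta_{l+1}]$. In the paper's notation, $C_{3j+1}=\mathbf{C}_{k,3j+1}$, $C_{3j+4}=\mathbf{C}_{k,3j+4}$, $D_{3j+2}=\mathbb{M}^{(j)}_{k,2}(\eta_{3j+3})$ and $D_{3j+3}=\mathbb{M}^{(j+1)}_{k,0}(\eta_{3j+3})$. The identity for $\norm{\mathcal{A}_{k,j}}^2$ will follow from the independence of the blocks $\eta_l$ and repeated use of the tower property.

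\textbf{Step 1 (split off the Gaussian part).} By construction $Z_{k,j}$ is standard normal and independent of $(\eps_i)$. Also $D_{3j+2}+D_{3j+3}$ has mean zero. Therefore
\[
\norm{\mathcal{A}_{k,j}}^2=\E\big[\Var_{k,2j+2}(\eta_{3j+3})\big]+\E\big[\Var_{k,2j+3}(\eta_{3j+3})\big]+\norm{D_{3j+2}+D_{3j+3}}^2 .
\]

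\textbf{Step 2 (average the conditional variances).} For fixed $\bm{a}$, $\Var_{k,2j+2}(\bm{a})$ is an expectation over $(\eta_{3j+1},\eta_{3j+2})$. Integrating over $\bm{a}=\eta_{3j+3}$ gives
\[
\E\big[\Var_{k,2j+2}\big]=\norm{\mathbf{B}_{3j+1}-C_{3j+1}+\mathbf{B}_{3j+2}-D_{3j+2}}^2 .
\]
Expanding this square, the cross terms simplify as follows.
\begin{itemize}
\item $\E[\mathbf{B}_{3j+1}C_{3j+1}]=\norm{C_{3j+1}}^2$ and $\E[\mathbf{B}_{3j+2}D_{3j+2}]=\norm{D_{3j+2}}^2$, by the tower property.
\item $\E[\mathbf{B}_{3j+2}C_{3j+1}]$, $\E[\mathbf{B}_{3j+1}D_{3j+2}]$ and $\E[C_{3j+1}D_{3j+2}]$ all vanish. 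In each case the two factors depend on disjoint innovation blocks and have mean zero.
\end{itemize}
This leaves
\[
\E\big[\Var_{k,2j+2}\big]=\norm{\mathbf{B}_{3j+1}}^2+\norm{\mathbf{B}_{3j+2}}^2+2\E[\mathbf{B}_{3j+1}\mathbf{B}_{3j+2}]-\norm{C_{3j+1}}^2-\norm{D_{3j+2}}^2 .
\]

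The same argument applies to
\[
\E\big[\Var_{k,2j+3}\big]=\norm{\mathbf{B}_{3j+3}-D_{3j+3}+C_{3j+4}}^2 .
\]
Here $\E[D_{3j+3}C_{3j+4}]=0$ by independence. The remaining cross term is $\E[\mathbf{B}_{3j+3}C_{3j+4}]=\E[\mathbf{B}_{3j+3}\mathbf{B}_{3j+4}]$. This holds because, conditionally on $\eta_{3j+4}$, $\mathbf{B}_{3j+3}$ and $\mathbf{B}_{3j+4}$ are independent, so both sides equal $\E[D'_{3j+3}\,C_{3j+4}]$ with $D'_{3j+3}=\E[\mathbf{B}_{3j+3}\mid\eta_{3j+4}]$. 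Hence
\[
\E\big[\Var_{k,2j+3}\big]=\norm{\mathbf{B}_{3j+3}}^2-\norm{D_{3j+3}}^2+\norm{C_{3j+4}}^2+2\E[\mathbf{B}_{3j+3}\mathbf{B}_{3j+4}] .
\]

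\textbf{Step 3 (mean term and cancellation).} Expand
\[
\norm{D_{3j+2}+D_{3j+3}}^2=\norm{D_{3j+2}}^2+\norm{D_{3j+3}}^2+2\E[D_{3j+2}D_{3j+3}] .
\]
Conditionally on $\eta_{3j+3}$, the blocks $\mathbf{B}_{3j+2}$ and $\mathbf{B}_{3j+3}$ are independent, since they share only $\eta_{3j+3}$. Hence $\E[D_{3j+2}D_{3j+3}]=\E[\mathbf{B}_{3j+2}\mathbf{B}_{3j+3}]$.

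Adding the three pieces, the terms $\pm\norm{D_{3j+2}}^2$ and $\pm\norm{D_{3j+3}}^2$ cancel. What remains is exactly the right-hand side of Lemma~\ref{lem:variance_A}.

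\textbf{Main obstacle.} The difficulty is bookkeeping rather than any estimate. One must track which innovation blocks each $\mathbf{B}_l$, $C_l$ and $D_l$ depends on, and check that each surviving cross term reduces to a product covariance via the conditional independence given a single shared boundary block. That requires care with the $m_k$-dependence structure of $\tilde{\varphi}_i$ across inner sub-blocks.
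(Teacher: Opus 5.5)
Your proposal is correct and follows essentially the same route as the paper's proof. Both arguments separate the Gaussian part $\sqrt{\mathbb{V}^{\circ}_{k,j+1}}\,Z_{k,j}$ by orthogonality, write $\E[\mathbb{V}^{\circ}_{k,j+1}]$ as the two squared norms, and simplify the cross terms using the tower property, independence of disjoint innovation blocks, and conditional independence given the shared boundary block. The one small difference is that the paper obtains $\E[\mathbf{B}_{k,3j+3}\mathbf{C}_{k,3j+4}]=\E[\mathbf{B}_{k,3j+3}\mathbf{B}_{k,3j+4}]$ directly from $\E[\mathbf{B}_{k,3j+3}(\mathbf{B}_{k,3j+4}-\mathbf{C}_{k,3j+4})]=0$, which is equivalent to your conditioning on $\bm{\eta}_{k,3j+4}$.
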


According to the explicit form of $\norm{\mathcal{A}_{k,j}}^2$, it is clear that this quantity is closely related to the variance of the outer blocks $\norm{\mathcal{B}_{k,j}}^2$. As a first step toward regularizing the variance, we compare $\overline{\mathsf{Q}}_n$ with $\E[(\mathsf{S}_n^{\Diamond})^2]$ by examining their variance components within each group $\mathcal{D}_k$. Note that the variance of $\sum_{j=1}^{q_k} \mathcal{B}_{k,j}$ can be written as
\begin{align*}
\left(\norm{\mathbf{B}_{k,3}}^2 + \norm{\mathbf{B}_{k,4}}^2 + \cdots + \norm{\mathbf{B}_{k,3q_k+2}}^2 \right)
+ 2\left( \E[\mathbf{B}_{k,3}\mathbf{B}_{k,4}] + \E[\mathbf{B}_{k,4}\mathbf{B}_{k,5}] + \cdots + \E[\mathbf{B}_{k,3q_k+1} \mathbf{B}_{k,3q_k+2}] \right).
\end{align*}
On the other hand, by Lemma~\ref{lem:variance_A}, the contribution of group $\mathcal{D}_k$ to $\overline{\mathsf{Q}}_n$ is given by
\begin{align*}
\sum_{j=1}^{q_k-1} \norm{\mathcal{A}_{k,j}}^2
&= \left(\norm{\mathbf{B}_{k,4}}^2 + \norm{\mathbf{B}_{k,5}}^2 + \cdots + \norm{\mathbf{B}_{k,3q_k}}^2 \right)
- \norm{\mathbf{C}_{k,4}(\bm{\eta}_{k,4})}^2
+ \norm{\mathbf{C}_{k,3q_k+1}(\bm{\eta}_{k,3q_k+1})}^2 \\
&\quad + 2\left( \E[\mathbf{B}_{k,4}\mathbf{B}_{k,5}] + \E[\mathbf{B}_{k,5}\mathbf{B}_{k,6}] + \cdots + \E[\mathbf{B}_{k,3q_k}\mathbf{B}_{k,3q_k+1}] \right).
\end{align*}
Therefore, the difference between the two variance expressions equals
\begin{align*}
&\norm{\mathbf{B}_{k,3}}^2 + \norm{\mathbf{B}_{k,3q_k+1}}^2 + \norm{\mathbf{B}_{k,3q_k+2}}^2
- \norm{\mathbf{C}_{k,4}(\bm{\eta}_{k,4})}^2
+ \norm{\mathbf{C}_{k,3q_k+1}(\bm{\eta}_{k,3q_k+1})}^2 \\
&\quad + 2\E[\mathbf{B}_{k,3}\mathbf{B}_{k,4}]
+ 2\E[\mathbf{B}_{k,3q_k+1}\mathbf{B}_{k,3q_k+2}].
\end{align*}
Using Jensen's inequality to control $\mathbf{C}_{k,l}(\bm{\eta}_{k,l})$ and Cauchy's inequality to control the cross terms, the above difference is bounded by
\begin{align*}
9 \max_{3\leq l \leq 3q_k+2} \norm{\mathbf{B}_{k,l}}^2
\;\lesssim\;
\sup_{s}\,
\norm{\max_{1\leq l \leq m_k}
\bigl|\tilde{X}_{k,s+1}+\cdots+\tilde{X}_{k,s+l}\bigr|}^2.
\end{align*}

\vspace{1pt}
\begin{Lem}\label{lem:maximal_ineq}
    Under Assumption~\ref{asm:dep_decay}, for any $r\geq2$ and
    $m\geq 1$, it holds that
    \begin{align*}
        \sup_{s}
        \norm{
        \max_{1\leq l\leq m}
        \bigl|X_{s+1}+\cdots+X_{s+l}\bigr|
        }_r
        \leq
        \frac{r}{\sqrt{r-1}}\,
        \Theta_{0,r}\sqrt{m},
    \end{align*}
    where the same bound continues to hold if $X_i$ is replaced by
    $X_i^{\oplus}-\E[X_i^{\oplus}]$ or by $\tilde{X}_{k,i}$.
\end{Lem}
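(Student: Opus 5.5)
The approach is the standard martingale-projection argument under the functional dependence framework, adapted to nonstationarity, followed by Doob's maximal inequality. For each $i$ and $h\ge0$ define the projection operator $\mathcal{P}_{i-h}X_i := \E[X_i\mid\mathcal{F}_{i-h}]-\E[X_i\mid\mathcal{F}_{i-h-1}]$, where $\mathcal{F}_j=\sigma(\eps_j,\eps_{j-1},\dots)$.

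\textbf{Step 1: reduce to martingale difference sums.} Since $\E[X_i]=0$ and $X_i\in\Lp_r$, we have $X_i=\sum_{h\ge0}\mathcal{P}_{i-h}X_i$ in $\Lp_r$. The usual coupling argument (Theorem 1 of \cite{wu2005nonlinear}) carries over verbatim when $\varphi_i$ depends on $i$, because it only uses the coupled variable $X_{i,\{i-h\}}$. It gives
\[
\norm{\mathcal{P}_{i-h}X_i}_r\le\norm{X_i-X_{i,\{i-h\}}}_r\le\delta_r(h).
\]
Fix $s$ and write
\[
X_{s+1}+\cdots+X_{s+l}=\sum_{h\ge0}M_{h,l},\qquad M_{h,l}:=\sum_{i=s+1}^{s+l}\mathcal{P}_{i-h}X_i .
\]
For fixed $h$, the summands $\mathcal{P}_{i-h}X_i$ form a martingale difference sequence in $i$ with respect to $(\mathcal{F}_{i-h})_i$. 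Hence $l\mapsto M_{h,l}$ is a martingale.

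\textbf{Step 2: maximal inequality per lag and summation.} By the triangle inequality for $\max_l$,
\[
\norm{\max_{l\le m}\Bigl|\sum_{i=s+1}^{s+l}X_i\Bigr|}_r\le\sum_{h\ge0}\norm{\max_{l\le m}|M_{h,l}|}_r .
\]
Doob's $\Lp_r$ inequality gives the factor $r/(r-1)$. Burkholder's inequality in the sharp form $\norm{M_{h,m}}_r\le\sqrt{r-1}\,\bigl(\sum_i\norm{\mathcal{P}_{i-h}X_i}_r^2\bigr)^{1/2}$ (Rio's inequality for $r\ge2$) then yields
\[
\norm{\max_{l\le m}|M_{h,l}|}_r\le\frac{r}{r-1}\sqrt{r-1}\,\sqrt{m}\,\delta_r(h)=\frac{r}{\sqrt{r-1}}\sqrt{m}\,\delta_r(h).
\]
Summing over $h$ gives $\frac{r}{\sqrt{r-1}}\Theta_{0,r}\sqrt m$. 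The bound is uniform in $s$ because $\delta_r(h)$ is a supremum over $i$.

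\textbf{Step 3: the variants.} For $X_i^\oplus-\E[X_i^\oplus]$, the truncation $\T{a}{\cdot}$ is 1-Lipschitz, so
\[
\norm{X_i^\oplus-(X_i^\oplus)_{\{i-h\}}}_r\le\norm{X_i-X_{i,\{i-h\}}}_r .
\]
The truncation level depends only on $i$ and not on the innovations, so the same dependence measure bound applies, and the argument goes through.

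For $\tilde X_{k,i}=\E[X_i^\oplus\mid\eps_i,\dots,\eps_{i-m_k}]-\E X_i^\oplus$, note three things. It is again a function of i.i.d.\ inputs. Replacing $\eps_{i-h}$ by a copy moves it by at most the conditional expectation of the corresponding difference for $X^\oplus$, so Jensen's inequality bounds its physical dependence measure by $\delta_r(h)$. Finally, it vanishes for $h>m_k$.

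\textbf{Main obstacle.} The delicate point is obtaining the exact constant $r/\sqrt{r-1}$, which depends on which Burkholder constant is available. If Rio's $\sqrt{r-1}$ constant for sums of martingale differences is not directly citable in this form, I would fall back on the standard statement and absorb the resulting constant, since only the order $\Theta_{0,r}\sqrt m$ is used downstream. A secondary point is justifying the $\Lp_r$ convergence of the projection expansion, which follows from $\Theta_{0,r}<\infty$ together with $\E[X_i\mid\mathcal{F}_{-\infty}]=0$ by Kolmogorov's 0-1 law.
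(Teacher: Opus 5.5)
Your proposal is correct and follows the paper's proof essentially step for step. Both arguments use the lag-wise projection decomposition, the $\sqrt{r-1}$-constant Burkholder (Rio) bound for each fixed lag, and Doob's inequality for the factor $r/(r-1)$, then sum $\delta_r(h)$ over lags uniformly in $s$. Both handle the truncated and $m_k$-dependent variants by the $1$-Lipschitz property of truncation and the $\mathcal{L}_r$-contraction of conditional expectation.
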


By applying Lemma~\ref{lem:maximal_ineq} with $r=2$, we can bound the variance difference between 
$\sum_{j=1}^{q_k-1}\mathcal{A}_{k,j}$ and $\sum_{j=1}^{q_k}\mathcal{B}_{k,j}$ by 
$C\, m_k \Theta_{0,p}^2$, where the constant $C$ is independent of $k$. 
Define $\alpha_k = 3^{k-k\alpha/p} m_k^{\alpha/2-1}$. 
Under Condition~\eqref{condition_2}, we have $\alpha_k \to 0$ as $k \to \infty$. 
Consequently,
\begin{align}
\max_{1\le i\le n}\bigl|\overline{\mathsf{Q}}_i-\E[(\mathsf{S}_i^{\Diamond})^2]\bigr|
&\lesssim \Theta_{0,p}^2 \sum_{k=1}^{h_n} m_k  \label{eq:variance_group_bound}\\
&\lesssim \sum_{k=1}^{h_n} \alpha_k^{1/(\alpha/2-1)} 
3^{k(\alpha/p-1)/(\alpha/2-1)}
= o\!\left(n^{(\alpha/p-1)/(\alpha/2-1)}\right). \nonumber
\end{align}
Recall that $\mathsf{S}_i^{\Diamond}$ differs from $\tilde{\mathsf{S}}_i$ only by removing a subset of the
$\tilde{X}_{k,i}$ terms. By the same argument, we therefore obtain
\begin{align}
\max_{1\le i \le n}\bigl|\overline{\mathsf{Q}}_i-\E[\tilde{\mathsf{S}}_i^2]\bigr|
= o\!\left(n^{(\alpha/p-1)/(\alpha/2-1)}\right). \label{eq:var_regular_1}
\end{align}

We now have a uniform control on the deviation of the variance processes.
To transfer this variance deviation into a deviation of the associated Brownian motions,
we invoke the following lemma.

\begin{Lem}\label{lem:increment_property}
    Suppose that $\{\mathsf{V}_i\}_{i\ge1}$ and $\{\overline{\mathsf{V}}_i\}_{i\ge1}$ are
    nonnegative processes. If there exists $\beta\in(0,1)$ such that
    \[
        \max_{1\leq i \leq n}|\mathsf{V}_i - \overline{\mathsf{V}}_i|
        = o(n^{\beta}),
    \]
    then it holds that
    \begin{align*}
        \max_{1\leq i \leq n}
        \bigl|\Bro(\mathsf{V}_i) - \Bro(\overline{\mathsf{V}}_i)\bigr|
        = \oas{n^{\beta/2}\sqrt{\log n}}.
    \end{align*}
\end{Lem}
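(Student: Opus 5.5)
The plan is to reduce the statement to a uniform modulus-of-continuity bound for $\Bro$ on a window that grows polynomially. I take $\{\V_i\}$ and $\{\overline{\V}_i\}$ to be deterministic; in the application they are the variance sequences $\E[\tilde{\mathsf{S}}_i^2]$ and $\overline{\mathsf{Q}}_i$. The lemma as stated gives no growth condition, but one is needed: without some control on how large $\V_i$ can be, $\log n$ cannot be the right normalisation. So I add an explicit hypothesis, which I cannot derive from the stated assumptions: there is a constant $C_0$ with $\max_{i\le n}(\V_i\vee\overline{\V}_i)\le C_0 n^{\gamma}$ for some fixed $\gamma<\infty$. In the use following \eqref{eq:var_regular_1} this holds with $\gamma=1$ by Lemma~\ref{lem:maximal_ineq} with $r=2$. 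Under this hypothesis every pair $(\V_i,\overline{\V}_i)$ with $i\le n$ lies in $[0,T_n]$, where $T_n:=C_0 n^{\gamma}$, and the two points are at distance at most $a_n:=\max_{i\le n}|\V_i-\overline{\V}_i|=o(n^{\beta})$. Hence
\[
\max_{1\le i\le n}\bigl|\Bro(\V_i)-\Bro(\overline{\V}_i)\bigr|\le \omega(T_n,a_n'),\qquad \omega(T,a):=\sup_{0\le s\le T,\;0\le h\le a}|\Bro(s+h)-\Bro(s)|,
\]
where $a_n'\ge a_n$ is any deterministic majorant. Because $a_n=o(n^\beta)$, for each $\epsilon>0$ we may take $a_n'=\epsilon n^{\beta}$ for all large $n$.

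The core step is the Csörgő--Révész modulus of continuity, which I would prove directly by Borel--Cantelli along $n=2^r$. I discretise $[0,T_{2^{r+1}}]$ into a grid of mesh $a:=\epsilon 2^{(r+1)\beta}$, so there are $O(2^{r(\gamma-\beta)})$ points. Every increment over a span $h\le a$ is then controlled by $3\sup_{t\in[0,a]}|\Bro(s_j+t)-\Bro(s_j)|$ for some grid point $s_j$. For each $j$, the reflection principle gives $\P[\sup_{t\le a}|\Bro(s_j+t)-\Bro(s_j)|>x]\le 4e^{-x^2/(2a)}$. I choose $x=\sqrt{2Ka\log 2^r}$ with $K>\gamma-\beta+1$. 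A union bound over the grid then yields a tail bound $O(2^{r(\gamma-\beta)}2^{-rK})$, which is summable in $r$.

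Borel--Cantelli then gives, almost surely for all large $r$, $\omega(T_{2^{r+1}},\epsilon2^{(r+1)\beta})\le 3\sqrt{2K\epsilon}\,2^{(r+1)\beta/2}\sqrt{r\log 2}$. For $2^r\le n<2^{r+1}$ this is $\lesssim \sqrt{\epsilon}\,n^{\beta/2}\sqrt{\log n}$, because $\omega$ is monotone in both arguments. Since $\epsilon>0$ is arbitrary (take $\epsilon=1/m$ along a countable sequence), the ratio $\max_{i\le n}|\Bro(\V_i)-\Bro(\overline{\V}_i)|/(n^{\beta/2}\sqrt{\log n})\to0$ almost surely. This is the claimed $o_{\mathrm{a.s.}}$ bound.

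I expect the main obstacle to be getting $o$ rather than $O$. The modulus bound alone only gives $O(\sqrt{a\log(T/a)})$. Turning the $o(n^\beta)$ deterministic gap into a little-$o$ statement requires the $\epsilon$-indexed family of Borel--Cantelli events. The point to check is that the logarithmic factor stays $\log n$, with constant depending only on $\gamma,\beta,K$ and not on $\epsilon$. This holds because $\log(T_n/(\epsilon n^\beta))=(\gamma-\beta)\log n+O(\log(1/\epsilon))$, and the union-bound exponent $K$ can be fixed independently of $\epsilon$.
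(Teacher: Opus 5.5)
Your argument is valid under the growth hypothesis $\max_{i\le n}(\V_i\vee\overline{\V}_i)\le C_0n^{\gamma}$, and that hypothesis holds wherever the paper invokes the lemma, since all time changes there are $O(n)$ by Lemma~\ref{lem:maximal_ineq}. But the hypothesis is not part of the statement. Your claim that it is needed is false in the deterministic setting you adopt, so as written you prove a strictly weaker lemma.

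The missing observation is that at stage $n$ only $n$ fixed increments need to be controlled. Each $\Bro(\V_i)-\Bro(\overline{\V}_i)$, $i\le n$, is centered Gaussian with variance $|\V_i-\overline{\V}_i|\le\Delta_n:=\max_{i\le n}|\V_i-\overline{\V}_i|$. Where these points sit on the half-line is irrelevant. A union bound over these $n$ variables gives
\[
\P\Bigl[\max_{1\le i\le n}\bigl|\Bro(\V_i)-\Bro(\overline{\V}_i)\bigr|>c\sqrt{\Delta_n\log n}\Bigr]\le 2n^{1-c^2/2},
\]
which is summable for $c>2$. Borel--Cantelli then yields $O_{\mathrm{a.s.}}(\sqrt{\Delta_n\log n})$, and $\Delta_n=o(n^\beta)$ gives the $o_{\mathrm{a.s.}}$ conclusion at once. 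This is exactly the paper's proof. It needs no grid, no dyadic blocking and no $\epsilon$-indexed family of events. The ``$o$ versus $O$'' difficulty you anticipate disappears because $\Delta_n$ itself, rather than a majorant, enters the bound.

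What your modulus-of-continuity route buys is robustness. It controls every increment of $\Bro$ over $[0,T_n]$ with span at most $a_n$ simultaneously. It therefore survives when the time changes are random and possibly depend on $\Bro$, provided the gap and growth conditions hold pathwise; the word ``processes'' in the statement might suggest this case, though the paper's proof implicitly treats the sequences as deterministic. In that situation the paper's step ``the increment is centered Gaussian with variance $|\V_i-\overline{\V}_i|$'' is no longer available. A growth condition like yours then does become necessary, since $\omega(T,a)$ grows like $\sqrt{a\log(T/a)}$. For the lemma as stated and used in the paper, with deterministic variance sequences, the finite union bound is both sufficient and weaker in its hypotheses.
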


\noindent With the help of Lemma \ref{lem:increment_property}, it is easy to check that \begin{align*}
    \max_{1\le i \le n}\abs{\Bro(\overline{\mathsf{Q}}_i) - \Bro(\E[\tilde{\mathsf{S}}_i^2])}
= \oas{n^{(\alpha/p-1)/(\alpha-2)}\sqrt{\log n}} = \oas{n^{1/p}}.
\end{align*}


Our next step toward regularizing the variance is to obtain a uniform bound on
\(
\max_{1\le i\le n}\bigl|\E[\tilde{\mathsf{S}}_i^2]-\E[(\mathsf{S}_i^{\oplus})^2]\bigr|.
\)
Compared with the previous variance regularization step, the key new issue here is that
$\mathsf{S}_n^{\oplus}$ is no longer $m$-dependent, and hence cross-term contributions
must be handled explicitly. In particular,
\begin{align*}
    \E[\tilde{\mathsf{S}}_n^2]
    &= \sum_{k=1}^{h_n-1} \E\!\left[\tilde{\mathcal{W}}_k^2\right]
       + \E\!\left[\tilde{\mathcal{W}}_{h_n,n-3^{h_n-1}}^2 \right]
       + o\!\left(n^{(\alpha/p-1)/(\alpha/2-1)}\right), \\ 
    \E[(\mathsf{S}^{\oplus}_n)^2]
    &= \sum_{k=1}^{h_n-1} \E\!\left[\mathcal{W}_k^2\right]
       + \E\!\left[\mathcal{W}_{h_n,n-3^{h_n-1}}^2 \right]
       + 2\sum_{1\le k_1< k_2 \le h_n} \E\!\left[\mathcal{W}_{k_1}\mathcal{W}_{k_2}\right],
\end{align*}
where we write $\mathcal{W}_k := \mathcal{W}_{k,3^k-3^{k-1}}$ for brevity. We begin by stating the covariance bound used to control dependence across different groups.

\begin{Lem}\label{lem:covariance}
Under Assumption~\ref{asm:dep_decay}, it holds that
\(
\abs{\E[X_s X_{s+l}]} \leq \sum_{h=0}^{\infty}\delta_p(h)\delta_p(h+l)
\)
for all $s,l\ge1$. Moreover, the same bound remains valid if
$X_i$ is replaced by $X_i^{\oplus}-\E[X_i^{\oplus}]$.
\end{Lem}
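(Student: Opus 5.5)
We prove Lemma~\ref{lem:covariance} with the standard projection (martingale-difference) decomposition of the FDM framework. For each $i$ and $j\in\mathbb{Z}$, define the projection operator $\mathcal{P}_j[\cdot]:=\E[\cdot\mid\mathcal{F}_j]-\E[\cdot\mid\mathcal{F}_{j-1}]$, where $\mathcal{F}_j=\sigma(\eps_j,\eps_{j-1},\dots)$.

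\textbf{Step 1 (decomposition).} Since $\E[X_i]=0$ and $X_i$ is $\mathcal{F}_i$-measurable, we write $X_i=\sum_{h=0}^{\infty}\mathcal{P}_{i-h}X_i$. The series converges in $\Lp_2$: $\E[X_i\mid\mathcal{F}_{i-h}]\to 0$ as $h\to\infty$ by the Lévy downward theorem, using the triviality of the tail $\sigma$-field of the i.i.d.\ inputs (Kolmogorov's 0--1 law).

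\textbf{Step 2 (orthogonality).} Projections onto distinct indices are orthogonal, so
\[
\E[X_sX_{s+l}]=\sum_{j\le s}\E\bigl[\mathcal{P}_jX_s\,\mathcal{P}_jX_{s+l}\bigr].
\]
Setting $j=s-h$ and applying Cauchy--Schwarz gives
\[
\abs{\E[X_sX_{s+l}]}\le\sum_{h\ge0}\norm{\mathcal{P}_{s-h}X_s}\,\norm{\mathcal{P}_{s-h}X_{s+l}}.
\]

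\textbf{Step 3 (the key projection bound).} This is the main step. We show
\[
\norm{\mathcal{P}_{i-h}X_i}_2\le\norm{X_i-X_{i,\{i-h\}}}_2\le\delta_2(h)\le\delta_p(h).
\]
Because $\eps'_{i-h}$ is an independent copy, $\E[X_{i,\{i-h\}}\mid\mathcal{F}_{i-h}]=\E[X_i\mid\mathcal{F}_{i-h-1}]$. Hence
\[
\mathcal{P}_{i-h}X_i=\E\bigl[X_i-X_{i,\{i-h\}}\mid\mathcal{F}_{i-h}\bigr],
\]
and conditional Jensen yields the first inequality. The last inequality, $\delta_2(h)\le\delta_p(h)$, holds because $p>2$. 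Applying the bound with $i=s$ and with $i=s+l$ (for which the lag becomes $h+l$) gives $\abs{\E[X_sX_{s+l}]}\le\sum_{h}\delta_p(h)\delta_p(h+l)$. The sum is finite because $\Theta_{0,p}<\infty$.

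\textbf{Step 4 (truncated version).} Set $X_i^{\oplus}-\E[X_i^{\oplus}]=\T{a_i}{\varphi_i(\cdot)}-\text{const}$. This variable is centered and is a measurable function of the same inputs, so Steps 1--2 apply verbatim. Since $\T{a}{\cdot}$ is 1-Lipschitz, its coupled difference satisfies
\[
\abs{\T{a}{X_i}-\T{a}{X_{i,\{i-h\}}}}\le\abs{X_i-X_{i,\{i-h\}}},
\]
and the additive constant cancels in the difference. Its functional dependence measure is therefore dominated by $\delta_p(h)$, and Step 3 goes through unchanged. The possibly different truncation levels for $s$ and $s+l$ in different groups cause no issue, because the bound is applied to each index separately.

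The only delicate point is the identity in Step 3, which relies on the coupling being built from an independent copy.
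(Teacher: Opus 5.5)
Your proposal is correct and follows essentially the same route as the paper: the projection decomposition, the coupling identity $\mathcal{P}_{i-h}X_i=\E[X_i-X_{i,\{i-h\}}\mid\mathcal{F}_{i-h}]$, orthogonality of projections combined with Cauchy--Schwarz, and the 1-Lipschitz property of truncation for the second claim. You also spell out two steps the paper leaves implicit: the $\Lp_2$ convergence of the projection series (via the triviality of the tail $\sigma$-field) and the inequality $\delta_2(h)\le\delta_p(h)$.
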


\begin{Prop}\label{prop:variance_regularization_1}
Under the assumptions of Theorem \ref{thm:main}, it holds that
\begin{align}
    \max_{1\le i\le n}\bigl|\E[\tilde{\mathsf{S}}_i^2]-\E[(\mathsf{S}_i^{\oplus})^2]\bigr|
    \lesssim \log n + \sum_{k=1}^{h_n} 3^k\,\Theta_{m_k,p} + o\!\left(n^{(\alpha/p-1)/(\alpha/2-1)}\right).
\end{align}
\end{Prop}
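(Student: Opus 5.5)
We compare the two displayed decompositions of $\E[\tilde{\mathsf{S}}_n^2]$ and $\E[(\mathsf{S}^{\oplus}_n)^2]$ group by group. The difference splits into two parts:
\[
\textup{(I)}=\sum_{k\le h_n}\bigl(\E[\tilde{\mathcal W}_k^2]-\E[\mathcal W_k^2]\bigr),
\qquad
\textup{(II)}=2\sum_{k_1<k_2\le h_n}\E[\mathcal W_{k_1}\mathcal W_{k_2}].
\]
In (I) the last group is understood with length $n-3^{h_n-1}$. The $o(n^{(\alpha/p-1)/(\alpha/2-1)})$ term is inherited from the $\tilde{\mathsf S}$ decomposition, which drops the cross-group terms of the $m_k$-dependent process. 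Those cross terms involve only the first $m_{k+1}$ indices of group $k+1$ and the last $m_{k+1}$ of group $k$, and are bounded by $\sum_k m_k$ as in \eqref{eq:variance_group_bound}. For the maximum over $i\le n$, the same bounds hold uniformly in the partial length $l$ of the last group, so it suffices to treat a fixed $n$ with this uniformity built in.

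\textbf{Within-group terms (I).} Write $\E[\tilde{\mathcal W}_{k,l}^2]-\E[\mathcal W_{k,l}^2]=\E[(\tilde{\mathcal W}_{k,l}-\mathcal W_{k,l})(\tilde{\mathcal W}_{k,l}+\mathcal W_{k,l})]$. By Cauchy--Schwarz this is at most $\|\tilde{\mathcal W}_{k,l}-\mathcal W_{k,l}\|\,(\|\tilde{\mathcal W}_{k,l}\|+\|\mathcal W_{k,l}\|)$. Lemma~\ref{lem:inequality_1}, using $\|\cdot\|_2\le\|\cdot\|_p$, gives the first factor $\lesssim 3^{k/2}\Theta_{1+m_k,p}$. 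Lemma~\ref{lem:maximal_ineq} with $r=2$ gives the second factor $\lesssim 3^{k/2}\Theta_{0,2}$. Each group therefore contributes $\lesssim 3^k\Theta_{m_k,p}$, uniformly in $l\le 2\cdot 3^{k-1}$, and summing over $k\le h_n$ yields the term $\sum_{k=1}^{h_n}3^k\Theta_{m_k,p}$.

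\textbf{Cross-group terms (II).} Apply Lemma~\ref{lem:covariance} to the truncated centered variables:
\[
|\E[\mathcal W_{k_1}\mathcal W_{k_2}]|\le\sum_{s\in\mathcal D_{k_1}}\sum_{t\in\mathcal D_{k_2}}\sum_{h\ge0}\delta_p(h)\delta_p(h+t-s).
\]
Summing over all pairs $k_1<k_2$ at once, the terms with $t-s=l$ number at most $h_n$ per value of $l$. This is because for fixed $l$ and each pair of groups the pairs $(s,t)$ straddle the boundary, and there are at most $\min(l,3^{k_1})$ of them, but the counting is done more carefully below. The crude route is as follows. For each boundary $3^{k}$ between consecutive groups, $\sum_{s\le 3^k<t}\sum_h\delta_p(h)\delta_p(h+t-s)\le\sum_{l\ge1}l\sum_h\delta_p(h)\delta_p(h+l)$. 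This quantity is $\le \sum_h\delta_p(h)\sum_{j\ge h}j\,\delta_p(j)$. Under Assumption~\ref{asm:dep_decay} with $A>1$, $\sum_j j\delta_p(j)\lesssim\sum_i\Theta_{i,p}<\infty$ by summation by parts. Any pair $s<t$ lying in different groups straddles at least one boundary $3^k$ with $k<h_n$. Hence the total over all $k_1<k_2$ is at most $h_n$ times a finite constant, i.e.\ $\lesssim\log n$.

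\textbf{Main obstacle.} The delicate point is (II): the overcount over boundaries must not lose more than a factor $h_n\asymp\log n$. One also needs to check that $\sum_l l\sum_h\delta_p(h)\delta_p(h+l)$ is finite using only $\Theta_{i,p}\lesssim (i+1)^{-A}$. Bounding $\delta_p(h+l)\le\Theta_{l,p}$ and $\sum_h\delta_p(h)=\Theta_{0,p}$ reduces this to $\sum_l l\,\Theta_{l,p}$, which diverges when $A\le2$. So I would instead use the summation-by-parts identity $\sum_{l\ge1}l\,\delta_p(h+l)\le\sum_{i\ge h}\Theta_{i,p}$ and then $\sum_h\delta_p(h)\sum_{i\ge h}\Theta_{i,p}\le\Theta_{0,p}\sum_i\Theta_{i,p}<\infty$, where the last sum is finite since $A>1$. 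If this fails, the fallback is to bound the contribution at each boundary by $\sum_{l}\min(l,3^k)\Theta_{l,p}$, which still sums to $O(\log n)$ up to the terms already absorbed in $\sum_k 3^k\Theta_{m_k,p}$.
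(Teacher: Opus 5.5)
Your proposal is correct and follows essentially the same route as the paper: Cauchy--Schwarz with Lemmas~\ref{lem:inequality_1} and~\ref{lem:maximal_ineq} handles the within-group terms. For the cross terms you use Lemma~\ref{lem:covariance} with at most $l$ straddling pairs per lag $l$, together with $\sum_{l\ge1} l\sum_{h\ge0}\delta_p(h)\delta_p(h+l)\le\Theta_{0,p}\sum_{r\ge1}\Theta_{r,p}<\infty$ and $h_n\lesssim\log n$; you count per boundary $3^k$ where the paper counts per group $\mathcal{D}_k$, and the two give the same bound.

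Two side remarks do not affect the argument. Your aside that there are ``at most $h_n$'' cross-group pairs per lag should read at most $l\,h_n$. The fallback in your last sentence is not needed, and it would not give $O(\log n)$: for $1<A<2$ it yields a term of order $n^{2-A}$.
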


\begin{proof}
Using the group-wise decompositions above, the variance difference can be written as the sum
of the difference between the group-wise squared terms and the contribution of the cross terms.
By Lemmas~\ref{lem:inequality_1} and~\ref{lem:maximal_ineq}, the group-wise squared terms satisfy
\begin{align}\label{eq:block_var_diff}
    \max_{1\leq h \leq h_n}\abs{\sum_{k=1}^{h}
    \Bigl(\E[\tilde{\mathcal{W}}_k^2]-\E[\mathcal{W}_k^2]\Bigr)}
    \le \sum_{k=1}^{h_n}
    \norm{\tilde{\mathcal{W}}_k-\mathcal{W}_k}\,
    \norm{\tilde{\mathcal{W}}_k+\mathcal{W}_k}
    \lesssim \sum_{k=1}^{h_n} 3^k\,\Theta_{m_k,p}.
\end{align}
It therefore remains to bound the cross terms
\(
\E[\mathcal{W}_{k_1}\mathcal{W}_{k_2}].
\)
For notational convenience, let
$c_p(l)=\sum_{h=0}^{\infty}\delta_p(h)\delta_p(h+l)$.
By Lemma~\ref{lem:covariance}, for any $k\le h_n$,
\begin{align*}
    \sum_{k_2>k}\abs{\E[\mathcal{W}_{k}\mathcal{W}_{k_2}]}
    &\le \sum_{i_1\in\mathcal{D}_k}\sum_{i_2>3^k}c_p(i_2-i_1)
    \le \sum_{l=1}^{\infty}l\,c_p(l),
\end{align*}
where the last inequality follows since, for each fixed lag $l$,
there are at most $l$ pairs $(i_1,i_2)$ satisfying
$i_1\in\mathcal{D}_k$, $i_2>3^k$, and $i_2-i_1=l$. Moreover,
\begin{align*}
    \sum_{l=1}^{\infty}l\,c_p(l)
    &=\sum_{r=1}^{\infty}\sum_{l=r}^{\infty}c_p(l)
    =\sum_{r=1}^{\infty}\sum_{h=0}^{\infty}\delta_p(h)\Theta_{h+r,p}
    \le \Theta_{0,p}\sum_{r=1}^{\infty}\Theta_{r,p}
    \lesssim 1,
\end{align*}
where the last inequality follows from Assumption~\ref{asm:dep_decay} and $A>1$.
Consequently,
\[
2\sum_{1\le k_1<k_2\le h_n}
\abs{\E[\mathcal{W}_{k_1}\mathcal{W}_{k_2}]}
\lesssim h_n\lesssim \log n.
\]
The result follows by combining this estimate with \eqref{eq:block_var_diff}.
\end{proof}

By Lemma~\ref{lem:increment_property}, in order to ensure that
\(\max_{1\le i\le n}
\bigl|\Bro(\E[\tilde{\mathsf{S}}_i^2])-\Bro(\E[(\mathsf{S}_i^{\oplus})^2])\bigr|
= \oas{n^{1/p}},\)
it suffices to require that
\begin{align}\label{condition_4}
    \sum_{k=1}^{h_n} 3^k\,\Theta_{m_k,p}
    = o\!\left(\frac{n^{2/p}}{\log n}\right).
\end{align}
This condition essentially requires that
the sequence $(m_k)$ diverges sufficiently fast so that the approximation error
$\sum_{k=1}^{h_n}3^k\Theta_{m_k,p}$ is sufficiently small. We take this condition as given for now.

\vspace{3pt}
Our final step for regularizing the variance is to control
$\max_{1\leq i \leq n}|\E[\mathsf{S}_i^2] - \E[(\mathsf{S}_i^{\oplus})^2]|$.
Again, since there is no structure such as $m$-dependence to leverage, we need to expand the variance difference and bound each resulting cross-term directly.

\begin{Prop}\label{prop:variance_regularization_2}
    Under the assumptions of Theorem \ref{thm:main}, it holds that \begin{align}
        \max_{1\leq i \leq n}|\E[\mathsf{S}_i^2] - \E[(\mathsf{S}_i^{\oplus})^2]|  = o(n^{2/p}).
    \end{align}
\end{Prop}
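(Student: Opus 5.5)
We bound the difference by writing $X_i = X_i^{\oplus} + R_i$ with $R_i := X_i - X_i^{\oplus}$, the truncation remainder at level $3^{k(i)/p}$. Since $\E[X_i]=0$, the centered truncated variable is $X_i^{\oplus}-\E[X_i^{\oplus}] = X_i - (R_i - \E[R_i])$. Setting $\mathsf{R}_i := \sum_{l=1}^i (R_l - \E[R_l])$, we get $\mathsf{S}_i = \mathsf{S}_i^{\oplus} + \mathsf{R}_i$. Hence
\[
\bigl|\E[\mathsf{S}_i^2] - \E[(\mathsf{S}_i^{\oplus})^2]\bigr| \le 2\,\norm{\mathsf{S}_i}\,\norm{\mathsf{R}_i} + \norm{\mathsf{R}_i}^2 .
\]
Lemma~\ref{lem:maximal_ineq} with $r=2$ gives $\norm{\mathsf{S}_i}\lesssim \Theta_{0,2}\sqrt{i}\le\Theta_{0,p}\sqrt{n}$. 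The whole task is therefore to show $\norm{\mathsf{R}_i} = o(n^{2/p-1/2})$ uniformly in $i\le n$, since then $\norm{\mathsf{R}_i}^2$ is of even smaller order. Note $2/p-1/2>0$ requires $p<4$. For $p\ge 4$ the target $o(n^{2/p})$ is weaker than what the square-root scaling allows, so we instead aim for $\norm{\mathsf{R}_i}=o(n^{2/p-1/2})$, which may still fail. Because of this, we will keep the cross-term bound sharper, as explained below.

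\textbf{Step 1: a crude bound via the triangle inequality.} We have $\norm{R_l - \E R_l}\le \norm{R_l}\lesssim (\E[Q^2\1\{Q>3^{k/p}\}])^{1/2}\le 3^{-k(p-2)/(2p)}(\E[Q^p\1\{Q>3^{k/p}\}])^{1/2}$ by Assumption~\ref{asm:UI}. Summing over groups gives
\[
\norm{\mathsf{R}_i}\le \sum_{k\le h_n} 3^{k}\,3^{-k(p-2)/(2p)}\,o(1) = o\bigl(n^{1/2+1/p}\bigr).
\]
This is too large. The crude bound loses because it ignores cancellation, so dependence has to be exploited.

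\textbf{Step 2: exploit the functional dependence of the remainder.} Since $w\mapsto w-\T{a}{w}$ is 1-Lipschitz, the coupled differences satisfy $\norm{R_l - R_{l,\{l-h\}}}_2\le \norm{X_l-X_{l,\{l-h\}}}_2\le\delta_p(h)$. We also have the direct bound $\norm{R_l-R_{l,\{l-h\}}}_2\le 2\norm{R_l}_2=:2\rho_k$ for $l\in\mathcal D_k$. Interpolating, the remainder process has dependence measure at most $\min(\delta_p(h),2\rho_k)$, or, using Hölder with the $p$-th moments, $\le \delta_p(h)^{\theta}\rho_k^{1-\theta}$.

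Now apply the projection/martingale decomposition argument behind Lemma~\ref{lem:maximal_ineq} (Burkholder in $\Lp_2$) to $\mathsf{R}_i$. This gives
\[
\norm{\mathsf{R}_i}\lesssim \sqrt{n}\sum_{h\ge0}\min\bigl(\delta_p(h),\max_{k\le h_n}\rho_k\bigr),
\]
or more precisely a groupwise version of it. Split the sum at $h=H$: the tail contributes $\Theta_{H,p}\lesssim H^{-A}$, and the head contributes at most $H\rho_k$. Because $\rho_k\to0$, both pieces vanish, which already gives $\norm{\mathsf{R}_i}=o(\sqrt n)$ and hence a difference of $o(n)$. That is still not $o(n^{2/p})$.

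\textbf{Step 3: a direct covariance expansion (main obstacle).} To reach $o(n^{2/p})$, we will expand directly:
\[
\E[\mathsf{S}_i^2]-\E[(\mathsf{S}_i^{\oplus})^2]=\sum_{l,l'\le i}\bigl(\E[X_lX_{l'}]-\mathrm{Cov}(X_l^{\oplus},X_{l'}^{\oplus})\bigr).
\]
Each summand equals $\mathrm{Cov}(R_l,X_{l'})+\mathrm{Cov}(X_l^{\oplus},R_{l'})$. By a Lemma~\ref{lem:covariance}-type argument, each is bounded by $\sum_h \delta^{R}(h)\,\delta_p(h+|l-l'|)$, where $\delta^{R}$ denotes the dependence measure of the remainder. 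Summing over $l'$ costs a factor $\Theta_{0,p}$, leaving roughly $\sum_{l\le n}\norm{R_l}_{q}$ for a suitable $q$.

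Taking the $\Lp_1$-type bound $\norm{R_l}_1\lesssim\E[Q\1\{Q>3^{k/p}\}]\le 3^{k(1-p)/p}o(1)$ gives $\sum_l \norm{R_l}_1 = o(n^{1/p})$, as in Proposition~\ref{prop:truncation_error}. The obstacle is that covariance bounds require $\Lp_2$ norms on at least one side. We will therefore use the Hölder pairing with $\norm{R_l}_{p/(p-1)}$ against $\norm{\cdot}_p$ for $X$, which is legitimate since $X\in\Lp_p$. This gives $\norm{R_l}_{p/(p-1)}\le (\E[Q^{p/(p-1)}\1\{Q>a\}])^{(p-1)/p}\lesssim a^{-(p-2)}\,o(1)$ with $a=3^{k/p}$. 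Summing $3^k\cdot3^{-k(p-2)/p}$ over $k\le h_n$ yields $o(n^{2/p})$, which matches the target exactly. The delicate part is justifying the dual-exponent covariance inequality under the physical-dependence coupling for both $\delta^{R}$ and $\delta_p$, uniformly over nonstationary $l$.
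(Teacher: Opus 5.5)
Your decomposition $\mathrm{Cov}(R_l,X_{l'})+\mathrm{Cov}(X_l^{\oplus},R_{l'})$ is correct. When the remainder sits at the \emph{earlier} index, your H\"older pairing is exactly the paper's first-term argument. For $l\le l'$ one has $\mathrm{Cov}(R_l,X_{l'})=\E[R_l(X_{l'}-X_{l',[l]})]$, where $X_{l',[l]}$ replaces every $\eps_j$ with $j\le l$ by an independent copy. This is at most $\norm{R_l}_{p/(p-1)}\Theta_{l'-l,p}$. Summing over $l'$ is finite since $A>1$, and summing over $l$ gives $\sum_l\norm{R_l}_{p/(p-1)}=o(n^{2/p})$.

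The gap is the other configuration, $\mathrm{Cov}(X_l^{\oplus},R_{l'})$ with $l<l'$. No symmetrization removes it: the full sum equals $\sum_{l,l'}\mathrm{Cov}(R_l,X_{l'}+X_{l'}^{\oplus})$, and the pairs with $l'<l$ always put $R$ at the later time. Here your claimed bound $\sum_h\delta^R(h)\,\delta_p(h+|l-l'|)$ is wrong, because the remainder's dependence measure enters at the \emph{larger} lag. What you actually need is $\norm{X_l^{\oplus}}_p\,\norm{R_{l'}-R_{l',[l]}}_{p/(p-1)}$. Summing over $l<l'$ keeps the truncation gain only if the coupled remainder difference is both summable in the lag and of size $3^{-k(1-2/p)}$ for $l'\in\mathcal{D}_k$, at the same time. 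This does not reduce to $\sum_l\norm{R_l}_q$.

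The tools from your Step 2 cannot deliver this. Under Assumption~\ref{asm:dep_decay} you only know $\Theta_{j,p}\lesssim j^{-A}$. So with $c_k\asymp 3^{-k(1-2/p)}$, the min-bound gives at best $\sum_j\min(\Theta_{j,p},c_k)\asymp c_k^{1-1/A}$. Group $k$ then contributes $3^{k(2/p+(1-2/p)/A)}$, and the total is $O(n^{2/p+(1-2/p)/A})$, polynomially too large. The interpolation $\delta_p^{\theta}\rho_k^{1-\theta}$ needs $\theta>1/A$ to be summable in the lag, and it loses $n^{\theta(1-2/p)}$ in the same way.

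The missing ingredient is a \emph{product} bound in place of a min-type interpolation. Since $r(w):=w-\mathcal{T}_a[w]$ is $1$-Lipschitz and vanishes on $[-a,a]$, one has $|r(x)-r(y)|\le|x-y|\bigl(\1\{|x|\ge a\}+\1\{|y|\ge a\}\bigr)$. The paper applies H\"older with exponents $p-1$ and $(p-1)/(p-2)$ to $\E\bigl[|r(X_{l'})-r(X_{l',[l]})|^{p/(p-1)}\bigr]$. It then uses Markov and Assumption~\ref{asm:UI}, together with $X_{l',[l]}\overset{\mathcal{D}}{=}X_{l'}$. This yields $\norm{r(X_{l'})-r(X_{l',[l]})}_{p/(p-1)}\lesssim\Theta_{l'-l,p}\,(3^{-k}u_k)^{(p-2)/p}$, where $u_k=\E[Q^p\1\{Q\ge 3^{k/p}\}]$.

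This bound keeps the full lag decay and the full factor $3^{-k(1-2/p)}u_k^{1-2/p}$ simultaneously. Group $k$ therefore contributes $3^{2k/p}u_k^{1-2/p}$, and the total is $o(n^{2/p})$. What you call ``a delicate justification of a dual-exponent covariance inequality'' is in fact this substantive estimate, and without it the argument does not close.

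Separately, the opening Cauchy--Schwarz reduction and Steps 1--2 can be dropped. They discard the cancellation in $\E[\mathsf{S}_i\mathsf{R}_i]$ and are too lossy to reach $o(n^{2/p})$ in general.
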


\begin{proof}
    Within the proof of Proposition~\ref{prop:truncation_error}, we can obtain that
    $\left( \sum_{i=1}^n \E[X_i^{\oplus}] \right)^2 = o(n^{2/p})$,
    so it reduces to controlling
    $\max_{1\leq r\leq n}|\E[\mathsf{S}_r^2  - (\sum_{i=1}^r X_i^{\oplus})^2]|$.
    By bounding $\max_{1\leq r\leq n}\E[\mathsf{S}_r^2  - (\sum_{i=1}^r X_i^{\oplus})^2]$ as
    $\sum_{i_1=1}^n \sum_{i_2=1}^n |\E[X_{i_1}X_{i_2} - X_{i_1}^{\oplus} X_{i_2}^{\oplus}]|$,
    it remains to bound the generic term, namely
    \begin{align*}
       |\E[X_{i_1}X_{i_2}] - \E[X_{i_1}^{\oplus} X_{i_2}^{\oplus}]| \leq |\E[(X_{i_1} - X_{i_1}^{\oplus}) X_{i_2}]| + |\E[X_{i_1}^{\oplus}(X_{i_2} - X_{i_2}^{\oplus})]|. 
    \end{align*}
    We treat the two terms on the right-hand side separately.

    \vspace{3pt}
    \noindent Without loss of generality, assume that $i_2 \geq i_1$. Recall the causal representation \eqref{eq:g_function} for $X_{i_2}$, and define
    $X_{i_2,[i_1]} = \varphi_{i_2}(\eps_{i_2},\dots,\eps_{i_1+1},\eps_{i_1}',\eps_{i_1-1}',\dots)$,
    where $(\eps_{i}')_{i\in\mathbb{Z}}$ is an independent copy of $(\eps_{i})_{i\in\mathbb{Z}}$.
    Then $X_{i_2,[i_1]}$ is mean-zero and independent of $X_{i_1}$.
    Let $q = p/(p-1)$. By H\"older's inequality,
    \begin{align*}
        |\E[(X_{i_1} - X_{i_1}^{\oplus}) X_{i_2}]|
        = |\E[(X_{i_1} - X_{i_1}^{\oplus}) (X_{i_2} - X_{i_2,[i_1]})]| \leq \norm{X_{i_1} - X_{i_1}^{\oplus}}_q \norm{X_{i_2} - X_{i_2,[i_1]}}_p.
    \end{align*}

    \noindent By employing the same argument as in the proof of Lemma~\ref{lem:covariance}, we have
    $\norm{X_{i_2} - X_{i_2,[i_1]}}_p \leq \Theta_{i_2-i_1,p}$.
    To bound $\norm{X_{i_1} - X_{i_1}^{\oplus}}_q$, assume that $i_1\in\mathcal{D}_k$.
    Then, by the definition of truncation,
    $|X_{i_1} - X_{i_1}^{\oplus}| \leq |X_{i_1}|\1\{|X_{i_1}| > 3^{k/p}\}$.
    Moreover, for any $a>0$, $\E[\abs{X}^q\1\{\abs{X}>a\}] \leq a^{q-p}\E[\abs{X}^p\1\{\abs{X}>a\}]$.
    Applying Assumption~\ref{asm:UI} yields
    \[
    \sup_{i\in\mathbb{Z}}\E[|X_{i}|^p\1\{|X_{i}|\geq 3^{k/p}\}]
    \lesssim \E[Q^p \1\{Q\geq 3^{k/p}\}]=: u_k.
    \]
    Consequently, $\sup_{i_1\in\mathcal{D}_k}\norm{X_{i_1} - X_{i_1}^{\oplus}}_q^q \lesssim u_k(3^{k/p})^{q-p}$, and hence
    $\sup_{i_1\in\mathcal{D}_k}\norm{X_{i_1} - X_{i_1}^{\oplus}}_q \lesssim 3^{k(2/p-1)}u_k^{1/q}$.
    Combining the above bounds and using Assumption \ref{asm:dep_decay}, we have \begin{align*}
        \sum_{i_2 = i_1}^n |\E[(X_{i_1} - X_{i_1}^{\oplus}) X_{i_2}]| &\leq \left( \sup_{i_1\in\mathcal{D}_k}\norm{X_{i_1} - X_{i_1}^{\oplus}}_q \right) \left( \sum_{i_2=1}^n \norm{X_{i_2} - X_{i_2,[i_1]}}_p \right) \\ &\lesssim 3^{k(2/p-1)}u_k^{1/q}\left( \sum_{i=1}^{\infty} \Theta_{i,p} \right) \lesssim 3^{k(2/p-1)}u_k^{1/q}.
    \end{align*}

    \noindent Summing over $i_1$, and using the fact that $u_k\to 0$ as $k\to \infty$, we obtain \begin{align}\label{eq:final_term_1}
        \sum_{i_1 = 1}^n \sum_{i_2 = i_1}^n  |\E[(X_{i_1} - X_{i_1}^{\oplus}) X_{i_2}]| &\leq \sum_{k\leq h_n} \sum_{i_1\in\mathcal{D}_k}\left(\sum_{i_2 = i_1}^n |\E[(X_{i_1} - X_{i_1}^{\oplus}) X_{i_2}]|\right) \nonumber \\ & \lesssim \sum_{k\leq h_n} |\mathcal{D}_k|3^{k(2/p-1)}u_k^{1/q} \leq \sum_{k\leq h_n} 3^{2k/p}u_k^{1/q} = o(n^{2/p}).
    \end{align}

    \noindent For controlling $\E[X_{i_1}^{\oplus}(X_{i_2} - X_{i_2}^{\oplus})]$, for notational convenience, we denote the remainder term $X_{i} - X_i^{\oplus}$ as $r(X_i)$. Recall that $X_{i_2,[i_1]}$ has been defined above; in particular, $X_{i_2,[i_1]}$ is independent of $X_{i_1}^{\oplus}$ and has the same marginal distribution as $X_{i_2}$. Using $\E[r(X_i)] = -\E[X_i^{\oplus}]$, we have \begin{align*}
        |\E[X_{i_1}^{\oplus}(X_{i_2} - X_{i_2}^{\oplus})]| &= |\E[X_{i_1}^{\oplus}(r(X_{i_2}) - r(X_{i_2,[i_1]}))] + \E[X_{i_1}^{\oplus} r(X_{i_2,[i_1]})]| \\&= |\E[X_{i_1}^{\oplus}(r(X_{i_2}) - r(X_{i_2,[i_1]}))] + \E[X_{i_1}^{\oplus}]\E[r(X_{i_2})]| \\ & \leq |\E[X_{i_1}^{\oplus}(r(X_{i_2}) - r(X_{i_2,[i_1]}))]| + |\E[X_{i_1}^{\oplus}]\E[X_{i_2}^{\oplus}]|.
    \end{align*}
    \noindent In particular, when summing over indices, the second term contributes
    $\sum_{i_1=1}^n \sum_{i_2=1}^n |\E[X_{i_1}^{\oplus}]\E[X_{i_2}^{\oplus}]| = \left(\sum_{i= 1}^n |\E[X_{i}^{\oplus}]| \right)^2 = o(n^{2/p})$.
    For the first term, by H\"older's inequality, we have  $|\E[X_{i_1}^{\oplus}(r(X_{i_2}) - r(X_{i_2,[i_1]}))]| \leq \norm{X_{i_1}^{\oplus}}_p \norm{r(X_{i_2}) - r(X_{i_2,[i_1]})}_q$. By exploring the structure of function $r(x)-r(y)$, and using H\"older's inequality again (assuming $i_2\in\mathcal{D}_k$) \begin{align*}
        \E[|r(X_{i_2}) - r(X_{i_2,[i_1]})|^q] & \leq \E\left[ |X_{i_2}-X_{i_2,[i_1]}|^q\left( \1\{|X_{i_2}|\geq 3^{k/p}\} + \1\{|X_{i_2,[i_1]}|\geq 3^{k/p}\}\right)\right] \\ & \leq \left( \E[|X_{i_2}-X_{i_2,[i_1]}|^p]\right)^{1/(p-1)}\cdot \norm{\1\{|X_{i_2}|\geq 3^{k/p}\} + \1\{|X_{i_2,[i_1]}|\geq 3^{k/p}\}}_{\frac{p-1}{p-2}} \\ & \lesssim \Theta_{i_2 - i_1,p}^q \norm{\1\{|X_{i_2}|\geq 3^{k/p}\}}_{\frac{p-1}{p-2}} \leq \Theta_{i_2 - i_1,p}^q \left(\P\left[ |X_{i_2}| \geq 3^{k/p} \right] \right)^{\frac{p-2}{p-1}} \\ & \leq \Theta_{i_2 - i_1,p}^q \left( 3^{-k} u_k\right)^{\frac{p-2}{p-1}},
    \end{align*}
    where the last inequality is given by Markov's inequality and Assumption \ref{asm:UI}. Thus we have \begin{align*}
        |\E[X_{i_1}^{\oplus}(r(X_{i_2}) - r(X_{i_2,[i_1]}))]| \leq \norm{X_{i_1}^{\oplus}}_p \norm{r(X_{i_2}) - r(X_{i_2,[i_1]})}_q \leq \Theta_{i_2- i_1,p} 3^{k(2/p-1)} u_k^{1-2/p}.
    \end{align*}

    \noindent Repeating the same summation argument as in \eqref{eq:final_term_1}, we can conclude that $\sum_{i_1=1}^n \sum_{i_2= i_1}^n |\E[X_{i_1}^{\oplus}(r(X_{i_2}) - r(X_{i_2,[i_1]}))]| = o(n^{2/p})$. Combining this result with \eqref{eq:final_term_1} and $\sum_{i_1=1}^n \sum_{i_2=1}^n |\E[X_{i_1}^{\oplus}]\E[X_{i_2}^{\oplus}]| = o(n^{2/p})$ completes the proof. 
\end{proof}

By applying Lemma~\ref{lem:increment_property}, it follows that
\(\max_{1\le i\le n}
\bigl|\Bro(\E[{\mathsf{S}}_i^2])-\Bro(\E[(\mathsf{S}_i^{\oplus})^2])\bigr|
= \oas{n^{1/p}\sqrt{\log n}}\). Thus, combining the main results established above, we conclude that under Conditions~\eqref{condition_1}, \eqref{condition_2} and \eqref{condition_3}, and under the setting of Theorem~\ref{thm:main}, we have the following strong Gaussian approximation results for nonstationary processes:
\begin{align}
    \max_{1\le i\le n}
    \bigl|\mathsf{S}_i-\Bro(\E[(\mathsf{S}_i^{\oplus})^2])\bigr|
    = \oas{n^{1/p}},\quad \max_{1\le i\le n}
    \bigl|\mathsf{S}_i-\Bro(\E[(\mathsf{S}_i)^2])\bigr|
    = \oas{n^{1/p}\sqrt{\log n}}.
\end{align}

\noindent One may notice that our strong Gaussian approximation maintains the optimal rate $n^{1/p}$ up to Proposition~\ref{prop:variance_regularization_2}, while the final step introduces an additional $\oas{\sqrt{\log n}}$ factor through the variance regularization. Removing this $\sqrt{\log n}$ term generally requires an additional non-singularity condition on the partial sum process; see \cite{bonnerjee2024gaussian}. In contrast, for stationary processes, such a non-singularity condition is automatically satisfied under Assumption~\ref{asm:dep_decay}, and hence \cite{berkes2014kmt} achieve the optimal approximation. 

\vspace{3pt}
On the other hand, we still obtain an optimal strong Gaussian approximation at the rate $\oas{n^{1/p}}$ when the Brownian time change is indexed by the truncated partial-sum variance $\E[(\mathsf{S}_i^{\oplus})^2]$. This form is also practically useful: the truncation level is pre-specified, and therefore $\E[(\mathsf{S}_i^{\oplus})^2]$ can be estimated from the sample in a straightforward manner.

\vspace{3pt}
In the above proof, besides the assumptions of Theorem~\ref{thm:main}, we also introduced three additional conditions \eqref{condition_1}, \eqref{condition_2} and \eqref{condition_4}, which impose further restrictions on the dependency decay rate $A$ in Assumption~\ref{asm:dep_decay}. At this stage, we have not verified whether these conditions are implied by the assumptions of Theorem~\ref{thm:main}, so they should be viewed as auxiliary conditions required by the proof. In the next step, we will determine a weak restriction on $A$ (i.e., the strongest admissible dependence) under which these three conditions can be satisfied.

\subsubsection{(c) - Dependency Decay Rate Calibration}

\noindent In this last step of the proof of Theorem~\ref{thm:main}, we determine a range of $A$ under which Conditions \eqref{condition_1}, \eqref{condition_2}, and \eqref{condition_4} can all be satisfied. For simplicity, we take $m_k = 3^{\beta k}$ with $\beta\in(0,1)$. Under this choice, we will derive the admissible ranges of $\beta$ and $A$ jointly.

\vspace{3pt}
Substituting $m_k$ into Condition \eqref{condition_4}, we obtain the equivalent constraint
\begin{align}\label{constraint_1}
    \frac{2}{\beta}\left(\frac{1}{2} - \frac{1}{p} \right) < A. 
\end{align}
On the other hand, substituting the same $m_k$ into Condition \eqref{condition_1} yields the constraint $(1/2 - 1/p)<A\beta$. This requirement is already implied by \eqref{constraint_1}, and hence it suffices to work with \eqref{constraint_1} in what follows.

\vspace{3pt}
Next, substituting $m_k$ into Condition \eqref{condition_2}, the constraint $\Lambda_p(\alpha)<\infty$ is equivalent to
\begin{align}\label{constraint_2}
    1 - \frac{\alpha}{p} + \beta\left( \frac{\alpha}{2} - 1\right) < 0.
\end{align}
To ensure $\Xi_p(\alpha)<\infty$, we relate it to $\Theta_{m,p}$. Under Assumption~\ref{asm:dep_decay}, an upper bound for $\Xi_p(\alpha)$ can be obtained via H\"older's inequality:
\begin{align*}
    \Xi_p(\alpha) &\leq \sum_{k = 1}^{\infty} 3^{k(1/2-1/\alpha)} \sum_{h=3^{k-1}}^{3^k-1} \delta_p(h)^{p/\alpha} \leq \sum_{k = 1}^{\infty} 3^{k(1/2-1/\alpha)} 3^{(k-1)(1-p/\alpha)} \left(\sum_{h=3^{k-1}}^{3^k-1} \delta_p(h)\right)^{p/\alpha} \\ 
    &\leq \sum_{k=1}^{\infty} 3^{k(3/2 - 1/\alpha - p/\alpha)} \Theta_{3^{k-1},p}^{p/\alpha}\lesssim \sum_{k=1}^{\infty} 3^{k(3/2 - 1/\alpha - p/\alpha - Ap/\alpha)}.
\end{align*}
Therefore, a sufficient condition for $\Xi_p(\alpha)<\infty$ is
\begin{align}\label{constraint_3}
    \alpha < \frac{2(1+p+Ap)}{3}.
\end{align}

\vspace{3pt}
Combining \eqref{constraint_2} and \eqref{constraint_3}, we obtain the admissible range for $\alpha$:
\begin{align*}
    (1-\beta)/\left(\frac{1}{p} - \frac{\beta}{2} \right) < \alpha < \frac{2(1+p+Ap)}{3}.
\end{align*}
Notice that the lower bound of $\alpha$ is an increasing function of $\beta$ when $\beta \in(0,2/p)$. Consequently, the above constraint yields the following upper bound on $\beta$:
\begin{align*}
    \beta < \frac{2Ap + 2-p}{p(p+Ap-2)}.
\end{align*}
Substituting this bound into \eqref{constraint_1} and solving for the range of $A$, we obtain
\begin{align*}
    A > A_0:= \max\left\{ \frac{p^2 - p - 2 + (p-2)\sqrt{p^2+10p+1}}{4p}, 1\right\}.
\end{align*}

\subsection{Proofs of Lemmas from \ref{sec:proof_ASIP}}
\label{sec_asip: FDM lemmas}

\begin{proof}[Proof of Lemma \ref{lem:inequality_1}]
    The proof is analogous to to the proof of Lemma A1 of \cite{LiuLin2009}. Although their result is formulated for stationary random variables, the arguments extend directly to the non-stationary case.
\end{proof}
\vspace{6pt}

\begin{proof}[Proof of Lemma \ref{lem:rosenthal}]
By applying the Rosenthal-type bound in \cite{LiuXiaoWu2013}, we obtain
    \begin{align*}
        \max_{1\leq j\leq q_k} \norm{\max_{3jm_k < l\leq 3(j+1)m_k} |\tilde{\mathcal{S}}_{3jm_k,l}^{(k)}|}_{\alpha} \lesssim & \  m_k^{1/2}\left(\sum_{h=1}^{3m_k} \delta_{2}(h)  + \sum_{h = 3m_k+1}^{\infty} \tilde{\delta}_{k,\alpha}(h) + \sup_{i \in \mathbb{Z}} \norm{\mathcal{T}_{3^{k/p}}[X_i]}\right) \\ & + m_k^{1/\alpha} \left( \sum_{h=1}^{3m_k} h^{1/2 - 1/\alpha} \tilde{\delta}_{k,\alpha}(h) +  \sup_{i\in\mathbb{Z}}\norm{\mathcal{T}_{3^{k/p}}[X_i]}_{\alpha}\right),
    \end{align*}
    where $\tilde{\delta}_{k,\alpha}(h)$ is the lag-$h$ functional dependence measure defined for $\tilde{X}_{k,i}$.
    Noting that $m_{k}^{1/2} < m^{1/\alpha}_k h^{1/2 - 1/\alpha}$ for $h > m_k$, the above quantity can be further bounded by
    \begin{align*}
        \underbrace{m_k^{1/2}\left( \sum_{h=1}^{\infty} \delta_2(h) + \sup_{i\in\mathbb{Z}}\norm{X_i}\right)}_{=: \mathcal{I}_{1,k}} + \underbrace{m_{k}^{1/\alpha} \sum_{h=1}^{\infty} h ^{1/2 - 1/\alpha} \tilde{\delta}_{k,\alpha}(h)}_{=:\mathcal{I}_{2,k}} + \underbrace{m_{k}^{1/\alpha} \sup_{i\in\mathbb{Z}} \norm{\mathcal{T}_{3^{k/p}}[X_i]}_{\alpha}}_{=:\mathcal{I}_{3,k}}.
    \end{align*}
    It is therefore enough to control $\sum_{k\geq K_0} 3^{k - k\alpha/p} m_k^{-1}\left(\mathcal{I}_{1,k}^{\alpha} + \mathcal{I}_{2,k}^{\alpha} + \mathcal{I}_{3,k}^{\alpha}\right)$.
    Since $\sum_{h=1}^{\infty} \delta_2(h) + \sup_{i\in\mathbb{Z}} \norm{X_i} \leq 2\Theta_{0,p}$, we have
    \begin{align*}
        \sum_{k\geq K_0} 3^{k - k\alpha/p} m_k^{-1}\mathcal{I}_{1,k}^{\alpha} \lesssim \sum_{k\geq K_0} 3^{k - k\alpha/p} m_k^{\alpha/2-1} \Theta_{0,p}^{\alpha} = \Lambda_p(\alpha) \Theta_{0,2}^{\alpha}.
    \end{align*}
    For the third term, noting that $\min\{ \abs{v}^{\alpha}, 1 \} \leq \min\{ \abs{v}^{\alpha}, v^2\}$, we have
    \begin{align}
         &\sum_{k\geq K_0}3^{k - k\alpha/p }\sup_{i\in\mathbb{Z}}\norm{ \mathcal{T}_{3^{k/p}}[X_i]}_{\alpha}^{\alpha} \leq \sum_{k\geq K_0} \sup_{i\in\mathbb{Z}}3^k \E\left[ \min \left\{ \abs{\frac{X_i}{3^{k/p}}}^{\alpha} ,1\right\} \right] \nonumber \\ \lesssim & \sum_{k\geq 1}3^k \E\left[ \min \left\{ \abs{\frac{Q}{3^{k/p}}}^{\alpha} , \left(\frac{Q}{3^{k/p}} \right)^2\right\} \right] \lesssim \norm{Q}_p^p, \label{eq:interpo_ineq}
    \end{align}
    where we omit some elementary calculations in the last inequality. For the second term, let $\beta = \alpha/(\alpha - 1)$ and $\lambda_h = (h^{1/2-1/\alpha} \delta_p(h)^{p/\alpha})^{-1/\beta}$. By Hölder's inequality, we have
    \begin{align*}
        \left(\sum_{h=1}^{\infty} h^{1/2 - 1/\alpha} \tilde{\delta}_{k,\alpha}(h)\right)^{\alpha} \leq \Xi_p(\alpha)^{\alpha/\beta} \sum_{h=1}^{\infty} \lambda_h^{\alpha}(h^{1/2-1/\alpha} \tilde{\delta}_{k,\alpha}(h))^{\alpha}.
    \end{align*}
    Thus we have \begin{align*}
        \sum_{k\geq K_0} 3^{k - k\alpha/p}m_k^{-1} \mathcal{I}_{3,k}^{\alpha} \leq \Xi_p(\alpha)^{\alpha/\beta} \sum_{h=1}^\infty \lambda_h^{\alpha} h^{\alpha/2-1} \left(\sum_{k=1}^{\infty} 3^{k - k\alpha/p} \tilde{\delta}_{k,\alpha}(h)^{\alpha} \right).
    \end{align*}
    Thus, it suffices for us to show $\sum_{k=1}^{\infty} 3^{k - k\alpha/p} \tilde{\delta}_{k,\alpha}(h)^{\alpha} \lesssim \delta_p(h)^p$, since $\Xi_p(\alpha)^{\alpha/\beta} \sum_{h=1}^{\infty} \lambda_h^{\alpha} h^{\alpha/2-1} \delta_p(h)^p = \Xi_p(\alpha)^{\alpha}$. By the Lipschitz continuity of the truncation operator, we have
    \begin{align*}
        \sum_{k=1}^{\infty} 3^{k-k\alpha/p}\tilde{\delta}_{k,\alpha}(h)^{\alpha}
        &\lesssim \sum_{k=1}^{\infty} 3^{k-k\alpha/p}\sup_{i\in\mathbb{Z}}
        \left\|\mathcal{T}_{3^{k/p}}[X_i]-\mathcal{T}_{3^{k/p}}[X_{i,\{i-h\}}]\right\|_\alpha^\alpha \\
        &\lesssim \sum_{k=1}^{\infty} 3^{k-k\alpha/p}\sup_{i\in\mathbb{Z}}
        \E\left[\min\left(2\cdot 3^{k/p},|X_i-X_{i,\{i-h\}}|\right)^\alpha\right] \\
        &\lesssim \sum_{k=1}^{\infty} 3^k \sup_{i\in\mathbb{Z}}
        \E\left[\min\left\{\left|\frac{X_i-X_{i,\{i-h\}}}{3^{k/p}}\right|^\alpha,1\right\}\right] \lesssim \delta_p(h)^p,
    \end{align*}
    where we used the inequality $|\mathcal{T}_a(x) - \mathcal{T}_a(y)| \leq \min\{ 2a, |x-y|\}$ and the same argument as in \eqref{eq:interpo_ineq}.
\end{proof}
\vspace{6pt}

\begin{proof}[Proof of Lemma \ref{lem:variance_A}]
Recall that $\mathcal{A}_{k,j} = \sqrt{\mathbb{V}^{\circ}_{k,j+1}(\bm{\eta}_{k,3j+3})}\,Z_{k,j} + \mathbb{M}^{(j)}_{k,2}(\bm{\eta}_{k,3j+3}) + \mathbb{M}^{(j+1)}_{k,0}(\bm{\eta}_{k,3j+3})$, where $(Z_{k,j})$ is independent of $(\eps_i)_{i\in\mathbb{Z}}$ and has mean $0$. Hence the Gaussian part is orthogonal to the drift part, and $\|\mathcal{A}_{k,j}\|^2
=
\E\!\left[\mathbb{V}^{\circ}_{k,j+1}(\bm{\eta}_{k,3j+3})\right]
+
\|\mathbb{M}^{(j)}_{k,2}(\bm{\eta}_{k,3j+3})+\mathbb{M}^{(j+1)}_{k,0}(\bm{\eta}_{k,3j+3})\|^2$, where
\begin{align*}
    \E\left[\mathbb{V}^{\circ}_{k,j+1}(\bm{\eta}_{k,3j+3})\right]
    &=
    \Bigl\|\mathbf{B}_{k,3j+1}-\mathbf{C}_{k,3j+1}(\bm{\eta}_{k,3j+1})
    +\mathbf{B}_{k,3j+2}-\mathbb{M}^{(j)}_{k,2}(\bm{\eta}_{k,3j+3})\Bigr\|^2 \\
    &\quad
    +
    \Bigl\|\mathbf{B}_{k,3j+3}-\mathbb{M}^{(j+1)}_{k,0}(\bm{\eta}_{k,3j+3})
    +\mathbf{C}_{k,3j+4}(\bm{\eta}_{k,3j+4})\Bigr\|^2.
\end{align*}

\noindent We now expand the two squared terms. First, since
$\mathbf{C}_{k,3j+1}(\bm{\eta}_{k,3j+1})=\E[\mathbf{B}_{k,3j+1}| \bm{\eta}_{k,3j+1}]$,
we have $\E[(\mathbf{B}_{k,3j+1}-\mathbf{C}_{k,3j+1}(\bm{\eta}_{k,3j+1}))\,\mathbf{C}_{k,3j+1}(\bm{\eta}_{k,3j+1})]=0$, and thus
\begin{align*}
\|\mathbf{B}_{k,3j+1}-\mathbf{C}_{k,3j+1}(\bm{\eta}_{k,3j+1})\|^2
=
\|\mathbf{B}_{k,3j+1}\|^2-\|\mathbf{C}_{k,3j+1}(\bm{\eta}_{k,3j+1})\|^2.
\end{align*}
Similarly, because $\mathbb{M}^{(j)}_{k,2}(\bm{\eta}_{k,3j+3})=\E[\mathbf{B}_{k,3j+2}| \bm{\eta}_{k,3j+3}]$ and
$\mathbb{M}^{(j+1)}_{k,0}(\bm{\eta}_{k,3j+3})=\E[\mathbf{B}_{k,3j+3}|\bm{\eta}_{k,3j+3}]$,
we have
\begin{align*}
\|\mathbf{B}_{k,3j+2}-\mathbb{M}^{(j)}_{k,2}(\bm{\eta}_{k,3j+3})\|^2+\|\mathbb{M}^{(j)}_{k,2}(\bm{\eta}_{k,3j+3})\|^2
=
&\|\mathbf{B}_{k,3j+2}\|^2,\\
\|\mathbf{B}_{k,3j+3}-\mathbb{M}^{(j+1)}_{k,0}(\bm{\eta}_{k,3j+3})\|^2+\|\mathbb{M}^{(j+1)}_{k,0}(\bm{\eta}_{k,3j+3})\|^2
=&
\|\mathbf{B}_{k,3j+3}\|^2.
\end{align*}

\noindent For the cross terms, note that $\mathbf{C}_{k,3j+1}(\bm{\eta}_{k,3j+1})$ is independent of $\mathbf{B}_{k,3j+2}$, and $\mathbb{M}^{(j)}_{k,2}(\bm{\eta}_{k,3j+3})$ is independent of $\mathbf{B}_{k,3j+1}$. Therefore,
\begin{align*}
\E\!\left[(\mathbf{B}_{k,3j+1}-\mathbf{C}_{k,3j+1}(\bm{\eta}_{k,3j+1}))(\mathbf{B}_{k,3j+2}-\mathbb{M}^{(j)}_{k,2}(\bm{\eta}_{k,3j+3}))\right]
=
\E[\mathbf{B}_{k,3j+1}\mathbf{B}_{k,3j+2}].
\end{align*}
Moreover, conditional on $\bm{\eta}_{k,3j+3}$, the variables $\mathbf{B}_{k,3j+2}$ and $\mathbf{B}_{k,3j+3}$ depend on disjoint internal innovations and are conditionally independent, so
\begin{align*}
\E[\mathbf{B}_{k,3j+2}\mathbf{B}_{k,3j+3}]
=
\E\!\left[\mathbb{M}^{(j)}_{k,2}(\bm{\eta}_{k,3j+3})\,\mathbb{M}^{(j+1)}_{k,0}(\bm{\eta}_{k,3j+3})\right].
\end{align*}
Finally, since $\mathbf{C}_{k,3j+4}(\bm{\eta}_{k,3j+4})=\E[\mathbf{B}_{k,3j+4}| \bm{\eta}_{k,3j+4}]$ and
$\E[\mathbf{B}_{k,3j+3}(\mathbf{B}_{k,3j+4}-\mathbf{C}_{k,3j+4}(\bm{\eta}_{k,3j+4}))]=0$, we have $\E[\mathbf{B}_{k,3j+3}\mathbf{C}_{k,3j+4}(\bm{\eta}_{k,3j+4})]
=
\E[\mathbf{B}_{k,3j+3}\mathbf{B}_{k,3j+4}]$. We can conclude the proof by putting these identities into the expansion of $\|\mathcal{A}_{k,j}\|^2$.
\end{proof}
\vspace{6pt}

\begin{proof}[Proof of Lemma \ref{lem:maximal_ineq}]
    Let $\mathcal{P}_k[X]=\E[X|\mathcal{F}_k]-\E[X|\mathcal{F}_{k-1}]$,
    $\mathbf{R}_k=\sum_{i=1}^k X_{a+i}$, and for $h\geq 0$ define
    $\mathbf{R}_{k,h}=\sum_{i=1}^k
    \mathcal{P}_{a+i-h}[X_{a+i}]$. Then
    $\mathbf{R}_k=\sum_{h=0}^{\infty}\mathbf{R}_{k,h}$ in $L_r$.
    Fix $h\geq 0$. The sequence
    $(\mathcal{P}_{a+i-h}[X_{a+i}])_{i\geq 1}$ forms martingale
    differences, and hence by Burkholder's inequality,
    \begin{align}\label{Burkholder ineq}
        \norm{\mathbf{R}_{m,h}}_r^2 = \norm{\sum_{i=1}^m
        \mathcal{P}_{a+i-h}[X_{a+i}]}_r^2  \leq
        (r-1)\sum_{i=1}^m
        \norm{\mathcal{P}_{a+i-h}[X_{a+i}]}_r^2 \leq
        (r-1)m\,\delta_r(h)^2.
    \end{align}
    Therefore,
    $\norm{\mathbf{R}_{m,h}}_r
    \leq\sqrt{r-1}\sqrt{m}\,\delta_r(h)$.
    By Doob's maximal inequality,
    $\norm{\max_{1\leq k\leq m}|\mathbf{R}_{k,h}|}_r
    \leq r\norm{\mathbf{R}_{m,h}}_r/(r-1)$.
    Combining these bounds and summing over $h$ yields
    \begin{align*}
        \norm{\max_{1\leq k\leq m}|\mathbf{R}_k|}_r \leq
        \sum_{h=0}^{\infty} \norm{\max_{1\leq k\leq m}|\mathbf{R}_{k,h}|}_r \leq
        \frac{r}{\sqrt{r-1}}\sqrt{m} \sum_{h=0}^{\infty}\delta_r(h) =
        \frac{r}{\sqrt{r-1}}\sqrt{m}\,\Theta_{0,r}.
    \end{align*}
    Since this bound is uniform in $a$, taking the supremum over $a$
    gives the desired result. The same argument applies to
    $X_i^{\oplus}-\E[X_i^{\oplus}]$ and $\tilde{X}_{k,i}$, since
    centering leaves the functional dependence measure unchanged,
    truncation is $1$-Lipschitz, and conditional expectation is an
    $\mathcal{L}_r$ contraction.
\end{proof}
\vspace{6pt}

\begin{proof}[Proof of Lemma \ref{lem:increment_property}]
    Let $\Delta_n:=\max_{1\le i\le n}|\mathsf{V}_i-\overline{\mathsf{V}}_i|$. For each fixed $i$, the increment
    $\Bro(\mathsf{V}_i)-\Bro(\overline{\mathsf{V}}_i)$ is centered Gaussian with variance $|\mathsf{V}_i-\overline{\mathsf{V}}_i|$; hence for any $x>0$, by taking the union bound,
    \begin{align*}
        \P\left[\max_{1\le i\le n}|\Bro(\mathsf{V}_i)-\Bro(\overline{\mathsf{V}}_i)|>x\right]
        \le 2n\exp\!\left(-\frac{x^2}{2\Delta_n}\right).
    \end{align*}
    Choose $x=c\sqrt{\Delta_n\log n}$ with any constant $c>2$. Then $\P\left[\max_{1\le i\le n}|\Bro(\mathsf{V}_i) -\Bro(\overline{\mathsf{V}}_i)|>c\sqrt{\Delta_n\log n}\right] \le 2n^{\,1-c^2/2},$
    and the right-hand side is summable in $n$. By the Borel--Cantelli lemma,
    \begin{align*}
        \max_{1\le i\le n}|\Bro(\mathsf{V}_i)-       \Bro(\overline{\mathsf{V}}_i)|
        =O_{\mathrm{a.s.}}\!\big(\sqrt{\Delta_n\log n}\big).
    \end{align*}
    Since $\Delta_n=o(n^\beta)$, it follows that $\max_{1\le i\le n}|\Bro(\mathsf{V}_i)-\Bro(\overline{\mathsf{V}}_i)|
        =o_{\mathrm{a.s.}}\!\big(n^{\beta/2}\sqrt{\log n}\big).$
\end{proof}
\vspace{6pt}

\begin{proof}[Proof of Lemma \ref{lem:covariance}]
    Recall the projection operator $\mathcal{P}_k$ from the proof of Lemma~\ref{lem:maximal_ineq}. For $i\le s$, let $X_{s,\{i\}}$ be obtained by replacing $\eps_i$ with an independent copy. Since
    $\E[X_{s,\{i\}}|\mathcal{F}_i]=\E[X_s|\mathcal{F}_{i-1}]$, we have
    $\mathcal{P}_i[X_s]=\E[X_s-X_{s,\{i\}}|\mathcal{F}_i]$, and hence
    $\norm{\mathcal{P}_i[X_s]}\le \delta_p(s-i)$.
    Therefore, by orthogonality of martingale projections, for any $s,l\ge1$,
    \begin{align*}
        \abs{\E[X_sX_{s+l}]}
        =\left|\sum_{i\in\mathbb Z}\sum_{j\in\mathbb Z}
        \E[\mathcal{P}_i[X_s]\mathcal{P}_j[X_{s+l}]]\right| \leq \sum_{i=-\infty}^{s}
        \norm{\mathcal{P}_i[X_s]}
        \norm{\mathcal{P}_i[X_{s+l}]}
        \leq \sum_{h=0}^{\infty}\delta_p(h)\delta_p(h+l).
    \end{align*}
    The same argument applies to $X_i^{\oplus}-\E[X_i^{\oplus}]$, since the truncation operator is $1$-Lipschitz and hence its functional dependence measure is bounded by $\delta_p(h)$.
\end{proof}
\vspace{6pt}

\section{Proofs of Results in Section \ref{Sec: Asymp CS} and Section \ref{Sec: Robbins Gaussian Mixture Asymp CSs for (Nonstationary) Time Series}} \label{Appendix: Proof of TUCLT}
\noindent Section \ref{Appendix: Definition on Asymp CS} further discusses AsympCSs and proves the claims of two illustrative examples in Section \ref{Sec: Asymp CS}. Section~\ref{Appendix: Proof of coverage by eventual inclusion} shows that a candidate sequence of intervals attains asymptotic time-uniform coverage if it eventually contains a valid nonasymptotic CS. Section~\ref{Appendix: Proofs of Gaussian Mixture Asymp CS} derives the Gaussian-mixture AsympCS for the running average of the unconditional means. Finally, Section~\ref{Appendix: Proof of Corollary  Equivalent Asymp CS Boundary} establishes the asymptotic equivalence between the Gaussian-mixture boundary in Theorem \ref{Thm: Robbins Gaussian Mixture Asymp CS} and its simplified form.

\subsection{Discussion of Asymptotic Confidence Sequence - Definition \ref{Def: Asym CS}} 
\label{Appendix: Definition on Asymp CS}
\noindent In this section, we first discuss the proof strategies used in existing work to establish asymptotic time-uniform coverage. We then provide proofs of the two illustrative examples.

\subsubsection{General Statements} \label{Appendix: General Statements for Defn of Asymp CS}
\noindent Following \cite{bibaut2022near} and \cite{waudby2024time}, some subsequent work such as \cite{dalal2024anytime} also noted the subtle difference between an AsympCS and a sequence merely satisfying asymptotic time-uniform coverage. However, they all proceed with Theorem 2.8 of \cite{waudby2024time} in the context of \cite{robbins1970boundary} by directly obtaining a sharp asymptotic time-uniform coverage. They are thus less integrated with the general results provided by first enabling an underlying CS via martingale inequality. In contrast, our construction leverages the eventual-inclusion condition. Consequently, their proofs rely on the Condition L-3-$\eta$ in \cite{waudby2024time}, which requires a polynomial rate of convergence for variance estimation, stronger than the  logarithmic rate imposed in our Theorem \ref{Thm: Robbins Gaussian Mixture Asymp CS}. Their argument is that, in the i.i.d.\ setting, the polynomial rate condition follows from the Marcinkiewicz-Zygmund SLLN under mild moment assumptions. For time series, however, this condition is no longer automatic and can impose substantive restrictions on the choice of bandwidth and kernel. Moreover, although Marcinkiewicz-Zygmund SLLNs (\cite{marcinkiewicz1937fonctions}) are available for certain classes of dependent processes, such as stationary strongly mixing sequences (see e.g., \cite{rio1995maximal} or Corollary 3.2 (i) of \cite{rio2017asymptotic}), to the best of our knowledge, these results do not directly extend to nonstationary time series. We thus think our proof construction has its own merit in time-series settings.

\subsubsection{Proofs of Illustrative Examples in Section \ref{Sec: Asymp CS}} \label{Appendix: Proofs of Illustrative Examples} 
\begin{proof}[Proof of Illustrative Example \ref{IEg 2}]
We proceed with a two-step proof. In the first step, we show that Condition 1 of Definition \ref{Def: Asym CS} holds. In the second step, we show that the time time-uniform coverage approaches to zero in the limit. We next state a lemma that will be used in Step 1 of the main proof.  
\begin{Lem}[EFKP-LIL] \label{Lem: Counterex 2}
Under the same conditions as Illustrative Example \ref{IEg 2}, let $S_t :=\sum_{i=1}^t (X_i-\theta_0) =t(\bar X_t-\theta_0).$ It holds that 
\[
\mathbb{P}\!\left[
|S_t|> u_t(a) \ \text{i.o.}
\right]
=
\begin{cases}
1, & a\le 3,\\[1mm]
0, & a>3.
\end{cases}
\]
\end{Lem}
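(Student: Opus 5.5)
Since $S_t$ is a sum of i.i.d.\ $\mathcal N(0,1)$ variables, it has the same law as $\Bro(t)$ at integer times for a standard Brownian motion $\Bro$. The plan is to transfer the classical Kolmogorov--Erd\H{o}s--Petrowski--Feller integral test from continuous time to integer times. For a nondecreasing $\phi$, that test says
\[
\P\bigl[\Bro(s)>\sqrt s\,\phi(s)\ \text{i.o. as } s\to\infty\bigr]\in\{0,1\},
\]
with value $0$ or $1$ according to whether $\int^\infty \phi(s)e^{-\phi(s)^2/2}\,\d s/s$ converges or diverges. The same dichotomy holds for $|\Bro|$, since the two-sided case only changes constants in the tail estimates. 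Here we take $\phi_a(s)=\sqrt{2\log\log s+a\log\log\log s}$. Then
\[
\phi_a e^{-\phi_a^2/2}\asymp \sqrt{\log\log s}\,(\log s)^{-1}(\log\log s)^{-a/2}.
\]
The substitution $v=\log\log s$ turns the integral into $\int^\infty v^{1/2-a/2}\,\d v$. This diverges exactly when $a\le3$ and converges when $a>3$, which matches the threshold in Lemma~\ref{Lem: Counterex 2}.

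\textbf{Case $a>3$ (probability $0$).} This direction is immediate. The event $\{|S_t|>u_t(a)\ \text{i.o. in integers } t\}$ is contained in $\{|\Bro(s)|>\sqrt s\,\phi_a(s)\ \text{i.o. in real } s\}$, because $u_t(a)=\sqrt t\,\phi_a(t)$. The continuous-time event has probability zero by convergence of the integral.

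\textbf{Case $a\le3$ (probability $1$).} Continuous-time crossings must now be shown to produce crossings at integer times. Fix $a'\in(a,3]$ when $a<3$; the case $a=3$ is treated below. The integral for $a'$ diverges, so $\Bro(s)>\sqrt s\,\phi_{a'}(s)$ for arbitrarily large real $s$. For such an $s$, set $t=\lceil s\rceil$. The gap $\sqrt t\,\phi_{a'}(t)-\sqrt s\,\phi_{a'}(s)$ is $O(\sqrt{\log\log s/s})$, and $\sqrt s\,(\phi_{a'}(s)-\phi_a(s))\asymp\sqrt s\,\log\log\log s/\sqrt{\log\log s}\to\infty$. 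So it is enough to control the oscillation $\sup_{u\in[s,s+1]}|\Bro(u)-\Bro(s)|$. By L\'evy's modulus or a Borel--Cantelli argument over unit intervals, this supremum is $O_{\mathrm{a.s.}}(\sqrt{\log s})$, which is $o(\sqrt s\,\log\log\log s/\sqrt{\log\log s})$. Hence the crossing at $s$ yields a crossing at the integer $t$ for the boundary $u_t(a)$.

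The boundary case $a=3$ cannot use the slack from $a'>a$. It should be handled directly with the discrete-time version of the EFKP test for Gaussian random walks, as in Chapter 5.2 of \cite{revesz2013random} or Feller's original integral test. That version gives the same integral criterion along integer times, because the second-order boundary shift caused by discretization is summable. I expect this to be the main obstacle. The cleanest route is to cite the discrete-time EFKP-LIL, which is valid for i.i.d.\ sums with finite $2+\delta$ moments and in particular for Gaussian increments, for both directions. The continuous-time transfer above then serves only as intuition.
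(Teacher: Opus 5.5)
Your proposal is correct and, in its final form, matches the paper's proof: the paper treats $S_t$ as a Gaussian random walk and applies the discrete-time EFKP integral test directly. It then reduces the integral via $y=\log\log t$ to $\int^\infty y^{(1-a)/2}\,dy$, which is finite exactly when $a>3$. Your continuous-time transfer is also sound for $a\neq 3$, and it could cover $a=3$ as well by taking the slack from a fourth-order term. For example, the boundary $\sqrt{t\,(2\log\log t+3\log\log\log t+2\log\log\log\log t)}$ still has a divergent integral, and its excess over $u_t(3)$ grows faster than the $O(\sqrt{\log t})$ unit-interval oscillation.
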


\begin{proof}
Observe that $\{S_t\}_{t\ge1}$ is a Gaussian random walk, and we could then apply the Kolmogorov--Erd\H{o}s--Petrowski--Feller upper--lower class integral test (EFKP-LIL, Chapter 5.2 of \cite{revesz2013random}), which states the following:
\[
\P[|S_t| \ge  \sqrt{t}\,\phi(t) \ i.o. ] = 0 \ \text{or} \ 1 \quad \text{according as} \ \int^\infty \frac{\phi(x)}{x}
\exp\!\left(-\frac{\phi(x)^2}{2}\right)\,dx <\infty \ \text{or} \ =\infty,
\]
where $\phi(\cdot)$ is a positive non-decreasing continuous test function.

\vspace{5pt}

\noindent Now for $a > 0$, let $\phi_a(t):=\sqrt{2\log\log t+a\log\log\log t}$, which is a valid test function on $[16,\infty)$. Note that $u_t(a) = \sqrt{t} \phi_a(t)$. Then,
\[
\exp\!\left(-\frac{\phi_a(t)^2}{2}\right)
=
\exp\!\left(
-\log\log t-\frac{a}{2}\log\log\log t
\right)
=
\frac{1}{\log t\,(\log\log t)^{a/2}}.
\]
Hence the integrand satisfies 
\[
\frac{\phi_a(t)}{t}
\exp\!\left(-\frac{\phi_a(t)^2}{2}\right)
\asymp
\frac{\sqrt{\log\log t}}
{t\log t\,(\log\log t)^{a/2}}
=
\frac{1}
{t\log t\,(\log\log t)^{(a-1)/2}}.
\]
Making the change of variables $y=\log\log t,\
dy=\frac{dt}{t\log t},$ the integral is asymptotically equivalent to $\int^\infty y^{(1-a)/2}\,dy,$ which converges if and only if $\frac{1-a}{2}<-1,
\ \text{or equivalently}\
a>3.$
\end{proof}

\noindent Now we proceed with the main proof. 
\paragraph{Step 1.} We first verify that $\mathbb P\!\left[ \forall t \ge 1:\, \theta_0 \in C_t^\star \right]\ge 1-\alpha$. Let $S_t :=\sum_{i=1}^t (X_i-\theta_0) =t(\bar X_t-\theta_0).$ It follows that 
\[
\theta_0\in
\left[
\bar X_t-\frac{g_t^{\star}}{t},
\,
\bar X_t+\frac{g_t^{\star}}{t}
\right]
\iff
|S_t|\le g_t^{\star},
\]
and it thus suffices to show that $\mathbb P\!\left[
\forall t\ge1:\, |S_t|\le g_t^{\star} \right] \ge 1-\alpha.$

\vspace{3pt}

\noindent For $R>0$, define the first part of event as $A_R := \left\{\max_{1\le t<T_0}|S_t| \le R \right\}.$  Since $T_0<\infty$, $\max_{1\le t<T_0}|S_t|<\infty$ almost surely. Note that $A_R$ is a sequence of non-decreasing events with respect to $R$, and by the continuity from below property of probability measure, it then follows that $\P[A_R]\uparrow1,$ as $R\to\infty$. By the definition of a limit, we have $\forall\,\eta>0,\ \exists\,R<\infty$ such that $\mathbb{P}[A_R] > 1-\eta.$ In particular, taking \(\eta=\alpha/2\), we can choose a large enough $R_{\alpha/2} > 0$, potentially depending on $\alpha$, such that $\mathbb{P}[A_R^c] \le \frac{\alpha}{2}.$

\vspace{5pt}

\noindent  Similarly, for $K>0$, define the second part of event as 
\[
B_K :=
\Bigg\{
\forall t\ge T_0:
|S_t|
\le
u_t(4)+K
\Bigg\}
=
\left\{ \sup_{t\ge T_0}
\left(|S_t|-u_t(4)\right) \le K\right\}.
\]
W.L.O.G., take $t \in \mathbb{N}$. By Lemma \ref{Lem: Counterex 2}, it then follows that $\sup_{t\ge T_0}
\left(|S_t|-u_t(4)\right) < \infty$ almost surely. 
Hence by similar arguments, $\P[B_K] \uparrow1$ as $K\to\infty$, and we can choose a large enough $K_{\alpha/2} > 0$ such that $\mathbb P(B_K^c) \le  \frac{\alpha}{2}.$ Therefore, by the union bound, we conclude with 
\begin{align*}
\P \left[\forall t\ge1: |S_t| \le
g_t^{\star} \right] = \P[A_R\cap B_K] \ge
1- \mathbb P[A_R^c] - \mathbb P[B_K^c] \ge 1-\alpha. 
\end{align*}
\noindent We then verify that $\widetilde{g}_t / g_t^\star \to 1$ almost surely. This easily follows because convergence is a tail property, and on $[16,\infty)$, $u_t(4)/u_t(2) \to 1$.

\paragraph{Step 2.} Observe that $u_t(2)$ is the lower-class boundary of LIL rate, i.e., $\mathbb{P}\!\left[
|S_t|> u_t(2) \ \text{i.o.} \right] = 1$. Then for any  $m \ge T_0$, 
it holds that 
\[
\mathbb{P}\!\left[\forall\, t \ge m:\ \theta_0 \in \widetilde C_t\right]
=
\mathbb{P}\!\left[\forall\, t \ge m:\ |S_t| \le u_t(2)\right]
=0.
\]
Therefore, no matter how large the deterministic burn-in time point $m$ is chosen, the process will almost surely cross the boundary $u_t(2)$ again after time $m$. Consequently, we obtain 
\[
\lim_{m\to\infty}
\mathbb{P}\!\left[\forall\, t \ge m:\ \theta_0 \in \widetilde C_t\right]
=0.
\]
\end{proof}

\begin{proof}[Proof of Illustrative Example \ref{IEg 1}]
By Hoeffding's inequality (Section 2.2 of \cite{vershynin2018high}),
\begin{align*}
\P[\mu \notin C_t^\star] =
\P\left[
\left|\bar X_t-\mu\right|
>
\sqrt{\frac{\log\!\left(\pi^2 t^2/(3\alpha')\right)}{2t}}
\right] \le 2\exp\left( -2t\cdot
\frac{\log\!\left(\pi^2 t^2/(3\alpha')\right)}{2t}
\right) =
\frac{6\alpha'}{\pi^2 t^2}.
\end{align*}
Therefore by a simple union bound, it follows that 
$\P\!\left[\exists t\ge1:\mu\notin C_t^\star\right]
\le
\sum_{t=1}^\infty
{6\alpha'}/{\pi^2t^2} = \alpha'.$
Moreover, for every $m\ge1$, by the tail union bound,  $\P\!\left[\exists t\ge m:\mu\notin C_t^\star\right]
\le
\sum_{t=m}^\infty
{6\alpha'}/{\pi^2t^2} < \infty, $
so that
\[
\lim_{m\to\infty}
\P\!\left[\exists t\ge m:\mu\notin C_t^\star\right]
=0.
\]    
Taking the complement concludes the proof. 
\end{proof}

\subsection{Proof of Asymptotic Time-Uniform Coverage via Eventual Inclusion} \label{Appendix: Proof of coverage by eventual inclusion}
\begin{proof}
For each deterministic $m\in\mathbb{N}$, define
$A_m:=\bigcap_{t=m}^{\infty}\{\theta_0\in C_t^{\star}\}$,
$B_m:=\bigcap_{t=m}^{\infty}\{C_t^{\star}\subseteq\widetilde{C}_t\}$, and
$D_m:=\bigcap_{t=m}^{\infty}\{\theta_0\in\widetilde{C}_t\}$.
Clearly, $A_m\cap B_m\subseteq D_m$. Moreover, since $A_1\subseteq A_m$, the validity of $(C_t^{\star})$ under Condition 1 in Definition~\ref{Def: Asym CS} gives
$\mathbb{P}[A_m]\geq\mathbb{P}[A_1]\geq1-\alpha$. Therefore, by the union bound,
\begin{align*}
    \mathbb{P}[D_m]
    \geq \mathbb{P}[A_m\cap B_m]
    \geq 1-\alpha-\mathbb{P}[B_m^{\complement}].
\end{align*}
The events $(B_m)$ are increasing, and
\begin{align*}
    \bigcup_{m=1}^{\infty}B_m
    =
    \left\{
        \exists\,M<\infty:
        \ C_t^{\star}\subseteq\widetilde{C}_t
        \text{ for all }t\geq M
    \right\},
\end{align*}
where $M$ is an almost surely finite, sample-dependent index. Hence, the eventual-inclusion assumption and continuity from below yield
$\mathbb{P}[B_m]\to
\mathbb{P}[\bigcup_{m=1}^{\infty}B_m]=1$.
Thus, $\mathbb{P}[B_m^{\complement}]\to0$, and consequently,
\begin{align*}
    \liminf_{m\to\infty}\mathbb{P}[D_m]\geq1-\alpha.
\end{align*}
Finally, since $D_m\subseteq D_{m+1}$, the sequence $(D_m)$ is
non-decreasing and the limit exists. Therefore,
\begin{align*}
    \lim_{m\to\infty}
    \mathbb{P}
    \left[
        \forall t\geq m:
        \theta_0\in\widetilde{C}_t
    \right]
    \geq1-\alpha.
\end{align*}
\end{proof}

\subsection{Proof of Theorem \ref{Thm: Robbins Gaussian Mixture Asymp CS}}  \label{Appendix: Proofs of Gaussian Mixture Asymp CS}

\noindent In this section, we first establish auxiliary lemmas and then use them to prove Theorem \ref{Thm: Robbins Gaussian Mixture Asymp CS}.

\vspace{12pt}
\noindent \textbf{Auxiliary Lemmas.} As stated in the main text, a key technical caveat is that the process $\V_t$ in Theorem \ref{Thm: nonstationary ASIP} is not necessarily non-decreasing. Consequently, the canonical Brownian filtration, defined as $\mathcal{G}_t := \sigma(\Bro(s): 0 \le s \le \V_t)$, is not valid in a manner consistent with the martingale structure. Therefore, to recover the validity of standard filtration-based arguments, we upper bound the process $\V_t$ by a deterministic non-decreasing sequence. The minimal such sequence is given by its running maximum.

\begin{Lem}[Construction of Nonnegative Martingale] \label{Lemma: Nonnegative martingale construction}
Let $\{\U_t\}_{t\geq0}$ be a deterministic and non-decreasing sequence with $\U_0 = 0$ and $\Bro(\cdot)$ denote the standard Brownian motion. Then 
for any $\lambda \in \R$, 
\begin{align}\label{eq: expoential_martingale}
    \mathcal{E}_t(\lambda)
:= \exp\left( \lambda \Bro(\U_t) - \frac{\lambda^2}{2} \U_t \right)
\end{align}
is a nonnegative martingale starting at $\mathcal{E}_0= 1$ with respect to the Brownian filtration $\mathcal{G}_t := \sigma(\Bro(s): 0 \le s \le \U_t)$.  
\end{Lem}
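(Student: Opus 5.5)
The plan is to reduce the claim to the standard exponential martingale of Brownian motion in continuous time, evaluated along the deterministic non-decreasing sequence of times $\U_t$. Fix $\lambda\in\R$ and set $M(s):=\exp(\lambda\Bro(s)-\lambda^2 s/2)$ for $s\ge0$. With respect to the natural filtration $\mathcal{F}^{\Bro}_s:=\sigma(\Bro(r):0\le r\le s)$, the process $M$ is a continuous-time martingale. To see this, use the independence and stationarity of Brownian increments: for $0\le r\le s$,
\[
\E\bigl[M(s)\mid\mathcal{F}^{\Bro}_r\bigr]
= M(r)\,\E\bigl[\exp(\lambda(\Bro(s)-\Bro(r)))\bigr]\exp\bigl(-\lambda^2(s-r)/2\bigr)
= M(r).
\]
The last equality uses the Gaussian moment generating function $\E[e^{\lambda Z}]=e^{\lambda^2\sigma^2/2}$ for $Z\sim\mathcal N(0,\sigma^2)$ with $\sigma^2=s-r$. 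Integrability also follows from this m.g.f.: $\E[M(s)]=1<\infty$. Nonnegativity is immediate from the exponential.

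Next, I observe that $\mathcal{E}_t(\lambda)=M(\U_t)$ and $\mathcal{G}_t=\mathcal{F}^{\Bro}_{\U_t}$. Since $\U_t$ is deterministic and non-decreasing in $t$, the discrete filtration $(\mathcal{G}_t)_{t\ge0}$ is increasing, $\mathcal{G}_{t-1}\subseteq\mathcal{G}_t$. This is exactly where monotonicity is needed; without it the $\sigma$-fields would not be nested, which is the failure mode discussed in the main text for $\V_t$. Adaptedness holds because $\mathcal{E}_t(\lambda)$ is a measurable function of $\Bro(\U_t)$, which is $\mathcal{G}_t$-measurable. The martingale property then follows by applying the continuous-time identity above with $r=\U_{t-1}\le s=\U_t$:
\[
\E\bigl[\mathcal{E}_t(\lambda)\mid\mathcal{G}_{t-1}\bigr]=\mathcal{E}_{t-1}(\lambda).
\]
Because the times are deterministic, no optional sampling theorem is required. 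Finally, $\U_0=0$ together with $\Bro(0)=0$ gives $\mathcal{E}_0=1$.

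There is no real obstacle in this argument. The only point requiring care is confirming that the time change is deterministic, so that $\mathcal{G}_t$ is a genuine filtration and the conditional expectation computation reduces to a fixed Gaussian increment. The case $\U_t=\U_{t-1}$ is trivial, since the increment is then zero and the identity holds with equality.
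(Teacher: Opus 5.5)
Your proof is correct and is the standard argument the paper has in mind. The paper omits this proof as standard, and you supply exactly those details: the continuous-time exponential martingale $\exp(\lambda\Bro(s)-\lambda^2 s/2)$ is sampled at the deterministic non-decreasing times $\U_t$, and monotonicity of $\U_t$ is what makes $\mathcal{G}_t=\sigma(\Bro(r):0\le r\le \U_t)$ a nested filtration.
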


\begin{proof}
This proof is standard, so we omit it here. 
\end{proof}

\begin{Prop}[Order of Running Max Difference] \label{Prop: Order of Running Max Difference}
Under the same notation and assumptions as in Theorem \ref{Thm: nonstationary ASIP}, let $\U_t:=\max_{0\le s\le t}\V_s$ with $\U_0=0$ be the running max of the cumulative variance process.
The difference between running max $\U_t$ and $\V_t$ is upper bounded by
\[
    \U_t-\V_t\lesssim 1.
\]
\end{Prop}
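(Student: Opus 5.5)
We need to show that $\U_t-\V_t$ is bounded uniformly in $t$. Since $\U_t=\V_s$ for some $s\le t$, it suffices to prove the one-sided bound $\V_s-\V_t\le C$ for all $0\le s\le t$, with $C$ independent of $s,t$. The plan is to write $\V_t$ as $\V_s$ plus the variance of the later block plus a cross term, and show that only the cross term can be negative and that it is bounded.

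Write $\Sf_t=\Sf_s+\Sf_{s,t}$ with $\Sf_{s,t}:=\sum_{i=s+1}^t X_i$. Because the mean sequence is deterministic, $\V_t=\E[\Sf_t^2]$. Expanding gives
\[
\V_t=\V_s+\E[\Sf_{s,t}^2]+2\E[\Sf_s\Sf_{s,t}]\ \ge\ \V_s-2\abs{\E[\Sf_s\Sf_{s,t}]}.
\]
So the whole task is to bound the cross-covariance between the past block and the future block uniformly in $s$ and $t$.

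For that, I use Lemma \ref{lem:covariance}: for $i\le s<j$,
\[
\abs{\E[X_iX_j]}\le c_p(j-i),\qquad c_p(l):=\sum_{h\ge0}\delta_p(h)\delta_p(h+l).
\]
Summing over all such pairs, each lag $l\ge1$ occurs at most $l$ times, so
\[
\abs{\E[\Sf_s\Sf_{s,t}]}\le\sum_{l=1}^\infty l\,c_p(l).
\]
This is the same quantity handled in the proof of Proposition \ref{prop:variance_regularization_1}. There, $\sum_{l\ge1} l\,c_p(l)\le\Theta_{0,p}\sum_{r\ge1}\Theta_{r,p}$. By Assumption \ref{Assump:dep_decay}, $\Theta_{r,p}\le C(r+1)^{-A}$ with $A>1$, so the right-hand side is finite. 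Hence $\V_s-\V_t\le 2\sum_{l\ge1} l\,c_p(l)=:C$. Taking the maximum over $s\le t$ gives $\U_t-\V_t\le C$, and $\U_t-\V_t\ge0$ holds trivially.

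The only delicate step is the covariance bound. Lemma \ref{lem:covariance} is stated for $s,l\ge1$ and relies on the projection decomposition, which requires $\E[X_i]=0$ and $\Theta_{0,p}<\infty$ (so that $X_i=\sum_k\mathcal P_k X_i$ in $\Lp_2$). Both hold under the standing assumptions. Beyond that, the argument is just counting pairs by lag, so I do not expect a real obstacle; the crux is recognizing that $A>1$ makes $\sum_r\Theta_{r,p}$ summable.
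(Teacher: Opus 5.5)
Your proposal is correct and is essentially the paper's own proof. Both arguments expand $\V_t=\V_s+\E[(\Sf_t-\Sf_s)^2]+2\E[\Sf_s(\Sf_t-\Sf_s)]$, discard the nonnegative block variance, and bound the cross-covariance uniformly in $s\le t$ via Lemma~\ref{lem:covariance} and counting pairs by lag, which gives $\sum_{l\ge1} l\,c_p(l)\le\Theta_{0,p}\sum_{r\ge1}\Theta_{r,p}<\infty$ since $A>1$. The only cosmetic difference is that the paper counts at most $\min\{h,t-h\}$ pairs per lag before relaxing to $h$, whereas you pass directly to the bound $l$.
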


\begin{proof}
Note that
$\V_t=\Var\left[\sum_{i=1}^t(Y_i-\mu_i)\right]$. Let $\Sf_t=\sum_{i=1}^t(Y_i-\mu_i)$ so that $\E[\Sf_t]=0$ and $\V_t=\Var[\Sf_t]=\E[\Sf_t^2]$.
For any $s\le t$, using $\Sf_t=\Sf_s+(\Sf_t-\Sf_s)$, we have the identity
$\E[\Sf_s^2]=\E[\Sf_t^2]-\E[(\Sf_t-\Sf_s)^2]-2\E[\Sf_s(\Sf_t-\Sf_s)]$, which implies
$\E[\Sf_s^2]\le \E[\Sf_t^2]+2|\E[\Sf_s(\Sf_t-\Sf_s)]|$. Taking the maximum on both sides, we have
\begin{align*}
    \max_{0\le s\le t}\E[\Sf_s^2]
    \le \E[\Sf_t^2]
    +2\max_{0\le s\le t}\abs{\E[\Sf_s(\Sf_t-\Sf_s)]}.
\end{align*}
It therefore remains to control the cross term
$\max_{0\le s\le t}\abs{\E[\Sf_s(\Sf_t-\Sf_s)]}$.
Notice that for any $s\le t$, by Lemma \ref{lem:covariance}, we have
\begin{align*}
    \abs{\E[\Sf_s(\Sf_t-\Sf_s)]}
    &\le \sum_{i_1=1}^s\sum_{i_2=s+1}^t
    \sum_{j=0}^{\infty}\delta_p(j)\delta_p(j+i_2-i_1)\\
    &\le \sum_{h=1}^{t-1}\min\{h,t-h\}
    \sum_{j=0}^{\infty}\delta_p(j)\delta_p(j+h)\\
    &\le \sum_{h=1}^{\infty}h
    \sum_{j=0}^{\infty}\delta_p(j)\delta_p(j+h).
\end{align*}
Since all terms are nonnegative,
\begin{align*}
    \sum_{h=1}^{\infty}h
    \sum_{j=0}^{\infty}\delta_p(j)\delta_p(j+h)
    &=\sum_{r=1}^{\infty}\sum_{j=0}^{\infty}
    \delta_p(j)\Theta_{j+r,p}\\
    &\le \Theta_{0,p}\sum_{r=1}^{\infty}\Theta_{r,p}
    \lesssim \sum_{r=1}^{\infty}r^{-A}<\infty,
\end{align*}
where the last inequality follows from Assumption \ref{asm:dep_decay} and $A>1$.
Combining these results together, we conclude the proof.
\end{proof}

The running maximum $\U_t$ is non-decreasing and therefore provides a valid time change for the Brownian filtration and the subsequent martingale arguments. Proposition~\ref{Prop: Order of Running Max Difference} shows that replacing $\V_t$ by $\U_t$ incurs only a bounded error. The next lemma further shows that the Brownian time changes $\mathbb{B}(\U_t)$ and $\mathbb{B}(\V_t)$ are asymptotically equivalent at the scale
of the confidence-sequence boundary. We may therefore construct the Gaussian-mixture martingale using the monotone process $\U_t$ without changing the relevant asymptotic behavior associated with $\V_t$.

\begin{Lem} \label{Lemma: Brownian motion difference}
Under the same notation and assumptions as in Proposition \ref{Prop: Order of Running Max Difference}, for some $\beta \in (0,1)$, it holds that 
$$\abs{\Bro(\V_t)  - \Bro(\U_t)} =  \oas{t^{\beta/2}\sqrt{\log t}}.$$
\end{Lem}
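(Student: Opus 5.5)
This is a direct application of Lemma~\ref{lem:increment_property} with $\mathsf{V}_i:=\U_i$ and $\overline{\mathsf{V}}_i:=\V_i$. Both sequences are deterministic and nonnegative: $\V_i=\E[\Sf_i^2]\ge0$, and $\U_i$ is the running maximum of nonnegative terms. The only hypothesis to check is that
\[
\max_{1\le i\le t}\abs{\U_i-\V_i}=o(t^{\beta})
\]
for some $\beta\in(0,1)$. By construction $\U_i\ge\V_i$. Proposition~\ref{Prop: Order of Running Max Difference} gives $0\le\U_i-\V_i\le C$ uniformly in $i$, with a constant $C$ that does not depend on $i$. The constant comes from $2\Theta_{0,p}\sum_{r\ge1}\Theta_{r,p}<\infty$, which is uniform in $s,t$. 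Hence $\max_{1\le i\le t}\abs{\U_i-\V_i}\le C=o(t^{\beta})$ for every $\beta\in(0,1)$.

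Lemma~\ref{lem:increment_property} then yields
\[
\max_{1\le i\le t}\abs{\Bro(\U_i)-\Bro(\V_i)}=\oas{t^{\beta/2}\sqrt{\log t}},
\]
and in particular the same bound holds for the single term at time $t$. That is the claim.

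I expect the only real subtlety to be the passage from $O_{\mathrm{a.s.}}$ to $o_{\mathrm{a.s.}}$. The proof of Lemma~\ref{lem:increment_property} first shows $O_{\mathrm{a.s.}}(\sqrt{\Delta_t\log t})$, where $\Delta_t$ denotes the maximal variance discrepancy, which here is $\le C$. Since $\Delta_t$ is bounded, this gives $O_{\mathrm{a.s.}}(\sqrt{\log t})$, and that is $o(t^{\beta/2}\sqrt{\log t})$ for any $\beta>0$. So the conclusion holds for every $\beta\in(0,1)$, which is stronger than the existential statement.

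If one prefers not to rely on that lemma, the argument can be done directly. For each $t$, $\Bro(\U_t)-\Bro(\V_t)$ is centered Gaussian with variance $\U_t-\V_t\le C$. Then
\[
\P\bigl[\abs{\Bro(\U_t)-\Bro(\V_t)}>c\sqrt{\log t}\bigr]\le 2t^{-c^2/(2C)},
\]
which is summable once $c^2>2C$. Borel--Cantelli gives $O_{\mathrm{a.s.}}(\sqrt{\log t})$, and hence the stated $o_{\mathrm{a.s.}}$ rate. No independence across $t$ is needed, because only a union bound over $t$ is used.
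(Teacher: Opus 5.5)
Your proposal is correct and matches the paper's proof. The paper likewise takes the uniform bound $0\le \U_t-\V_t\lesssim 1$ from Proposition~\ref{Prop: Order of Running Max Difference}, notes $O(1)=o(t^{\beta})$ for every $\beta\in(0,1)$, and invokes Lemma~\ref{lem:increment_property}. Your direct Gaussian-tail and Borel--Cantelli argument, which relies only on both time changes being deterministic, is also valid and gives the sharper $O_{\mathrm{a.s.}}(\sqrt{\log t})$ bound.
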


\begin{proof}
By Proposition \ref{Prop: Order of Running Max Difference}, we know $\U_t - \V_t = O(1)$. For any $\beta \in (0,1),$  it holds that $O(1) = o(t^{\beta})$. Then by Lemma \ref{lem:increment_property}, the desired result immediately follows after scaling over $t$.
\end{proof}

\vspace{12pt}
\noindent \textbf{Main Proof of Theorem \ref{Thm: Robbins Gaussian Mixture Asymp CS}.}   
\noindent As discussed in the main text, our Definition~\ref{Def: Asym CS} differs from the original definition proposed in \cite{waudby2024time}. We therefore proceed with two parts. Part I follows the general recipe (Condition G-1 to Condition G-4) of \cite{waudby2024time} to establish Condition 1, the almost-sure relative approximations, and Part II verifies Condition 2, the eventual inclusion. We proceed Part I with the following steps:
\begin{enumerate}
    \item \textbf{Verify G-1.} Theorem~\ref{Thm: nonstationary ASIP} couples the centered partial-sum process with a time-changed Brownian motion, providing the strong approximation required by Condition G-1.

    \item \textbf{Verify G-2.} We construct a Gaussian-mixture martingale based on $\mathbb{B}(\U_t)$ and apply Ville's inequality to obtain a nonasymptotic boundary for the Brownian proxy.

    \item \textbf{Verify G-3.} We show that the strong approximation error is negligible relative to the nonasymptotic boundary from G-2. Proposition~\ref{Prop: Order of Running Max Difference} allows the boundary rate based on $\V_t$ to be replaced by the corresponding rate based on $\U_t$.

    \item \textbf{Verify G-4.} Using $\widehat{\V}_t/\V_t=1+\oas{1/\log t}$ and $\U_t/\V_t\to1$, we show that the boundaries based on $\widehat{\V}_t$ and $\U_t$ have a ratio converging to one, as required by Condition G-4.
\end{enumerate}

\vspace{6pt}
\begin{proof}[Proof of Part I]  
\noindent Condition G-1 is satisfied directly by Theorem~\ref{Thm: nonstationary ASIP}. To verify Condition G-2, we next derive a nonasymptotic boundary for the process $\Bro(\U_t)$. Recall from Equation~\eqref{eq: expoential_martingale} that, for each $\lambda\in\mathbb{R}$, $\mathcal{E}_t(\lambda)$ is a nonnegative martingale with initial value one. Let $f(\lambda;0,\rho^2)$ denote the Gaussian density with mean zero and variance $\rho^2>0$. Define
\begin{align*}
    \mathcal{E}_t
    &:=
    \int_{\mathbb{R}}
    \mathcal{E}_t(\lambda)f(\lambda;0,\rho^2)\,d\lambda \\
    &=
    \exp\left(
    \frac{\rho^2(\Bro(\U_t))^2}{2(\U_t\rho^2+1)}
    \right)
    (\U_t\rho^2+1)^{-1/2},
\end{align*}
where the closed form of the mixture exponential Wiener process follows from construction in \cite{waudby2024time}. By Fubini's theorem, $\mathcal{E}_t$ is also a nonnegative martingale with initial value one (\cite{robbins1970statistical}). Ville's inequality (\cite{ville1939etude}) therefore gives $\P[\forall t\geq1:\mathcal{E}_t<1/\alpha]\geq1-\alpha$. Substituting the closed-form expression and taking logarithms yield
\begin{align*}
    \P\left[
    \forall t\geq1:
    \frac{\rho^2(\Bro(\U_t))^2}{2(\U_t\rho^2+1)}
    <
    \log\left(\frac{1}{\alpha}\right)
    +
    \frac{1}{2}\log(\U_t\rho^2+1)
    \right]
    \geq1-\alpha.
\end{align*}
Rearranging the inequality gives
\begin{equation} \label{Eq: Direct Boundary from Villes}
    \P\left[
    \forall t\geq1:
    \left|
    \frac{\Bro(\U_t)}{t}
    \right|
    <
    \sqrt{
    \frac{\U_t\rho^2+1}{t^2\rho^2}
    \log\left(
    \frac{\U_t\rho^2+1}{\alpha^2}
    \right)
    }
    \right]
    \geq1-\alpha.
\end{equation}
Thus, Condition G-2 is satisfied.

\vspace{12pt}
\noindent We next compare the strong approximation and nonasymptotic boundary rates to verify Condition G-3. Specifically, we show that the strong approximation error in Theorem~\ref{Thm: nonstationary ASIP} is asymptotically negligible relative to the nonasymptotic boundary constructed above. Since $p>2$ and $\V_t\asymp t$,
\begin{align*}
    t^{1/p}\sqrt{\log t}
    =
    o\left(
    \sqrt{\V_t \log\V_t}
    \right).
\end{align*}
By Proposition \ref{Prop: Order of Running Max Difference} and $V_t \asymp t,$
\begin{equation} \label{Eq: order of Ut/V_t}
   {\U_t}/{\V_t}-1
    = O(1/t)
    =
    o\left({1}/{\log t}\right).
\end{equation}
Thus, $\U_t/\V_t\to1$ and $\log\U_t/\log\V_t\to1$. Combining these limits with the preceding rate comparison gives $t^{1/p}\sqrt{\log t} = o\left(\sqrt{\U_t \log\U_t} \right),$ which verifies Condition G-3.

 
\vspace{12pt}
\noindent We next derive the almost-sure approximate boundary required by Condition G-4. Let $\widetilde{\mu}_t:=t^{-1}\sum_{i=1}^t\mu_i$ denote the estimand, with estimator $\widehat{\mu}_t:=t^{-1}\sum_{i=1}^tY_i$ as in Theorem~\ref{Thm: Robbins Gaussian Mixture Asymp CS}. We verify that
\begin{align*}
    [\widehat{\mu}_t - \widetilde{L}_t, \widehat{\mu}_t + \widetilde{U}_t]
    :=
    \left[
    \widehat{\mu}_t
    \pm
    \left(
    \sqrt{
    \frac{\widehat{\V}_t\rho^2+1}{t^2\rho^2}
    \log\left(
    \frac{\widehat{\V}_t\rho^2+1}{\alpha^2}
    \right)
    }
    \right)
    \right]
\end{align*}
forms a $(1-\alpha)$ AsympCS for $\widetilde{\mu}_t$.

\vspace{3pt}
\noindent The assumed relative consistency
$\widehat{\V}_t/\V_t=1+\oas{1/\log t}$ implies
$\V_t/\widehat{\V}_t\xrightarrow{\mathrm{a.s.}}1$. Together with the preceding result that $\U_t/\V_t\to1$, this gives $\U_t/\widehat{\V}_t\xrightarrow{\mathrm{a.s.}}1$. Since $\V_t\asymp t$ and $\V_t/\widehat{\V}_t\xrightarrow{\mathrm{a.s.}}1$, we have $\widehat{\V}_t\to\infty$ almost surely. Hence, $(\U_t\rho^2+1)/(\widehat{\V}_t\rho^2+1) \xrightarrow{\mathrm{a.s.}}1$.



\vspace{3pt}
\noindent For the logarithmic factors,
\begin{align*}
    \frac{
    \log((\U_t\rho^2+1)/\alpha^2)
    }{
    \log((\widehat{\V}_t\rho^2+1)/\alpha^2)
    }
    -1
    =
    \frac{
    \log((\U_t\rho^2+1)/(\widehat{\V}_t\rho^2+1))
    }{
    \log((\widehat{\V}_t\rho^2+1)/\alpha^2)
    }
    \xrightarrow{\mathrm{a.s.}}
    0.
\end{align*}
Indeed, the numerator converges to zero, whereas the denominator diverges to infinity. Combining the two factors yields
\begin{align*}
    \frac{
    (\U_t\rho^2+1)
    \log((\U_t\rho^2+1)/\alpha^2)
    }{
    (\widehat{\V}_t\rho^2+1)
    \log((\widehat{\V}_t\rho^2+1)/\alpha^2)
    }
    \xrightarrow{\mathrm{a.s.}}
    1.
\end{align*}
Thus, the ratio of the two boundary terms converges almost surely to one. Moreover, the boundary based on $\widehat{\V}_t$ divided by $\sqrt{\U_t\log\U_t}/t$ converges almost surely to one. Therefore,
\begin{align}
    \sqrt{
    \frac{\U_t\rho^2+1}{t^2\rho^2}
    \log\left(
    \frac{\U_t\rho^2+1}{\alpha^2}
    \right)
    }
    =
    \sqrt{
    \frac{\widehat{\V}_t\rho^2+1}{t^2\rho^2}
    \log\left(
    \frac{\widehat{\V}_t\rho^2+1}{\alpha^2}
    \right)
    }
    +
    o_{\mathrm{a.s.}}\left(\frac{\sqrt{\U_t\log\U_t}}{t}\right)
    \label{Eq: plain rho case}
\end{align}

\noindent Finally, let
\begin{align*}
    [\widehat{\mu}_t - L_t, \widehat{\mu}_t + U_t]
    :=
    \left[
    \widehat{\mu}_t
    \pm
    \left(
    \sqrt{
    \frac{\U_t\rho^2+1}{t^2\rho^2}
    \log\left(
    \frac{\U_t\rho^2+1}{\alpha^2}
    \right)
    }
    \right)
    \right].
\end{align*}
Equation \eqref{Eq: plain rho case} implies
$\widetilde{L}_t/L_t\xrightarrow{\mathrm{a.s.}}1$ and
$\widetilde{U}_t/U_t\xrightarrow{\mathrm{a.s.}}1$,
satisfying Condition G-4.

\vspace{12pt}
\noindent \textbf{Supplementary Step.}\footnote{For interested readers, our notation for the (centered) boundaries, i.e., $[L_t^{\star},U_t^{\star}]$, $[L_t,U_t]$, and $[\widetilde{L}_t,\widetilde{U}_t]$, is not exactly same as ones in \cite{waudby2024time}. We order them in this way for the consistency of our proof.} One can further verify directly that Condition 1 in Definition~\ref{Def: Asym CS} is satisfied. We provide this verification for completeness and to make the argument transparent to readers encountering this proof strategy for the first time. By the triangle inequality, 
$$
\abs{\frac{1}{t} \sum_{i=1}^{t} (Y_i - \mu_i)} \le \abs{\frac{1}{t} \sum_{i=1}^{t} (Y_i - \mu_i) - \Bro(\V_t)/t}  + \abs{\Bro(\V_t)/t  - \Bro(\U_t)/t} + \abs{\Bro(\U_t)/t},
$$
and thus by Theorem \ref{Thm: nonstationary ASIP}, Lemma \ref{Lemma: Brownian motion difference}, and Step 1, we have with probability at least $(1-\alpha)$ that
$$
\forall t \ge 1, \quad \abs{\frac{1}{t} \sum_{i=1}^{t} (Y_i - \mu_i)}  \le \abs{\eps_t} +  \sqrt{
\frac{\U_t \rho^2 + 1}{t^2 \rho^2}
\log\!\left(
\frac{\U_t \rho^2 + 1}{\alpha^2}
\right)
},
$$
where, with a slight abuse of notation in indexing the error sequence in a finite $t$ statement, we define $\eps_t := \oas{t^{1/p -1}\sqrt{\log t}} + \oas{t^{\beta/2 -1}\sqrt{\log t}}$ with $p > 2$ and $\beta \in (0,1)$. In particular, letting 
\begin{equation} \label{Eq: exact unknown CS boundary}
C_t^{\star} = [\widehat{\mu}_t - L_t^{\star}, \widehat{\mu}_t + U_t^{\star}] := \left[\widehat{\mu}_t \pm \left(\sqrt{
\frac{\U_t \rho^2 + 1}{t^2 \rho^2}
\log\!\left(
\frac{\U_t \rho^2 + 1}{\alpha^2}
\right)
} + |\eps_t| \right)  \right],    
\end{equation}
we then obtain the exact (but unknown due to randomness in $|\eps_t|$ and unobservability of exact values of $\U_t$) $(1-\alpha)$ non-asymptotic confidence sequence for $\widetilde{\mu}_t$. In fact, one could verify that $|\eps_t| = \oas{\sqrt{\U_t\log \U_t}/t}$, and it follows 
that we do have $\widetilde{L}_t/L_t^{\star} \xrightarrow{a.s.} 1$ and $\widetilde{U}_t/U_t^{\star} \xrightarrow{a.s.} 1$ as required in Definition \ref{Def: Asym CS}. 

\end{proof}

\noindent Part II shows that the eventual-inclusion condition holds, and the desired asymptotic time-uniform coverage follows by Appendix \ref{Appendix: Proof of coverage by eventual inclusion}. The key idea is to introduce a slack parameter $k$ and use different values of $\rho$ in $C_t^\star$ and $\widetilde{C}_t$ while preserving the almost sure consistency of the boundary endpoints established in Part I. Specifically, for a fixed $k>e$, we set $\rho_k:=\rho\log k>\rho$. This choice is only a proof device and does not restrict the pre-specified value of $\rho$ in Theorem~\ref{Thm: Robbins Gaussian Mixture Asymp CS}. With a slight abuse of notation, given any pre-specified target value $\rho$ in the empirical CS $(\widetilde{C}_t)$, it suffices to fix $k \ge e$ and set the tuning parameter in the oracle CS $(C_t^\star)$ as $\rho_0 := \rho/\log k$.  

\vspace{6pt}
\begin{proof}[Proof of Part II]
\noindent We show that there exists an almost surely finite random time $T_k<\infty$ such that $C_t^{\star}\subseteq\widetilde{C}_t$ for all $t\geq T_k$. Define
\begin{align*}
L_t(\rho):=\sqrt{\frac{\U_t\rho^2+1}{t^2\rho^2}\log\left(\frac{\U_t\rho^2+1}{\alpha^2}\right)},\quad \widetilde{L}_t(\rho_k):=\sqrt{\frac{\widehat{\V}_t\rho_k^2+1}{t^2\rho_k^2}\log\left(\frac{\widehat{\V}_t\rho_k^2+1}{\alpha^2}\right)}.
\end{align*}
For each fixed $k$, since $\rho$ and $\rho_k$ are fixed and $\widehat{\V}_t/\U_t\xrightarrow{\mathrm{a.s.}}1$, the same argument as that leading to Equation~\eqref{Eq: plain rho case} gives $L_t(\rho)=\widetilde{L}_t(\rho_k)+o_{\mathrm{a.s.}}\left(\sqrt{\U_t\log\U_t}/t\right)$. Consequently, $\widetilde{L}_t(\rho_k)/L_t(\rho)\xrightarrow{\mathrm{a.s.}}1$ for each fixed $k$.\footnote{A caveat is that the eventual coverage guarantee needs to be given with respect to the exact (unknown) non-asymptotic confidence sequence $C_t^{\star}$ defined in Equation \eqref{Eq: exact unknown CS boundary}, instead of just on $C_t$ constructed directly from Ville's inequality on Brownian motion in Equation \eqref{Eq: Direct Boundary from Villes}. This part of proof for asymptotic equivalence by only checking $L_t, U_t$ is already sufficient as shown by the general recipe in \cite{waudby2024time} (Theorem 2.4 in Section 2.3). } Since the confidence sequence is symmetric and two-sided, it suffices to verify the eventual coverage of the lower side. Unlike the asymptotic-equivalence argument in Part I, the slackness argument requires the sharper rate
\begin{align}
\label{Eq: Specific rate of Vt estimator and Ut}
\frac{\widehat{\V}_t}{\U_t}
=
\frac{\widehat{\V}_t}{\V_t}\frac{\V_t}{\U_t}
=
1+o_{\mathrm{a.s.}}\left(\frac{1}{\log t}\right),
\end{align}
where the last equality follows from the assumed convergence rate of the cumulative variance estimator and Equation~\eqref{Eq: order of Ut/V_t}. Then for every fixed \(k>e\), we could obtain
\begin{align*}
\widetilde{L}_t(\rho_k) - L_t(\rho) =& \sqrt{
\frac{\widehat{\V}_t \rho_k^2 + 1 }{t^2 \rho_k^2}
 \log\!\left(
\frac{\widehat{\V}_t \rho_k^2 + 1}{\alpha^2} \right)
}  - \sqrt{
\frac{\U_t \rho^2 + 1}{t^2 \rho^2}
\log\!\left(
\frac{\U_t \rho^2 + 1}{\alpha^2}
\right)
} \\
=& \sqrt{\frac{\U_t}{t^2}
\left(\frac{\widehat \V_t}{\U_t}+\frac{1}{\U_t\rho_k^2}\right)
\left[\log \U_t+\log\!\left(\frac{\rho_k^2}{\alpha^2}\right)
+\log\!\left(\frac{\widehat \V_t}{\U_t}+\frac{1}{\U_t\rho_k^2}\right)\right]} \\ &- \sqrt{
\frac{\U_t \rho^2 + 1}{t^2 \rho^2}
\log\!\left(
\frac{\U_t \rho^2 + 1}{\alpha^2}
\right)
}.
\end{align*}
Expanding the first term,
\begin{align*}
&\sqrt{\frac{\U_t}{t^2}
\left(\frac{\widehat \V_t}{\U_t}+\frac{1}{\U_t\rho_k^2}\right)
\left[\log \U_t+\log\!\left(\frac{\rho_k^2}{\alpha^2}\right)
+\log\!\left(\frac{\widehat \V_t}{\U_t}+\frac{1}{\U_t\rho_k^2}\right)\right]} \\
= &  \sqrt{\frac{\U_t}{t^2}
\left(1 + o_{\mathrm{a.s.}}\left(\frac{1}{\log \U_t} \right)\right)
\log \U_t \left[  1 +\frac{\log(\rho_k^2/\alpha^2)}{\log \U_t} + o_{\mathrm{a.s.}}\left(\frac{1}{\log \U_t} \right) \right]}  \\
=& \frac{\sqrt{\U_t \log \U_t}}{t} \left[1 +\frac{\log(\rho_k^2/\alpha^2)}{2\log \U_t} + o_{\mathrm{a.s.}}\left(\frac{1}{\log \U_t} \right) \right],
\end{align*}
where the first equality uses Equation~\eqref{Eq: Specific rate of Vt estimator and Ut}, $\U_t\asymp t$, and the expansion $\log(1+x)=x+o(x)$, which give $\widehat{\V}_t/\U_t+1/(\U_t\rho_k^2)=1+\oas{1/\log\U_t}$ and $\log\left(\widehat{\V}_t/\U_t+1/(\U_t\rho_k^2)\right)=\oas{1/\log\U_t}$. The second equality follows from the expansion $\sqrt{1+x}=1+x/2+o(x)$. Similarly, expanding the second term, 
\begin{align*}
\sqrt{
\frac{\U_t \rho^2 + 1}{t^2 \rho^2}
\log\!\left(
\frac{\U_t \rho^2 + 1}{\alpha^2}
\right)
} &= \sqrt{\frac{\U_t}{t^2}
\left(1 +\frac{1}{\U_t\rho^2}\right)
\left[\log \U_t+\log\!\left(\frac{\rho^2}{\alpha^2}\right)
+\log\!\left(1+\frac{1}{\U_t\rho^2}\right)\right]} \\
&= \frac{\sqrt{\U_t \log \U_t}}{t} \left(1 +\frac{\log(\rho^2/\alpha^2)}{2\log \U_t} + o_{\mathrm{a.s.}}\left(\frac{1}{\log \U_t} \right) \right).
\end{align*}
Therefore, subtracting the two expansions and using $\rho_k/\rho=\log k$, we obtain
\begin{align*}
\widetilde{L}_t(\rho_k)-L_t(\rho)
=
\frac{\sqrt{\U_t}}{t\sqrt{\log\U_t}}
\left(
\log\left(\frac{\rho_k}{\rho}\right)+\oas{1}
\right) =
\frac{\sqrt{\U_t}}{t\sqrt{\log\U_t}}
\left(\log\log k+\oas{1}\right).
\end{align*}

\noindent Since $\log\log k>0$ for every fixed $k>e$, it follows that $\widetilde{L}_t(\rho_k)-L_t(\rho)>0$ eventually almost surely. Furthermore, define $L_t^{\star}(\rho):=L_t(\rho)+\abs{\eps_t}$ as in Equation~\eqref{Eq: exact unknown CS boundary}, and recall that $\eps_t=\oas{t^{1/p-1}\sqrt{\log t}}+\oas{t^{\beta/2-1}\sqrt{\log t}}$, where $p>2$ and $\beta\in(0,1)$. Since $\U_t\asymp t$, we have $\abs{\eps_t}=\oas{\sqrt{\U_t}/\left(t\sqrt{\log\U_t}\right)}$. Therefore,
\begin{align*}
\widetilde{L}_t(\rho_k)-L_t^{\star}(\rho)
&=\widetilde{L}_t(\rho_k)-L_t(\rho)-\abs{\eps_t}\\
&=\frac{\sqrt{\U_t}}{t\sqrt{\log\U_t}}\left[\log\log k+\oas{1}\right].
\end{align*}
Since $\log\log k>0$ for every fixed $k>e$, there exists an almost surely finite random time $T_k$ such that $L_t^{\star}(\rho)\leq\widetilde{L}_t(\rho_k)$ for all $t\geq T_k$. Hence, $C_t^{\star}\subseteq\widetilde{C}_t$ for all $t\geq T_k$, which proves the eventual inclusion.\footnote{The random time $T_k$ may depend on $k$, which is sufficient here. By exploiting the monotonicity of $\widetilde{L}_t(\rho_k)$ on a suitable region, one could obtain a uniform random time independent of $k$.} 
\end{proof}

\subsection{Proof of Corollary \ref{Corollary: Equivalent Asymp CS Boundary}} \label{Appendix: Proof of Corollary  Equivalent Asymp CS Boundary}

\begin{proof}
Under the current assumptions, denote the boundaries in Theorem \ref{Thm: Robbins Gaussian Mixture Asymp CS} and Corollary \ref{Corollary: Equivalent Asymp CS Boundary}, respectively, by
\[
\widetilde{C}^{\mathcal{G},\mathrm{oracle}}_t := \left(
\widehat{\mu}_t \pm
\sqrt{
\frac{\bigl(t\widehat{\sigma}^2_t \rho^2 + 1\bigr)}{t^2\rho^2}
\log\!\left(\frac{t\widehat{\sigma}^2_t\rho^2 + 1}{\alpha^2}\right)
}
\right), \quad 
\widetilde{C}^{\mathcal{G},\mathrm{equiv}}_t := \left(
\widehat{\mu}_t \pm \widehat{\sigma}_t
\sqrt{
\frac{t  \rho^2 + 1}{t^2\rho^2}
\log\!\left(\frac{t\rho^2 + 1}{\alpha^2}\right)
}
\right)
\]
We then first show the almost sure relative approximations between two boundaries, i.e., 
\begin{align*}
    \frac{
        (t\hat{\sigma}_t^2\rho^2+1)
        \log((t\hat{\sigma}_t^2\rho^2+1)/\alpha^2)
    }{
        \hat{\sigma}_t^2(t\rho^2+1)
        \log((t\rho^2+1)/\alpha^2)
    }
    \overset{\mathrm{a.s.}}{\longrightarrow}1.
\end{align*}
Since $\hat{\sigma}_t^2 \overset{\mathrm{a.s.}}{\longrightarrow}\sigma^2>0$ and $t\to \infty$, it follows that $(t\hat{\sigma}_t^2\rho^2+1)/\hat{\sigma}_t^2(t\rho^2+1) \overset{\mathrm{a.s.}}{\longrightarrow} 1$. For the logarithmic factor, observe that 
\begin{align*}
    \frac{\log((t\hat{\sigma}_t^2\rho^2+1)/\alpha^2)}{\log((t\rho^2+1)/\alpha^2)} - 1 & = \frac{\log\left(t\widehat{\sigma}_t^2\rho^2 + 1\right) - \log(t\rho^2+1)}{\log\left(t\rho^2 + 1\right) - \log(\alpha^2)}
\end{align*}
The numerator is $O_{\mathrm{a.s.}}(1)$ because $\hat{\sigma}_t^2 \overset{\mathrm{a.s.}}{\longrightarrow}\sigma^2$, whereas the denominator diverges to infinity as $t\to\infty$. Hence, the logarithmic factor converges to one almost surely. Next, with a slight abuse of notation, fix $\rho >0$, and observe the exact algebraic identity that 
$\widetilde{C}^{\mathcal{G},\mathrm{equiv}}_t (\rho) = \widetilde{C}^{\mathcal{G},\mathrm{oracle}}_t (\rho /\widehat{\sigma}_t)$. Then by the same slack trick in Appendix \ref{Appendix: Proofs of Gaussian Mixture Asymp CS}, for any pre-specified value $\rho > 0$ in $\widetilde{C}^{\mathcal{G},\mathrm{equiv}}_t$, fix $k > e$, and set the tuning parameter in $\widetilde{C}^{\mathcal{G},\mathrm{oracle}}_t$ as $\rho_0 := \rho/(\sigma \log k)$. The eventual-inclusion property could then be similarly established, which completes the proof. 
\end{proof}

\section{Proofs of Results in Section \ref{Sec: Strongly Consistent Estimation Strategies}} \label{Appendix: Sec_C}

\noindent This section provides supporting proofs for the strong consistency of HAC estimators in the FDM framework. Section~\ref{Proof of Prop: A.S. Oracle HAC Estimator} proves the strong consistency of the oracle HAC estimator. Section~\ref{Proof of Thm: A.S. Plug-in HAC Estimator} shows that under regularity conditions, replacing the unknown centered process with residuals based on the running mean preserves the strong consistency and convergence rate of the HAC estimator.

\subsection{Proof of Proposition \ref{Prop: Strong Consistency of Oracle HAC Estimator}} \label{Proof of Prop: A.S. Oracle HAC Estimator}
\noindent We begin by stating a lemma showing that the bandwidth choice $b_t$ in Condition~\ref{Cond: primitive-bandwidth-rate} is sufficient for the summability conditions used later in the main proof.

\begin{Lem} \label{Lem: Suff Cond for Oracle HAC Bandwidth}
For $p >2$, denote \(q:=p/2>1\) and $q_0:= \min\{q,2\}.$ Then define the dyadic block bandwidth envelope $b_\ell^{\star}
:=
\max_{2^\ell \le t < 2^{\ell+1}} \lfloor b_t \rfloor$, and then the bandwidth condition in Condition \ref{Cond: primitive-bandwidth-rate} is sufficient to obtain the dyadic condition $\sum_{\ell=1}^{\infty}
(b_\ell^\star)^q2^{-\ell q(1-1/q_0)} < \infty.$
\end{Lem}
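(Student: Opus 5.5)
We will verify the dyadic summability directly from the upper envelope in Condition \ref{Cond: primitive-bandwidth-rate}, namely $b_t \le C\, t^{1-2/(p\wedge 4)}(\log t)^{-\lambda}$ for all large $t$ and some $\lambda>2/p$. Our first step is to identify the exponent $1-1/q_0$ with the polynomial exponent in the bandwidth bound. Since $q=p/2$ and $q_0=\min\{p/2,2\}=(p\wedge 4)/2$, we have $1-1/q_0 = 1-2/(p\wedge 4)$. Thus the bandwidth envelope exponent and the dyadic weight exponent coincide, which is the whole point of the lemma: the polynomial factors cancel exactly and only the logarithmic factor decides summability.

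Next we bound the block envelope. For $2^\ell\le t<2^{\ell+1}$ and $\ell$ large, $t^{1-1/q_0}\le 2^{(\ell+1)(1-1/q_0)}\le 2\cdot 2^{\ell(1-1/q_0)}$ and $(\log t)^{-\lambda}\le (\ell\log 2)^{-\lambda}$. Hence $b_\ell^\star\le \max_{t}b_t \lesssim 2^{\ell(1-1/q_0)}\ell^{-\lambda}$, with constants independent of $\ell$. Finitely many small $\ell$ contribute a finite amount, since each $b_\ell^\star$ is finite (the bandwidth is deterministic and finite; monotonicity even gives $b_\ell^\star\le b_{2^{\ell+1}}$).

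Then $(b_\ell^\star)^q 2^{-\ell q(1-1/q_0)} \lesssim \ell^{-\lambda q} = \ell^{-\lambda p/2}$, and the series converges iff $\lambda p/2>1$, i.e. $\lambda>2/p$, exactly the standing assumption. The only step requiring care is the exponent bookkeeping between $p\wedge 4$ and $q_0$ (separating the cases $p\le 4$, where $q_0=q$, and $p>4$, where $q_0=2$, both giving the same identity); beyond that the argument is a routine comparison with a $p$-series, so we expect no genuine obstacle.
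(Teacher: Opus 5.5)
Your proposal is correct and follows essentially the same argument as the paper. On each dyadic block you bound $b_t \le C\,2^{(\ell+1)(1-1/q_0)}(\ell\log 2)^{-\lambda}$, so the polynomial factors cancel and the series reduces to $\sum_\ell \ell^{-\lambda q}$, which converges because $\lambda q>1$; your explicit identification $1-1/q_0=1-2/(p\wedge 4)$ and your treatment of the finitely many small $\ell$ spell out two points the paper leaves implicit.
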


\begin{proof}
\noindent For $2^\ell \le t < 2^{\ell+1}$, we have $t^{1-1/q_0} \le 2^{(\ell+1)(1-1/q_0)}$, and $\log t \ge \ell \log 2.$ Hence,
\begin{align*}
b_t 
\le 
C t^{1-1/q_0}(\log t)^{-\lambda}
\le
C \frac{2^{(\ell+1)(1-1/q_0)}}{(\ell\log 2)^\lambda}.
\end{align*}
Note that this upper bound of $b_t$ now does not depend on $t$. Taking the maximum over the dyadic block yields
\begin{align*}
b_\ell^\star
:=
\max_{2^\ell \le t < 2^{\ell+1}} b_t
\le
C
\frac{2^{(\ell+1)(1-1/q_0)}}{(\ell\log 2)^\lambda}.
\end{align*}
Therefore, we have $(b_\ell^\star)^q 2^{-\ell q(1-1/q_0)} \lesssim
\ell^{-\lambda q}.$ Since $\lambda>1/q$, we have $\lambda q>1$, and thus $\sum_{\ell=1}^{\infty}\ell^{-\lambda q}<\infty.$ Consequently the dyadic condition is satisfied: $\sum_{\ell=1}^{\infty}
(b_\ell^\star)^q 2^{-\ell q(1-1/q_0)} < \infty.$
\end{proof}

\vspace{12pt}
\noindent \textbf{Main proof of Proposition~\ref{Prop: Strong Consistency of Oracle HAC Estimator}.} For notational simplicity, denote \(q:=p/2>1\), and $q_0:= \min\{q,2\}.$ To distinguish the
finite-sample variance from its limit \(\sigma^2\), define
\(\sigma_t^2:=\mathsf{V}_t/t\) as in Corollary \ref{Corollary: Equivalent Asymp CS Boundary}. We divide the proof into two parts corresponding to the following decomposition:
\begin{align*}
\widetilde{\sigma}_t^2-\sigma_t^2
=
\underbrace{\left(
\widetilde{\sigma}_t^2-\E[\widetilde{\sigma}_t^2]
\right)}_{\text{the first (stochastic) part}}
+
\underbrace{\left(
\E[\widetilde{\sigma}_t^2]-\sigma_t^2
\right).}_{\text{the second (deterministic) part}}
\end{align*}
We first control the stochastic fluctuation
\(\widetilde{\sigma}_t^2-\E[\widetilde{\sigma}_t^2]\), and then bound the deterministic bias
\(\E[\widetilde{\sigma}_t^2]-\sigma_t^2\). For strong consistency, it suffices to show that both parts converge almost surely to zero. We derive the corresponding specific convergence rates at the end of the proof.
\vspace{5pt}

\begin{proof}[Proof of first (stochastic) part:]
Since \(K\) is even, \(K(u)=0\) for \(|u|>1\), and $K(0)=1$, we can re-index the double sum and rewrite the first stochastic part: 
\begin{align*}
\widetilde{\sigma}_t^2-\E[\widetilde{\sigma}_t^2]
&=\frac{1}{t}
\sum_{i=1}^{t}
\sum_{j=1}^{t}
K\!\left(\frac{i-j}{b_t}\right)
\left(X_iX_j-\E[X_iX_j] \right) \\
&=  \frac{1}{t}\sum_{i=1}^{t}\left(X_i^2-\E[X_i^2]\right)
+
\frac{2}{t}\sum_{h=1}^{\lfloor b_t\rfloor}
K\!\left(\frac{h}{b_t}\right)
\sum_{i=1}^{t-h}
\left(X_iX_{i+h}-\E[X_iX_{i+h}]\right) \\
&= 
\frac1t\sum_{i=1}^{t}\xi_{i,0}
+
\frac2t\sum_{h=1}^{\lfloor b_t\rfloor}
K\left(\frac{h}{b_t}\right)\sum_{i=1}^{t-h}\xi_{i,h},
\end{align*}
where we define\footnote{Note that $X_i$ is assumed to be mean zero in the oracle HAC estimator, but not the second moment, namely the product \(X_iX_{i+h}\).} for \(h\ge 0\), $\xi_{i,h} := X_iX_{i+h}-\E[X_iX_{i+h}].$

\vspace{3pt}
On the dyadic block $2^\ell \le t < 2^{\ell+1},$  we have $t^{-1} \le 2^{-\ell}$ and $\lfloor b_t\rfloor \le b_\ell^\star:=
\max_{2^\ell \le t < 2^{\ell+1}}
\lfloor b_t \rfloor.$
Hence, for every \(t\in[2^\ell,2^{\ell+1})\), the inner partial sum has length at most \(2^{\ell+1}\). Therefore, for $h \ge 0$,
\[
\left|
\sum_{i=1}^{t-h}\xi_{i,h}
\right|
\le
\max_{1\le m\le 2^{\ell+1}}
\left|
\sum_{i=1}^{m}\xi_{i,h}
\right|.
\]
It follows that
\begin{align} \label{Step: Dyadic blocking inner sum}
\max_{2^\ell\le t<2^{\ell+1}}
\left|
\widetilde{\sigma}_t^2-\E[\widetilde{\sigma}_t^2]
\right|
\le
C\,2^{-\ell}
\sum_{h=0}^{b_\ell^\star}
\max_{1\le m\le 2^{\ell+1}}
\left|
\sum_{i=1}^{m}\xi_{i,h}
\right|,
\end{align}
where we assume the kernel function to be bounded so that $|K(h/b_t)|\le C.$

\vspace{3pt}
As in the proof of Lemma~\ref{lem:maximal_ineq}, let $\mathcal{F}_j:=\sigma(\eps_j,\eps_{j-1},\ldots)$ and define the projection operator $\mathcal{P}_j[Z]:=\E[Z\mid\mathcal{F}_j]-\E[Z\mid\mathcal{F}_{j-1}]$. For fixed $i,h$, the random variable $\xi_{i,h}$ is $\mathcal{F}_{i+h}$-measurable. For every finite $R$, the telescoping identity gives $\sum_{r=0}^{R}\mathcal{P}_{i+h-r}[\xi_{i,h}]=\xi_{i,h}-\E[\xi_{i,h}\mid\mathcal{F}_{i+h-R-1}]$. Assumption~\ref{asm:dep_decay} gives $\sup_{i\in\mathbb{Z}}\norm{X_i}_p\leq\Theta_{0,p}$ and hence $\xi_{i,h}\in\mathcal{L}_q$. By the reverse martingale convergence theorem, $\E[\xi_{i,h}\mid\mathcal{F}_{i+h-R-1}]\to\E[\xi_{i,h}\mid\mathcal{F}_{-\infty}]=\E[\xi_{i,h}]=0$ in $\mathcal{L}_q$, where $\mathcal{F}_{-\infty}:=\bigcap_{j\in\mathbb{Z}}\mathcal{F}_j$ is trivial by the Kolmogorov zero-one law. Letting $R\to\infty$ yields
\begin{equation}\label{eq: xi_{i,h}}
\xi_{i,h}
=
\sum_{r=0}^{\infty}
\mathcal{P}_{i+h-r}[\xi_{i,h}]
\qquad\text{in }\mathcal{L}_q.
\end{equation}

For each fixed $h,r$, the sequence $(\mathcal{P}_{i+h-r}[\xi_{i,h}])_{i\geq1}$ forms martingale differences with respect to $(\mathcal{F}_{i+h-r})_{i\geq1}$. We use $N$ as the terminal index here, reserving the dyadic-block index for later. Doob's maximal inequality, the von Bahr–Esseen inequality (\cite{von1965inequalities}, for $q<2$) and Burkholder moment inequality (\cite{burkholder1973distribution}, for $q \ge 2$) imply
\begin{equation} \label{Step: DB ineq}
\left\|
\max_{1\leq m\leq N}
\left|
\sum_{i=1}^{m}
\mathcal{P}_{i+h-r}[\xi_{i,h}]
\right|
\right\|_q
\leq
\frac{q}{q-1}
\left\|
\sum_{i=1}^{N}
\mathcal{P}_{i+h-r}[\xi_{i,h}]
\right\|_q 
\lesssim
\left(
\sum_{i=1}^{N}
\norm{\mathcal{P}_{i+h-r}[\xi_{i,h}]}_q^{q_0}
\right)^{1/q_0}    
\end{equation}
It remains to bound $\norm{\mathcal{P}_{i+h-r}[\xi_{i,h}]}_q$. Let $X_u^{*(j)}$ denote the coupled version of $X_u$ obtained by replacing $\eps_j$ with an independent copy, and let $\xi_{i,h}^{*(j)}$ denote the corresponding coupled version of $\xi_{i,h}$. Since
$\E[\xi_{i,h}\mid\mathcal{F}_{i+h-r-1}]
=\E[\xi_{i,h}^{*(i+h-r)}\mid\mathcal{F}_{i+h-r-1}]
=\E[\xi_{i,h}^{*(i+h-r)}\mid\mathcal{F}_{i+h-r}]$, we have
\begin{align*}
\norm{\mathcal{P}_{i+h-r}[\xi_{i,h}]}_q
=
\left\|
\E\left[
\xi_{i,h}-\xi_{i,h}^{*(i+h-r)}
\mid
\mathcal{F}_{i+h-r}
\right]
\right\|_q.
\end{align*}
Since conditional expectation is an $\mathcal{L}_q$ contraction, the triangle inequality and H\"older's inequality with $q=p/2$ give
\begin{align*}
\norm{\mathcal{P}_{i+h-r}[\xi_{i,h}]}_q
&\leq
\norm{\xi_{i,h}-\xi_{i,h}^{*(i+h-r)}}_q\\
&=
\norm{
X_iX_{i+h}
-
X_i^{*(i+h-r)}X_{i+h}^{*(i+h-r)}
}_q\\
&\leq
\norm{\left(X_i-X_i^{*(i+h-r)}\right)X_{i+h}}_q
+
\norm{
X_i^{*(i+h-r)}
\left(X_{i+h}-X_{i+h}^{*(i+h-r)}\right)
}_q\\
&\lesssim
\norm{X_i-X_i^{*(i+h-r)}}_p
+
\norm{X_{i+h}-X_{i+h}^{*(i+h-r)}}_p.
\end{align*}

\noindent By the definition of the uniform functional dependence measure in Equation~\eqref{eq:f_measure},
\begin{align*}
\norm{\mathcal{P}_{i+h-r}[\xi_{i,h}]}_q
\leq
C\left[
\mathbf{1}_{\{r\geq h\}}\delta_p(r-h)
+
\delta_p(r)
\right].
\end{align*}
Consequently,
\begin{equation}\label{Step: sum sup proj operator}
\sum_{r=0}^{\infty}
\sup_i
\norm{\mathcal{P}_{i+h-r}[\xi_{i,h}]}_q
\leq
C
\sum_{r=0}^{\infty}
\left[
\delta_p(r)
+
\mathbf{1}_{\{r\geq h\}}\delta_p(r-h)
\right]
\leq
C\Theta_{0,p},
\end{equation}
where the last inequality follows by the change of variables $k=r-h$ and the definition of $\Theta_{0,p}$.

\vspace{3pt}
We next bound the inner sum on the right-hand side of Equation~\eqref{Step: Dyadic blocking inner sum}. Using the decomposition in Equation~\eqref{eq: xi_{i,h}}, Minkowski's inequality, and Equation \eqref{Step: DB ineq}, we obtain 
\begin{align}\label{Step: N^power control}
\Biggl\|
\max_{1\leq m\leq N}
\Bigl|
\sum_{i=1}^{m}\xi_{i,h}
\Bigr|
\Biggr\|_q
&\leq
\sum_{r=0}^{\infty}
\Biggl\|
\max_{1\leq m\leq N}
\Bigl|
\sum_{i=1}^{m}
\mathcal{P}_{i+h-r}[\xi_{i,h}]
\Bigr|
\Biggr\|_q
\nonumber\\
&\lesssim
\sum_{r=0}^{\infty}
\left(
\sum_{i=1}^{N}
\norm{\mathcal{P}_{i+h-r}[\xi_{i,h}]}_q^{q_0}
\right)^{1/q_0}
\nonumber\\
&\leq
C N^{1/q_0}
\sum_{r=0}^{\infty}
\sup_i
\norm{\mathcal{P}_{i+h-r}[\xi_{i,h}]}_q
\leq
C N^{1/q_0},
\end{align}
where the last inequality follows from Equation~\eqref{Step: sum sup proj operator}.

\vspace{3pt}
We now return to the outer sum in Equation~\eqref{Step: Dyadic blocking inner sum}. Setting $N=2^{\ell+1}$ and applying Minkowski's inequality and Equation~\eqref{Step: N^power control} give
\begin{align*}
\left\|
\max_{2^\ell \leq t < 2^{\ell+1}}
\left|
\widetilde{\sigma}_t^{\,2}
-
\E[\widetilde{\sigma}_t^{\,2}]
\right|
\right\|_q
&\leq
C\,2^{-\ell}
\left\|
\sum_{h=0}^{b_\ell^\star}
\max_{1\leq m\leq 2^{\ell+1}}
\left|
\sum_{i=1}^{m}\xi_{i,h}
\right|
\right\|_q\\
&\leq
C\,2^{-\ell}
\sum_{h=0}^{b_\ell^\star}
\left\|
\max_{1\leq m\leq 2^{\ell+1}}
\left|
\sum_{i=1}^{m}\xi_{i,h}
\right|
\right\|_q\\
&\leq
C\,2^{-\ell}
\sum_{h=0}^{b_\ell^\star}
(2^{\ell+1})^{1/q_0}\leq
C b_\ell^\star 2^{-\ell(1-1/q_0)}.
\end{align*}
Therefore,
\begin{align}\label{Eq: Dyadic Lq norm}
\sum_{\ell=1}^{\infty}
\left\|
\max_{2^\ell \leq t < 2^{\ell+1}}
\left|
\widetilde{\sigma}_t^{\,2}
-
\E[\widetilde{\sigma}_t^{\,2}]
\right|
\right\|_q^q
\leq
C\sum_{\ell=1}^{\infty}
(b_\ell^\star)^q2^{-\ell q(1-1/q_0)}
<
\infty,
\end{align} 
where the last inequality follows from Condition~\ref{Cond: primitive-bandwidth-rate} and Lemma~\ref{Lem: Suff Cond for Oracle HAC Bandwidth}. For every $\varepsilon>0$, Equation~\eqref{Eq: Dyadic Lq norm} and Markov's inequality imply
\begin{align}\label{Eq: Dyadic Borel-Cantelli}
\sum_{\ell=1}^{\infty}
\P\left[
\max_{2^\ell \leq t < 2^{\ell+1}}
\left|
\widetilde{\sigma}_t^{\,2}
-
\E[\widetilde{\sigma}_t^{\,2}]
\right|
>
\varepsilon
\right]
<
\infty.
\end{align}
The Borel--Cantelli lemma therefore gives
$\max_{2^\ell\leq t<2^{\ell+1}}
\left|\widetilde{\sigma}_t^{\,2}-\E[\widetilde{\sigma}_t^{\,2}]\right|
\to0$ almost surely. Since every $t$ belongs to a dyadic block, $\left|
\widetilde{\sigma}_t^{\,2}
-
\E[\widetilde{\sigma}_t^{\,2}]
\right|
\leq
\max_{2^\ell\leq s<2^{\ell+1}}
\left|
\widetilde{\sigma}_s^{\,2}
-
\E[\widetilde{\sigma}_s^{\,2}]
\right|
\to0
\ \mathrm{a.s.}$ This completes the proof of the stochastic part. 
\end{proof}

\vspace{5pt}
\begin{proof}[Proof of second (deterministic) part:]
Define the uniform covariance envelope $\gamma_2(h):=\sup_i\abs{\E[X_iX_{i+h}]}$. Following Equation~(39) of \cite{wu2012covariance}, we can decompose the deterministic error into two components, corresponding to the kernel bias and the truncation bias, respectively, as follows:
\begin{equation} \label{Eq: kernel bias + truncation bias}
\abs{\E[\widetilde{\sigma}_t^2]-\sigma_t^2}
\leq
2\sum_{h=1}^{\lfloor b_t\rfloor}
\abs{1-K(h/b_t)}\gamma_2(h)
+
2\sum_{h=\lfloor b_t\rfloor+1}^{t-1}
\gamma_2(h), \quad \gamma_2(h)\leq\sum_{r=0}^{\infty}\delta_2(r)\delta_2(r+h)\footnote{Equation (39) of \cite{wu2012covariance} applies to both $2<p<4$ and $p \ge 4$, since the deterministic bias bound itself
uses only second-order projection arguments. The covariance envelope inequality follows from the projection argument in Proposition 8, Equations (31) and (34) of \cite{xiao2011asymptotic}, with the bounds taken uniformly to accommodate nonstationarity. }.
\end{equation}
The short-range dependence condition from Assumption \ref{Assump:dep_decay}, together with $p > 2$ gives $\sum_{r=0}^{\infty}\delta_2(r)<\infty$ by the monotonicity of $\mathcal{L}_p$ norms. Hence,
\begin{equation} \label{Eq: summability}
\sum_{h=0}^{\infty}\gamma_2(h)
\leq
\sum_{h=0}^{\infty}
\sum_{r=0}^{\infty}
\delta_2(r)\delta_2(r+h)\leq
\left(
\sum_{r=0}^{\infty}\delta_2(r)
\right)^2
<
\infty.
\end{equation}
For each fixed $h$, continuity of $K$ at $0$ and $b_t\to\infty$ imply $K(h/b_t)\to K(0)=1$. Since $K$ is bounded and $\sum_{h=0}^{\infty}\gamma_2(h)<\infty$, the dominated convergence theorem gives $\sum_{h=1}^{\lfloor b_t\rfloor}
\abs{1-K(h/b_t)}
\gamma_2(h)
\to0.$ The summability of $\gamma_2(h)$ and $b_t\to\infty$ also imply $\sum_{h=\lfloor b_t\rfloor+1}^{t}
\gamma_2(h)
\leq
\sum_{h=\lfloor b_t\rfloor+1}^{\infty}
\gamma_2(h)
\to0.$
Therefore, $\E[\widetilde{\sigma}_t^2]-\sigma_t^2\to0$.
\end{proof}

\vspace{8pt}

\begin{proof}[Proof of the specific rate of strong consistency in Corollary \ref{Corollary: Equivalent Asymp CS Boundary}]
Condition~\ref{Cond: o(1/logt) oracle HAC bandwidth} strengthens the bandwidth requirements in Condition~\ref{Cond: primitive-bandwidth-rate}, so Lemma~\ref{Lem: Suff Cond for Oracle HAC Bandwidth} remains applicable. By the triangle inequality, it suffices to verify the desired rate for the stochastic and deterministic parts separately.

\vspace{3pt}
\noindent We first consider the stochastic part. On the dyadic block $2^\ell\leq t<2^{\ell+1}$, we have $\log t\asymp\ell$. Therefore, for every $\varepsilon>0$, it suffices to show
\begin{align}\label{Eq: Dyadic Borel-cantelli on o(1/logt)}
\sum_{\ell=1}^{\infty}
\P\left[
\max_{2^\ell\leq t<2^{\ell+1}}
\left|
\widetilde{\sigma}_t^{\,2}
-
\E[\widetilde{\sigma}_t^{\,2}]
\right|
>
\frac{\varepsilon}{\ell}
\right]
<
\infty.
\end{align}
Recall that $q=p/2$ and $q_0:= \min\{q,2\}.$ Lemma~\ref{Lem: Suff Cond for Oracle HAC Bandwidth} still applies and gives  $(b_\ell^\star)^q 2^{-\ell q(1-1/q_0)} \lesssim
\ell^{-\lambda q}.$ By Markov's inequality, the $\ell$-th probability in Equation~\eqref{Eq: Dyadic Borel-cantelli on o(1/logt)} is therefore bounded by
$C\varepsilon^{-q}\ell^{q-\lambda q}
=C\varepsilon^{-q}\ell^{-(\lambda-1)q}$.
The series is summable whenever $(\lambda-1)q>1$, or equivalently, $\lambda>1+2/p$. The Borel--Cantelli lemma and $\log t\asymp\ell$ then yield
$\widetilde{\sigma}_t^{\,2}-\E[\widetilde{\sigma}_t^{\,2}]
=\oas{1/\log t}$.

\vspace{3pt}
\noindent It remains to consider the deterministic part. By an argument similar to that in Equations \eqref{Eq: kernel bias + truncation bias}-\eqref{Eq: summability}, for any lag cutoff $b \ge 1$, $\sum_{h>b}\gamma_2(h) \le \sum_{r=0}^{\infty}\delta_2(r)\Theta_{r+b+1,2} = O(b^{-A})$ under Assumption \ref{Assump:dep_decay}. Then by the ``local H\"older condition'' of order $\kappa$ at zero of Condition \ref{Cond: o(1/logt) oracle HAC bandwidth} and the implied boundedness property of the kernel $K$, reindexing Equation \eqref{Eq: kernel bias + truncation bias} gives
\begin{align*}
\abs{\E[\widetilde{\sigma}_t^2]-\sigma_t^2}
&\lesssim
b_t^{-\kappa}\sum_{h\leq b_t}h^\kappa\gamma_2(h)
+
\sum_{h>b_t}\gamma_2(h) =
O(b_t^{-\kappa})+O(b_t^{-A}) = O(b_t^{-\kappa})
\end{align*}
where the first bound follows from summation by parts and $\kappa<A$, which imply $\sum_{h=1}^{\infty} h^\kappa\gamma_2(h) < \infty$, and the second equality follows from $\kappa<A$.

Finally, the lower bound bandwidth condition $b_t^\kappa/\log t\to\infty$ implies
$O(b_t^{-\kappa})=o(1/\log t)$.  
\end{proof}

\subsection{Proof of Theorem \ref{Thm: Strong Consistency of Plug-in HAC Estimator}} \label{Proof of Thm: A.S. Plug-in HAC Estimator} 
\noindent Under Condition \ref{Cond: Ces\`aro Mean-Stability Condition} and the same assumptions as in Proposition \ref{Prop: Strong Consistency of Oracle HAC Estimator}, Theorem \ref{Thm: Strong Consistency of Plug-in HAC Estimator}
follows once we establish $\widehat{\sigma}_t^2 = \widetilde{\sigma}_t^2 + o_{\mathrm{a.s.}}(r_t)$
for some $r_t \to 0$. We introduce vector notation for clarity. Let
$\bm{X}_t = (X_1,\dots,X_t)'$, $\bm{\mu}_t = (\mu_1,\dots,\mu_t)'$, and
$\1_t = (1,\dots,1)'$. Define $\bm{d}_t = \bm{\mu}_t - \widetilde{\mu}_t\1_t$,
which satisfies $\bm{d}_t'\1_t = 0$ by construction, and define $\overline{X}_t := t^{-1}\sum_{i=1}^t X_i$.
By Equation \eqref{eq:additive_decomposition}, we have $\widehat{\mu}_t = \widetilde{\mu}_t + \overline{X}_t$. After residualizing 
$\widehat{X}_{i,t} = Y_i - \widehat{\mu}_t$ and stacking them into
$\widehat{\bm{X}}_t = (\widehat{X}_{1,t},\dots, \widehat{X}_{t,t})'$,
we obtain 
\begin{align*}
    \widehat{\bm{X}}_t = \bm{X}_t + \bm{d}_t - \overline{X}_t \1_t =: \bm{X}_t + \bm{u}_t.
\end{align*}
Let $\mathbf{W}_t = (K((i-j)/b_t))_{1\leq i,j\leq t}$, so that the two variance
estimators take the quadratic forms $\widehat{\sigma}_t^2 = t^{-1} \widehat{\bm{X}}_t'
\mathbf{W}_t \widehat{\bm{X}}_t$ and $\widetilde{\sigma}_t^2 = t^{-1} \bm{X}_t'
\mathbf{W}_t \bm{X}_t$. Since $K$ is a positive semi-definite kernel, $\mathbf{W}_t\succeq \bm{0}$, and by boundedness on a compact support $[-1,1]$, $\|\mathbf{W}_t\|_{\mathrm{op}} \leq C_K b_t$
for a kernel-dependent constant $C_K$. Expanding the difference and applying
the Cauchy--Schwarz inequality yields
\begin{align*}
    \abs{\widehat{\sigma}_t^2 - \widetilde{\sigma}_t^2}
    \leq \frac{2}{t} \abs{\bm{X}_t' \mathbf{W}_t \bm{u}_t}
    + \frac{1}{t} \bm{u}_t' \mathbf{W}_t\bm{u}_t
    \leq 2\widetilde{\sigma}_t \sqrt{Q_t} + Q_t,
\end{align*}
where $Q_t = \bm{u}_t'\mathbf{W}_t\bm{u}_t/t$.

\vspace{3pt}
It therefore suffices to show
$Q_t = o_{\mathrm{a.s.}}(r_t^2)$ for some $r_t\to 0$. Using $\bm{u}_t =
\bm{d}_t - \overline{X}_t\1_t$ and the elementary inequality
$(\bm{a} + \bm{b})'\mathbf{W}_t(\bm{a}+\bm{b}) \leq 2(\bm{a}'\mathbf{W}_t\bm{a}
+ \bm{b}'\mathbf{W}_t\bm{b})$, which follows from $\mathbf{W}_t\succeq \bm{0}$, we obtain
\begin{align*}
    Q_t
    \leq \frac{2}{t} \bm{d}_t' \mathbf{W}_t \bm{d}_t
    + 2\overline{X}_t^2\left( \frac{1}{t}\1_t'\mathbf{W}_t\1_t \right)
    \leq2 \norm{\mathbf{W}_t}_{\mathrm{op}}
    \left(\frac{\norm{\bm{d}_t}^2}{t} + \overline{X}_t^2 \frac{\norm{\1_t}^2}{t}\right)
    \lesssim b_t \left(A_{\mu,t} + \overline{X}_t^2\right),
\end{align*}
where $A_{\mu,t} = t^{-1}\sum_{i=1}^t (\mu_i - \widetilde{\mu}_t)^2$ is controlled
by Condition \ref{Cond: Ces\`aro Mean-Stability Condition}. 

\vspace{3pt}
When $\mathsf{V}_t\asymp t$, Theorem~\ref{Thm: nonstationary ASIP} together with a standard Brownian maximal inequality gives $\overline{X}_t = O_{\mathrm{a.s.}}(\sqrt{\log t /t})$, and hence $Q_t \lesssim b_t\bigl(A_{\mu,t} + O_{\mathrm{a.s.}}(\log t/t)\bigr)$. Finally, taking $c_t := b_t (\log t)^2$ in Condition \ref{Cond: Ces\`aro Mean-Stability Condition} and under the bandwidth condition in Condition \ref{Cond: o(1/logt) oracle HAC bandwidth}, it follows that $Q_t = \oas{1/{(\log t)^2}}.$ Taken together with Proposition \ref{Prop: Strong Consistency of Oracle HAC Estimator}, this completes the proof.

\section{AsympCSs for SGD and Proof in Section \ref{Sec: Sampling and Optimization Algorithms}} \label{Appendix: Sampling and Optimization Algorithms}
\noindent In this section, we provide further discussion of SGD in Appendix~\ref{Appendix: Stochastic Gradient Descent}, followed by the proof of Proposition~\ref{Prop: Asymp CS for MCMC averages} in Appendix~\ref{Proof of Prop: Asymp CS for MCMC averages}.

\subsection{Stochastic Gradient Descent} \label{Appendix: Stochastic Gradient Descent}
\noindent The same Markovian framework in Section \ref{Sec: Sampling and Optimization Algorithms} also applies to stochastic gradient descent (SGD) with a constant step size\footnote{Under a constant step size regime, the variance accumulation is non-degenerate, satisfying $\V_t\asymp t$ as required in Theorem \ref{Thm: Robbins Gaussian Mixture Asymp CS}.}.  Specifically, consider
\begin{align*}
    \bm{\theta}_i
    =
    \bm{\theta}_{i-1}
    -
    \eta\nabla_{\bm{\theta}} \ell(\bm{\theta}_{i-1},\bm{\zeta}_i),
\end{align*}
where $\eta>0$ is a fixed constant, $(\bm{\zeta}_i)_{i\geq1}$ are i.i.d.\ stochastic inputs, and $\ell$ is the sample loss function.

\vspace{3pt}
Despite its lack of exact convergence because a stepsize-dependent stationary bias still persists, constant-step SGD has still gained widespread adoption in practice because of its computational advantages. In particular, it simplifies hyperparameter tuning, rapidly forgets the effect of the initialization, and naturally supports parallel computation through the aggregation of multiple independent SGD trajectories. The Markov-chain perspective on constant-stepsize SGD has been developed in \cite{dieuleveut2020bridging}, \cite{merad2025convergence} and  \cite{wei2025online}. Under strong convexity, smoothness, and moment conditions, \cite{li2024sgd} show that the SGD recursion has a geometric-moment contraction property within the functional dependence framework, so Lipschitz functionals of the iterates satisfy geometric dependence decay.

\vspace{3pt}
A convenient objective in determining whether SGD can be stopped is an observable surrogate-based stochastic gradient energy: Let $Y_i :=  \|\nabla_\theta\ell(\bm{\theta}_{i-1},\bm{\zeta}_i)\|^2
=
\eta^{-2}\|\bm{\theta}_i-\bm{\theta}_{i-1}\|^2$, and then we define the estimand and the estimator respectively as 
\begin{align} \label{Eq: surrogate stochastic gradient energy}
    \bar\mu_t := \frac{1}{t}\sum_{i=1}^t \E[Y_i], \quad 
    \widehat\mu_t := \frac{1}{t}\sum_{i=1}^t Y_i,
\end{align}
where under unbiased stochastic gradient, Jensen's inequality gives that the surrogate-based estimand of interest $\bar\mu_t$ upper bounds the average expected population-gradient energy along the original, possibly nonstationary, SGD trajectory. 

\vspace{3pt}
While the monitored quantity is not the objective gap itself, it provides a natural proxy for optimization performance. For nonconvex problems, a small average population-gradient energy certifies approximate first-order stationarity. Under stronger structural assumptions, such as convexity or the Polyak--{\L}ojasiewicz (PL) condition, it further controls the expected objective suboptimality. Thus, monitoring the average population-gradient energy also provides a certificate for the expected objective performance of the averaged iterate.

\vspace{3pt}
Under the regularity conditions as in Corollary \ref{Corollary: Equivalent Asymp CS Boundary}, applying a one-sided version of the Robbins Gaussian mixture boundary yields that for any $\rho>0$ and $\alpha\in(0,1)$,
\begin{align} \label{Eq: one-sided upper boundary}
    U_t^+(\alpha)
    :=
    \widehat\mu_t
    +
    \widehat\sigma_t
    \sqrt{
    \frac{2 (t\rho^2+1)}{t^2\rho^2}
    \log\!\left(1 + \frac{\sqrt{t\rho^2+1}}{2\alpha}\right)
    }
\end{align}
forms an upper $(1-\alpha)$-AsympCS for the estimand of interest $\bar{\mu}_t$ as in Equation \eqref{Eq: surrogate stochastic gradient energy}, where $\widehat{\sigma}_t$ is the HAC estimator on the corresponding process $Y_i$ provided in Theorem \ref{Thm: Strong Consistency of Plug-in HAC Estimator}.  We omit the proof here but this follows from Proposition B.1 of \cite{waudby2024time} along with the same tricks as in our Corollary \ref{Corollary: Equivalent Asymp CS Boundary}.\footnote{\label{fn: 1S,2S diff}Note that the one-sided $(1-\alpha)$-AsympCS resembles a two-sided $(1-2\alpha)$-AsympCS but with an additional additive 1 inside log. A similar phenomenon appears in the one- and two-sided sub-Gaussian CSs of \cite{howard2021time}.} The optimal choice of $\rho$ is provided in Appendix \ref{Appendix: optimal rho}. A natural surrogate-based (asymptotically) anytime-valid stopping rule is $\tau_\varepsilon := \inf\{t\geq T_0: U_t^+(\alpha)\leq \varepsilon\}$, which certifies that the average expected
stochastic-gradient energy is below a threshold $\varepsilon > 0$.  

\vspace{3pt}
In contrast to our approach, \cite{aolaritei2025stopping} construct finite-sample anytime-valid confidence sequences for the weighted-average suboptimality of projected SGD under general convexity, allowing decaying stepsizes and requiring neither smoothness nor strong convexity. They handle trajectory dependence by isolating a martingale-difference structure in the suboptimality decomposition. Our construction instead targets average stochastic-gradient energy in the online, constant-stepsize regime. It yields AsympCSs with time-uniform CLT-rate widths under weak dependence. Furthermore, although we only focus SGD with a constant-step size, it is possible and interesting to extend the current framework to accommodate decaying and other more general learning-rate-based SGDs in that $\V_t \asymp t$ is only a sufficient condition but not as a necessary condition to derive our Theorem \ref{Thm: Robbins Gaussian Mixture Asymp CS} (see more discussion in the corresponding footnote \ref{fn:Vt order}). We thus leave them to future work.

\subsection{Proof of Proposition \ref{Prop: Asymp CS for MCMC averages}} \label{Proof of Prop: Asymp CS for MCMC averages}

\begin{proof}
Under the contraction and initial moment conditions in
Equations~\eqref{eq:contraction_map} and
\eqref{eq: initial moment condition}, respectively, Theorem~4.1 of
\cite{wu2010dependence} (also see
\cite{wu2004limit,diaconis1999iterated}) shows that the associated
backward iterations converge almost surely and yield a strictly
stationary ``causal'' solution
\(\overline{\bm{X}}_i
:=\varphi(\bm{\varepsilon}_i,\bm{\varepsilon}_{i-1},\ldots)\)
with marginal distribution \(\pi\) and
\(\overline{\bm{X}}_0\in \mathcal{L}_p\). Therefore,
\(\overline{Y}_i:=f(\overline{\bm{X}}_i)\) is itself a stationary causal
process and falls within the functional dependence measure framework.

\vspace{3pt}
Let \(\overline{Y}_i^{(i-h)}\) be obtained by replacing
\(\bm{\varepsilon}_{i-h}\) with an independent copy, and denote the
corresponding functional dependence measure of the
\(\overline{Y}\)-process by
\(\delta_p(h):=\|\overline{Y}_i-\overline{Y}_i^{(i-h)}\|_p\).
Such a replacement affects \(\overline{\bm{X}}_i\) through \(h\)
successive contractive maps. Iterating the contraction inequality \eqref{eq:contraction_map} and
using the Lipschitz property of \(f\) give
\(\delta_p(h)\leq C L_p^h\) for some \(C<\infty\). Thus the functional
dependence measure decays exponentially fast at the geometric rate
\(L_p^h\). Consequently,
\(\Theta_{m,p}\leq C L_p^m/(1-L_p)\), and, because exponential decay
dominates any polynomial rate,
\(\sup_{m\geq0}(m+1)^A\Theta_{m,p}<\infty\) for every \(A>0\).
Moreover, \(\overline{Y}_0\in \mathcal{L}_p\), and, by stationarity, we may take
\(Q:=|\overline{Y}_0|\), so that
\(\sup_i\P[|\overline{Y}_i|>y]=\P[Q>y]\). Thus
Assumptions~\ref{Assump:dep_decay} and~\ref{Assump:UI} hold, and
Theorem~\ref{Thm: Robbins Gaussian Mixture Asymp CS} applies to the
oracle estimator
\(\widehat{\mu}_t^\star:=t^{-1}\sum_{i=1}^t\overline{Y}_i\).

\vspace{3pt}
Now let \(\bm{X}_0\in \mathcal{L}_p\) and construct the observed and stationary
chains using the same future innovations. Iterating the contraction
condition along these common innovations gives
\(\|\bm{X}_i-\overline{\bm{X}}_i\|_p
\leq L_p^i\|\bm{X}_0-\overline{\bm{X}}_0\|_p\).
This is the geometric moment contraction (GMC) property: the two chains
forget their different initializations and approach one another
geometrically fast in \(\mathcal{L}_p\). Hence passing from the stationary chain
to the observed chain incurs only a summable transient error. Indeed,
the Lipschitz property of \(f\) implies
\(\sum_{i\geq1}\|Y_i-\overline{Y}_i\|_p<\infty\), and therefore, by
Minkowski's inequality,
\(M:=\sum_{i\geq1}|Y_i-\overline{Y}_i|<\infty\) almost surely. It follows
that
\[
    |\widehat{\mu}_t-\widehat{\mu}_t^\star|
    \leq \frac{M}{t}
    =O_{\mathrm{a.s.}}(t^{-1}).
\]
Since this difference is negligible relative to the asymptotic
confidence-sequence boundary, replacing the oracle estimator by
\(\widehat{\mu}_t\) does not affect the AsympCS conclusion. The result
then follows under the stated LRV conditions.
\end{proof}

\section{Additional Discussions} \label{Appendix: Additional Discussions}
\noindent In this section, we provide several additional discussions. Section \ref{Appendix: Functional Dependence Measure Framework} provides an overview of the functional dependence measure (FDM) framework. For readers interested in the underlying probability theory, Section \ref{Appendix: Non-Degenerate Condition on the Cumulative Variance Process} briefly discusses connections to recent developments of nonasymptotic KMT approximation results and the different roles of non-degeneracy conditions on the cumulative variance process. Section \ref{Appendix: Comparison of Admissible Bandwidth Classes} compares the admissible bandwidth classes under the FDM framework and mixing conditions. Finally, Section \ref{Appendix: optimal rho} provides the optimal choice of the tuning parameter $\rho$ for the Robbins' Gaussian-mixture AsympCSs.

\subsection{Functional Dependence Measure Framework} \label{Appendix: Functional Dependence Measure Framework}
\noindent This section provides discussion of the FDM framework. We first present illustrative examples for Assumptions \ref{Assump:dep_decay} and \ref{Assump:UI}, then compare FDM with mixing conditions, and finally discuss the assumptions imposed in \cite{hafouta2023almost} under mixing conditions.

\subsubsection{Illustrative Examples for Assumptions \ref{Assump:dep_decay} and \ref{Assump:UI}} \label{Appendix: Examples Satisfying and Violating the Assumptions}
\noindent We first survey real-valued stationary and nonstationary time-series models whose functional dependence properties have been characterized in the literature, and specifically lay out ones satisfying Assumptions \ref{Assump:dep_decay} and \ref{Assump:UI}.
Models not on the list should not be simply interpreted as counterexamples. Rather, it primarily reflects that their functional dependence properties may not yet have been formally established in the literature.
\begin{table}[H]
\centering
\label{tab:stationary-functional-dependence-examples}

\small
\setlength{\tabcolsep}{6pt}
\renewcommand{\arraystretch}{1.15}

\begin{tabularx}{\textwidth}{
    @{}
    >{\raggedright\arraybackslash}X
    >{\raggedright\arraybackslash}p{0.38\textwidth}
    @{}
}
\toprule
Representative Examples
    & References \\
\midrule

\multicolumn{2}{@{}l}{\textit{1. Linear processes}} \\[0.2em]

General linear processes (e.g., finite-order MA and stable
ARMA)
    & \cite[Equation ~6]{wu2005nonlinear} \\

\addlinespace[0.5em]
\multicolumn{2}{@{}l}{\textit{2. Nonlinear time series}} \\[0.2em]

Nonlinear AR models / ARCH models 
    & \cite[Example~2.2]{berkes2014kmt} \\

Exponential AutoRegressive (EXPAR) models
    & \cite[Example~5.1]{shao2007asymptotic} \\ 

Threshold autoregressive (TAR) models
    & \cite[Example~2.2]{berkes2014kmt} \\

Random-coefficient AR models
    & \cite[Equation 5.4]{shao2007asymptotic}\\

Bilinear models
    & \cite[Example~5.3]{shao2007asymptotic} \\

\addlinespace[0.5em]
\multicolumn{2}{@{}l}{
    \textit{3. Time-varying Volatility Models}
} \\[0.2em] 

AR models with ARCH errors
    & \cite[Example~5.2]{shao2007asymptotic} \\

GARCH, APARCH, ARMA--GARCH; augmented GARCH
    & \cite[Propositions~5.1--5.2]{shao2007asymptotic} \\ 

Gaussian AR(1) log-stochastic-volatility models
    & \cite{kokoszka2025dependence} \\

\addlinespace[0.5em]
\multicolumn{2}{@{}l}{\textit{4. Nonlinear series representations}} \\[0.2em]

(discrete-time) Volterra processes
    & \cite[Example~2.3]{berkes2014kmt} \\
\bottomrule
\end{tabularx}
\caption{FDM: real-valued examples of stationary time series (also see survey in \cite{wu2011asymptotic})}
\end{table}

\begin{table}[H]
\centering
\label{tab:nonstationary-functional-dependence-examples}

\small
\setlength{\tabcolsep}{6pt}
\renewcommand{\arraystretch}{1.15}

\begin{tabularx}{\textwidth}{
    @{}
    >{\raggedright\arraybackslash}X
    >{\raggedright\arraybackslash}p{0.38\textwidth}
    @{}
}
\toprule
Representative Examples
    & References \\
\midrule

Time-varying random-coefficient and volatility models
(e.g., tvAR(p), tvARCH(p), tvTAR(1))
    & \cite[Example~2.2 and Proposition~4.4(ii)]{dahlhaus2019towards} \\

Cyclostationary process
    & \cite[Example~2.5.1]{bonnerjee2024gaussian}; \cite{parzen1979approach} \\
    
Bounded piecewise-stationary models with finitely many
structural breaks
    & Direct consequences of uniform-contraction results \\
\bottomrule
\end{tabularx}
\caption{FDM: real-valued examples of non-stationary time series} 
\end{table}

\noindent We now give out two examples not satisfying our Assumptions \ref{Assump:dep_decay}-\ref{Assump:UI}, but as discussed in the main text, we do not regard these examples as posing serious practical concerns. 

\begin{IEg}[ARFIMA$(0,d,0)$] 
For \(0<d<1/2\), consider the fractionally differenced process: $X_t=(1-L)^{-d}\varepsilon_t
    =\sum_{j=0}^{\infty}\psi_j\varepsilon_{t-j},
\
\psi_j
    =\frac{\Gamma(j+d)}{\Gamma(j+1)\Gamma(d)}
    \sim \frac{j^{d-1}}{\Gamma(d)},$
where \(L\) denotes the lag operator with $\varepsilon_t \overset{\mathrm{iid}}{\sim} \mathcal{N}(0,1)$, and 
\(\Gamma(\cdot)\) denotes the Gamma function: $\Gamma(z)=\int_0^\infty u^{z-1}e^{-u}\,du, \ x>0.$ Then the process fails to satisfy Assumption \ref{Assump:dep_decay}.
\end{IEg}
\noindent The ARFIMA$(0,d,0)$ model is a stationary long-memory process whose partial sums no longer converge to a Brownian motion limit. Instead, under an appropriate normalization, its limit is a fractional Brownian motion (\cite{taqqu1975weak}). 
\begin{IEg}[Unit-root random walk]
Consider the process: $X_t = X_{t-1} + \varepsilon_t$ with $\varepsilon_t \overset{\mathrm{iid}}{\sim} \mathcal{N}(0,1).$ Then the process fails to satisfy Assumption \ref{Assump:UI}. 
\end{IEg}
\noindent The unit-root random walk can often be handled by the standard differencing transformation as in the classic fixed-time regime, after which the resulting process satisfies our stochastic dominance condition.

\subsubsection{Comparison with Mixing Conditions} 
\noindent We refer to Chapter 5 in \cite{rio2017asymptotic} for a gentle introduction to the basics of mixing conditions. In short, mixing conditions are characterized through the dependence structure between sigma-algebras. Unlike the FDM framework, they are often difficult to verify in nonlinear models and may even fail to hold for rather simple processes (see e.g., \cite{andrews1984non} for an AR model and \cite{doukhan2009weak} for an ARCH model), but these processes still satisfy dependency conditions under \cite{wu2005nonlinear}'s framework. For related comparisons of dependence conditions in other contexts, see, e.g., Appendix B in \cite{ballarin2026impulse}. For more complex settings, we refer to \cite{han2023probability} for reviews and benefits of functional dependence measures, where there are substantially new challenges to the verification of mixing conditions.  

For readers interested in conditions and examples under which functional dependence follows from mixing conditions, and conversely when mixing conditions can be derived from functional dependence, we refer to some of the recent developments in the literature (\cite{hill2025mixingale}; \cite{heinrichs2026note}). These comparisons are not intended to be exhaustive, but rather to provide a useful point of reference.

\subsubsection{Assumptions in the Existing Nonstationary ASIP Result under Mixing} \label{Appendix: Assumptions in Existing Nonstationary ASIP Results}
\noindent We remark that our ASIP differs from that of \cite{hafouta2023almost} in terms of the approximation error rate and the variance structure of the Gaussian approximation.
We therefore present their assumptions solely to compare the conditions imposed on the underlying processes.

\begin{Ass}[Mixing Conditions] \label{Assump: nonstationary mixing coefficients condition}
Let $\{X_t : t \ge 1\}$ be a zero-mean, (univariate) real-valued process with $\alpha$-mixing coefficients $\alpha(n)$ and $\phi$-mixing coefficients $\phi(n)$. Assume that the mixing coefficients satisfy the following conditions simultaneously: (i) $\alpha(n)\;\le\; C \delta^n \ \text{for some }C<\infty,\ \delta \in (0,1)$, and (ii) $\phi(n_0) < \frac{1}{2}$ for some $n_0 \in \mathbb{N}$.
\end{Ass}

\begin{Ass}[Boundedness] \label{Assump: boundedness mixing condition}
For the same process in Assumption \ref{Assump: nonstationary mixing coefficients condition}, assume that $(X_t)_{t\ge 1}$ is uniformly bounded, i.e., there exists a constant $B > 0$ such that $\sup_{t\ge 1} |X_t| \le B$ almost surely. 
\end{Ass}
\noindent Assumption \ref{Assump: nonstationary mixing coefficients condition} and Assumption \ref{Assump: boundedness mixing condition}  impose substantially stronger conditions than our Assumption \ref{asm:dep_decay} and Assumption \ref{asm:UI}. In particular, we relax exponential dependence decay to polynomial decay and uniform boundedness to uniform boundedness in moments, thereby accommodating a substantially broader class of processes.

\subsection{KMT Approximations and Different Usage of the Non-Degenerate Condition} \label{Appendix: Non-Degenerate Condition on the Cumulative Variance Process}
\noindent KMT rates are usually also of independent interest, beyond the scope that we consider in this paper. For time-series applications, they are valuable for conducting structural break tests and constructing simultaneous confidence bands for nonparametric trend estimation in complex scenarios (\cite{wu2007inference}; \cite{zhou2010simultaneous}). Recent work on nonasymptotic KMT approximation uses a similar peeling perspective to obtain sharp path-wise couplings (\cite{waudby2025nonasymptotic}). The distinction from our result of Theorem \ref{Thm: nonstationary ASIP} lies in how the group-wise error probabilities are aggregated. In this paper, we make these probabilities summable and then pass to the limit through the Borel--Cantelli lemma, which yields an asymptotic almost sure approximation but does not keep track of the finite-sample error aggregation.  Nonasymptotic analysis keeps this accounting explicit in the union-bound step, which then enables finite-sample guarantees. Since our primary goal is to establish an ASIP under mild conditions to validate the time-uniform dependent CLT, we do not optimize the outer aggregation step, although we think such construction is feasible in time-series settings. A nonasymptotic version of the present ASIP, possibly with distribution-uniform guarantees, is thus left for future work.

Besides, we remark a difference between our ASIP/AsympCSs and those in the existing literature. In \cite{waudby2024time} and its related subsequent work, they rely on \cite{strassen1967almost}. Consequently, a non-degenerate condition on $\V_t \to \infty$ is required to establish both ASIP results for martingale difference sequence, and to ensure that the approximation error rate is dominated by the nonasymptotic boundary rate for valid construction of AsympCSs. In contrast, our approach only requires this condition for the latter purpose and in particular, we do not assume it in our Theorem \ref{Thm: nonstationary ASIP}. We could achieve this because subsequent ASIP literature and thus ours leverages finer nonasymptotic techniques from \cite{sakhanenko2006estimates}.

\subsection{Comparison of Admissible Bandwidth Classes between Functional Dependence Framework and Mixing Condition Framework} \label{Appendix: Comparison of Admissible Bandwidth Classes}
\noindent For weakly consistent HAC estimators, the classic bandwidth conditions have been shown to be $b_{t}^{\text{FDM}} = o(t)$ (Theorem 6 in \cite{wu2012covariance}) and $b_{t}^{\text{Mixing}} = o(t^{1/2-1/(p \wedge 4)})$(Theorem 2 in \cite{de2000strong}), with the latter subsequently relaxed Hansen's rate $o(t^{1-2/(p \wedge 4)})$ (\cite{davidson2020new}). This advantage of enabling more feasible bandwidth choices under the FDM framework appears to be quite general. A similar phenomenon is also observed in more general spectral density estimation (\cite{shao2007asymptotic}; \cite{liu2010asymptotics}) and in the present setting for strongly consistent HAC estimators, where Remark \ref{Rmk: A subset relation for admissible bandwidth classes} also reveals a polynomial-factor difference, up to logarithmic terms. This difference appears to reflect sharper control of the quadratic-form estimation error under functional dependence (\cite{liu2010asymptotics}), although additional structure can also relax bandwidth restrictions in the mixing framework (\cite{davidson2020new}).

\vspace{8pt}
\noindent We next complete the argument per further enlarging the admissible bandwidth choice set. As seen from the exisiting literature (\cite{de2000strong}; \cite{davidson2020new}), truncating the effective moment order at four provides a convenient simplification, with little practical loss since commonly used bandwidth choices are typically of order smaller than $t^{1/2}$. This is because when applying the von Bahr--Esseen inequality (\cite{von1965inequalities}) or the Burkholder inequality (\cite{burkholder1973distribution}), letting $q:=p/2>1$, the corresponding moment bound for the martingale difference sequence $\{X_i\}_{i=1}^{N}$ is of order
\[
\left\|\sum_{i=1}^N X_i\right\|_q
\lesssim
N^{1/(q\wedge 2)}
=
\begin{cases}
N^{2/p}, & 2<p<4,\\
N^{1/2}, & p\ge 4,
\end{cases}
\]
so that the order saturates at $N^{1/2}$ once $p\ge4$ and cannot be further improved.

However, under a bounded variation condition (Condition 3 of \cite{liu2010asymptotics}), 
the result may still be established for $p>4$ while retaining the larger admissible bandwidth, i.e., $b_t = O\!\left(t^{1-2/p}(\log t)^{-\lambda}\right), \lambda>2/p.$ This extension requires a maximal banded quadratic-form argument. In particular, Burkholder's inequality combined with Young's convolution inequality yields the required moment bound for rectangular partial sums, while the two-parameter (time index and lag index) maximal inequality of \cite{moricz1977moment} provides uniform control over both the time and lag indices. This approach would allow the admissible bandwidth to be further relaxed in terms of the leading polynomial order as higher-order moments become available. We do not pursue this approach here, as we think the additional practical gains are likely to be limited.

\subsection{Optimizing Robbins' Normal Mixture} \label{Appendix: optimal rho}
\noindent In this section, we outline how one can choose $\rho$ to optimize the Robbins' normal mixture boundary for a specific time $t^{\star}$ and type-I error level $\alpha \in (0,1)$. Specifically, we outline the exact solution and a closed-form approximation for the two-sided AsympCS as in Corollary \ref{Corollary: Equivalent Asymp CS Boundary}, followed by an approximate solution for the one-sided AsympCS in Equation \eqref{Eq: one-sided upper boundary}. The boundary in Corollary \ref{Corollary: Equivalent Asymp CS Boundary} is asymptotically equivalent to that in Theorem \ref{Thm: Robbins Gaussian Mixture Asymp CS} but is easier to optimize. This section is similar to Appendix B.2 of \cite{waudby2024time}, so we omit proof details and only present results for completeness. 
 
\subsubsection{Optimal \texorpdfstring{$\rho$}{rho} in a Two-sided AsympCS}
\noindent  \textbf{The exact solution.} Consider the boundary in Corollary \ref{Corollary: Equivalent Asymp CS Boundary} at time $t$,
\[
\widetilde{\mathfrak{B}}_{t}(\alpha)
:=
\sqrt{
\frac{t  \rho^2 + 1}{t^2\rho^2}
\log\!\left(\frac{t\rho^2 + 1}{\alpha^2}\right)
}.
\]
Let $W_{-1}$ be the lower branch of the Lambert $W$ function (\cite{corless1996lambert}). Then, at the specific time of interest $t^{\star}$, we could obtain 
\begin{equation} \label{Eq: Exact Optimal rho in 2-sided Asymp CS}
\argmin_{\rho>0} \,\widetilde{\mathfrak{B}}_{t^{\star}}(\alpha)
=
\sqrt{
\frac{
-\,W_{-1}\!\left(-\alpha^{2}\exp(-1)\right)-1
}{t^{\star}}
}.
\end{equation}

\paragraph{An approximate solution.}
We can derive a closed-form approximation to Equation ~\eqref{Eq: Exact Optimal rho in 2-sided Asymp CS} by considering the Taylor series expansion of the Lambert \(W\) function. We could then obtain 
\[
\rho^{\text{2s-approx}}(t^\star)
:=
\sqrt{
\frac{
-2\log\alpha
+ \log\!\left(-2\log\alpha+1\right)
}{t^\star}
}.
\]

\subsubsection{Optimal \texorpdfstring{$\rho$}{rho} in a One-sided AsympCS}
\noindent As noted in the footnote \ref{fn: 1S,2S diff}, the one-sided boundary has an additional additive 1 inside log, and 
this precludes an exact solution in terms of the Lambert $W$ function. We therefore recommend using a proxy for the boundary, i.e., disregarding the additional additive 1, and applying the same argument as in the two-sided case, with $\alpha$ replaced by $2\alpha$. The intuition is that the two-sided construction divides the total error budget between the upper and lower tails, whereas the one-sided construction allocates the entire error budget to a single tail, yielding a sharper boundary. In particular, it suffices to obtain the following approximate solution for the one-sided boundary: 
\[
\rho^{\text{1s-approx}}(t^\star)
:=
\sqrt{
\frac{
-2\log2\alpha
+ \log\!\left(-2\log 2 \alpha+1\right)
}{t^\star}
}.
\]

\end{document}